\documentclass[12pt]{amsart}
\usepackage[textwidth=6.375in,margin=1in]{geometry}
\usepackage{amssymb}
\usepackage{amsthm}
\usepackage{amsmath}
\usepackage{amscd}
\usepackage{verbatim}
\usepackage[all]{xy}
\usepackage{longtable, lscape}
\usepackage{todonotes}

\usepackage{multirow,array}
\usepackage[urlcolor=blue]{hyperref}

\numberwithin{equation}{section}
\numberwithin{table}{section}

\theoremstyle{plain}
\newtheorem{theorem}{Theorem}[section]
\newtheorem{corollary}[theorem]{Corollary}
\newtheorem{lemma}[theorem]{Lemma}
\newtheorem{proposition}[theorem]{Proposition}

\theoremstyle{definition}

\newtheorem{example}[theorem]{Example}

\theoremstyle{remark}
\newtheorem{remark}[theorem]{Remark}

\newcommand{\A}{\mathbb{A}}
\newcommand{\R}{\mathbb{R}}
\newcommand{\Q}{\mathbb{Q}}
\newcommand{\Z}{\mathbb{Z}}
\newcommand{\N}{\mathbb{N}}
\newcommand{\C}{\mathbb{C}}

\renewcommand{\H}{\mathbb{H}}

\newcommand{\D}{\mathbb{D}}

\newcommand{\zxz}[4]{\begin{pmatrix} #1 & #2 \\ #3 & #4 \end{pmatrix}}

\newcommand{\leg}[2]{\left( \frac{#1}{#2} \right)}
\newcommand{\kzxz}[4]{\left(\begin{smallmatrix} #1 & #2 \\ #3 & #4\end{smallmatrix}\right) }

\newcommand{\mat}[1]{\begin{pmatrix} #1 \end{pmatrix}}

\newcommand{\calA}{\mathcal{A}}

\newcommand{\calE}{\mathcal{E}}

\newcommand{\calN}{\mathcal{N}}
\newcommand{\calO}{\mathcal{O}}
\newcommand{\calP}{\mathcal{P}}

\newcommand{\calZ}{\mathcal{Z}}

\newcommand{\fraka}{\mathfrak a}

\newcommand{\frake}{\mathfrak e}

\newcommand{\frakp}{\mathfrak p}

\newcommand{\eps}{\varepsilon}
\newcommand{\bs}{\backslash}
\newcommand{\norm}{\operatorname{N}}

\newcommand{\tr}{\operatorname{tr}}
\newcommand{\id}{\operatorname{id}}

\newcommand{\Cl}{\operatorname{Cl}}

\newcommand{\SL}{\operatorname{SL}}
\newcommand{\Sp}{\operatorname{Sp}}

\newcommand{\GSpin}{\operatorname{GSpin}}
\newcommand{\Mp}{\operatorname{Mp}}
\newcommand{\Orth}{\operatorname{O}}

\newcommand{\Hom}{\operatorname{Hom}}
\newcommand{\Aut}{\operatorname{Aut}}
\newcommand{\Mat}{\operatorname{Mat}}
\newcommand{\Spec}{\operatorname{Spec}}

\newcommand{\End}{\operatorname{End}}

\newcommand{\dv}{\operatorname{div}}

\newcommand{\Sym}{\operatorname{Sym}}

\newcommand{\Pet}{\text{\rm Pet}}

\newcommand{\GL}{\operatorname{GL}}
\newcommand{\SO}{\operatorname{SO}}

\newcommand{\Gal}{\operatorname{Gal}}

\newcommand{\ord}{\operatorname{ord}}

\newcommand{\diag}{\operatorname{diag}}
\newcommand{\co}{\mathcal  O}
\newcommand{\cha}{\operatorname{Char}}

\newcommand{\GSp}{\operatorname{GSp}}

\font\cute=cmitt10 at 12pt 
\newcommand{\kay}{{\hbox{\cute k}}}

\newcommand{\Fal}{\operatorname{Fal}}

\begin{document}

\title[Bad Reduction of Genus Two Curves with complex multiplication]{Bad Reduction of Genus Two Curves with complex multiplication}

\author[Jan H.~Bruinier, Tonghai Yang, and Peng Yu]{Jan
Hendrik Bruinier, Tonghai Yang, and Peng Yu}
\address{Fachbereich Mathematik,
Technische Universit\"at Darmstadt, Schlossgartenstrasse 7, D--64289
Darmstadt, Germany}
\email{bruinier@mathematik.tu-darmstadt.de}
\address{Department of Mathematics, University of Wisconsin Madison, Van Vleck Hall, Madison, WI 53706, USA}
\email{thyang@math.wisc.edu}
\address{School of Mathematics, Renmin University of China, No. 59, Zhongguancun Street, Haidian District, Beijing, China}
\email{yupeng2020@ruc.edu.cn}

\thanks{The first author is supported in part by  the
DFG Collaborative Research Centre TRR 326 ``Geometry and Arithmetic of Uniformized Structures'', project number 444845124. The second author is partially supported by UW-Madison
Kellett Mid-Career Award and a NSF grant  DMS-2501617.}

\subjclass[2020]{11F46, 14G40, 11G15, 11G20}


\begin{abstract}
We study genus two curves whose Jacobians have complex multiplication by a biquadratic CM field.
In this setting the Jacobian decomposes as a product of two elliptic curves with complex multiplication by orders in the same imaginary quadratic field.
We derive upper bounds for the primes of stable bad reduction of such curves in terms of the discriminants of the CM orders.
To do so, we give an explicit description of principally polarized abelian surfaces with CM by a biquadratic field via small CM cycles on an orthogonal Shimura variety of signature $(3, 2)$. We use it to prove an arithmetic intersection formula between such CM cycles with Humbert surfaces in terms of coefficients of incoherent Eisenstein series and representation numbers of ternary positive definite quadratic forms.
Finally, employing the arithmetic properties of higher Green functions, we show that primes of stable bad reduction always exist.
\end{abstract}

\maketitle

\section{Introduction}

By work of Serre and Tate \cite{ST68}, any abelian variety with complex multiplication over a number field  has potentially good reduction everywhere.
If $C$ is a (smooth, geometrically connected, projective) curve over a field $k$, then its Jacobian $J(C)$, together with its theta divisor $\theta_C$, is a principally polarized abelian variety over $k$. We say that $C$ has complex multiplication  (short `CM') if the base change of $J(C)$ to an algebraic closure of $k$ has CM. Thus, if $C$ is a curve over a number field with CM, then $J(C)$ has potentially good reduction everywhere.
However, if the genus of $C$ is greater than one, this does not imply that $C$ itself has potentially good reduction everywhere. There may instead be
primes of stable bad reduction, that is, primes at which the stable model of $C$ has non-smooth special fiber.
%

The main purpose of the present paper is to study the reduction of genus two curves  with CM by a biquadratic field systematically. This is motivated by recent work of Lorenzo Garc\'ia, Ritzenthaler, and Rodr\'iguez Villegas \cite{GRV}, who consider the special case in which  $J(C)\cong E\times E$ where $E$ is an elliptic curve with CM by the maximal order of an imaginary quadratic field of discriminant $d<0$. They prove that the primes $p$ of stable bad reduction are bounded by $p\leq |d|/4$.   This result may be viewed as an analogue of the celebrated theorem of Gross and Zagier on prime factorizations of singular moduli \cite{GZ}.

In this paper we treat the general case in which $C$ has complex multiplication by a biquadratic CM field. Then $J(C) \cong E_1 \times E_2$ is the product of two CM elliptic curves  with CM by the same  imaginary quadratic field
(but with possibly different CM orders), and the polarization $\theta_C$ is {\em not}\/ equivalent to the product polarization, see e.g.~\cite[Corollary 10.6.3]{BL}. For a negative discriminant $\delta\in \Z$, we write $\calO_\delta$ for the quadratic order of discriminant $\delta$ in $\Q(\sqrt{\delta})$. Our first main result is as follows.

\begin{theorem} \label{thm:BadReduction}
Let $C$ be a genus two curve
over a number field such that $J(C) \cong E_1 \times E_2$, where $E_1$ and $E_2$ are CM elliptic curves with $\End (E_i) =\co_{d_i}$ and $\Q(\sqrt{d_1}) =\Q(\sqrt{d_2})$. Let $\operatorname{lcm}(|d_1|, |d_2|)$ be the least common multiple of $|d_1|$ and $|d_2|$. If $C$ has stable bad reduction at a rational prime  $p$,  then  $p$ is non-split in  $\Q(\sqrt{d_1})$ and $p\le \frac{\operatorname{lcm}(|d_1|, |d_2|)}4$.
\end{theorem}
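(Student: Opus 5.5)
The plan is to turn stable bad reduction into a statement about the reduction of the principal polarization. Put $A=J(C)$ over a number field $F$ with good reduction at a prime $\mathfrak p\mid p$; this is possible after a finite extension by Serre--Tate. The reduction $\bar A$ is a principally polarized abelian surface over $\bar{\mathbb F}_p$. By the standard criterion (Oort--Ueno, or the genus two Torelli theorem over the boundary), $C$ has stable bad reduction at $\mathfrak p$ exactly when the reduced principal polarization $\bar\theta$ is a product polarization. That is, $\bar A\cong \bar E_1'\times\bar E_2'$ as principally polarized surfaces, with $\bar\theta$ the sum of the two elliptic polarizations. Equivalently, $\bar A$ carries a class $x\in \operatorname{NS}(\bar A)$, or an endomorphism built from it, with $x^2=0$ and $x\cdot\theta=1$. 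In the language of Humbert surfaces, the CM point lies on the Humbert surface $G_1$ of discriminant $1$ modulo $\mathfrak p$ but not in characteristic $0$. The rest of the argument studies when the CM point meets $G_1$ in the special fiber.

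Next I make the lattices explicit. For the generic fiber, $\operatorname{End}(E_1\times E_2)$ is the order $\begin{pmatrix}\calO_{d_1} & \Hom(E_2,E_1)\\ \Hom(E_1,E_2)&\calO_{d_2}\end{pmatrix}$. The Rosati-symmetric part gives the lattice of special endomorphisms in the Shimura variety of signature $(3,2)$. Writing $\theta$ in terms of a Hermitian matrix $\begin{pmatrix} a & \beta\\ \bar\beta & b\end{pmatrix}$ with $ab-\beta\bar\beta=1$, I find the characteristic-$0$ lattice of "singular relations" $L_0$: a positive definite binary lattice attached to the CM orders. At a non-split $p$ (the split case is excluded because then $\bar E_i$ are ordinary and $\operatorname{End}(\bar A)=\operatorname{End}(A)$ in rank, so no new product decomposition can appear), $\bar E_i$ are supersingular. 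The lattice then enlarges to $L_p\supset L_0$ of rank $5$ in a definite quaternion setting, namely inside $\Mat_2(B_{p,\infty})$ with maximal-order-type entries. Bad reduction forces a vector $x\in L_p\setminus (L_0\otimes\Q)$ realizing the product decomposition, together with the orthogonal complement structure.

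The key quantitative step is a Gross--Zagier style bound. I take $x$ and its projection to the orthogonal complement of $L_0$. The quadratic form on the supersingular part is divisible by $p$, since $\Hom(\bar E_1,\bar E_2)$ modulo $\Hom(E_1,E_2)$ has reduced norms with a factor of $p$, by the Gross--Zagier/Dorman embedding argument. A ternary positive definite form of discriminant involving $p$ must then represent something $\le$ the relevant determinant. Concretely, I will show that a nonzero vector $y=x-x_0$, with $x_0$ the projection to $L_0\otimes\Q$, satisfies $Q(y)\ge p\cdot(\text{unit})/\operatorname{lcm}(|d_1|,|d_2|)$, since the denominators of the projection divide $\operatorname{lcm}$. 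Meanwhile $Q(y)\le Q(x)-\ldots\le \tfrac14$ comes from $x^2=0$, $x\cdot\theta=1$ and Cauchy--Schwarz/AM-GM in the Hermitian form. Combining the two gives $p\le\operatorname{lcm}(|d_1|,|d_2|)/4$. Equivalently, the same conclusion would follow from the paper's intersection formula: the finite intersection of the small CM cycle with $G_1$ is supported on primes $p$ for which $\frac{\operatorname{lcm}}{4}-\ldots$ has positive representation numbers. The main obstacle is the careful bookkeeping of the denominators coming from different orders $\calO_{d_1}\neq\calO_{d_2}$, which is what produces the $\operatorname{lcm}$ rather than $|d|$. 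I would first handle $d_1=d_2$, recovering the bound of \cite{GRV}, and then track the conductor contributions via the explicit lattice $\Hom(E_1,E_2)\cong$ a proper ideal of $\calO_{f}$ with $f=\gcd$-related conductor.
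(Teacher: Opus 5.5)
Your overall strategy is a genuinely different route from the paper. You reduce to ``the reduced principal polarization splits'' and then bound the new special endomorphism of $\bar A$ by a Gross--Zagier type lattice argument. The paper instead normalizes $J(C)\cong\C/\calO_l\times\C/\fraka$ with $l=-\operatorname{lcm}(|d_1|,|d_2|)$ via Proposition~\ref{prop:product}, and reads the primes off the arithmetic intersection formula, i.e.\ off the coefficients $\kappa_{\calN}(m_1,\mu_1)$.

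\textbf{Gap 1: the exclusion of split primes is wrong.} Equality of $\End(\bar A)$ and $\End(A)$ \emph{in rank} does not stop the lattice of special endomorphisms from growing by a $p$-power index, and that is all a new norm-$\tfrac14$ vector needs. Your argument does work for split $p$ not dividing the conductors: then the $E_i$ are canonical lifts and $\Hom(\bar E_1,\bar E_2)=\Hom(E_1,E_2)$. But if a split $p$ divides a conductor, Deuring's reduction theorem removes the $p$-part of the conductor from $\End(\bar E_i)$, and new product decompositions really occur. Here is an example.
\begin{itemize}
\item Take $E_1=\C/\calO_{-147}$ and $E_2=\C/\calO_{-3}$, so $\Hom(E_1,E_2)\cong(\calO_{-3},7\norm)$.
\item Take $\theta=2[\{0\}\times E_2]+4[E_1\times\{0\}]+[h]$, where $h$ is the natural $7$-isogeny; then $\theta^2=2$.
\item One checks that the Humbert form $(x\cdot\theta)^2-2x^2$ on $\operatorname{NS}(A)/\Z\theta$ has minimum $4$. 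Hence $\theta$ is not a product polarization, and $(A,\theta)=J(C)$.
\item Modulo a prime above $7$, which splits in $\Q(\sqrt{-3})$, Deuring gives $\End(\bar E_1)=\calO_{-3}$. Hence $\bar A\cong\bar E_2\times\bar E_2$.
\item Every principal polarization on $\bar E_2\times\bar E_2$ is a product, because every positive definite unimodular binary Hermitian form over $\calO_{-3}$ is equivalent to the identity.
\end{itemize}
So $C$ has stable bad reduction at the split prime $7$. This also violates the sharper bound $p\le|d_2|t/4=21/4$ of Theorem~\ref{theo:Curve}. The non-split clause is therefore false in general at split primes dividing the conductor, and no argument can prove it there.

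The paper's proof has the matching hole. Split primes are excluded only for the terms with $m_1>0$ (Theorem~\ref{theo4.6}(1)). The terms $a_p^{(0)}$ for split $p\mid t_1$ involve exactly the split-case derivative computed in the proof of Proposition~\ref{prop:nonpos}, and that derivative need not vanish. Moreover, in case (6) of the proof of Theorem~\ref{theo:Curve}, the step from $\tilde W_p(1,0,\phi_{\mu_p})=0$ to $W_p(1,0,\phi_{\mu_p})=0$ fails for split $p$, since $L_p(s+1,\chi)/L_p(s,\chi)$ vanishes at $s=0$. For such $p$ only the trivial bound survives: $p$ divides the conductor $f$ of $\calO_l$, so $p\le f\le |d|f^2/4=\operatorname{lcm}(|d_1|,|d_2|)/4$.

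\textbf{Gap 2: the quantitative step is asserted, not proved, and as formulated it fails at primes dividing $d_1$.} The supersingularity input (ramification at $p$ of the space of special endomorphisms of $\bar A$) tells you that $p$ divides the discriminant of the lattice spanned by $\calP(A)$ and the new vector. That discriminant is $\operatorname{disc}(\calP)\cdot 2Q(y)$ up to a bounded factor. Since every prime dividing $d_1$ already divides $\operatorname{disc}(\calP)$, this gives a lower bound of the shape $Q(y)\gg p/|d_1|$ only when $p\nmid d_1$. For $p\mid d_1$ the factor $p$ is absorbed and you get nothing. For instance, with $d_1=d_2=-\ell$ you still have to exclude the ramified prime $p=\ell>\ell/4$.

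This is exactly where the paper does separate work.
\begin{itemize}
\item For $p\nmid d_1$ it gets $p\mid n$ with $m_1=n/|d_2t|\le\tfrac14$ from Theorem~\ref{theo4.6}.
\item For $p\mid d_1$ it uses Kani's non-existence results for $d_1=d_2\in\{-3,-4\}$ and $(d_1,d_2)=(-12,-3)$, together with explicit local Whittaker computations (the case analysis in the proof of Theorem~\ref{theo:Curve}).
\end{itemize}
Your sketch needs an analogous local argument at ramified primes. It also needs a precise description of the orthogonal complement of $\calP(A)$ in the rank-$5$ lattice of special endomorphisms of $\bar A$, to justify the claimed denominators dividing $\operatorname{lcm}(|d_1|,|d_2|)$.
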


On the other hand, our second main result shows that  there always exist primes of stable bad reduction.


\begin{theorem} \label{theo:AlwaysBad}
 Let $C$ be a genus two curve over a number field $K$ such that  its Jacobian $J(C)$ has CM by  a biquadratic CM field. Then  $C$ has stable bad reduction at some prime $p$.
\end{theorem}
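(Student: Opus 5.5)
We want to prove Theorem~\ref{theo:AlwaysBad}, and the plan is to argue by contradiction via an arithmetic intersection computation on the Siegel threefold. The standard criterion, going back to the work on reduction of genus two CM curves, is this. A principally polarized abelian surface $(A,\theta)$ over $\calO_K$ (after finite extension) is the Jacobian of a curve with good reduction at a prime $\frakp$ if and only if its reduction mod $\frakp$ is not isomorphic to a product of elliptic curves with the product polarization. Equivalently, the CM point does not meet the Humbert surface $G_1$ of discriminant $1$ (the locus of products) in the fiber at $\frakp$.

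So suppose $C$ has potentially good reduction everywhere. Let $Z$ be the small CM cycle on the orthogonal Shimura variety of signature $(3,2)$ attached to $(J(C),\theta_C)$; it is the zero-cycle given by the Galois orbit. Then $Z$ is disjoint from the arithmetic Humbert divisor $\calZ(1)$ at all finite places. Hence the finite intersection $(\calZ(1)\cdot Z)_{\mathrm{fin}}$ vanishes. Moreover $Z\cap G_1=\emptyset$ generically, since $\theta_C$ is not a product polarization.

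The key steps, in order, are the following.

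\begin{enumerate}
\item Use the explicit description of biquadratic CM points as small CM cycles to compute the finite intersection number $\langle \calZ(1), Z\rangle_{\mathrm{fin}}$. The paper's arithmetic intersection formula expresses it as a sum of primes $\log p$ with nonnegative multiplicities. These multiplicities come from Eisenstein series coefficients and representation numbers of ternary positive definite forms. Under our assumption the sum is $0$.
\item Interpret the full arithmetic degree $\widehat{\deg}(\widehat{\calZ}(1)\cdot Z)$. This is the finite part plus the archimedean Green function values $\sum_{z\in Z}\Phi_1(z)$. It should equal a derivative of an incoherent Eisenstein series coefficient, or be controlled by a Borcherds product. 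The cleanest route is that $\calZ(1)$ is the divisor of the Igusa cusp form $\chi_{10}$ (a Borcherds product), up to a multiple of the Hodge bundle.
\item Compare via $\chi_{10}$. The Faltings height of a CM abelian surface is given by Colmez (known for abelian CM fields, which a biquadratic field is). The Petersson norm $\sum_{z\in Z}\log\|\chi_{10}(z)\|$ is then determined by $\widehat{\deg}(Z\cdot\widehat\omega)$ minus finite contributions. Good reduction everywhere forces $\log\|\chi_{10}\|$ summed over $Z$ to equal an explicit multiple of the Faltings height, i.e.\ of $\Gamma'/\Gamma$-values and $L'/L(0,\chi)$ of the quadratic character.
\item Derive a contradiction. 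Here I would use higher Green functions. Since $J(C)\cong E_1\times E_2$, the value $\chi_{10}(z)$ factors through products of values of $\Delta$- or $\eta$-type functions at CM points $\tau_1,\tau_2$ with an isogeny relation. More precisely, $\log\|\chi_{10}\|$ restricted to the locus of such points is a (higher) Green function $G_s(\tau_1,\tau_2)$ at an integer $s$. The Gross–Zagier/Mellit type algebraicity results express its CM values as $\log$ of algebraic numbers of specific shape. These have nonzero $p$-adic valuation for some $p$, unless an identity holds that can be excluded by sign or size.
\end{enumerate}

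The main obstacle is step 4: showing the relevant CM value is not a unit. What I would try first is a size argument. By step 1 every local contribution is $\geq0$, so vanishing of all of them means the archimedean Green function sum equals the global quantity. I would then bound $G_s$ at CM points to show the archimedean term is strictly smaller in absolute value, for instance by using the positivity of the Green function near the diagonal. This would force some prime to contribute positively, which yields the desired prime $p$ of stable bad reduction.
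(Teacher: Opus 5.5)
There is a genuine gap, and it is your Step~4, which is where all the difficulty lies. Steps~1--3 only reassemble the identity underlying the intersection formula. Write $G(z)=-\log\|\Psi_1(z)\|_{\Pet}$ with $\Psi_1^2=\chi_{10}$. Then the normalized finite intersection equals an explicit Faltings-height term minus $\sum_{z\in Z(\calN)}G(z)$. Such an identity cannot by itself exclude the value $0$.

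Your size argument also runs in the wrong direction. To prove the finite part positive you need an \emph{upper} bound for $G$ at the CM points. Positivity of the Green function near $Z(1)$ only bounds $G$ from below, and large Green values make the finite part smaller. Moreover, the bound would have to be sharp to within the finite part itself, which is unknown a priori and can be small (it is $\log 4$ for $d=-11$). That amounts to evaluating the archimedean term exactly, and no estimate does this uniformly in $d_1$.

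The reduction to $\Delta$-values does not hold as stated either. Since $\theta_C$ is not the product polarization, the period matrix is not diagonal, and $\chi_{10}$ vanishes identically on the diagonal. Even a correct Gross--Zagier/Mellit-type reformulation only tells you that a CM value is the logarithm of an algebraic number; whether that number is a unit is precisely the question. Finally, $\log\|\chi_{10}\|$ is the $j=0$ Green function. The higher Green function at $s=s_0+1$ is a different function, and that difference is exactly what is needed.

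The missing idea is to apply positivity to the higher Green function $\Phi^1(z,\tilde f_1)$ attached to $\tilde f_1=q^{-1}+O(1)\in M^{!,+}_{-5/2}$, and to compare Fourier coefficients rather than sizes. First, $\Phi^1(z,\tilde f_1)=\tfrac14\Phi_1(z,\tfrac94)$ is an absolutely convergent sum over $x\in\frake_1+L$ with $Q(x)=\tfrac14$ of the positive terms $\tfrac{2}{35}\,t_x^{5/2}F(\tfrac52,2,\tfrac92;t_x)$, where $t_x=1/(4Q(x_{z^\perp}))\in(0,1)$ off $Z(1)$. Hence $\Phi^1(Z(\calN),\tilde f_1)>0$ (Corollary~\ref{cor:phi1pos}). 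Second, the CM value formula of \cite{BEY} (Theorem~\ref{thm:fund} with $j=1$) gives
\[
\Phi^1(Z(\calN),\tilde f_1)=\deg Z(\calN)\sum_{\substack{\mu_1+\mu_2\equiv\frake_1\,(L)\\ m_1+m_2=\frac14,\ (m_1,\mu_1)\neq(0,0)}}\bigl(m_2-\tfrac32 m_1\bigr)\,a_\calP(m_2,\mu_2)\,\kappa_\calN(m_1,\mu_1).
\]
Neither the Faltings height nor $\kappa_\calN(0,0)$ enters here, because $[\theta_\calP,\calE_\calN]_1$ has vanishing constant term and $a_\calP(\tfrac14,\frake_1)=0$.

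The products $a_\calP\kappa_\calN$ in this sum are exactly those occurring in the intersection formula, and each is $\le 0$ by \cite{Li21} (Proposition~\ref{prop:nonpos}). You already have this sign-definiteness in Step~1 but do not use its real consequence. If the finite intersection were $0$, every individual product would vanish, forcing $\Phi^1(Z(\calN),\tilde f_1)=0$, a contradiction. This is how the paper proves Theorem~\ref{theo:positive}, of which the statement is the case $m=1$. The Faltings height is needed only to derive the intersection formula, not for the positivity argument.
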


This result is somewhat surprising when compared with the experimental data in the case of genus two curves with CM by a {\em primitive} quartic CM field. In this setting, examples of genus two curves with potentially good reduction everywhere are known, such as the curve $y^2 = x^5-1$, which has complex multiplication  by the cyclotomic field $\Q(\zeta_5)$. For further examples, also for non-Galois quartic CM fields, we refer to \cite{Wa99} and \cite{BS15}.
%
In the case of cyclic CM fields, Habegger and Pazuki proved that such examples are rare, see \cite[Theorem~1.1]{HP}. More specifically, they showed that for every fixed real quadratic field there are only finitely many genus two curves over $\bar \Q$  with potentially good reduction everywhere which have CM by the maximal order of a cyclic quartic CM field containing the given real quadratic field. It is expected that an analogous result also holds for more general orders and for non-Galois CM fields.

\medskip

To prove Theorems~\ref{thm:BadReduction} and~\ref{theo:AlwaysBad} above,
we first reformulate the question of stable bad reduction as an arithmetic intersection problem.
It is well-known that a principally polarized abelian surface is not isomorphic to the Jacobian of a genus two curve $C$ if and only if it is decomposable, that is, isomorphic to the product of two elliptic curves equipped with the product polarization.
Hence, a genus two curve $C$ over a number field with CM has stable bad reduction at a prime,
exactly if
its  Jacobian $(J(C), \theta_C)$ becomes isomorphic modulo
that prime to such a product.
Equivalently, on the moduli stack   of principally polarized abelian surfaces, the CM cycle  determined by the N\'eron model of $J(C)$ and the divisor given by the decomposable locus
have non-zero intersection
at this prime.

We show that this question can also be described  as an arithmetic intersection problem on an orthogonal Shimura variety of signature $(3,2)$ between a small CM cycle (as in \cite{Schofer}, \cite{BY09}) determined by $J(C)$  and the special divisor corresponding to the Humbert surface of discriminant $1$. More generally, we shall study intersections with Humbert surfaces of arbitrary discriminant. Using a computation of the Faltings height (see \cite{Mocz}, \cite{NT}, and Section 5.1 for an alternative approach) together with Schofer's formula for evaluating  Borcherds products at small CM cycles, we determine the required arithmetic intersection in this general setting. Explicit formulas for coefficients of derivatives of incoherent Eisenstein series then yield the bound on the primes of bad reduction stated in Theorem~\ref{thm:BadReduction}. Finally, we employ the CM value formula for higher Green functions from \cite{BEY} to prove Theorem~\ref{theo:AlwaysBad}.

We now describe our approach and the results in more detail.
Let $\mathbf A=(A, \lambda)$ be a principally polarized abelian surface over $\C$.
The principal polarization $\lambda$ determines  a Riemann form on
$\Lambda =H_1(A, \Z)$, which gives rise to the Rosati involution  $f \mapsto f^\dagger$ on $\End(\Lambda)$. It can be proved (see Section \ref{sect:set-up} for details) that
$$
L=L(\mathbf A) =\{ f \in  \End(\Lambda):\;  \text{$f^\dagger =f$ and $\tr(f) =0$}  \}
$$
equipped with the quadratic form  $Q(f)=\frac{1}4 \tr(f^2)$ is an even lattice of signature $(3, 2)$. Its discriminant group
 $L'/L$ is isomorphic to $\tfrac{1}{2}\Z/\Z$.
The sublattice  $\mathcal P =\mathcal P(\mathbf A) =L\cap \End(A)$ of endomorphisms compatible with the complex structure on $A$ is positive definite. Following  Kudla and Rapoport \cite{KRSiegel} we call it the lattice of \emph{special endomorphisms} of $\mathbf A$. We also let $\mathcal N$ be the orthogonal complement of $\mathcal P$ in $L$.

If $\mathbf{A}$ has  CM by a biquadratic field, then, as mentioned above, there exists a unique imaginary quadratic field $\kay$ of discriminant $d$ such that $A\cong E_1\times E_2$ is the product of two elliptic curves with CM by (possibly different)  orders
in the same imaginary quadratic field $\kay$. More specifically,  there exists a rank $2$ integral lattice  $\mathfrak a\subset \kay$ with endomorphism ring $ \co_{d_2}\subset \calO_d$ and a positive integer $t$, such that $A$ is isomorphic to
\[
A(\mathfrak a, t) :=\C/\co_{t^2d_2} \times \C/\mathfrak a
\]
over $\C$.  We abbreviate the discriminant of the first order by $d_1=t^2 d_2$. The pair $(\mathfrak a, t)$ characterizes $A$ up to isomorphism (see Proposition \ref{prop:product}).
In this case, it can be proved that
$\mathcal P $ is a positive definite sublattice of $L$ of rank $3$, and its orthogonal complement $\mathcal N$ is a binary lattice  isomorphic to $(\bar{\mathfrak a}, -\frac{t\norm(\cdot)}{\norm(\mathfrak a)})$  (see  Proposition \ref{prop:N}). It turns out that $\calN$ is independent of the polarization on $A$, while $\calP$ depends on it.

There is  a natural isomorphism $ \GSp_{4}(\Z)\cong \GSpin(L)$, which induces a canonical isomorphism between  the Siegel threefold $\mathcal A_{2,\Q}$ and the GSpin-Shimura variety $X_L$  associated to $L$ over $\Q$. Here $\calA_g$ denotes the moduli stack over $\Z$ of principally polarized abelian schemes of dimension $g$.
 In Section \ref{sect:smallCM} we prove the following result
which may be of independent interest.


\begin{theorem} \label{theo:CMIdentification} (See also Theorem \ref{eq:GalAction}.)
Let $\mathbf A=(A(\mathfrak a, t), \lambda) \in \mathcal A_2(\C)$ be a principally polarized abelian surface as above.
\begin{itemize}
\item[(1)] Then $\mathbf A$ is defined over the ring class field $H_{d_1}$ of $\co_{d_1}$.
\item[(2)]  Under the identification $\mathcal A_{2,\Q}\cong X_L$ the cycle $Z(\mathbf A)$ of all Galois conjugates of $\mathbf A$ can be identified with the small CM cycle $Z(\mathcal N)$ associated to $\mathcal N$ as in Section~\ref{sect:OsmallCM}.
\end{itemize}
\end{theorem}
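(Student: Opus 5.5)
The plan has three steps: first show that $\mathbf A$ lies on the small CM cycle $Z(\mathcal N)$; then match the Galois action on $\mathbf A$ with the Shimura reciprocity action on $Z(\mathcal N)$; finally read off both claims.

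\emph{Step 1: $\mathbf A$ is a point of $Z(\mathcal N)$.} The small CM cycle $Z(\mathcal N)$ is the image in $X_L$ of the zero-dimensional Shimura variety attached to $T=\GSpin(\mathcal N)$. Its points are the negative definite planes $\mathcal N_\R\subset L_\R$ together with the $T(\A_f)$-orbits. Under $\mathcal A_{2,\Q}\cong X_L$, the point $\mathbf A$ corresponds to the oriented negative plane in $L_\R$ orthogonal to all special endomorphisms. This plane is exactly $\mathcal N_\R$, because $\mathcal P=L\cap\End(A)$ has rank $3$ and $\mathcal N=\mathcal P^\perp$ is computed in Proposition~\ref{prop:N}. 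So $\mathbf A$ is a point of $Z(\mathcal N)$.

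\emph{Step 2: the Galois action.} Since $\mathcal N\cong(\bar{\mathfrak a},-t\norm(\cdot)/\norm(\mathfrak a))$, the even Clifford algebra $C^+(\mathcal N)$ is an order in $\kay$. Hence $T(\Q)\bs T(\A_f)/K_T$ is a class group. The aim is to identify it with $\Pic(\co_{d_1})$, or with a quotient of it. Using the reciprocity law for small CM points (as in \cite{Schofer}, \cite{BY09}), $\sigma_s$ for $s\in\A_{\kay,f}^\times$ acts on $Z(\mathcal N)$ through the image of $s$ in $T(\A_f)$. On the moduli side, Shimura--Taniyama reciprocity says that $\sigma_s$ sends $A(\mathfrak a,t)=\C/\co_{d_1}\times\C/\mathfrak a$ to $\C/\mathfrak b^{-1}\times\C/\mathfrak b^{-1}\mathfrak a$, where $\mathfrak b$ is the lattice of $s$ intersected with the relevant orders, and it carries the polarization along. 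I would then use Proposition~\ref{prop:product} to rewrite this surface in the normal form $A(\mathfrak a',t)$. The same transformation should come out when $T(\A_f)$ acts on $L\otimes\hat\Z$ and one recomputes $\mathcal N$ for the translated lattice.

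\emph{Step 3: conclusions.} Part (1) follows from Step~2. All data are invariant under $s\in\A_{\kay,f}^\times$ whose components lie in $\hat\co_{d_1}^\times$, since $\co_{d_1}\subset\co_{d_2}$ and $t$ is fixed. By the reciprocity law, $\mathbf A$ is then fixed by $\Gal(\bar\Q/H_{d_1})$. Defining it over $H_{d_1}$ itself, and not just its isomorphism class, follows because a principally polarized abelian surface has a model over its field of moduli, since $\calA_2$ has trivial generic automorphisms up to $\pm1$. For part (2), both $Z(\mathbf A)$ and $Z(\mathcal N)$ are reduced zero-cycles. Step~2 shows they contain a common point and are stable under the same Galois action. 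So it remains to compare their sizes. $Z(\mathbf A)$ has $[H_{d_1}:\kay]$ points, up to a factor $2$ from complex conjugation, which swaps $\kay$-orbits. $Z(\mathcal N)$ has the same count, given by the class number of $T$ with the correct level, which comes from the discriminant form of $L$.

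\emph{Main obstacle.} The hard step is Step~2, matching the level structure: one has to show that the compact open in $T(\A_f)$ stabilizing $\hat L$ equals the image of $\hat\co_{d_1}^\times$, and not of the larger $\hat\co_{d_2}^\times$. This requires tracking how the factor $t$ enters the embedding $\mathcal N\hookrightarrow L$. My first attempt would be to write $L$ explicitly from the period lattice $\co_{d_1}\oplus\mathfrak a$ with the principal polarization, and then compute the stabilizer prime by prime.
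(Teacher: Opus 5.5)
Your overall strategy is sound: locate $\mathbf A$ on $Z(\calN)$ via Lemma~3.2 (the lemma attributed to Kudla--Rapoport), then apply Shimura reciprocity. However, two steps of Step~3 are false, and the step you flag as the main obstacle is the missing idea.

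\emph{Part (1): the descent step fails.} Your use of reciprocity to show that the moduli point is fixed by $\Gal(\bar\Q/H_{d_1})$ is fine. The problem is the next step. A principally polarized abelian surface with automorphism group $\{\pm1\}$ need not have a model over its field of moduli: the obstruction lies in $\mathrm{Br}(k)[2]$ (Mestre's obstruction in genus two). The paper's own data show this. By Remark~(4) of Section~\ref{sect:example}, the curves in Table~\ref{Table} with $\Aut(C)=C_2$ are defined only over a quadratic extension of their field of moduli. In genus two, $C$ and $(J(C),\theta_C)$ have the same fields of definition, so the same holds for these CM points. Also, $\Aut(\mathbf A)$ can be $D_4$ or $D_{12}$, so your hypothesis on automorphisms does not even hold.

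A correct descent uses the CM structure directly:
\begin{itemize}
\item Take $E_1$ over $H_{d_1}$ with $E_1(\C)\cong\C/\co_{d_1}$. Since $\End(E_1)$ is defined over $H_{d_1}\supseteq\kay$, the group $\Gal(\bar\Q/H_{d_1})$ acts $\co_{d_1}$-linearly on $E_1[n]$, so it preserves every $\co_{d_1}$-submodule.
\item Hence $E_2:=E_1/(c\mathfrak a/\co_{d_1})$, with $\co_{d_1}\subset c\mathfrak a$, is defined over $H_{d_1}$ together with all of $\Hom(E_i,E_j)$.
\item Therefore $\End(A)$, and every $\lambda=\lambda_0\circ\alpha$, is defined over $H_{d_1}$.
\end{itemize}

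\emph{Part (2): the counting step is wrong, and the level matching is left open.} The cycles $Z(\mathbf A)$ and $Z(\calN)$ are not reduced. They are formal sums indexed by $\Gal(H_{d_1}/\Q)$ and by $\{z_\calN^\pm\}\times T(\Q)\bs T(\A_f)/K_T$, and different labels can give the same point. For example, the introduction's curve $y^2=x^6+x^3+4/17$ (with $d=-51$, $h_d=2$) is defined over $\Q$, so its Galois orbit is a single point, whereas $\deg Z(\calN)=2h_{d_1}=4$. So your count of ``$[H_{d_1}:\kay]$ points up to a factor $2$'' is false, and ``common point, Galois stable, same size'' proves nothing. Set-theoretically no count is needed anyway, since reciprocity makes $Z(\calN)$ a single Galois orbit. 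At the level of cycles, labels must be matched equivariantly, which is exactly the obstacle you leave open.

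The missing idea is to match labels on the Siegel side rather than through $\hat L$. Under conjugation, the subgroup of $T(\A_f)$ preserving $\hat L$ contains $\A_f^\times$, so it is not the right group.
\begin{itemize}
\item $\Xi$ is an isomorphism of adelic double cosets, $[\tau,g]\mapsto[\Xi(\tau),\Xi(g)]$. It sends $\GSp_4(\hat\Z)$, the stabilizer of $\hat\Lambda$ with its symplectic form, to $K$.
\item $T$ is $\kay^\times$ acting by scalars on $\Lambda_\Q=\kay^2$. This is the commutant of $\End^0(A)=\Mat_2(\kay)$, and it lies in $\GSp_4$ because the Rosati involution restricts to conjugation on $\kay$.
\item Hence $K_T=\{a\in\kay_f^\times:\ a\hat\Lambda=\hat\Lambda\}=\hat{\co}_{d_1}^\times$, because $\co_{d_1}$ is the multiplier ring of $\co_{d_1}\oplus\mathfrak a$.
\item The translate $[\tau,a]$ is the moduli point $a\mathbf A=(\C^2/a\Lambda,\lambda/n(a))$.
\end{itemize}
A single reciprocity law on the Siegel side then suffices: the main theorem of CM for the type of Proposition~\ref{prop:CM}, whose reflex is $(\kay,\{1\})$ and whose reflex norm acts by scalars. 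It identifies $\sigma_a\mathbf A$ with $a\mathbf A$, up to normalization of the Artin map. Complex conjugation sends $\tau\mapsto-\bar\tau$, that is, $z_\calN^+\mapsto z_\calN^-$. This is the paper's proof of Theorem~\ref{theo:Galois}. It needs neither a second reciprocity law on $X_L$ nor a recomputation of $\calN$ for translated lattices.
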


Let $K$ be a finite field extension of $H_{d_1}$ such that the N\'eron model $\underline{\mathbf A}$ of $\mathbf A$ over $\co_K$ is smooth, i.e., has good reduction everywhere, and defines a point  $\underline{\mathbf A} \in  \mathcal A_2(\co_K)$. We shall study the arithmetic intersection of this arithmetic curve with the divisor  given by the reducible locus, and more generally, with Humbert surfaces in $\calA_2$.
For a positive integer $m$, the Humbert surface  $\mathcal Z(m)$ of discriminant $m$ is defined as the moduli stack of pairs $(\mathbf B, f)$ where $ \mathbf B$ is a principally polarized abelian surface and $f\in \calP(\mathbf{B})$ is a special endomorphism
of $\mathbf B$ with $Q(f) =m$.
The  forgetful map from $\mathcal Z(m)$ to $\mathcal A_2$ is finite and unramified and defines a special divisor on $\mathcal A_2$ (see Section \ref{sect:Borcherds}). For example, $\mathcal Z(1)\cong \calA_1\times \calA_1$ is the reducible locus.
We are interested in the normalized arithmetic intersection number
\begin{equation}
\frac{1}{[K:H_{d_1}]}\underline{\mathbf A} \cdot \mathcal Z(m) = \frac{1}{[K:H_{d_1}]} \sum_{\mathfrak p }  e_{\mathfrak p} \log \norm(\mathfrak p)
\end{equation}
when  the intersection is proper, that is,  $\mathbf A \notin \mathcal  Z(m)(\C)$.  Here the sum runs over all prime ideals $\frakp$ of $\co_K$, and $ e_{\mathfrak p}$ is the largest non-negative integer $n$ such that $\underline{\mathbf A}\in \mathcal Z(m) $ modulo $ \mathfrak p^n$. The intersection number does not depend on the choice of $K$. Our third main result is the following intersection  formula, which we prove in Section~\ref{sect:MainFormula} (see Theorem~\ref{theo: mainIntSect}).

\begin{theorem}
\label{theo: MainFormula}
Let the notation be as above, and assume that $\mathbf A \notin \mathcal  Z(m)(\C)$.
Then
\begin{align}
\label{eq:intro0}
\frac{\underline{\mathbf A} \cdot \mathcal Z(m)}{[K:H_{d_1}]}
&=-\frac{h_{d_1} }{2 }
\sum_{\substack{\mu_1 \in \mathcal N'/\mathcal N\\ \mu_2 \in \mathcal P'/\mathcal P \\
\mu_1+\mu_2\equiv \frake_{m}\,(L)}}\sum_{\substack{m_1,m_2\in \Q_{\ge 0}\\ m_1 + m_2 = \frac{m}{4}\\  (m_1,\mu_1) \ne (0,0)}}
	\kappa_{\mathcal N}(m_1, \mu_1) a_\mathcal P(m_2, \mu_2).
%
\end{align}
Here $h_{d_1}$  denotes the ring class number  of $\co_{d_1}$, and $\frake_m\in L'/L$ the neutral (respectively non-neutral) element if $m$ is even (respectively odd).  Moreover, $a_{\mathcal P}(m_2, \mu_2)$ is
the representation number of $m_2$ by the coset $\mu_2+\calP$ of the positive definite lattice $\calP$, and  $\kappa_{\mathcal N}(m_1, \mu_1)$ is the $(m_1, \mu_1)$-th Fourier coefficient of the central derivative of the incoherent Eisenstein series associated to $\mathcal N$, defined in Section \ref{sect:inceis}.

The cycles $\underline{\mathbf A}$ and $\mathcal Z(m)$ do not intersect in the fiber above $p$ unless $p|d_1$ or $p \le \frac{m|d_2t|}4$ is inert in $\Q(\sqrt{d_1})$.
\end{theorem}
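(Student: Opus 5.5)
The plan follows the strategy announced in the introduction: compute the arithmetic intersection by evaluating a Borcherds product at the small CM cycle. First I use Theorem~\ref{theo:CMIdentification} to identify the Galois orbit $Z(\mathbf A)$ with the small CM cycle $Z(\calN)$ on $X_L$, and then transport the problem to the orthogonal side. Next I choose a weakly holomorphic form $f$ of weight $-1/2$ for the Weil representation $\rho_L$, with principal part $q^{-m/4}\frake_{m}$ (plus $\frake_{-m}$ if needed). By Borcherds, the associated product $\Psi(f)$ is a meromorphic modular form of weight $c(0,0)/2$ whose divisor is $\calZ(m)$. A key input is that over $\Z$ the divisor of the rational section $\Psi(f)$ on $\calA_2$ equals $\calZ(m)$ exactly, with no vertical components. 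I will take this from Section~\ref{sect:Borcherds}, using the $q$-expansion principle at the cusp.

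With this in place, I write
\[
\underline{\mathbf A}\cdot\calZ(m)=-\sum_{\sigma}\log\|\Psi(f)(\mathbf A^\sigma)\|_{\Pet}+\tfrac{c(0,0)}{2}\,h_{\Fal}\text{-term},
\]
where the height of the metrized Hodge bundle on $\underline{\mathbf A}$ is a Faltings height. Since $A\cong E_1\times E_2$, this Faltings height is twice the average Faltings height of CM elliptic curves with CM by $\calO_{d_1}$ and $\calO_{d_2}$. That quantity is given by the Chowla--Selberg formula for orders (Mocz, Nakkajima--Taguchi, or Section~5.1). For the archimedean term I apply Schofer's formula in the form of \cite{BY09} for the small CM cycle $Z(\calN)$, with $L\otimes\Q=\calN_\Q\oplus\calP_\Q$. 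It expresses the sum of $\log\|\Psi\|$ over $Z(\calN)$ as $-\frac{\deg Z(\calN)}{4}$ times the constant term of $\langle f,\theta_{\calP}\otimes\calE_{\calN}\rangle$, where $\calE_\calN$ is the holomorphic part of the derivative of the incoherent Eisenstein series. Expanding this constant term produces the double sum over $\mu_1+\mu_2\equiv\frake_m$ and $m_1+m_2=m/4$. The $(0,0)$ terms together with the $c(0,0)$ contribution must cancel against the Faltings height term, using $\kappa_\calN(0,0)$ as a logarithmic derivative of an $L$-function. I also need $\deg Z(\calN)=h_{d_1}$ up to the factor $[K:H_{d_1}]$ and stacky factors of $2$.

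This cancellation is the main obstacle: matching $\kappa_\calN(0,0)$, which involves $\Lambda'(0,\chi_d)$ and local correction terms at primes dividing the conductors of $\mathfrak a$ and $t$, with the Faltings height of $E_1\times E_2$ for non-maximal orders. I would first try to compute both sides prime by prime at $p\mid t\,\mathrm{cond}$. If that fails, I would instead compare with the case $m$ replaced by an auxiliary $m'$, so that the differences kill the constant terms.

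For the final assertion, I use the explicit formula for $\kappa_\calN(m_1,\mu_1)$ with $m_1>0$ for the binary negative definite lattice $\calN\cong(\bar{\mathfrak a},-t\norm/\norm(\mathfrak a))$. This coefficient is a sum of $\log p$ times local factors, and it is nonzero only when the Diff set $\{p: \calN\otimes\Q_p \text{ does not represent } -m_1\}$ is a single prime $p$. That prime must be non-split in $\kay$, and the coefficient is supported on $\log p$. Since $\calP$ is positive definite, $a_\calP(m_2,\mu_2)\neq0$ forces $m_2\ge0$, so $m_1\le m/4$. Moreover, $-m_1$ is represented by $\calN\otimes\Q_\ell$ for all $\ell\ne p$ but not at $p$. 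The quantity $4 m_1\norm(\mathfrak a)/t$ (suitably normalized) is then a norm times $p$, so that $p\le m|d_2 t|/4$. For ramified $p$ we have $p\mid d_1$. This shows that only primes $p\mid d_1$, or inert $p$ in this range, contribute. Since every term of the formula is a rational multiple of $\log p$ for such $p$, and the local intersection multiplicities are nonnegative, no other fiber can meet the cycle.
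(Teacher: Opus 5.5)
Your architecture matches the paper's: the Borcherds product $\Psi_m$ with horizontal divisor $\calZ(m)$, the height identity expressing $\underline{\mathbf A}\cdot\calZ(m)$ through $\frac{c(0)}{2}h_{\Fal}(A)$ and the values $\log\|\Psi_m\|_{\Pet}$ on $Z(\mathbf A)=Z(\calN)$, and then Schofer's formula. The gap is the step you yourself flag as the main obstacle, which you leave unresolved. It is exactly what produces the stated formula.

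After Schofer, $-4\sum_{Z(\calN)}\log\|\Psi_m\|_{\Pet}$ equals $2h_{d_1}c(0)\bigl(C_0+\kappa_\calN(0,0)\bigr)$ plus $2h_{d_1}$ times the sum in the theorem, where $C_0=\log 4\pi+\Gamma'(1)$. Since $c(0)$ is twice the weight of $\Psi_m$ and is nonzero (e.g.\ $c(0)=10$ for $m=1$), dropping the $(m_1,\mu_1)=(0,0)$ term requires
\[
h_{\Fal}(E_1)+h_{\Fal}(E_2)=\tfrac12\kappa_\calN(0,0)+\tfrac12(\log 4\pi+\Gamma'(1))
\]
for precisely this $\calN\cong(\bar{\fraka},-\tfrac{t}{a}\norm)$. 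Neither of your suggestions supplies this identity.

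\begin{itemize}
\item The prime-by-prime route would compare Mocz/Nakkajima--Taguchi with the local Whittaker derivatives of Theorem~\ref{theo4.6}(2) at every $p\mid t_1$, including $p=2$. That is a substantial computation you do not carry out. The paper's remark after Proposition~\ref{prop5.3} makes this comparison only for $E_1=E_2$, and only up to a multiple of $\log 2$.
\item The auxiliary-$m'$ trick fails outright. Subtracting a multiple of $f_{m'}$ to kill $c(0)$ gives a weight-$0$ product with divisor $\calZ(m)-r\,\calZ(m')$. You then learn only the intersection with that combination, and $\underline{\mathbf A}\cdot\calZ(m')$ is as unknown as what you started with.
\end{itemize}

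The missing idea (Proposition~\ref{prop5.3}) is to compute the Faltings height with the same machinery on a different Shimura variety. Since $h_{\Fal}(A)$ does not depend on the polarization, regard $E_1\times E_2$ with the product polarization as a point of $\calA_1\times\calA_1$, the $\Orth(2,2)$ Shimura variety of $(M_2(\Z),\det)$.

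\begin{itemize}
\item Its Galois orbit is the small CM cycle $Z_0(\calN)$ for the same binary lattice $\calN$.
\item $\Delta(z_1)\Delta(z_2)$ is the Borcherds lift of the constant $24$. Its divisor is the boundary, which the good-reduction point does not meet.
\item Schofer's formula then gives $12\bigl(h_{\Fal}(E_1)+h_{\Fal}(E_2)\bigr)=6C_0+6\kappa_\calN(0,0)$.
\end{itemize}

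So the cancellation is automatic, and $\kappa_\calN(0,0)$ never has to be evaluated explicitly.

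In your final paragraph you should also treat the terms with $m_1=0$, $\mu_1\neq 0$. By Theorem~\ref{theo4.6}(2) these only involve $\log p$ with $p\mid t_1$, hence $p\mid d_1$. The inert bound is cleanest as in the paper: write $m_1=n/|d_2t|$ with $0<n\le m|d_2t|/4$. A contributing inert $p\nmid d_1$ has $\ord_p(m_1)$ odd, so $p\mid n$ and $p\le n$.
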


For $m=1$, we actually obtain a slightly better bound on the primes $p$,
see Theorem~\ref{theo:Curve}, which implies Theorem~\ref{thm:BadReduction} immediately.
If in addition the discriminant of $\kay$ is equal to $d=-\ell$  for a prime $\ell \equiv 3\pmod{4}$, we derive  a much simpler formula, which was conjectured by  Lorenzo Garc\'ia, Ritzenthaler, and Rodríguez Villegas \cite[Conjectural Formula (7.1)]{GRV}. This formula does not hold for general discriminants as they  observed.

\begin{corollary}
\label{prop:Conjecture}
Let  $\ell \equiv 3\pmod{4}$ be a prime.
Let $C$ be a genus two curve over a number field such that $J(C) \cong E_1 \times E_2$ with $\End(E_1) =\End(E_2) =\mathcal O_{-\ell}$.  Let $K$ be a number field over which $J(C)$ has good reduction everywhere. Then the normalized arithmetic intersection is given by
$$
\frac{\underline{\mathbf J(C)} \cdot (\mathcal A_1 \times \mathcal A_1)}{[K:H_d]}
=-\sum_{ \substack{x\in\mathcal P'\\ \frac{\ell -4\ell Q(x)}{4}\in\Z_{>0}}}
 \sum_{n|\frac{\ell-4 \ell Q(x)}4}\left(\frac{-\ell}{n}\right)\log n.
$$
Here $Q$ is the positive definite quadratic form on $\mathcal P'$ defined in (\ref{eq:L}).
\end{corollary}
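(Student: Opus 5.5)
We specialize Theorem~\ref{theo: MainFormula} to $m=1$, $d=d_1=d_2=-\ell$ and $t=1$; the Humbert surface $\mathcal Z(1)$ is then the reducible locus $\mathcal A_1\times\mathcal A_1$. Since $C$ is a genus two curve, $J(C)$ is not decomposable, so $\mathbf A=(J(C),\theta_C)\notin\mathcal Z(1)(\C)$ and the theorem applies. Because $\ell\equiv 3\pmod 4$ is prime, $h_{-\ell}$ is odd and the class group has no $2$-torsion. Here $\mathfrak a$ is an $\calO_{-\ell}$-ideal, so by Proposition~\ref{prop:N} the lattice $\calN$ is $(\bar{\fraka},-\norm(\cdot)/\norm(\fraka))$. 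This is a binary lattice of discriminant $\ell$, a negative of a norm form of the maximal order. Hence $\calN'/\calN\cong\Z/\ell$. Since $L'/L\cong\frac12\Z/\Z$ and $\ell$ is odd, $\calP'/\calP$ has order $4\ell$ divided appropriately, and the glue condition $\mu_1+\mu_2\equiv\frake_1$ pairs each $\mu_1\in\calN'/\calN$ with a unique class $\mu_2$.

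The first step is to rewrite the double sum as a single sum over $x\in\calP'$. For each pair $(\mu_1,\mu_2)$ allowed by the glue and each $m_2$, the factor $a_\calP(m_2,\mu_2)$ counts vectors $x\in\mu_2+\calP$ with $Q(x)=m_2$. This leaves $m_1=\frac14-Q(x)$ and $\mu_1$ determined by $x$. The whole expression therefore becomes
\[
-\frac{h_{-\ell}}{2}\sum_{x}\kappa_\calN\bigl(\tfrac14-Q(x),\mu_1(x)\bigr),
\]
where $x$ ranges over those elements of $\calP'$ whose class lies in the image of the glue and with $Q(x)\le \frac14$. One must check that this image is all of the relevant part of $\calP'$, namely exactly those $x$ with $\frac{\ell-4\ell Q(x)}{4}\in\Z$. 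Terms with $m_1=0$ and $\mu_1=0$ are excluded, as in the statement. If $m_1=0$ with $\mu_1\neq0$, the coefficient vanishes because the norm form does not represent $0$ nontrivially; this is why the final sum runs only over $\frac{\ell-4\ell Q(x)}{4}\in\Z_{>0}$.

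The second step, which is the main computation, is to evaluate $\kappa_\calN(m_1,\mu_1)$ for $m_1>0$ using the explicit formulas of Section~\ref{sect:inceis}. These are Kudla--Rapoport--Yang/Bruinier--Yang formulas for the derivative of the incoherent Eisenstein series of a binary lattice of prime discriminant $\ell$. With $n=\ell m_1\in\Z_{>0}$, I expect them to give
\[
\kappa_\calN(m_1,\mu_1)=\frac{2}{h_{-\ell}}\,\rho\Bigl(\frac{n}{p}\Bigr)\,(\mathrm{ord}_p n+1)\log p
\]
for the unique prime $p$ inert (or $p=\ell$) with $\mathrm{ord}_p n$ odd, and $\kappa_\calN(m_1,\mu_1)=0$ otherwise. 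Here $\rho(k)=\sum_{r|k}\left(\frac{-\ell}{r}\right)$, and the normalization is up to the factor coming from the genus, which is trivial here since $\ell$ is prime. The point is that for prime discriminant the genus is a single class, so the coefficient is independent of $\mathfrak a$ and depends only on $n$.

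The final step is the classical identity
\[
\sum_{r|n}\Bigl(\frac{-\ell}{r}\Bigr)\log r=\rho(n/p)\,\frac{\mathrm{ord}_p(n)+1}{2}\log p,
\]
valid when exactly one prime $p$ non-split in $\Q(\sqrt{-\ell})$ has odd order in $n$, with the left side vanishing otherwise (Gross--Zagier). Substituting this and cancelling $\frac{h_{-\ell}}{2}\cdot\frac{2}{h_{-\ell}}$ against the factor $2$ from the identity yields the stated formula. The obstacle I expect is pinning down the exact normalization of $\kappa_\calN$, including the contribution of the prime $\ell$ itself and the choice of coset $\mu_1$. To handle it I would first verify the formula numerically for $\ell=3$ or $7$, then track constants through the definitions in Section~\ref{sect:inceis}.
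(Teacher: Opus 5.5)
Your outline follows the paper's proof: specialize Theorem~\ref{theo: mainIntSect} to $m=1$, $t=1$, $d_1=d_2=-\ell$, rewrite the glued double sum as a sum over $x\in\calP'$, evaluate $\kappa_\calN$ by \eqref{eq:kappa}, and compare with the divisor sum $F(n)=\sum_{r\mid n}\left(\frac{-\ell}{r}\right)\log r$. However, the step that carries the content of the corollary is only guessed, and the guess does not work. That step is the paper's Lemma~\ref{lem:explicit-formula}, namely $\frac{h}{2}\kappa_\calN(m_1,\mu_1)=F(\ell m_1)$.

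Taken at face value, your two displayed formulas give $-\frac{h}{2}\kappa_\calN=-\rho(n/p)(\ord_p n+1)\log p=-2\rho(n/p)\frac{\ord_p n+1}{2}\log p$. So you would obtain twice the stated right-hand side; the product $\frac h2\cdot\frac 2h=1$ leaves nothing to cancel the $2$. Both formulas also have the wrong sign. By Proposition~\ref{prop:nonpos}(1), $\kappa_\calN(m_1,\mu_1)\le 0$ for $m_1>0$, and for $n=q$ inert one has $F(q)=-\log q$.

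The correct evaluation is short. From $0<m_1\le\frac14$ you get $n=\ell m_1<\ell$, so $\ell\nmid n$ and $\mu_1\ne0$. In \eqref{eq:kappa} this means $\eta_0=1$ and $\eta_\ell=0$, so no $\log\ell$ term ever occurs; this settles your ``or $p=\ell$''. Since $\Lambda(0,\chi)=2h/w_{-\ell}=h$,
\[
\kappa_\calN(m_1,\mu_1)=-\frac{1}{h}\sum_{p\ \text{inert}}(\ord_p n+1)\,\rho(n/p)\log p .
\]
On the other side, $F(n)=-\frac{e+1}{2}\rho(n/q)\log q$ if $q$ is the only inert prime of odd exponent $e$ in $n$, and $F(n)=0$ if there are at least two such primes. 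Hence $\frac h2\kappa_\calN=F(n)$.

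Three further points are missing.

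First, your claim that the divisor sum vanishes ``otherwise'' is false when no inert prime has odd exponent. For example, $F(p)=\log p$ for $p$ split, while $\kappa_\calN$ would be $0$. So this case must be excluded. The paper does this by showing $\ell\notin\operatorname{Diff}(m_1,\calN)$, using $\ell\nmid n$ and $m_1\equiv Q_\calN(\mu_1)\pmod 1$. Then $\operatorname{Diff}$, which has odd cardinality, is a nonempty set of inert primes of odd exponent in $n$.

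Second, $w_{-\ell}=2$ requires $\ell>3$. For $\ell=3$ the normalization changes, but the case is vacuous: $\C/\calO_{-3}\times\C/\calO_{-3}$ carries no non-product principal polarization. So testing the formula at $\ell=3$ is not meaningful.

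Third, the identification of the index set that you leave as ``one must check'' follows from Proposition~\ref{prop:fqm2}. The class glued to $x\in\calP'$ is $g(x)=\frake_m-x$, with $Q_\calN(\frake_m-x)\equiv\frac{m^2}{4}-Q(x)\pmod 1$. Since $\ell Q_\calN$ is integral on $\calN'$, the condition $\ell(\frac14-Q(x))\in\Z$ forces $m=1$. Note also that $|\calP'/\calP|=2\ell$.
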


Note that similar intersection problems were studied earlier in different settings for {\em primitive} quartic CM fields. For instance, Goren and Lauter studied class invariants of such CM fields that are constructed as CM values of certain Siegel modular functions. They provided upper bounds on the primes appearing in their denominators and used these bounds to construct $S$-units in primitive quartic CM fields, see~\cite{GL06}, \cite{GL07}, \cite{GL12}. The second author computed such intersections to prove the Colmez conjecture for primitive quartic CM fields (under a technical assumption on the discriminant), see~\cite{Ya10}, \cite{YaCol}. We intend to use our new approach to study this non-biquadratic case in a subsequent paper.


\medskip

The basic idea of the proof of Theorem \ref{theo: MainFormula} is as follows.
First, there is a unique weakly holomorphic modular form $f_m(\tau) = q^{-m} + c(0) + O(q)$ of weight $-1/2$ (for $\Gamma_0(4)$ in the Kohnen plus-space), whose Borcherds lifting  $\Psi_m(z)$  determines a Siegel modular form with divisor $\mathcal Z(m)$ over $\Spec(\Z)$. Using the properties of the Faltings height on $\mathcal A_2$, the arithmetic intersection number can be expressed as
\begin{align}
\label{eq:intro1}
\frac{\underline{\mathbf A} \cdot \mathcal Z(m)}{[K:H_{d_1}]} = \frac{c(0)  h_{d_1}}2 h_{\Fal}(A) +  \sum_{z \in Z(\mathbf A) } \| \log \Psi_m(z)\|_{\Pet}.
\end{align}

Since the Faltings height $h_{\Fal}(A)$ of the CM abelian surface $A$ is independent of the choice of the principal polarization, it can also be computed by viewing it as a point on the decomposable locus  $\mathcal A_1 \times \mathcal A_1$, which we regard as a GSpin-Shimura variety associated to the even unimodular lattice of signature $(2,2)$. We show in Proposition \ref{prop5.3} that
\begin{align}
\label{eq:intro11}
h_{\Fal}(A) =  \frac{1}2 \kappa_{\mathcal N}(0, 0) + \frac{1}2 (\log 4\pi + \Gamma'(1)),
\end{align}
where $\kappa_{\mathcal N}(0, 0)$ is given by the constant term of the derivative of the incoherent Eisenstein series associated with $\calN$.

Finally,  we identify  $Z(\mathbf A)$ with the small CM cycle $Z(\mathcal N)$ by means of Theorem~\ref{theo:CMIdentification}, and employ Schofer's CM value formula (see Theorem  \ref{thm:SmallCM}) to relate the evaluation of  $\| \log \Psi_m(z)\|_{\Pet}$ on the cycle $Z(\mathbf A)$ with $\kappa_{\mathcal N}(0, 0)$ and the sum on the right hand side of \eqref{eq:intro0}.
Putting these identities together, we derive  the theorem.

\begin{remark}
An alternative approach to prove Theorem \ref{theo: MainFormula},
more along the argument of \cite{GRV}, might be to use an identification of $\calA_2$ with the {\em integral model} of the orthogonal Shimura variety over $\Z$ together with an integral version of Theorem~\ref{theo:CMIdentification}, and to apply the intersection formula of \cite[Theorem 4.5.1]{AGHM}. This would require a generalization of the latter result to include non-maximal CM orders  and possibly even discriminants.
%
\end{remark}

We use Theorem \ref{theo: MainFormula} to show the positivity  of the normalized arithmetic intersection number. To this end, we notice that by a result of Y.~Li \cite{Li21}, the products $\kappa_\mathcal N(m_1,  \mu_1) a_{\mathcal P}(m_2, \mu_2)$ appearing in \eqref{eq:intro0}
are always non-positive. However, it is not clear at all that there is at least one non-vanishing summand. We shall prove this by adapting an idea of \cite{Li21} to our situation. We show that the evaluation of a certain higher Green function for the Humbert surface $\calZ(m)(\C)$ at the small CM cycle $Z(\calN)$ is always positive, see Corollary \ref{cor:phi1pos}. By means of the CM value formula for higher Green functions of \cite{BEY}, it can be finally deduced that there exists a non-vanishing summand.

%

\begin{theorem}
\label{theo:positive}
Let the notation and assumption be as in  Theorem \ref{theo: MainFormula}. Then
$$
\frac{\underline{\mathbf A} \cdot \mathcal Z(m)}{[K:H_{d_1}]} >0.
$$
\end{theorem}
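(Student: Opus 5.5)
We start from the intersection formula of Theorem~\ref{theo: MainFormula}. By the result of Y.~Li \cite{Li21}, each product $\kappa_{\mathcal N}(m_1,\mu_1)\,a_{\mathcal P}(m_2,\mu_2)$ with $(m_1,\mu_1)\neq(0,0)$ in \eqref{eq:intro0} is non-positive. Since there is an overall factor $-h_{d_1}/2$, every summand therefore contributes non-negatively. So it is enough to show that at least one product is nonzero. Equivalently, we need a pair $(m_1,\mu_1)$, $(m_2,\mu_2)$ with $m_1+m_2=m/4$ and $\mu_1+\mu_2\equiv \frake_m$, such that $\mu_2+\mathcal P$ represents $m_2$ and $\kappa_{\mathcal N}(m_1,\mu_1)\neq 0$.

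To produce such a pair we use higher Green functions. Consider the Green function $\Phi_{m}^{j}$ for $\calZ(m)(\C)$ on $X_L$, obtained as the regularized theta lift of a harmonic weak Maass form built from the weight $-1/2$ form $f_m$ via the raising operator, or equivalently a higher resolvent kernel $G_{1+j}$ with spectral parameter $s=1+j$. We choose $j$ even, or whichever parity makes the sign work; $j=1$ might already suffice. On the one hand, write $\Phi$ as a sum over $x\in L'$ with $Q(x)=m$ of a Legendre-type function of $\|x_z\|^2/Q(x)$, where $x_z$ is the negative-definite projection. At a CM point $z\in Z(\mathcal N)$, every such $x$ has a nonzero component in the negative definite $\mathcal N\otimes\R$, because $\mathbf A\notin \calZ(m)(\C)$. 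The summands are then values of $Q_{j}$-type Legendre functions at arguments $>1$, and these are all positive. This gives Corollary~\ref{cor:phi1pos}-style positivity: $\Phi(Z(\mathcal N))>0$.

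On the other hand, the CM value formula of \cite{BEY} expresses $\Phi(Z(\mathcal N))$ as a finite linear combination of exactly the products $\kappa_{\mathcal N}(m_1,\mu_1)\,a_{\mathcal P}(m_2,\mu_2)$, with the same constraints as in \eqref{eq:intro0}. Here $a_{\mathcal P}(m_2,\mu_2)$ is replaced by the coefficients of a Rankin--Cohen or holomorphic projection applied to the theta series of $\mathcal P$, i.e.\ representation numbers weighted by polynomials. Because of the $j$-fold derivative, the formula may also contain an algebraic-number term, which vanishes or is harmless for the small CM setting. If all the products $\kappa_{\mathcal N}(m_1,\mu_1)a_{\mathcal P}(m_2,\mu_2)$ vanished, the weighted sum would vanish too, contradicting positivity. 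Hence some product is nonzero, and Theorem~\ref{theo:positive} follows.

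The main obstacle is matching the BEY formula precisely to the index set of \eqref{eq:intro0}. The weighted coefficients are supported on the same $(m_2,\mu_2)$ as $a_{\mathcal P}$, but the weights, such as Gegenbauer values, could vanish for specific vectors. I would handle this by choosing $j$ so that the weight polynomial has a definite sign on the relevant range, or by arguing vector-by-vector that a nonzero weighted term forces $a_{\mathcal P}(m_2,\mu_2)\neq 0$ together with $\kappa_{\mathcal N}(m_1,\mu_1)\neq0$. One must also check that the constant term $(m_1,\mu_1)=(0,0)$ does not enter: since $m>0$, the partner $m_2=m/4$ would require $\mathbf A\in\calZ(m)(\C)$, which is excluded.
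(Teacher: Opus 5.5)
Your proposal is essentially the paper's proof. It has the same three ingredients: non-positivity of each product $\kappa_{\calN}(m_1,\mu_1)a_{\calP}(m_2,\mu_2)$ via \cite{Li21}; strict positivity of the CM value of a higher Green function, read off from its termwise positive series; and the CM value formula of \cite{BEY} with the Rankin--Cohen bracket $[\theta_{\calP},\calE_{\calN}]_1$. In that bracket the weights $m_2-\frac{3}{2}m_1$ only multiply the products, so, as your own contradiction argument already shows, vanishing weights are harmless.

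The one point to fix is the choice of $j$. It is dictated by existence of the input form, not by a sign: the series terms are positive for every $j\ge 1$. You need $f_m=q^{-m}+O(1)$ in $M^{!,+}_{-1/2-2j}$, raised $j$ times to weight $-1/2$; this is not a form built from the weight $-1/2$ form $f_m$. Such an $f_m$ exists for $j=1$ (Remark~\ref{rem:4.4}) but not in general for $j\ge 2$. For $j=0$ the Green function is only a regularized constant term with no sign. So $j=1$, which is odd, is precisely the right choice, and your suggestion to take $j$ even should be dropped.
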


Note  that Theorem~\ref{theo:AlwaysBad} above can be deduced from Theorem~\ref{theo:positive} by specializing to the case $m=1$.

\begin{example}
The curve of genus two defined by the equation
\[
C:\; y^2= x^6+x^3+\frac{4}{17}
\]
has CM by a biquadratic field containing $\kay=\Q(\sqrt{-51})$. The class number of $\kay$ is two, and it turns out that the Jacobian of $C$ is isomorphic to $\C/\calO_{-51}\times \C/\mathfrak{a}$, where $\fraka\subset  \calO_{-51}$ is a representative for the nontrivial element of the class group of $\kay$. A computation with Sage or Magma shows that a minimal equation for $C$ is
$y^2 = 289 x^6 + 289 x^3 + 68$.
It has minimal discriminant $2^{12} \cdot 3^6 \cdot  17^{15}$ and
conductor $2^4 \cdot 3^6 \cdot 17^4$. The normalized arithmetic intersection number  between $\underline{J(C)}$ and $ \mathcal Z(1)$ as in Theorem \ref{theo: MainFormula} is given by $\log(2^4\cdot 3^2)$. This implies that $C$ has stable bad reduction at $2$ and $3$.  Although $C$ has bad reduction at $17$, it has potentially good reduction there. Indeed, it has good reduction at all primes of $\Q(\sqrt{-51},  17^{\frac{1}6})$ above~$17$.  Notice that  the primes $2$ and $3$ satisfy the conditions in Theorem \ref{thm:BadReduction}.
For further examples we refer to Section~\ref{sect:example}.
\end{example}

The present paper is organized as follows. In Section \ref{sect:2}, we provide some background on Siegel threefolds, orthogonal Shimura varieties, and small CM cycles. In particular, we prove 
that the  cycle $Z(\mathbf A)$ of all Galois conjugates of $\mathbf A$ can be identified with the small CM cycle $Z(\mathcal N)$ on the orthogonal Shimura variety. In Section \ref{sect:lattice}, we provide explicit descriptions of the definite lattices $\calN$ and $\calP$ in terms of the CM abelian variety $\mathbf A$.
This is a rather  subtle part of our paper, making the main intersection formula effectively computable. In Section~\ref{sect:Review},
we review some important properties of the Borcherds lift,  incoherent Eisenstein series,
and Schofer's CM value formula in our setting.
These are necessary preparations for Section~\ref{sect:MainFormula}, where we prove the main arithmetic intersection formula Theorem~\ref{theo: MainFormula}. In Section~\ref{sect:positive}, we recall some facts on higher Green functions in our setting and employ the formula for their CM values of \cite{BEY}
to prove Theorem~\ref{theo:positive} following an idea of \cite{Li21}. In Section~\ref{sect:BadReduction}, we recall some facts on the reduction of algebraic curves and prove Theorem~\ref{thm:BadReduction}, Theorem~\ref{theo:AlwaysBad}, and Corollary~\ref{prop:Conjecture}. Finally, in  Section~\ref{sect:example} we provide some numerical examples and observations.

\subsubsection*{Acknowledgements}
We thank  Elisa  Lorenzo Garc\'ia, Eyal Goren, Ben Howard, Yingkun Li, Christophe Ritzenthaler, Fernando Rodríguez Villegas, 	Ari Shnidman, Dongxi Ye, and Wei~Zhang for their help.


\section{ Siegel threefolds,  orthogonal Shimura varieties, and small CM cycles}
\label{sect:2}

Here we provide some background on orthogonal Shimura varieties and Siegel threefolds. In particular, we study abelian surfaces with complex multiplication by a biquadratic field and relate the cycles of their Galois conjugates to small CM cycles on an orthogonal Shimura variety.

\subsection{Orthogonal Shimura varieties and small CM cycles}
\label{sect:OsmallCM}
Let $L$ be an even quadratic lattice of signature $(n, 2)$ and $V=L \otimes_\Z \Q$ be the associated quadratic space over $\Q$. Let $\mathbb D$ be the Grassmannian of oriented negative $2$-planes in $V_\R$, which we identify with its projective model  $\mathcal L/\C^\times$, where
$$
\mathcal L = \{ w \in V_\C:\,  (w, w) =0,  (w, \bar w)<0\}.
$$
Given $w = u + i v \in \mathcal L$, one has $(u, u)=(v, v) <0$ and $(u, v) =0$. Then  the associated oriented negative $2$-plane in $\mathbb D$ is given by $\R u + \R v$. The space $\mathbb D$ has two connected components. We fix one of them and denote it by $\mathbb D^+$.
Let $G=\GSpin(V)$, and let $K=G(\hat\Z)  \subset G(\A_f)$ be the compact open subgroup given by those elements which preserve  $ \hat L $ and act trivially on $L'/L$. Let $X=X_L $ be the associated Shimura variety over $\Q$ with
\[
X_L(\C) = G(\Q) \backslash \mathbb D \times G(\A_f)/K.
\]

If $\mathcal N \subset L$ is a sublattice of signature $(0, 2)$,
then  $\mathcal N_\R$ is a negative $2$-plane in $V_\R$ and, when equipped with an orientation, thus gives rise to two `small' CM points $z_\mathcal N^{\pm}$.  Notice that $T =\GSpin(\mathcal N_\Q) \cong \kay^\times$,  where $\kay=\Q(\sqrt{-\det \mathcal N})$ is the even Clifford algebra of $\mathcal N_\Q$.  Moreover,  
$$
K_T= K\cap T(\A_f)=\{ a \in \hat{\co}_{\kay}^\times :\,  (a/\bar a) \hat L =\hat L\}
$$
is a compact open subgroup of $T(\A_f)$.  Let
\begin{align}
\label{eq:smallcm}
Z(\mathcal N)  = \{ z_\mathcal N^{\pm} \}\times  T(\Q) \backslash T(\A_f)/K_T
\end{align}
be the small CM cycle  in $X_L$ associated with $\mathcal N$ as in \cite{Schofer}, \cite{BY09}. It is defined over $\Q$. Notice that $\Cl(\mathcal N)= T(\Q) \backslash T(\A_f)/K_T$ is the class group   associated  to $K_T$.
The group $\Cl(\mathcal N)$ acts on the $Z(\mathcal N)$ via multiplication on itself and leaves $z_{\mathcal N}^\pm$ fixed. Write $Z(\mathcal N) = Z^+(\mathcal N) + Z^-(\mathcal N)$ with $Z^+(m, \mu) =\{ z_{\mathcal N}^+\} \times  T(\Q) \backslash T(\A_f)/K_T$ defined over $\kay$, and similarly for $Z^-(\mathcal N)$.  The complex conjugation on $\kay$ maps the two cycles  $Z^\pm(\mathcal N)$ to each other. A point in  $Z(\mathcal N)$ is called a \emph{small CM point} (with respect to $\kay$).

Recall that
for $m \in \Q_{>0}$ and $\mu \in L'/L$ with $Q(\mu) \equiv m \pmod 1$, there is a special divisor $Z(m, \mu)$ on $X_L$. If  $X_L$ has only one connected component $X_L(\C) = \Gamma \backslash \mathbb D^+$ with $\Gamma = K \cap H(\Q)^+$ (which is what we need here), it can be defined as
$$
Z(m, \mu) =\Gamma \backslash \{ z \in  \mathbb D^+:\, \text{$(z, x) =0$ for some $ x \in  \mu+ L$  with $Q(x) =m$}\}.
$$
From now on, we assume that $X_L(\C) = \Gamma \backslash \mathbb D^+$. The following lemma  is  clear,  and  will be useful later in this paper.

\begin{lemma}
\label{lem:inclusion}
Let the notation be as above, and let $\mathcal P = \calN^\perp \cap L$ be the orthogonal  complement  of $\mathcal N$ in $L$, which is a positive definite even lattice of rank $n$. Then  $Z(\mathcal N) \cap Z(m, \mu)$ is non-empty if and only if $\mu \in \mathcal P'/\mathcal  P$
and   the representation number $a_{\mathcal P}(m, \mu)$ of $m$ by the coset $\mu +\mathcal P$ is non-zero.  In such a case, $Z(\mathcal N) \subset Z(m, \mu)$.
Here the condition $\mu \in  \mathcal P'/\mathcal P$ means that there exists a preimage  of $\mu$ in $L' \cap \mathcal P' $.
\end{lemma}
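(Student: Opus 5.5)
The plan is to reduce the intersection condition to an orthogonality statement about a single lattice vector, and then to use the rational decomposition $V = \mathcal N_\Q \oplus \mathcal P_\Q$.

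First we describe the points of $Z(\mathcal N)$. Since $X_L(\C)=\Gamma\backslash\mathbb D^+$, every point of $Z(\mathcal N)$ lifts to $\mathbb D^+$ as $\gamma z_{\mathcal N}^{\pm}$ for some $\gamma\in G(\Q)$. Here $\gamma$ comes from strong approximation, writing $h\in T(\A_f)$ as $\gamma k$ with $\gamma\in G(\Q)^+$ and $k\in K$. We may translate the condition on $x$ by $\gamma^{-1}$: the lattice coset $\gamma^{-1}(\mu+L)$ is $k(\mu+\hat L)\cap V=\mu+L$, because $k\in K$ preserves $\hat L$ and acts trivially on $L'/L$. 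Hence it suffices to treat the base point $z_{\mathcal N}$, and the condition for a point of $Z(\mathcal N)$ to lie on $Z(m,\mu)$ is the same for all points of the cycle. This same-for-all-points property gives the final assertion $Z(\mathcal N)\subset Z(m,\mu)$ once one point lies in the intersection. The subtle point in this step is checking that the $T(\A_f)$-translates indeed correspond to $\gamma$ with $\gamma^{-1}(\mu+L)=\mu+L$; this uses $T\subset G$ and $K_T=K\cap T(\A_f)$.

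Second, we analyse the base point. The point $z_{\mathcal N}$ is the oriented plane $\mathcal N_\R$. A vector $x\in V$ satisfies $(z_{\mathcal N},x)=0$ exactly when $x\perp\mathcal N_\R$, that is, when $x\in\mathcal P_\Q$. So $z_{\mathcal N}\in Z(m,\mu)$ if and only if there is some $x\in(\mu+L)\cap\mathcal P_\Q$ with $Q(x)=m$.

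Third, we compare $(\mu+L)\cap\mathcal P_\Q$ with cosets of $\mathcal P$. Any $x\in(\mu+L)\cap\mathcal P_\Q$ lies in $L'\cap\mathcal P_\Q$. Because $(x,\mathcal P)\subset(x,L)\subset\Z$, we get $x\in\mathcal P'\cap L'$, so $\mu$ has a preimage in $L'\cap\mathcal P'$, which is exactly the stated meaning of $\mu\in\mathcal P'/\mathcal P$. Conversely, fix such a preimage $\mu_0$. Then $(\mu+L)\cap\mathcal P_\Q=\mu_0+(L\cap\mathcal P_\Q)=\mu_0+\mathcal P$, since $\mathcal P=\mathcal N^\perp\cap L$. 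Therefore the existence of $x$ with $Q(x)=m$ is equivalent to $a_{\mathcal P}(m,\mu_0)\neq0$, where $a_{\mathcal P}(m,\mu_0)$ counts the elements of norm $m$ in the coset $\mu_0+\mathcal P$.

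Combining the three steps gives the equivalence, and the uniformity from the first step gives the containment $Z(\mathcal N)\subset Z(m,\mu)$.
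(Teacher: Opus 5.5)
Your Steps 2 and 3 are correct. Step 1, however, rests on a false identity, and both the \emph{only if} direction and the containment $Z(\calN)\subset Z(m,\mu)$ depend on it.

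With $h=\gamma k$ one has $[z_\calN^\pm,h]=[\gamma^{-1}z_\calN^\pm,1]$, so the lift is $\gamma^{-1}z_\calN^\pm$. Moreover $\gamma^{-1}(\mu+L)=kh^{-1}(\mu+\hat L)\cap V$, and this equals $\mu+L$ only if $h$ stabilizes $\mu+\hat L$. Your argument uses nothing about $\mu$, so suppose $\gamma^{-1}(\mu+L)=\mu+L$ held for all $\mu\in L'/L$. Then $\gamma\in K$, hence $h=\gamma k\in K\cap T(\A_f)=K_T$. So the identity fails for every $h\in T(\A_f)\setminus K_T$. If it were true, every point of $Z^+(\calN)$ would coincide with the base point, which is false in general, e.g.\ for the Galois orbits $Z(\mathbf A)$ considered later.

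What is true is $\gamma^{-1}\bigl(h(\mu+\hat L)\cap V\bigr)=\mu+L$. But this only translates between the adelic and the $\Gamma\backslash\mathbb D^+$ descriptions of one and the same point. It says nothing about how that point compares with the base point, and the facts you invoke ($T\subset G$ and $K_T=K\cap T(\A_f)$) cannot supply that comparison.

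The missing idea is that $T=\GSpin(\calN_\Q)$ acts trivially on $\calN_\Q^\perp=\calP_\Q$, and likewise on $\calP\otimes\A_f$. This holds because even elements of the Clifford algebra of $\calN_\Q$ commute with vectors orthogonal to $\calN$. With it, the argument runs as follows:
\begin{itemize}
\item $[z_\calN^\pm,h]$ lies on $Z(m,\mu)$ iff there is $x\in h(\mu+\hat L)\cap V$ with $Q(x)=m$ and $x\perp\calN_\R$, i.e.\ $x\in\calP_\Q$.
\item For $x\in\calP_\Q$ we have $x\in h(\mu+\hat L)$ iff $x=h^{-1}x\in\mu+\hat L$.
\item Hence only the intersection $(\mu+L)\cap\calP_\Q$, not the whole coset $\mu+L$, is independent of $h$.
\item The condition therefore reduces to ``$(\mu+L)\cap\calP_\Q$ contains a vector of norm $m$'', for every point of $Z(\calN)$ and both orientations.
\end{itemize}
In your notation: $\gamma^{-1}$ agrees with $k$ on $\calP_\Q$. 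So $x\mapsto\gamma x$ is a norm-preserving bijection from the vectors of $\mu+L$ orthogonal to $\gamma^{-1}z_\calN^\pm$ onto $(\mu+L)\cap\calP_\Q$. With Step 1 replaced by this, your Steps 2 and 3 complete the proof.
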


\subsection{The Siegel threefold as an orthogonal Shimura variety}
\label{sect:siegel3fold}
Let $\H_2^\pm$ denote the upper (respectively lower) Siegel half space.
Recall the Siegel threefold $X= \Sp_4(\Z) \backslash \H_2 =\GSp_4(\Z) \backslash \H_2^\pm$.
It can be viewed as the coarse moduli space of principally polarized  abelian surfaces over $\C$.

Let $V$ be the quadratic space
$$
V= \left\{ x:=x(a, b, c, d, r): = \kzxz {{}^t(J\tau)} {Ja} { -Jb} {J\tau}:\;   a , b \in \Q, \, \tau =\kzxz {c} {r} {r} {d} \in \Mat_2(\Q) \right\}
$$
with the quadratic form
$$
Q(x) = \frac{1}4 \tr(x^2) = ab + r^2  -cd = ab - \det \tau ,
$$
where $J=J_1$ with  $J_n =\kzxz {0} {I_n} {-I_n} {0}$.
We consider the even lattice
$$
L=\{x(a, b,c, d, r) : \;a , b, c, d, r \in \Z \}   \subset V.
$$
It is easily checked that $L'/L \cong \Z/2\Z$.
The group $\GSp_4$ acts on $V$ by conjugation and preserves the quadratic form.
This gives rise to an isomorphism $\Xi: \GSp_4 \stackrel{\sim}{\to} G=\GSpin(V)$ and an exact sequence
$$
1  \rightarrow \mathbb G_m \rightarrow \GSp_4 \rightarrow \SO(V) \rightarrow 1.
$$
Under this isomorphism, $\GSp_4(\hat\Z)$ is  the maximal compact open subgroup $K$ of $G(\A_f)=\GSpin(V)(\A_f)$ above.  According to \cite[Lemma~9.2]{YuPeng}, we have the following identification.

\begin{lemma}
\label{lem: Identification}   The isomorphism $\Xi$ induces an isomorphism
$$
\Xi:  \H_2^\pm  \stackrel{\sim}{\to}  \mathbb D , \quad  \Xi(\tau) = \kzxz {{}^t(J\tau)} {J \det(\tau) } { -J} {J\tau}
$$
which is $G$-equivariant. It also induces an  isomorphism
$$
\Sp_4(\Z) \backslash \H_2 \stackrel{\sim}{\to}  X_L .
$$
The inverse map  is given by
$$
\Xi^{-1}(x(a, b, c, d, r))= \frac{1}{b} \kzxz {c} {r} {r} {d}.
$$

\end{lemma}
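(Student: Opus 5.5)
The statement to prove is Lemma~\ref{lem: Identification}. I would verify it directly from the explicit conjugation action of $\GSp_4$ on $V$, in four steps.

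\textbf{Step 1: $\Xi(\tau)$ lies in $\mathcal L$.} For $\tau\in\H_2^\pm$, write
\[
w=\Xi(\tau)=x(\det\tau,1,c,d,r),
\]
extended $\C$-linearly, where $\tau=\kzxz{c}{r}{r}{d}$ now has complex entries. Then
\[
Q(w)=ab-\det\tau=\det\tau-\det\tau=0 .
\]
The bilinear form is $(x,y)=Q(x+y)-Q(x)-Q(y)$. Computing it on $x(a,b,\tau)$ and $x(a',b',\tau')$ gives
\[
(x,y)=ab'+a'b-\tr(\tau\,\tau'^{\#}),
\]
where $\tau'^{\#}$ is the adjugate of $\tau'$. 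For $\tau=X+iY$ this yields
\[
(w,\bar w)=\det\tau+\det\bar\tau-\tr(\tau\bar\tau^{\#})=-4\det Y .
\]
The case $2\times2$ can be checked by expanding, or by using $\det(\tau-\bar\tau)=\det\tau+\det\bar\tau-\tr(\tau\bar\tau^{\#})$. Since $\det Y>0$ on $\H_2^\pm$, we get $(w,\bar w)<0$, so $w\in\mathcal L$.

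\textbf{Step 2: bijectivity.} Any $w\in\mathcal L$ has $b$-coordinate nonzero. Indeed, if $b=0$ then $Q(w)=0$ forces $\det\tau_w=0$ with $\tau_w$ complex symmetric. Writing the real and imaginary parts as $u,v$, the conditions $(u,u)=(v,v)<0$ and $(u,v)=0$ then lead to a contradiction, because the span of $u,v$ would lie in the $b=0$ hyperplane. On that hyperplane the form restricted modulo the isotropic $a$-line has signature $(2,1)$, which cannot contain a negative definite $2$-plane.

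So we may scale to $b=1$. Then $Q(w)=0$ forces $a=\det\tau$, and $(w,\bar w)<0$ gives $\det\operatorname{Im}\tau>0$, so $\tau\in\H_2^\pm$. This gives the inverse $x\mapsto \frac1b\kzxz{c}{r}{r}{d}$. Both maps are holomorphic, and the two components $\H_2^\pm$ correspond to the two components of $\mathbb D$.

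\textbf{Step 3: equivariance.} This is the main computation. For $g=\kzxz{A}{B}{C}{D}\in\GSp_4(\R)$, I need to show
\[
g\,\Xi(\tau)\,g^{-1}=j(g,\tau)\,\Xi(g\tau)
\]
for some scalar $j(g,\tau)$, namely a multiple of $\det(C\tau+D)$. It suffices to check this on generators of $\Sp_4$ together with the similitudes $\diag(1,1,\nu,\nu)$:
\begin{itemize}
\item translations $\kzxz{1}{S}{0}{1}$ with $S$ symmetric;
\item $\kzxz{A}{0}{0}{{}^tA^{-1}}$;
\item the inversion $J_2$.
\end{itemize}
In each case the identity is a direct $2\times2$ block calculation. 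Equivariance combined with the bijection of Step 2 also shows that $\Xi$ is compatible with the isomorphism $\GSp_4\cong\GSpin(V)$.

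\textbf{Step 4: the quotient.} Under $\Xi$, $\GSp_4(\hat\Z)$ corresponds to $K$, and $\GSp_4(\Q)$ has strong approximation with $\nu$ surjective onto $\hat\Z^\times$. Hence $X_L(\C)$ has a single component, equal to
\[
\GSp_4(\Z)\bs\H_2^\pm=\Sp_4(\Z)\bs\H_2 .
\]

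I expect Step 3 to be the main bookkeeping obstacle. The lemma is also quoted from \cite[Lemma~9.2]{YuPeng}, so one could instead reduce to that reference.
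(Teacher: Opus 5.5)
Your argument is correct. Note, however, that the paper does not prove this lemma itself; it simply quotes \cite[Lemma~9.2]{YuPeng}. So your proposal is a self-contained replacement for that citation rather than a variant of an argument in the paper.

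The checks go through as you describe. With $(x,y)=ab'+a'b-\tr(\tau\tau'^{\#})$ one gets $(\Xi(\tau),\overline{\Xi(\tau)})=\det(\tau-\bar\tau)=-4\det(\operatorname{Im}\tau)$. The hyperplane $b=0$ is $e_a^\perp$ for the isotropic vector $e_a=x(1,0,0,0,0)$, so it contains no negative definite $2$-plane. With the conjugation $x\mapsto gxg^{-1}$ one finds on your generators $g\,\Xi(\tau)\,g^{-1}=j(g,\tau)\,\Xi(g\tau)$ with $j(g,\tau)=\nu(g)^{-1}\det(C\tau+D)$. Concretely, $j=1$ for translations, $j=\det A^{-1}$ for $\diag(A,{}^tA^{-1})$, $j=\det\tau$ for $J_2$, and $j=\nu$ for $\diag(1,1,\nu,\nu)$.

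To make Step~3 airtight, say explicitly three things. First, the action is $x\mapsto gxg^{-1}$. Second, $j$ is a cocycle, so the identity propagates from generators to all of $\GSp_4(\R)$. Third, scalars act trivially. Your remark that equivariance ``shows compatibility with $\GSp_4\cong\GSpin(V)$'' is unnecessary, since that isomorphism is defined through this very conjugation action.

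What your route buys over the citation is that it makes explicit two facts the paper later uses without comment. The identity $(w,\bar w)=-4\det(\operatorname{Im}\tau)$ matches the orthogonal Petersson normalization with the factor $\det(y)^{c(0)/2}$ in Theorem~\ref{thm:Borcherds}(iii). The automorphy factor $\det(C\tau+D)$ on $\Sp_4$ shows that weight-$k$ forms on the cone over $\mathbb D$ pull back to Siegel modular forms of weight $k$, which underlies the weight $c(0)/2$ in Theorem~\ref{thm:Borcherds}(i). The citation buys only brevity.
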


\subsection{Principally polarized abelian surfaces with small CM}

Let $\mathbf A =(A, \lambda)$ be a principally polarized abelian surface over $\C$. It corresponds to a point $\tau = \kzxz {\tau_1} {\tau_{12}} {\tau_{12}} {\tau_2} \in \H_2$ in the sense that $\mathbf A \cong \mathbf A_\tau =(\C^2/\Lambda_\tau,  \lambda_\tau)$. Here $\Lambda_\tau =\tau \Z^2 + \Z^2$, and the Riemann form on  $\Lambda_\tau$  is given by $\lambda_\tau(x, y) = \Im( H_\tau(x, y))$ with
\begin{equation}
H_\tau(x, y) =  \, {}^t x (\Im \tau)^{-1} \bar y,
\end{equation}
which is  a  positive definite Hermitian form  on $\C^2$.  It is given explicitly by
 \begin{equation}
 \lambda_\tau(\tau x_1  +x_2,  \tau y_1 + y_2) = ({}^t x_1,  {}^t x_2) J_2  \begin{pmatrix} y_1 \\ y_2 \end{pmatrix}.
 \end{equation}
 This determines a principally polarized abelian surface $A_\tau=(\C^2/\Lambda_\tau, \lambda_\tau)$. We say that $\mathbf A$ has small CM by $\kay$ if $\Xi(\tau)$ is a small CM point with respect to $\kay$ on $X_L$.

\begin{proposition}
 \label{prop:smallCM}
Let $\mathbf A =(A,\lambda)$ with $A=\C^2/\Lambda$
be a principally polarized  abelian surface associated to $\tau \in \H_2$.   Then the following are equivalent:
\begin{enumerate}
\item $\mathbf A $ has small CM by $\kay$;

\item $\tau \in \H_2 \cap \Mat_2(\kay)$;

\item the ring
$$
\co=\co_\mathbf A =\{ r \in \kay:\,  r \Lambda \subset \Lambda\}
$$
is a quadratic order in $\kay$;

\item we have $\End^0(A) \cong \Mat_2(\kay)$;

\item $A$ is isogenous to the square $E^2$ of some elliptic curve $E$ with CM by $\kay$;

\item $A$ is isomorphic to the product $E_1 \times E_2$ of two elliptic curves with $\End^0(E_i) =\kay$.

\item $A$ is singular in the sense that the natural connecting map $H^1(A, \mathcal O_A^\times) \rightarrow H^{1,1} (A, \Z) = H^2(A, \Z)\cap H^{1, 1}(A, \C)$ is an isomorphism  up to torsion.

\end{enumerate}

\end{proposition}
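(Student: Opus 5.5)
Our plan is to prove the seven conditions equivalent by a chain anchored at (2), the Siegel-matrix description, passing through (3) and (4), then tying in the classical characterisations (5)–(7) of singular abelian surfaces.

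\emph{(1)$\Leftrightarrow$(2).} By Lemma~\ref{lem: Identification}, $\Xi(\tau)$ is the line spanned by $w=x(\det\tau,1,\tau_1,\tau_2,\tau_{12})\in V_\C$. A small CM point with respect to $\kay$ is a point whose negative $2$-plane $\R\Re w+\R\Im w$ is rational, i.e.\ equals $\calN_\R$ for a rational negative definite plane $\calN_\Q$ with $\kay\cong\Q(\sqrt{-\det\calN})$. A rational negative plane carries exactly two isotropic lines, and they are defined over its discriminant field. Thus the plane is rational with discriminant field $\kay$ if and only if the line $\C w$ is defined over $\kay$.

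Normalising the $b$-coordinate of $w$ to $1$, the line $\C w$ is defined over $\kay$ if and only if $\tau_1,\tau_2,\tau_{12}\in\kay$ (and then automatically $\det\tau\in\kay$). This is the same as $\tau\in\Mat_2(\kay)$. Conversely, if $\tau\in\Mat_2(\kay)\cap\H_2$, then $w$ and $\bar w$ span a $\Gal(\kay/\Q)$-stable plane, hence a rational plane. It is negative definite because $(w,\bar w)<0$, and its discriminant field is $\kay$ because its isotropic vectors $w,\bar w$ are defined over $\kay$ and not over $\Q$.

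\emph{(2)$\Rightarrow$(3).} If $\tau$ has entries in $\kay$, then $\Lambda_\tau\otimes\Q=\tau\Q^2+\Q^2$ is a $\kay$-subspace of $\kay^2\subset\C^2$. It is $2$-dimensional over $\kay$ since $\Im\tau$ is invertible. Hence $\{r\in\kay: r\Lambda\subset\Lambda\}$ is an order in $\kay$ by the usual lattice argument.

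\emph{(3)$\Rightarrow$(2).} Suppose $\calO$ is an order in $\kay=\Q(\sqrt{d})$. Then $\sqrt{d}\,\Lambda_\tau\subset\Lambda_\tau\otimes\Q$. Writing $\sqrt d(\tau x+y)=\tau x'+y'$ with $x,y,x',y'$ rational, and taking $x=0$, $y=e_i$, gives $\sqrt d\,e_i=\tau x'+y'$. Since the columns of $\tau$ and of $I$ form an $\R$-basis of $\C^2$, this forces $\sqrt d\,I=\tau M+N$ with $M,N\in\Mat_2(\Q)$. From $\Im$ and invertibility we get $M$ invertible, so $\tau=(\sqrt d\,I-N)M^{-1}\in\Mat_2(\kay)$.

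\emph{(3)$\Rightarrow$(4),(5),(6).} Once $\Lambda\otimes\Q\cong\kay^2$ as $\kay$-space, compatibly with a complex structure on which $\kay$ acts through one fixed embedding, we have $\End^0(A)\supset\Mat_2(\kay)$. Equality holds by dimension count, since $\End^0$ of an abelian surface has $\Q$-dimension at most $8$, attained only here. Moreover $A$ is isogenous to $E^2$ with $E=\C/\calO$. For (6), choose a $\calO$-stable flag: pick an $\calO$-saturated rank-one submodule $\Lambda_1\subset\Lambda$ ($\kay$-line intersected with $\Lambda$). Then $\C\Lambda_1/\Lambda_1$ is a CM elliptic curve $E_1\subset A$. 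Since the ideal classes involved are invertible over the appropriate orders, one may show the extension splits; this uses Proposition~\ref{prop:product}-style arguments or Shioda–Mitani's theorem that a singular abelian surface is a product of two CM elliptic curves with isogenous CM. I expect this splitting step to be the main obstacle. My first attempt would be to cite Shioda–Mitani directly.

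\emph{Closing the chain.} For (4)$\Rightarrow$(3), the subalgebra $\kay\cdot I\subset\Mat_2(\kay)$ acts on $\Lambda\otimes\Q$, so its intersection with $\End(A)$ is an order. The implications (5)$\Rightarrow$(4) and (6)$\Rightarrow$(5) are immediate. Finally, (7) is equivalent to $\rho(A)=4=h^{1,1}$. Since $\NS(A)\otimes\Q$ is the Rosati-symmetric part of $\End^0(A)$, this has $\Q$-dimension $4$ exactly when $\End^0(A)\cong\Mat_2(\kay)$ (the Hermitian $2\times2$ matrices over $\kay$). This identification follows from the classification of endomorphism algebras of abelian surfaces, giving (4)$\Leftrightarrow$(7).
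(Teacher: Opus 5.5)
Your proposal is correct, and in its main lines it follows the paper's proof. You derive (1)$\Leftrightarrow$(2)$\Leftrightarrow$(3) from the definitions and the shape of $\Lambda_\tau$; you spell out the isotropic-line argument and the identity $\sqrt d\, I=\tau M+N$, which the paper dismisses as ``definition'' and ``direct calculation''. Then $\Lambda\otimes\Q=\kay^2$ gives $\End^0(A)=\Mat_2(\kay)$ and an isogeny to $E^2$. The actual product decomposition (6) is taken from Mitani--Shioda, exactly as in the paper.

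The one genuinely different piece is (7). The paper cites Mitani--Shioda, Theorem 4.1, for (5)$\Leftrightarrow$(6)$\Leftrightarrow$(7) all at once. You instead prove (4)$\Leftrightarrow$(7) directly from $\mathrm{NS}(A)\otimes\Q\cong\End^0(A)^{\mathrm{sym}}$ and Albert's classification, so that equivalence no longer depends on the citation. Your reading of (7) as $\rho(A)=4=h^{1,1}$ is the right one: taken literally, the connecting map always has the non-torsion kernel $\operatorname{Pic}^0(A)$.

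Two small points to tighten. First, in (4)$\Rightarrow$(3) the isomorphism $\End^0(A)\cong\Mat_2(\kay)$ is abstract, so you should say why its center acts on $\operatorname{Lie}(A)=\C^2$ by scalars through an embedding $\kay\hookrightarrow\C$. The reason is that the analytic representation is a unital $2$-dimensional representation of $\Mat_2(\kay)\otimes_\Q\C\cong\Mat_2(\C)\times\Mat_2(\C)$, hence factors through one factor. Second, (7) does not mention $\kay$, so (7)$\Rightarrow$(4) only gives $\Mat_2(\kay')$ for some imaginary quadratic $\kay'$. This imprecision is already present in the paper's statement.
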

\begin{proof} $(1) \Leftrightarrow (2)$   follows from the definition. $(2)\Leftrightarrow(3)$ follows from a direct calculation using $\Lambda =\Lambda_\tau$.

$(2) \Rightarrow (4)$: Assume $\mathbf A = \mathbf A_\tau$. We identify $\Lambda_\tau \otimes_\Z \Q \cong \kay^2$ naturally, and  $\C^2 = \kay^2 \otimes_\kay \C $.  Then
$$
\End(A) = \{ g \in \Mat_2(\C):  g \Lambda_\tau \subset \Lambda_\tau\},
$$ and so
$$
 \End^0(A)    = \{ g \in \Mat_2(\C):  g \kay^2 \subset \kay^2\} = \Mat_2(\kay).
$$

$ (4)  \Rightarrow (5)$:  Let $e_1=\kzxz {1} {0} {0} {0}$ and $e_2=\kzxz {0} {0} {0} {1}$ be the two standard idempotents of $\Mat_2(\kay)$.
Then $E_i =e_i A$ are CM elliptic curves,  and $E_1 \times E_2$ is isogenous to $A$. Moreover $E_1$ and $E_2$ are isogenous to each other as they both have CM by $\kay$.

The equivalence of (5), (6) and (7) are  \cite[Theorem 4.1]{MS}, see also \cite[Section 10.6]{BL}.
Finally, $(6) \Rightarrow (3)$ is obvious.
\end{proof}

Let $\kay$ be an imaginary quadratic field of  discriminant $d=d_\kay$. Then its ring of integers is given by $\co_\kay=\co_d$. In general, we write $\co_\delta$ for the quadratic order in $\kay$ with discriminant $\delta$.
Ignoring the principal polarization, T.~Mitani and  T.~Shioda proved the following result in \cite[Section 4]{MS} (see also \cite[Theorem 1]{Ka11} for a generalization).

\begin{proposition}
\label{prop:product}
Let $A= \C/\mathfrak a_1 \times \C/\mathfrak a_2$ and $B = \C/\mathfrak b_1 \times \C/\mathfrak b_2$ be products of  CM elliptic curves by the same imaginary quadratic field $\kay$ with quadratic lattices $\mathfrak a_i$ and $\mathfrak b_i$. Let
$$
\co_{\mathfrak a} =\{ r \in \kay:   r \mathfrak a \subset \mathfrak a\}
$$
 be the order of a quadratic lattice $\mathfrak a$ so that $\mathfrak a$ is a proper fractional ideal of $\co_{\mathfrak a}$.  Assume that $\co_{\mathfrak a_i}=\co_{e_i}$  and $ \co_{\mathfrak b_i}=\co_{f_i} $. Then $A \cong B$ if and only  if the following conditions hold:

\begin{enumerate}
\item $\mathfrak a_1 \mathfrak a_2  = c \mathfrak b_1 \mathfrak b_2$ for some $c \in \kay^\times$.

\item  $\operatorname{gcd}(f_1, f_2) = \operatorname{gcd}(e_1, e_2)$.

\item  $\operatorname{lcm}(f_1, f_2) = \operatorname{lcm}(e_1, e_2)$.
\end{enumerate}
\end{proposition}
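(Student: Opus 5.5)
The plan is to reduce the statement to a classification of rank-two $\kay$-lattices. By Proposition~\ref{prop:smallCM}, every isomorphism $\C^2/\Lambda_A \to \C^2/\Lambda_B$ is given by a complex linear map, and this map lies in $\End^0 = \Mat_2(\kay)$ once we fix the identifications $\Lambda_A\otimes\Q=\kay^2=\Lambda_B\otimes\Q$ coming from $\Lambda_A=\fraka_1\oplus\fraka_2$ and $\Lambda_B=\mathfrak b_1\oplus\mathfrak b_2$. So $A\cong B$ if and only if there is $g\in \GL_2(\kay)$ with $g(\fraka_1\oplus\fraka_2)=\mathfrak b_1\oplus\mathfrak b_2$. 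The task is therefore to find a complete set of invariants of the $\GL_2(\kay)$-orbit of a lattice $M\subset\kay^2$ of the form $\fraka_1\oplus\fraka_2$, and to check that they are exactly the three quantities in (1)--(3). Throughout I write $e_i=f_i^2d$ and $\co_{\mathfrak a_i}=\Z+f_i\co_d$, identifying an order with its conductor when taking gcd and lcm.

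First I would show that conditions (1)--(3) are necessary by exhibiting three invariants of $M$.
\begin{itemize}
\item The multiplier ring $\co_M=\{r\in\kay: rM\subset M\}$ is preserved by $\kay$-linear isomorphisms. For $M=\fraka_1\oplus\fraka_2$ it equals $\co_{\fraka_1}\cap\co_{\fraka_2}$, whose conductor is $\operatorname{lcm}(f_1,f_2)$. This gives (3).
\item The coefficient ideal $\mathfrak c(M)=\sum_{\varphi\in\Hom_\kay(\kay^2,\kay),\ \varphi(M)\ne 0}\varphi(M)\,\varphi(M)^{-1}$, or more simply the order generated by all $\varphi(M)^{-1}\varphi(M)$, is invariant under $g$. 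For the direct sum it equals $\co_{\fraka_1}\co_{\fraka_2}$, whose conductor is $\gcd(f_1,f_2)$. This gives (2).
\item The determinant lattice $\det M=\{\det(x,y):x,y\in M\}\subset\kay$, the image of $\wedge^2 M$, changes under $g$ by the scalar $\det g$. For the direct sum it equals $\fraka_1\fraka_2$. This gives (1).
\end{itemize}
For each of these I would verify the direct-sum computation by elementary manipulations with the coordinate projections.

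For sufficiency I would prove a normal form in the spirit of Borevich--Faddeev and Kani: $\fraka_1\oplus\fraka_2\cong \co_{\gcd}\oplus \fraka_1\fraka_2'$, where $\fraka_1\fraka_2'$ is a proper ideal of $\co_{\operatorname{lcm}}$ in the class of the product. The steps are as follows.
\begin{itemize}
\item Scale so that $\fraka_1,\fraka_2\subset\co_d$.
\item Choose representatives in their classes that are locally principal and of norm prime to the conductors, so that $\fraka_1+\fraka_2$ becomes as large as possible.
\item Use the exact sequence $0\to \fraka_1\cap\fraka_2\to\fraka_1\oplus\fraka_2\to\fraka_1+\fraka_2\to 0$. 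Splitting it requires $\fraka_1+\fraka_2$ to be projective over its own order; I would arrange this by the choice of representatives, which gives $M\cong(\fraka_1+\fraka_2)\oplus(\fraka_1\cap\fraka_2)$.
\item Identify $\fraka_1+\fraka_2$ up to scaling with the order of conductor $\gcd(f_1,f_2)$, and $\fraka_1\cap\fraka_2$ with a proper ideal of the order of conductor $\operatorname{lcm}(f_1,f_2)$ whose class is fixed by the determinant invariant.
\end{itemize}
Given this normal form, conditions (1)--(3) determine it up to $\GL_2(\kay)$, which yields $A\cong B$.

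The main obstacle is this sufficiency step, namely controlling non-invertible ideals at primes dividing the conductors. I would handle it by localizing at each prime $p$: over the local order $\co_{\fraka_i,p}$ every proper ideal is principal. The local classification of $\fraka_{1,p}\oplus\fraka_{2,p}$ is then the classical statement $\co_{f_1,p}\oplus\co_{f_2,p}$ with nested orders. Having done this, I would globalize with a strong approximation argument in $\GL_2(\kay)$, where the determinant condition (1) supplies the required global class. If that globalization turns out to be delicate, I would simply cite \cite[Section 4]{MS} and \cite[Theorem 1]{Ka11}, as the statement allows.
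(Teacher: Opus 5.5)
You are right that the paper gives no argument for this proposition. It only cites \cite[Section 4]{MS} and \cite[Theorem 1]{Ka11}, so your fallback is exactly what the paper does. Several pieces of your own sketch are sound: the reduction to $\GL_2(\kay)$-orbits of $M=\fraka_1\oplus\fraka_2\subset\kay^2$, the invariant for (3) (the multiplier ring $\co_{\fraka_1}\cap\co_{\fraka_2}$), and the invariant for (1) (the $\Z$-span of the determinants, which equals $\fraka_1\fraka_2$).

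\textbf{The invariant for (2) is wrong.} Write $\co_{\mathrm g}=\co_{\fraka_1}\co_{\fraka_2}$. Take $R=\Z+f\co_\kay$ with $f>1$, $\co_\kay=\Z[\omega]$, and $M=R\oplus R$. The functional $\varphi(x,y)=x+\omega y$ gives $\varphi(M)=R+\omega R=\co_\kay$. So your ``coefficient order'' contains $\co_\kay$, not $\co_{\mathrm g}=R$. The correct invariant is the multiplier ring of $\det M=\fraka_1\fraka_2$, which is $\co_{\mathrm g}$. In particular, (2) already follows from (1).

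\textbf{The splitting step fails as stated.} Projectivity of $\fraka_1+\fraka_2$ over its own order does not give a $\kay$-linear splitting, and your stated choice of representatives does not rescue it. Take $\kay=\Q(i)$, $\omega=i$, $\fraka_1=R$, $\fraka_2=iR$. Both lie in $\co_\kay$, are locally principal, and have norm $1$. Yet $\fraka_1+\fraka_2=\co_\kay$ and $\fraka_1\cap\fraka_2=f\co_\kay$ are both $\co_\kay$-modules. A splitting would therefore force the multiplier ring of $M\cong R\oplus R$ to be $\co_\kay$, which is false.

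What you actually need is $\fraka_1+\fraka_2=\co_{\mathrm g}$ together with $1\in(\fraka_1:\co_{\mathrm g})+(\fraka_2:\co_{\mathrm g})$. This holds, checking prime by prime, when $\fraka_i\subseteq\co_{\fraka_i}$ with indices coprime to each other and to the conductors.

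\textbf{Your normal form is not determined by (1)--(3).} Even granting the splitting, $\co_{\mathrm g}\oplus\fraka'$ with $\fraka'$ a proper $(\co_{\fraka_1}\cap\co_{\fraka_2})$-ideal is not pinned down. Condition (1) only fixes the image of $[\fraka']$ in $\operatorname{Pic}(\co_{\mathrm g})$, and that map usually has a kernel. For example, $\operatorname{Pic}(\co_{-36})$ has order $2$ while $\operatorname{Pic}(\co_{-4})$ is trivial. Aim instead for $(\co_{\fraka_1}\cap\co_{\fraka_2})\oplus\fraka$ with $\fraka$ a proper $\co_{\mathrm g}$-ideal, as in \eqref{eq:canonical}; there $[\fraka]=[\fraka_1\fraka_2]$ exactly.

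\textbf{Make the local--global route your main argument.} Locally $M_p\cong\co_{\fraka_1,p}\oplus\co_{\fraka_2,p}$ with nested orders, so (2) and (3) determine the genus. Strong approximation for $\SL_2$ over $\kay$ identifies the classes in the genus with $\kay^\times\backslash(\kay\otimes\A_f)^\times/\det\Aut(\hat M)$.

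The computation you must add is $\det\Aut(\hat M_p)=\co_{\mathrm g,p}^\times$. For $\co_1\supseteq\co_2$, an automorphism of $\co_1\oplus\co_2$ has upper-left and upper-right entries in $\co_1$, lower-right entry in $\co_2$, and lower-left entry in $(\co_2:\co_1)$. Hence its determinant lies in $\co_1^\times$, and $\operatorname{diag}(u,1)$ realizes every $u\in\co_1^\times$. This group is also the stabilizer of $\widehat{\fraka_1\fraka_2}$. So the class of $M$ within its genus is exactly $[\fraka_1\fraka_2]\in\operatorname{Pic}(\co_{\mathrm g})$, which is condition (1).
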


Note that $\mathfrak a_1 \mathfrak a_2$ is a proper ideal of $\co_{\operatorname{gcd}(e_1, e_2)}$.
This proposition implies that  every abelian surface with small CM by $\kay$ has the `canonical' form
\begin{equation} \label{eq:canonical}
A =A(\mathfrak a, t) =\C/\co_{d_1} \times \C/\mathfrak a.
\end{equation}
Here $\mathfrak a$ is a quadratic lattice in $\kay$ with endomorphism ring $\co_{\mathfrak a} = \co_{d_2}$ with   $ d_i =d t_i^2$ and  $t_1 = t t_2$ for some positive integers $t, t_1, t_2$.
Notice that $t$,  and the ideal class $[\mathfrak a]\in \Cl(\co_{d_2})$ are complete invariants of $A$. We should have written the abelian surface as $A([\mathfrak a], t)$. However, we prefer to  denote it by $A(\mathfrak a, t)$ for simplicity.

We finish this subsection with a connection between abelian surfaces with small CM and the usual CM abelian surfaces.
Let $(E, \Phi=\{\sigma_1, \sigma_2\}) $ be a CM quartic field with CM type $\Phi$. Recall that a polarized  abelian surface $\mathbf A=(A, \lambda)$ over $\C$  is of CM type $(E, \Phi)$ if there is an embedding $\iota:  E \rightarrow \End^0(A)$ such that
\begin{enumerate}
\item[(i)] the Rosati involution induced by $\lambda$ restricts to the complex conjugation on $E$, and
\item[(ii)] there is a basis $\{ \omega_1,  \omega_2\}$ of  the differentials $\Omega_A$ such that $\iota(a)^* \omega_i = \sigma_i(a) \omega_i$ for $i =1, 2$.
\end{enumerate}
If  $E$ is not bi-quadratic, then $\iota$ is an isomorphism, and $\mathbf A$ corresponds to  a big CM point as in  \cite{YuPeng}. We are mainly interested in this paper the case that  $E$ is bi-quadratic. In this case,  \cite[Corollary 10.6.3]{BL} asserts that $A$ is isomorphic to a product of two isogenous CM elliptic curves. So $\mathbf A$ has small CM by some quadratic field $\kay$. The field $E$ has two imaginary quadratic subfields, and it is natural to ask {\it which one is our $\kay$?}
It  is  determined by the CM type $\Phi=\{\sigma_1, \sigma_2\}$ of $E$ associated to $\mathbf A$: $\kay $ is the reflex field of $\Phi$. Concretely,  we can write   $E =\Q(\sqrt D, \sqrt d) \hookrightarrow \C$  for $d <0 < D$ such that $\Phi=\{ \sigma_1 =1, \sigma_2= \sigma\}$ with  $\sigma(\sqrt D) = -\sqrt D$ and $\sigma(\sqrt d)=\sqrt d$. So the reflex field of $\Phi$, the field generated by $\norm_\Phi(z) =z \sigma(z), z \in E$, is $\Q(\sqrt d)$, and $\Q(\sqrt d)$ acts on $\Omega_A$ via the  natural embedding $\Q(\sqrt d) \subset \C$. Consequently,  $\Q(\sqrt d) = \kay$. So   we have proved the first part of the following proposition. We will prove the other two claims after Proposition  \ref{prop:Dagger} to avoid setting up a lot of notation now. In the proof, we will also give a method to find $D$ such that $\mathbf A$ has CM by $\Q(\sqrt d, \sqrt D)$ when  $\mathbf A$ has small CM  by $\Q(\sqrt d)$.

\begin{proposition}  \label{prop:CM} Let $E=\Q(\sqrt d, \sqrt D)$ be a  biquadratic CM number field with $d<0 <D$, and let $\Phi=\{1,  \sigma\}$ be the CM type of $E$ such that $\sigma(\sqrt d) = \sqrt d$. Let $\mathbf A=(A, \lambda)$ be a CM abelian surface over $\C$ of CM type $(E, \Phi)$. Then

(1) \quad $\mathbf A$ has small CM by $\kay$.

(2) \quad $\mathbf A$ is of CM type $(E_i, \Phi_i)$ for infinitely many biquadratic CM number fields $E_i=\Q(\sqrt{d}, \sqrt{D_i})$.

(3) Conversely,  every principally polarized  abelian surface $\mathbf A =(A, \lambda)$ with small CM by $\kay$ is of CM type $(E, \Phi)$ for some biquadratic CM number field $\kay \subset E$ and CM type $\Phi=\{1, \sigma\}$ as above.
\end{proposition}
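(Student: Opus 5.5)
Part (1) has already been shown in the discussion preceding the statement: the reflex field of $\Phi$ is $\Q(\sqrt d)=\kay$, $\kay$ acts on $\Omega_A$ through the given embedding, and by \cite[Corollary 10.6.3]{BL} together with Proposition~\ref{prop:smallCM} $\mathbf A$ has small CM by $\kay$. The work is in (2) and (3), and both reduce to one question: when does an element of $\End^0(A)\cong\Mat_2(\kay)$ generate a copy of $E$ on which the Rosati involution acts as complex conjugation with the right CM type?

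For (3), let $\mathbf A$ have small CM by $\kay$. By Proposition~\ref{prop:smallCM}, $\End^0(A)=\Mat_2(\kay)$, and $\kay$ sits inside as scalars. Hence $\kay$ acts on $\Omega_A$ through the fixed embedding on both coordinates. The Rosati involution $\dagger$ attached to $\lambda$ is a positive involution of $\Mat_2(\kay)$ restricting to complex conjugation on the center $\kay$. Its fixed space of symmetric elements $S$ is a $4$-dimensional $\Q$-space: in the coordinates of $\tau$ it is conjugate to the Hermitian matrices, and it contains $\Q\cdot 1\oplus(\mathcal P\otimes\Q)$. I would pick a nonzero trace-zero symmetric element $f\in\mathcal P\otimes\Q$, which is nonempty because $\mathcal P$ has rank $3$; this rank statement is announced in the introduction and is visible from the identity $\Mat_2(\kay)^{\dagger=1}=\{\text{Hermitian}\}$. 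Positivity of $\dagger$ gives $f^2=Q(f)\cdot 1$ with $Q(f)>0$, since $Q(f)=\tfrac14\tr(f^2)$ is positive definite on $\mathcal P$. Put $D=Q(f)$, or its squarefree part. Then $\sqrt D\mapsto f$ and $\sqrt d\mapsto\sqrt d\cdot 1$ define an embedding $E=\Q(\sqrt d,\sqrt D)\hookrightarrow\End^0(A)$, provided $D$ is not a square. Because $f$ commutes with $\kay$ and $f^\dagger=f$, Rosati restricts to complex conjugation on $E$: it fixes $\sqrt D$ and conjugates $\sqrt d$.

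For the CM type, diagonalize $f$ on $\Omega_A$. It has eigenvalues $\pm\sqrt D$, each with multiplicity one since $\tr f=0$, while $\sqrt d$ acts by $\sqrt d$ on both eigenvectors. This is exactly the type $\{1,\sigma\}$ with $\sigma(\sqrt d)=\sqrt d$ and $\sigma(\sqrt D)=-\sqrt D$. To prove (2), I would run the same construction for varying $f\in\mathcal P$. Since $\mathcal P$ is a positive definite ternary lattice, it represents infinitely many square classes, so there are infinitely many distinct squarefree parts $D_i$ of $Q(f_i)$. Each gives a distinct field $E_i=\Q(\sqrt d,\sqrt{D_i})$. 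The same argument applies starting from the $\mathbf A$ in (2), because by (1) it has small CM.

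The main obstacle is the step "$D$ not a square". Here I would use that the binary form $Q$ restricted to the ternary space $\mathcal P_\Q$ represents non-squares. A positive ternary quadratic form over $\Q$ represents infinitely many square classes: it represents all squarefree integers outside finitely many local obstructions, by Hasse–Minkowski. So such an $f$ exists. A second, more technical point is to verify cleanly that trace-zero symmetric endomorphisms act on $\Omega_A$ with trace zero, so that the type really is $\{1,\sigma\}$ and not $\{1,1\}$. I would check this in the $\tau$-coordinates, using that the analytic and rational representations are related by $\rho_r=\rho_a\oplus\bar\rho_a$. This gives $\tr\rho_a(f)=\tfrac12\tr\rho_r(f)=0$ for $f$ with real eigenvalues.
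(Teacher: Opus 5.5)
Your proposal is correct and follows the paper's argument. The paper likewise takes a special endomorphism $g\in\mathcal P$ with $g^2=D\cdot 1$ (via \cite[Lemma~2.2]{KRSiegel}), observes that scalars satisfy $r^\dagger=\bar r$ (reading this off from \eqref{eq:Dagger} rather than from positivity of the Rosati involution), embeds $E=\Q(\sqrt d,\sqrt D)$ by $\sqrt d\mapsto\sqrt d$, $\sqrt D\mapsto g$, and varies $D$ for (2); your checks that $D$ must be a non-square and that $\tr\rho_a(f)=\tfrac12\tr\rho_r(f)=0$ forces the type $\{1,\sigma\}$ supply details the paper leaves implicit.
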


By this proposition, we can see that the biquadratic CM field $E$ is not an invariant for a CM abelian surface $A$. The invariants of $A$ are $t$ and $[\mathfrak a]$ mentioned above. This is very different from the case where $E$ is not biquadratic.

\subsection{Galois action and small CM cycles}
\label{sect:smallCM}

Let $\mathbf A = (A=\C^2/\Lambda,  \lambda) $
 be a principally polarized abelian surface with small CM by $\kay$, and let $\co=\co_A $ be the order of $\kay$ which preserves $\Lambda$. We will also say that $\mathbf A$ has small CM by $\co_A$.  For any finite idele $t \in  T(\A_f)=\kay_f^\times$, let $n(t)$ be the positive rational number  such that  $t\bar t \hat{ \Z} =n(t) \hat\Z$.  We define the action of $\kay_f^\times$ on principally polarized abelian surfaces with small CM by $\kay$ as follows. Let $t\Lambda$ be the lattice in $\Lambda_\Q =\Lambda\otimes_\Z \Q$ such that $\widehat{t\Lambda} = t \hat{\Lambda}$. Alternatively,  $t\Lambda = t \hat{\Lambda}\cap \Lambda_\Q$. Next, we define the principal polarization $t\lambda$ on $t\Lambda$ by
\begin{equation}
t\lambda:  t\Lambda \times t\Lambda \rightarrow \Z, \quad t\lambda( x, y) = \lambda(x, y)/ n(t).
\end{equation}
Indeed, on $\widehat{t\Lambda} =t \hat\Lambda$,
$$
t\lambda(tx, ty) = \lambda(tx, ty)/ n(t) = \lambda(x, y) \frac{t \bar t}{n(t)} \in \hat\Z,
$$
which implies that $t\lambda$ is locally unimodular. It also implies that $t \lambda (x, y) \in \hat\Z \cap \Q=\Z$ for $x, y \in  t\Lambda$. This way, we have defined another principally polarized  abelian surface with small CM by $\kay$,
$$
t\mathbf A = ( tA= \C^2/t\Lambda, \,t\lambda).
$$
Notice that if $t \in \kay^\times$, then  multiplication by $t$ on $\C^2$ induces an isomorphism $ \mathbf A  \cong t \mathbf A$.
When  $t \in \hat{\co}_A^\times$, we have $t\Lambda =\Lambda$ and $t\lambda =\lambda$. So $t\mathbf A$ only depends on the image of $t$ in the class group $\kay^\times \backslash \kay_f^\times/\hat{\co}_A^\times$.

Denote for two positive integers $t$ and $t_2$,
$$
\mathcal L(t, t_2)= \{ \mathbf A=(A(\mathfrak a, t), \lambda):\;  [\mathfrak a]\in \Cl(\co_{d_2}), \, \lambda \hbox{ is a principal polarization on } A\}.
$$
Let $H_{d_1}$ be the ring class field of $\co_{d_1}$ (recall that $d_1 =d_2 t^2$), then $\Cl(\co_{d_1}) \cong \Gal(H_{d_1}/\kay) $ via the Artin map $t \mapsto \sigma_t$, or equivalently $\mathfrak a \mapsto \sigma_{\mathfrak  a}$. Via this identification, the group $\Gal(H_{d_1}/\kay) $ acts on  $\mathcal L(t, t_2)$ via
\begin{equation} \label{eq:GalAction}
\sigma_{\mathfrak a_1} \mathbf A = \left( \C/ \mathfrak a_1 \times \C/\mathfrak a_1 \mathfrak a, \,  \frac{\lambda}{\norm(\mathfrak a_1)}\right)\cong \left(\C/\co_{d_1}\times   \C/\mathfrak a_1^2 \mathfrak a, \,\lambda' \right),
\end{equation}
for some principally polarization $\lambda'$.  We choose one complex conjugation $\sigma$ on $H_{d_1}$ (a fixed preimage of the complex conjugation on $\kay$) which acts on $\mathcal L(t, t_2)$ via
\begin{equation} \label{eq:GalAction2}
\sigma (\mathbf A) = \bar{\mathbf A}= \left( \C/\co_{d_1} \times \C/\bar{\mathfrak a}, \,  \bar\lambda(x, y):=\lambda(\bar x, \bar y)\right).
\end{equation}
This gives an action of the Galois group $\Gal(H_{d_1}/\Q)  $ on $\mathcal L(t, t_2)$. Let $Z(\mathbf A)$ be  the orbit of $\mathbf A$ under the action of $\Gal(H_{d_1}/\Q)  $, or equivalently the formal sum of all Galois conjugations.

\begin{theorem}
\label{theo:Galois}
Let $\mathbf A =(A, \lambda) \in \mathcal A_2(\C)$ be a principally polarized abelian surface with small CM by $\kay$. Let $\tau \in \H_2$ be a corresponding point, and let
$U$ be the negative two plane in $V_\R$ associated to $\Xi(\tau)$ and $z_U^+ =\Xi(\tau)$.  Then  for any $t \in \kay_f^\times$ the translate $t \mathbf A$ corresponds to $[z_U^+, t]$ under the isomorphism $\Xi$, and $\bar{\mathbf A} =\sigma(\mathbf A)$ corresponds to  $\Xi(-\bar\tau)$. In  particular,
$Z(\mathbf A) = Z(\mathcal N)$ under the isomorphism $\Xi$, where $\mathcal N =U \cap L$.
\end{theorem}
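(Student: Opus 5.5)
The plan is to make the identification $\Xi$ of Lemma~\ref{lem: Identification} explicit in lattice-theoretic terms. Then the adelic action $\mathbf A\mapsto t\mathbf A$ and complex conjugation can be compared directly with the actions on $X_L(\C)=G(\Q)\backslash\mathbb D\times G(\A_f)/K$. The guiding principle is that a point $[z,g]$ of the orthogonal Shimura variety corresponds to the principally polarized abelian surface with lattice $\Lambda_g:=g\hat\Lambda\cap\Lambda_\Q$ and rescaled symplectic form. Here $G(\A_f)\cong\GSp_4(\A_f)$ acts on $\hat\Lambda=\hat\Z^4$, the complex structure is determined by $z$, and the polarization is $\lambda/\nu(g)$, with $\nu$ the similitude character. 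This is the standard modular interpretation of $\GSp_4(\Q)\backslash\H_2^\pm\times\GSp_4(\A_f)/\GSp_4(\hat\Z)$, transported through the isomorphism $\GSp_4\cong\GSpin(V)$.

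\textbf{Step 1: embedding the torus.} With $\mathbf A=\mathbf A_\tau$ and $\tau\in\Mat_2(\kay)$ (Proposition~\ref{prop:smallCM}), I identify $\Lambda_\Q\cong\kay^2$, so that $\kay^\times$ acts on $\Lambda_\Q$ by scalar multiplication. This defines an embedding $\kay^\times\hookrightarrow\GSp(\Lambda_\Q,\lambda)\cong\GSp_4(\Q)$ with similitude $\nu(a)=a\bar a$, since $\lambda(ax,ay)=a\bar a\,\lambda(x,y)$ ($\lambda$ being the imaginary part of a $\kay$-Hermitian form). The image commutes with the complex structure $J_\tau$, so it fixes $\Xi(\tau)$.

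Next I check that under $\Xi$ this subgroup is exactly $T=\GSpin(\mathcal N_\Q)$, where $\mathcal N=U\cap L$. It acts trivially on $U$ up to the spin cover, because it commutes with $J_\tau$, and $U_\R\cap V$ is spanned by the rational traceless Rosati-symmetric elements anticommuting with the $\kay$-structure. Equivalently, $T$ is the pointwise stabilizer in $G$ of $U^\perp=\mathcal P_\Q$. Both tori are one-dimensional over $\kay$ and the inclusion is clear, so they coincide. I expect this to be a routine but somewhat careful computation: one has to confirm $\mathcal N_\Q=U\cap V$ is a rational negative plane and match the even Clifford algebra with $\kay$.

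\textbf{Step 2: Galois compatibility.} For $t\in\kay_f^\times=T(\A_f)$, the point $[z_U^+,t]$ corresponds by the principle above to the lattice $t\hat\Lambda\cap\Lambda_\Q=t\Lambda$ with the same complex structure and polarization $\lambda/n(t)$, where $\nu(t)=t\bar t$ generates $n(t)\hat\Z$. This is precisely $t\mathbf A$. The scaling by a finite idele versus the positive rational $n(t)$ differs by a unit in $\hat\Z^\times$, which is absorbed by $K=\GSp_4(\hat\Z)$. For conjugation, the complex conjugate of $\mathbf A_\tau$ is $(\C^2/\overline{\Lambda_\tau},\bar\lambda)$. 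Its period matrix is $\bar\tau\in\H_2^-$, which is equivalent under $\diag(I,-I)\in\GSp_4(\Z)$ to $-\bar\tau\in\H_2$; hence $\bar{\mathbf A}$ corresponds to $\Xi(-\bar\tau)$. This point is $z_U^-$, since it gives the same plane $U$ with the opposite orientation.

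\textbf{Step 3: identifying the cycles.} The Shimura reciprocity in \eqref{eq:GalAction} says that $\sigma_{\mathfrak a_1}$ acts as $t\mapsto t\mathbf A$ for $t$ an idele of $\mathfrak a_1$. By Step 2, the $\Gal(H_{d_1}/\kay)$-orbit is therefore $\{z_U^+\}\times T(\Q)\backslash T(\A_f)/K_T$. It remains to check that the stabilizer $K_T=\{a:(a/\bar a)\hat L=\hat L\}$ matches $\hat\co_{d_1}^\times$ modulo the kernel of the action on points, so that the orbit has no repetitions. Combined with the conjugation statement, this gives $Z(\mathbf A)=Z^+(\mathcal N)+Z^-(\mathcal N)=Z(\mathcal N)$.

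The main obstacle is this last matching of $K_T$ with the stabilizer of $\mathbf A$, i.e., of $t\Lambda=\Lambda$ and $t\lambda=\lambda$. One must show that $a\in\hat\kay^\times$ with $a\hat\Lambda\cong\hat\Lambda$ symplectically up to $\GSp_4(\hat\Z)$ is the same as $a$ acting on $\hat L$ trivially modulo scalars. I would handle this by working with the lattice of $\kay$-multiples and using $\co_{\mathbf A}=\co_{d_2}$ together with the rational description of $K\cap T(\A_f)$.
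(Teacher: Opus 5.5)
Your Steps~1--2 follow the paper's route. The paper transports the adelic moduli description of $\GSp_4(\Q)\backslash\H_2^\pm\times\GSp_4(\A_f)/\GSp_4(\hat\Z)$ through $\Xi$ and cites Shimura reciprocity (for the CM type $(E,\Phi)$ of Proposition~\ref{prop:CM}, with reflex $(\kay,\{1\})$) to get $[\tau,t]\leftrightarrow t\mathbf A$. You instead make explicit that $T(\A_f)=\hat\kay^\times$ sits in $\GSp(\hat\Lambda)$ as scalars, which the paper leaves implicit. Two small inaccuracies there:
\begin{itemize}
\item $\kay^\times$ does not act trivially on $U$; it acts by the rotation $a/\bar a$. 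What you need is that it fixes $U^\perp=\mathcal P_\Q$ pointwise, which holds because $\mathcal P_\Q\subset M_2(\kay)$ consists of $\kay$-linear maps.
\item $\Xi(-\bar\tau)$ does not itself span $U$. It equals $\diag(I,-I)\cdot\Xi(\bar\tau)$, where $\Xi(\bar\tau)=\overline{\Xi(\tau)}=z_U^-$. The two points agree only in $X_L(\C)$, because $\diag(I,-I)\in\GSp_4(\Z)\subset G(\Q)\cap K$. That is all you need.
\end{itemize}

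Step~3, however, aims at a false target. You cannot prove that the orbit has no repetitions, i.e.\ that $K_T$ is the full stabilizer of the point, because this fails in general. The paper's own example $y^2=x^6+x^3+\frac{4}{17}$, with $d=-51$, $h_d=2$ and $\fraka$ non-principal, is defined over $\Q$. Hence $\sigma_{\fraka_1}\mathbf A\cong\mathbf A$ for the non-trivial class, and $[z_U^+,t]=[z_U^+,1]$ for $t\notin \kay^\times K_T$.

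Nothing of this sort is needed. $Z(\mathbf A)$ and $Z(\mathcal N)$ are formal sums, indexed by $\Gal(H_{d_1}/\Q)$ and by $\{\pm\}\times T(\Q)\backslash T(\A_f)/K_T$ respectively. So you only need $K_T=\hat\co_{d_1}^\times$, and this is immediate. With $K=\GSp(\hat\Lambda)$ and $T(\A_f)$ acting by scalars,
\[
K_T=\{a\in\hat\kay^\times:\ a\hat\Lambda=\hat\Lambda\}.
\]
The multiplier $a\bar a$ is then automatically in $\hat\Z^\times$, since $\lambda$ is unimodular. Thus $K_T=\hat\co_\Lambda^\times$ with
\[
\co_\Lambda=\co_{d_1}\cap\co_{d_2}=\co_{d_1}.
\]
Note that $\co_\Lambda$ is not $\co_{d_2}$ as you wrote; that would give the wrong class number when $t>1$. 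With this, the identification $\sigma_t\mathbf A=t\mathbf A\leftrightarrow[z_U^+,t]$ from Step~2 matches the two sums term by term.
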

\begin{proof} This follows directly from Shimura's reciprocity law. Indeed, the isomorphism $\Xi$ comes from the isomorphism
\begin{align*}
\Xi:  \GSp_4(\Q) \backslash \H_2^\pm \times \GSp_4(\A_f)/\GSp_4(\hat\Z)  &\cong  \GSpin(V)(\Q) \backslash  \mathbb D \times \GSpin(V)(\A_f)/K,
\\  [\tau, g] &\mapsto [\Xi(\tau),  \Xi(g)],
\end{align*}
where $\Xi: \GSp_4 \cong \GSpin(V) $ was given in  Section \ref{sect:siegel3fold}.  We are working on the Siegel modular threefold.  By Proposition \ref{prop:CM},  there is a CM type $(E=\Q(\sqrt d, \sqrt{D}), \Phi=\{1, \sigma\})$ such that $\mathbf A =(\C^2/\Lambda, \lambda)$ has CM by  $(E, \Phi)$. The reflex type  of $(E, \Phi)$ is  $(\kay, \tilde\Phi=\{1\})$. Now the Shimura reciprocity law \cite[Theorems 3.1, 3.2]{Ya16} shows that  $[\tau, t]$ gives $ t\mathbf A =(\C^2 /(t)\Lambda,  \lambda/n(t))$ as claimed, where $(t)$ is the $\co_{d_1}$-ideal generated by $t$.  In \cite{Ya16}, $\co$ is the full ring of the integers in $E$, the conclusion there also works for more general orders as in our case.
\end{proof}

\begin{corollary} \label{cor:DefinitionField} Let $\mathbf A =(A(\mathfrak a, t), \lambda) \in \mathcal A_2(\C)$ be a principally polarized abelian surface with small CM by $\co_{d_1}$. Then $\mathbf A$ is actually defined over $H_{d_1}$.
\end{corollary}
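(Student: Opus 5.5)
The plan is to deduce this from Theorem~\ref{theo:Galois} together with the description \eqref{eq:GalAction}, \eqref{eq:GalAction2} of the Galois action on $\mathcal L(t,t_2)$. The field of moduli of $\mathbf A$ is the fixed field of the stabilizer of the point $[\mathbf A]\in\mathcal A_2(\C)$ in $\Aut(\C/\Q)$. Since $\mathbf A$ has CM (by Proposition~\ref{prop:CM} it has CM type $(E,\Phi)$ with reflex field $\kay$), its field of moduli is a finite abelian extension of $\kay$. By Shimura reciprocity, in the form used in the proof of Theorem~\ref{theo:Galois}, an element $\sigma\in\Aut(\C/\kay)$ acts on $\mathbf A$ through its Artin idele $t\in\kay_f^\times$, sending $\mathbf A$ to $t\mathbf A$.

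The key step is to observe that $t\mathbf A\cong\mathbf A$ whenever $t\in\kay^\times\hat\co_{d_1}^\times$. The order $\co_A$ preserving $\Lambda$ is $\co_{\gcd}$, which for $A=\C/\co_{d_1}\times\C/\mathfrak a$ with $\co_{\mathfrak a}=\co_{d_2}$ equals $\co_{d_1}$, because $\co_{d_1}\subset\co_{d_2}$. For $t\in\hat\co_{d_1}^\times$ we have $t\Lambda=\Lambda$ and $t\lambda=\lambda$, since $n(t)=1$. For $t\in\kay^\times$, multiplication by $t$ gives an isomorphism $\mathbf A\cong t\mathbf A$, as already noted in Section~\ref{sect:smallCM}. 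Hence the action of $\Aut(\C/\kay)$ on $[\mathbf A]$ factors through $\kay^\times\backslash\kay_f^\times/\hat\co_{d_1}^\times\cong\Gal(H_{d_1}/\kay)$. So $\Aut(\C/H_{d_1})$ fixes $[\mathbf A]$, and the field of moduli is contained in $H_{d_1}$.

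The remaining step, which I expect to be the main obstacle, is to pass from field of moduli to field of definition. For principally polarized abelian varieties this is not automatic, since the coarse space $\mathcal A_2$ is not a fine moduli space. I would argue as follows. Decompose $A\cong E_1\times E_2$ with $E_1=\C/\co_{d_1}$ and $E_2=\C/\mathfrak a$. By classical CM theory $j(E_1)\in H_{d_1}$ and $j(E_2)\in H_{d_2}\subset H_{d_1}$, so both curves have models over $H_{d_1}$, and one can choose these models so that all CM endomorphisms and the isogenies in $\Hom(E_1,E_2)$ are defined over $H_{d_1}$. The latter is possible because such homomorphisms are defined over $H_{d_1}$ once the models are chosen compatibly, for instance as twists realized by ideal actions. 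Then $A$ is defined over $H_{d_1}$, and the Néron–Severi group $\operatorname{NS}(A)\cong\operatorname{NS}(E_1)\oplus\operatorname{NS}(E_2)\oplus\Hom(E_1,E_2)$ consists of classes defined over $H_{d_1}$. In particular $\lambda$ is Galois-invariant as a class, so $\lambda$ is defined over $H_{d_1}$ as a polarization, because a polarization is determined by its class in $\operatorname{NS}$.

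Should this direct construction be delicate, the fallback is the standard descent argument. An automorphism group obstruction lies in $H^2$ of the Galois group with coefficients in $\Aut(\mathbf A)$, which is a finite group containing $\pm1$. For abelian varieties with polarization, a model over the field of moduli exists; this is a classical result of Shimura for CM abelian varieties, valid whenever the field of moduli contains the reflex field. Here that field is $H_{d_1}\supset\kay$.
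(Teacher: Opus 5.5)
Your proposal is correct, and it really contains two arguments. Your first paragraph is the paper's route. The corollary is stated there as an immediate consequence of Theorem~\ref{theo:Galois}: Shimura reciprocity makes $\Gal(\bar{\Q}/\kay)$ act on $\mathbf A$ through $\kay^\times\backslash\kay_f^\times/\hat\co_{d_1}^\times\cong\Gal(H_{d_1}/\kay)$, so $\Gal(\bar{\Q}/H_{d_1})$ fixes the isomorphism class of $\mathbf A$. The paper does not separately address descent from field of moduli to field of definition.

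Your second paragraph does address it, by a genuinely different and more elementary argument. Take models of $E_1,E_2$ over $F=H_{d_1}\supseteq\kay$ for which $\End(E_i)$ and $\Hom(E_1,E_2)$ are Galois-fixed. Then all of $\Hom(A,A^\vee)$ is defined over $F$, so $\lambda$ descends. This alone proves the corollary, using only classical CM theory for elliptic curves ($j(\co_{d_1})\in H_{d_1}$) rather than reciprocity for abelian surfaces. It even gives a single model of $A(\fraka,t)$ over $H_{d_1}$ on which every principal polarization is defined. What the reciprocity route buys instead is the explicit Galois orbit $\{\sigma_{\fraka_1}\mathbf A\}$, which the paper needs for $Z(\mathbf A)=Z(\calN)$.

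The compatible-models step is only asserted, but it can be made precise as follows.
\begin{itemize}
\item Take any model of $E_1$ over $F$. Since $F\supseteq\kay$, $\End(E_1)=\co_{d_1}$ is defined over $F$.
\item Hence $\Gal(\bar F/F)$ acts on $E_1[N]\cong\co_{d_1}/N\co_{d_1}$ by module automorphisms, i.e.\ by units, and so preserves every $\co_{d_1}$-stable subgroup.
\item For $0\neq x\in\fraka$, the subgroup $x^{-1}\fraka/\co_{d_1}$ is such a subgroup. So $E_2:=E_1/(x^{-1}\fraka/\co_{d_1})\cong\C/\fraka$ and the quotient isogeny $\phi$ are defined over $F$.
\item Any $\psi\in\Hom(E_1,E_2)$ is then Galois-fixed, because $\deg(\phi)\,\psi=\phi\circ(\hat\phi\circ\psi)$ with $\hat\phi\circ\psi\in\End(E_1)$.
\end{itemize}

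Drop the fallback. You only proved that the field of moduli is \emph{contained} in $H_{d_1}$, and it can be strictly smaller. For example, it is $\Q$ when $h_{d_1}=1$ and $\bar{\mathbf A}\cong\mathbf A$, so it need not contain $\kay$, contrary to what your first paragraph asserts. It also need not be a field of definition; compare remark (4) in Section~\ref{sect:example} on curves with $\Aut(C)=C_2$. So the quoted ``classical result of Shimura'' is neither stated precisely nor applied to the right field.
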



\begin{remark} The polarization plays a  critical  role in Theorem \ref{theo:Galois}. Without the polarization, the Galois group does not act faithfully on the product of CM  elliptic curves. For example, when $[\mathfrak a_1]^2 =1$ in the class group, then
\begin{align*}
\sigma_{\mathfrak a_1} (\C/\co_{d_1} \times \C/\mathfrak a)
 &= (\C/\mathfrak a_1 \times \C/ \mathfrak a_1 \mathfrak a) \cong (\C/\co_{d_1} \times \C/\mathfrak a).
\end{align*}

\end{remark}

\section{Lattices associated to abelian surfaces with small CM}
\label{sect:lattice}

\subsection{Set-up and natural construction of $L$} \label{sect:set-up}
From now on, we assume $\mathbf A =(A(\mathfrak a,t), \lambda)$ as in  (\ref{eq:canonical}) with $d_i=d t_i^2$ and $t_1 =t_2 t$. Write $\Lambda=\Lambda_\mathbf A = \co_{d_1} \oplus \mathfrak a\subset \kay^2$ as column vectors.   It is easy to see that
\begin{equation} \label{eq:EndA}
\End(A)= \zxz {\co_{d_1} } {\frac{t}{a}  \bar{\mathfrak a}}  {\mathfrak a} {\mathcal O_{d_2}} \subset \Mat_2(\kay).
\end{equation}
 Here   we write elements in  $\Lambda= \co_{d_1}  \oplus \mathfrak a$ as column vectors, and identify
 $$
 \frac{t}{a} \bar{\mathfrak a} = t \mathfrak a^{-1} \cong \hbox{Hom}_{\co} (\mathfrak a, \co),  \quad x \mapsto  l_{x}=\hbox{multiplication  by } x.
 $$
We  write
\begin{equation}
\label{eq:omega2}
\mathfrak a =\Z \omega_2  + \Z a, \quad  \omega_2 =\frac{b + \sqrt{d_2}}{2 }, \, a =\norm(\mathfrak a)>0, \,  c =\frac{b^2 -d_2}{4a} \in \Z.
\end{equation}
The quadratic form $Q(x) = x \bar x/a$ on $\mathfrak a$ is then given by $[a, b, c]= ax^2 + b xy + c y^2$.  Similarly, we write
\begin{equation}
\co_{d_1} = \Z \omega_1 + \Z 1,  \quad \omega_1 = \frac{b_1 + \sqrt{d_1}}2,  \,   a_1 =1, \hbox{ and }  c_1 = \frac{b_1^2 -d_1}{4a_1}.
\end{equation}

Later in this paper, we need to understand the lattice $\mathcal N =U\cap L$ and its complement $\mathcal P$ in $L$ for Schofer's  small CM value formula (Theorem  \ref{thm:SmallCM}), where $U$ is the negative two plane associated to $\Xi(\tau)$ when  we write $\mathbf A =A_\tau$. It turns out to be rather difficult to understand $\mathcal N$ (and its complement $\mathcal P$ in $L$)  this way. In this section, we will construct $L$, $\mathcal P$, and $\mathcal N$ directly from $\mathbf A$, and prove the necessary results about $\mathcal N$ and $\mathcal P$.
Recall that the principal polarization
$$
\lambda:  \Lambda \times \Lambda \rightarrow \Z
$$
is a unimodular symplectic form  on $\Lambda$ (i.e., the map is surjective) such that $\lambda(ix, x)$ is a positive definite quadratic form on $W=\C^2$.
For an endomorphism $f \in  \hbox{End}(A)$, we view it as an endomorphism  of $W$ which preserves $\Lambda$. Then its Rosati involution $f^\dagger$ is given, as an endomorphism of $W$,  by
\begin{equation} \label{eq:Rosati}
\lambda(f^\dagger(x), y) = \lambda(x, f(y)).
\end{equation}
Notice that the definition of the Rosati involution  does not depend on the complex structure on $V$ and depends only on the non-degenerate symplectic form on $\Lambda$. Define lattices 
\begin{align}
\label{eq:L}
L_{\mathbf A}&= \{ f \in \hbox{End}(\Lambda):\;  f^\dagger =f ,\,  \tr (f) =0\} ,
\\
\mathcal P_{\mathbf A}  &= \{ f \in  \hbox{End}(A):\; f^\dagger =f ,\,  \tr (f) =0\}  \subset L_{\mathbf A}  \notag
\end{align}
equipped with the quadratic form $Q(f) = \frac{1}4 \tr (f^2) =  \frac{1}4 \tr (f f^\dagger) $. Here the trace $\tr$ is taken as the trace of an endomorphism of the rank $4$ lattice $\Lambda$. Using the terminology in  \cite[Section~2]{KRSiegel}, $\mathcal P_{\mathbf A}$ is the lattice of special endomorphisms   of $(A, \lambda)$.  In particular, \cite[Lemma~2.2]{KRSiegel} asserts that  $f^2 =Q(f)  \id$ for $f \in  \mathcal P_{\mathbf A}$.  Let
\begin{equation} \label{eq:N}
\mathcal N_{\mathbf A} =\{ x \in L_{\mathbf A}: \; (x, y) =0  \hbox{ for all } y \in \mathcal P_{\mathbf A}\}
\end{equation}
be the orthogonal complement of $\mathcal P_{\mathbf A}$ in $L_{\mathbf A}$.  We will also need later
\begin{align}
\tilde L_{\mathbf A} &= \{ f \in \hbox{End}(\Lambda):\;  f^\dagger =f \} ,
\\
\tilde{\mathcal P }_{\mathbf A}  &= \{ f \in  \hbox{End}(A):\; f^\dagger =f   \}  \subset  \tilde L_{\mathbf A} . \notag
\end{align}

Let $\{ e_1, \dots, e_4\}$ be a  basis of $\Lambda$ with  Gram matrix  $J_2$, and identify $\alpha \in \End(\Lambda)$ with $\alpha =\kzxz {A} {B} {C} {D} \in \Mat_4(\Z)$  via
$$
(\alpha(e_1), \dots,  \alpha(e_4)) =( e_1, \dots, e_4) \begin{pmatrix} A & B \\ C &D\end{pmatrix}.
$$

\begin{lemma}
\label{lem:Iden}
Let the notation be as above.
\begin{enumerate}
\item[(1)]
For $\alpha = \kzxz {A} {B} {C} {D} \in \Mat_4(\Z) = \End(\Lambda)$, we have
 $$
\alpha^\dagger = J_2^{-1} \,  {}^t\alpha \,J_2=
\kzxz  {{}^tD} {- {}^tB} {-{}^tC} { {}^tA}.
 $$
\item[(2)]
The quadratic lattice $(\tilde L_{\mathbf A}, Q)$ is unimodular  of signature $(4, 2)$ (but not even) and is independent of $\mathbf A$ (up to isomorphism).  In fact,
$$
\tilde L_{\mathbf A} \cong \tilde L:=\{x= \kzxz {{}^tA} {J_2 a} {-J_2 b} {A}:\; a, b \in \Z, \, A=\kzxz {r} {c} {d} {s} \in \Mat_2(\Z)\}
$$
with quadratic form $Q(x) = \frac{1}4 \tr(x^2) =\frac{1}2 \tr (A^2) + ab = \frac{r^2+s^2}2 + ab+ cd$.
\item[(3)]
The quadratic lattice $(L_{\mathbf A}, Q)$ is isomorphic to the  quadratic lattice  defined in Section~\ref{sect:siegel3fold}, that is,
  $$
L_{\mathbf A} \cong   L=\{ x(a, b,c,d,r)=  \kzxz {{}^t(J\tau)} {Ja} { -Jb} {J\tau}:\;   a , b \in \Z,\,  \tau =\kzxz {c} {r} {r} {d} \in \Mat_2(\Z) \}
$$
with the quadratic form
$$
Q(x) = \frac{1}4 \tr(x^2) = ab + r^2  -cd = ab - \det \tau .
  $$
\item[(4)]
The lattice $\mathcal N_{\mathbf A}$ is also the orthogonal complement of $\tilde{\mathcal P}_{\mathbf A}$ in  $\tilde L_{\mathbf A}$.
\end{enumerate}
\end{lemma}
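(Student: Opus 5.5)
The plan is to prove the four parts in order, working in the fixed symplectic basis $e_1,\dots,e_4$ of $\Lambda$ with Gram matrix $J_2$. Everything except the orthogonality statement in (4) is an explicit matrix computation.

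For (1), I would unwind the definition \eqref{eq:Rosati} in coordinates. Writing $\lambda(x,y)={}^tx J_2 y$ for coordinate vectors, the defining identity $\lambda(\alpha^\dagger x,y)=\lambda(x,\alpha y)$ for all $x,y$ becomes ${}^t(\alpha^\dagger)J_2=J_2\alpha$. Hence $\alpha^\dagger=J_2^{-1}\,{}^t\alpha\, J_2$, up to sign conventions for $J_2^{-1}=-J_2$ and ${}^tJ_2=-J_2$. Multiplying out in $2\times2$ blocks then gives $\kzxz{{}^tD}{-{}^tB}{-{}^tC}{{}^tA}$. I would double-check the convention that $\alpha$ acts on coordinates by the matrix, so that the transpose lands correctly.

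For (2), the condition $\alpha^\dagger=\alpha$ reads $A={}^tD$, ${}^tB=-B$ and ${}^tC=-C$. Antisymmetric integral $2\times2$ matrices are of the form $B=J_1 a$ and $C=-J_1 b$ with $a,b\in\Z$. Renaming $D$ as $A$ gives exactly the displayed shape of $\tilde L$, and the description is the same for every $\mathbf A$ because only the Gram matrix $J_2$ enters. Next I would compute $x^2$ blockwise. Its diagonal blocks are $({}^tA)^2+ab\,I$ and $A^2+ab\,I$, using $J_1^2=-I$. Taking the trace and dividing by $4$ gives $\frac12\tr(A^2)+ab=\frac{r^2+s^2}2+cd+ab$. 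With respect to the coordinates $(r,s,c,d,a,b)$, the associated bilinear form $(x,y)=Q(x+y)-Q(x)-Q(y)$ has Gram matrix $I_2\oplus H\oplus H$, where $H$ is the hyperbolic plane. This has determinant $\pm1$ and signature $(4,2)$, and it is odd because $Q$ takes the value $\frac12$.

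For (3), the extra condition $\tr(x)=0$ becomes $2\tr(A)=0$, that is, $s=-r$. Such an $A$ can be written as $J_1\tau$ with $\tau$ symmetric, since $J_1^{-1}A=-J_1A$ is symmetric precisely when $\tr A=0$. Substituting into the formula from (2) gives $Q=r^2+ab+cd$ in the new coordinates. After relabeling the entries and signs this matches $ab+r^2-cd=ab-\det\tau$. The step to be careful with is matching the sign conventions of $x(a,b,c,d,r)$ exactly, possibly by replacing $b$ with $-b$ or transposing.

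For (4), I would use $\tilde L_{\mathbf A}=L_{\mathbf A}\oplus\Z\,\id$ rationally. The identity is orthogonal to $L_{\mathbf A}$, since $(x,\id)=\frac12\tr(x)=0$ for traceless $x$. Moreover $\id\in\tilde{\mathcal P}_{\mathbf A}$, and $\tilde{\mathcal P}_{\mathbf A}\otimes\Q=\mathcal P_{\mathbf A}\otimes\Q\oplus\Q\,\id$, because the traceless part $f-\frac14\tr(f)\id$ of a self-adjoint endomorphism is again an endomorphism of $A$ after scaling. Therefore the orthogonal complement of $\tilde{\mathcal P}_{\mathbf A}$ in $\tilde L_{\mathbf A}$ consists of the elements orthogonal both to $\id$ and to $\mathcal P_{\mathbf A}$. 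Being orthogonal to $\id$ means having trace $0$, so this complement lies in $L_{\mathbf A}$ and coincides with $\mathcal N_{\mathbf A}$. Orthogonal complements depend only on the rational span, so the rational argument suffices.

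I expect the main obstacle to be purely bookkeeping: keeping the sign conventions consistent in (1) and in the identification of (3).
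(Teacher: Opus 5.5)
Your proposal is correct and matches the paper's proof. The paper leaves (1)--(3) to the reader as direct calculations, and your block computations carry them out correctly: in (3) one gets $\tau=-J_1A=\kzxz{-d}{r}{r}{c}$, so $Q=ab-\det\tau$ with no sign change, and the off-diagonal blocks of $\tilde L$ are indeed $J_1a$ and $-J_1b$, the $J_2$ in the displayed statement being a typo. For (4) the paper uses exactly your argument: $\Z I_4$ is the orthogonal complement of $L_{\mathbf A}$ in $\tilde L_{\mathbf A}$, and $\mathcal P_{\mathbf A}\oplus\Z I_4$ has finite index in $\tilde{\mathcal P}_{\mathbf A}$.
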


\begin{proof}
Claims (1)--(3) follow from a direct calculation which we leave to the reader.

To prove (4),  notice that the orthogonal complement $L_{\mathbf A}^\perp$ of $L_{\mathbf A}$ in $\tilde L_{\mathbf A}$ is $\{a I_4:\, a \in \Z\}$. Hence
$$
\mathcal P_{\mathbf A} \oplus L_{\mathbf A}^\perp \subset \tilde{\mathcal P}_{\mathbf A}
$$
is of finite index.
Since $\mathcal N_{\mathbf A}$ is contained in $L_{\mathbf A}$ and is the orthogonal complement of $\mathcal P_{\mathbf A}$ in $L_{\mathbf A}$, it is also the orthogonal complement of $\tilde{\mathcal P}_{\mathbf A}$ in $\tilde L_{\mathbf A}$.
\end{proof}

From now on, we will identify the  lattices $L_{\mathbf A}$ with the fixed lattice $L$ via the isomorphism in Lemma~\ref{lem:Iden} (3), and analogously the $\tilde L_{\mathbf A}$ with  $\tilde L$.  This also identififies, the lattices $\mathcal P_{\mathbf A}$ and $\mathcal N_{\mathbf A}$ as sublattices of $L$, and analogously the $\tilde{\mathcal P}_{\mathbf A}$ as  sublattices of $\tilde L$. We will omit subscript $\mathbf A$ when it is clear from the context. 
The following lemma   follows  directly from \cite[Section~2]{KRSiegel} (essentially  Albert's classification of abelian surfaces).

\begin{lemma}\label{lem:KR} Let $(A, \lambda)$ be a principally polarized abelian surface. Let $e_1, \dots, e_r$ be a $\Z$-basis of the positive definite quadratic lattice $\mathcal P$, where $0 \le r \le 3$ is the rank of  $\mathcal P$.
Then  $(A, \lambda)$ belongs to the sub-Shimura variety associated to the orthogonal complement of $\mathcal P$. In particular, if  $\calP$ has rank $3$ (maximal rank), then  $(A, \lambda) = z_\mathcal N^+$ is a small CM point associated to the negative definite lattice  $\mathcal N=\mathcal P^\perp$ in $L$.
\end{lemma}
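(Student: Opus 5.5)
The plan is to identify the lattice $L_{\mathbf A}$ with the lattice $L$ of Section~\ref{sect:siegel3fold} compatibly with the period point $\Xi(\tau)$. Then $\mathcal P$ becomes exactly the set of vectors of $L$ orthogonal to the negative $2$-plane $U$ attached to $\mathbf A$. Once this is done, the statement follows from pure linear algebra.

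First, choose a symplectic basis $e_1,\dots,e_4$ of $\Lambda$, so that $\mathbf A\cong\mathbf A_\tau$ for some $\tau\in\H_2$. Using Lemma~\ref{lem:Iden}(3), identify $L_{\mathbf A}\cong L$, with the quadratic form $Q(f)=\frac14\tr(f^2)$ and $\GSp_4(\Q)$ acting by conjugation. This is the same model on which $\Xi$ of Lemma~\ref{lem: Identification} is defined.

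The complex structure on $\Lambda_\R=\C^2$ is a real endomorphism $j$ with $j^2=-1$. For $f\in L_\R$ we have $f\in\End(A)_\R$ if and only if $f$ commutes with $j$. I would compute the subspace of $L_\R$ commuting with $j$ and show that it equals $U^\perp$, where $U=\R u+\R v$ and $\Xi(\tau)=u+iv$.

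The cleanest route is $\Sp_4(\R)$-equivariance. Both sides transform compatibly under $g\in\Sp_4(\R)$: the complex structure becomes $gjg^{-1}$ and the plane becomes $gU$, by the $G$-equivariance in Lemma~\ref{lem: Identification}. So it suffices to check the base point $\tau=iI_2$. There $j$ is an explicit matrix, $\Xi(iI_2)$ is explicit, and the $j$-commuting self-adjoint traceless matrices form a $3$-dimensional positive definite space. This space can be verified directly to be orthogonal to both $u$ and $v$. This base-point verification is the main computational step. The orientation and sign conventions relating $\Xi$ to $j$ are where I expect care is needed, but they cannot affect $U^\perp$.

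Given $\mathcal P_\R=U^\perp$ inside $L_\R$, we get $\mathcal P=L\cap U^\perp$. The orthogonal complement of $\mathcal P$ in $L_\Q$ is then $U\cap L_\Q$, provided $\mathcal P$ has full rank in $U^\perp$. When $\operatorname{rank}\mathcal P=r$, the point $z=\Xi(\tau)$ is orthogonal to every $x\in\mathcal P$. Hence $z$ lies in the sub-Grassmannian of negative planes in $(\mathcal P_\Q)^\perp_\R$, and so $(A,\lambda)$ lies on the sub-Shimura variety attached to $\mathcal P^\perp$, as the lemma claims.

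When $r=3$, the space $(\mathcal P_\Q)^\perp$ is a rational negative definite $2$-plane. It must equal $U$, since $U$ is contained in it and both have dimension $2$. Therefore $U=\mathcal N_\R$ with $\mathcal N=\mathcal P^\perp\cap L$, and $z=\Xi(\tau)$ is one of the two points $z_{\mathcal N}^\pm$. The sign is $+$ because $\tau\in\H_2$ lies in the fixed component $\mathbb D^+$. Finally, $\mathcal P$ is positive definite because $f^2=Q(f)\id$ by \cite[Lemma~2.2]{KRSiegel}, and the Rosati involution is positive. This completes the argument.
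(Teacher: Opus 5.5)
Your route differs from the paper's. The paper gives no argument of its own and defers to \cite[Section~2]{KRSiegel} (Albert's classification). You instead prove the lemma directly: you show that, under $L_{\mathbf A}\cong L$, the lattice $\mathcal P$ equals $L\cap U^\perp$ for the period plane $U$, and the rest is linear algebra in signature $(3,2)$.

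The underlying fact is correct and has a convention-free form. An element $f\in\tilde L_\R$ corresponds to the real $2$-form $\lambda(f\cdot,\cdot)$. The $j$-commuting $f$ give the $(1,1)$-part, whose traceless part is $3$-dimensional and positive definite. The $j$-anticommuting $f$ are automatically traceless and orthogonal to the former, and they are the real points of $H^{2,0}\oplus H^{0,2}$, i.e.\ the negative plane of the period point. This makes the proof self-contained and gives positivity of $\mathcal P$ for free. Your rank-$r$ and rank-$3$ linear algebra is fine. (Minor: what you call $\End(A)_\R$ and $\mathcal P_\R$ should be $\{f\in L_\R: fj=jf\}$; $\mathcal P\otimes\R$ has rank $r$.)

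The equivariance step fails as written, and contrary to your claim, the conventions do move $U^\perp$. With $\Lambda_\tau=\tau\Z^2+\Z^2$ and the paper's column convention for $\End(\Lambda)=\Mat_4(\Z)$, one has $(g\tau,I)={}^t(C\tau+D)^{-1}(\tau,I)\,{}^tg$ for $g=\kzxz ABCD\in\Sp_4(\R)$. Hence
\[
j_{g\tau}={}^tg^{-1}\,j_\tau\,{}^tg=(J_2gJ_2^{-1})\,j_\tau\,(J_2gJ_2^{-1})^{-1}.
\]
By contrast, $\Xi(g\tau)$ is proportional to $g\,\Xi(\tau)\,g^{-1}$; you can check this on translations, and one finds $J_2\Xi(\tau)J_2^{-1}=\det(\tau)\,\Xi(-\tau^{-1})$. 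Your base-point check at $iI_2$ does hold. Transporting it, however, shows that the $j_\tau$-anticommuting plane is the plane of $\Xi(-\tau^{-1})$, not of $\Xi(\tau)$.

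Concretely, take $\tau=iY$ with $Y=\diag(y_1,y_2)$, so $j_\tau=\kzxz{0}{Y^{-1}}{-Y}{0}$. An element $\kzxz{0}{\beta J}{\gamma J}{0}\in L_\R$ anticommutes with $j_\tau$ iff $\gamma=y_1y_2\beta$. But $\operatorname{Re}\Xi(iY)=\kzxz{0}{-y_1y_2J}{-J}{0}$, which violates this unless $y_1y_2=1$. At the small CM point $\tau=\diag(i,2i)$ this gives $\mathcal P\not\perp\Xi(\tau)$.

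The repair is painless. $J_2\in\Sp_4(\Z)$ preserves $L$, and $\Xi(-\tau^{-1})$ and $\Xi(\tau)$ represent the same point of $X_L$. So run your argument with $U'$ the plane of $\Xi(-\tau^{-1})$, or equivalently identify $L_{\mathbf A}$ with $L$ via $f\mapsto{}^tf=J_2fJ_2^{-1}$. Then $\mathcal P=L\cap U'^\perp$, and the rest of your proof goes through verbatim, giving the lemma as a statement about the point of $X_L$.
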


In next two subsections, we will show that the converse holds, i.e., if $(A, \lambda)=z_{\mathcal N}^+$ has small CM by $\kay$, then $\mathcal P$ has rank $3$, and $\mathcal N=\mathcal P^\perp $. Furthermore, we will prove some results on the lattices $L$, $\mathcal P$, and $\mathcal N$ associated to $(A, \lambda)$ which are needed later.

\subsection{The product polarization}
We start with a very special and simple case: the product polarization $\lambda_0 =\lambda_1 \times \lambda_2$. We use $\lambda_0$ to distinguish it from general principal polarizations.
Recall that the product polarization  $\lambda_0$ on $\Lambda$  given by
 \begin{equation}
 \lambda_0(x, y) = \tr_{\kay/\Q} \frac{x_1 \bar{y}_1}{\sqrt{d_1}} + \tr_{\kay/\Q} \frac{x_2 \bar{y}_2}{a \sqrt{d_2}}.
 \end{equation}
 We renormalize  the associated positive Hermitian form as (replacing $i$ by $\sqrt{d_1}$ so that $H_0(x, y) \in
 \kay$ for $x, y \in \Lambda$)
 \begin{equation}  \label{eq:Hermitian}
 H_0(x, y) = \lambda_0(\sqrt{d_1} x, y) + \sqrt{d_1}  \lambda_0(x, y)= 2 x_1 \bar{y}_1+ \frac{2 t x_2 \bar{y_2}}{ a }.
 \end{equation}
When  $\alpha \in \End(A)$, its Rosati involution is also characterized by
\begin{equation}
H_0(\alpha x, y) = H_0(x, \alpha_0^\dagger y).
\end{equation}
A direct calculation gives the following lemma.

\begin{lemma} Let the notation be as above. For $\alpha = \kzxz {\alpha_1} {\frac{t}{a} \alpha_2} {\alpha_3} {\alpha_4} \in \End(A) \subset \Mat_2(\kay)$, we have
$$
\alpha^\dagger_0 = \kzxz {\bar{\alpha}_1} {\frac{t}{a} \bar{\alpha}_3} {\bar{\alpha}_2} {\bar{\alpha}_4}.
$$
Moreover,
\begin{enumerate}
\item
$$
\tilde{\mathcal P}_0 =\{ \kzxz {\alpha_1} {\frac{t}{a} \bar{\beta}} {\beta} {\alpha_2}:\;  \alpha_i \in \Z,  \beta \in \mathfrak a\}
$$
has the quadratic form  $Q(\alpha) = \frac{\alpha_1^2+ \alpha_2^2}2 + \frac{t}a \norm(\beta)$.  In particular,
$$
\tilde{\mathcal P}_0'/\tilde{\mathcal P}_0 \cong   \left(\frac{1}{\sqrt{d_1}}\mathfrak a/\mathfrak a,  \frac{t}a \norm \right), \quad |\tilde{\mathcal P}_0'/\tilde{\mathcal P}_0|=|d_1|.
$$

\item
$$
\mathcal P_0 =\{ \kzxz {\alpha} {\frac{t}{a} \bar{\beta}}  {\beta} {-\alpha}:\,  \alpha \in \Z,  \beta \in \mathfrak a\}
$$
has the quadratic form  $Q(\alpha) = \alpha^2 +  \frac{t}a \norm(\beta)$, and
$$
\mathcal P_0'/\mathcal P_0 \cong   (\frac{1}2\Z/\Z \oplus  \frac{1}{\sqrt{d_1}}\mathfrak a/\mathfrak a,  x^2 + \frac{t}a \norm(\beta)),  \quad |\mathcal P_0'/\mathcal P_0|= 2|d_1|.
$$
\end{enumerate}
\end{lemma}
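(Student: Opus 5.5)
The approach is a direct computation in $\Mat_2(\kay)$: first identify the Rosati involution as the adjoint for $H_0$, then read off the self-adjoint elements of $\End(A)$ from \eqref{eq:EndA}, compute $Q$ as a trace over $\Z$, and finally compute the dual lattices. The only nontrivial step is the last one, computing the dual of $\mathfrak a$ for the form $\frac{t}{a}\tr_{\kay/\Q}(\beta\bar\beta')$ when $\co_{d_2}$ is non-maximal.

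\textbf{The Rosati involution.} Since $\lambda_0$ is recovered from $H_0$, the involution $\alpha_0^\dagger$ is the $H_0$-adjoint of $\alpha$. Write $H_0(x,y)={}^t x\, D\, \bar y$ with $D=\diag(2, \frac{2t}{a})$. Then $H_0(\alpha x,y)=H_0(x,\alpha_0^\dagger y)$ forces
$$
\alpha_0^\dagger = D^{-1}\,{}^t\bar\alpha\, D .
$$
For $\alpha=\kzxz{\alpha_1}{\frac ta\alpha_2}{\alpha_3}{\alpha_4}$ the entries are as follows. The diagonal entries are $\bar\alpha_1,\bar\alpha_4$. The $(1,2)$ entry is $\frac12\bar\alpha_3\cdot\frac{2t}{a}=\frac ta\bar\alpha_3$. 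The $(2,1)$ entry is $\frac{a}{2t}\cdot\frac ta\bar\alpha_2\cdot 2=\bar\alpha_2$. This is the displayed formula.

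\textbf{The lattice $\tilde{\mathcal P}_0$.} Self-adjointness means $\alpha_1=\bar\alpha_1$, $\alpha_4=\bar\alpha_4$ and $\alpha_2=\bar\alpha_3$. By \eqref{eq:EndA} we have $\alpha_1\in\co_{d_1}\cap\Q=\Z$ and $\alpha_4\in\co_{d_2}\cap\Q=\Z$. Setting $\beta=\alpha_3\in\mathfrak a$, the upper right entry is $\frac ta\bar\beta\in\frac ta\bar{\mathfrak a}$, which is automatically allowed. This gives the stated description of $\tilde{\mathcal P}_0$.

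\textbf{The quadratic form.} The trace of a $\kay$-linear endomorphism of the rank $4$ $\Z$-lattice $\Lambda$ equals $\tr_{\kay/\Q}$ of its matrix trace. The matrix trace of $\alpha^2$ is $\alpha_1^2+\alpha_2^2+\frac{2t}{a}\beta\bar\beta$, which is rational. Hence
$$
Q(\alpha)=\tfrac14\cdot 2\bigl(\alpha_1^2+\alpha_2^2+\tfrac{2t}{a}\norm(\beta)\bigr)=\frac{\alpha_1^2+\alpha_2^2}{2}+\frac ta\norm(\beta),
$$
with bilinear form $\alpha_1\alpha_1'+\alpha_2\alpha_2'+\frac ta\tr_{\kay/\Q}(\beta\bar\beta')$.

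\textbf{The dual lattices.} The $\Z^2$-part of $\tilde{\mathcal P}_0$ is unimodular, so it remains to find all $\gamma\in\kay$ with $\frac ta\tr(\gamma\bar\beta)\in\Z$ for every $\beta\in\mathfrak a$. Since $\mathfrak a$ is a proper $\co_{d_2}$-ideal, $\mathfrak a\bar{\mathfrak a}=a\,\co_{d_2}$. The trace dual of $\co_{d_2}$ is $\frac1{\sqrt{d_2}}\co_{d_2}$. So the condition reads
$$
\gamma\bar{\mathfrak a}\subset \tfrac{a}{t\sqrt{d_2}}\co_{d_2}, \quad\text{equivalently}\quad \gamma\in\tfrac{1}{t\sqrt{d_2}}\mathfrak a=\tfrac1{\sqrt{d_1}}\mathfrak a ,
$$
where the last equality holds up to sign, using $d_1=t^2d_2$. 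The index is $[\frac1{\sqrt{d_1}}\mathfrak a:\mathfrak a]=\norm(\sqrt{d_1})=|d_1|$. This is the step I expect to need the most care: the invertibility of $\mathfrak a$ (properness) and the different of the non-maximal order $\co_{d_2}$ must be used correctly.

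\textbf{The lattice $\mathcal P_0$.} Imposing $\tr=0$ forces $\alpha_4=-\alpha_1$, since the $\Z$-trace is $2(\alpha_1+\alpha_4)$. Then $Q=\alpha^2+\frac ta\norm(\beta)$ with bilinear form $2\alpha\alpha'+\frac ta\tr(\beta\bar\beta')$. The $\Z$-summand now has dual $\frac12\Z$, so $\mathcal P_0'/\mathcal P_0\cong\frac12\Z/\Z\oplus\frac1{\sqrt{d_1}}\mathfrak a/\mathfrak a$, of order $2|d_1|$.
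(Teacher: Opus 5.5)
Your proposal is correct and is the direct calculation the paper has in mind; the paper gives no further detail, saying only that ``a direct calculation gives the following lemma.'' All your ingredients check out: the $H_0$-adjoint $D^{-1}\,{}^t\bar\alpha\, D$, the $\Z$-trace computed as $\tr_{\kay/\Q}$ of the matrix trace, and the dual of $\mathfrak a$ obtained from $\mathfrak a\bar{\mathfrak a}=a\,\co_{d_2}$ and $\co_{d_2}^*=\frac{1}{\sqrt{d_2}}\co_{d_2}$. That dual computation implicitly uses that $\bar{\mathfrak a}$ is an $\co_{d_2}$-module, which is what turns the trace condition into $\gamma\bar{\mathfrak a}\subset \frac{a}{t\sqrt{d_2}}\co_{d_2}$.
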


 To compute $\tilde L_0$ and $\mathcal N_0$, choose the standard basis of $\Lambda$
 $$
 e_1 =\,   {}^t(\omega_1, 0), \, e_2=\,{}^t(0, \omega_2),  e_3 =\,  {}^t(1, 0). \hbox{ and } e_4 = \,  {}^t(0, a),
 $$
 with Gram matrix $J_2$. With respect to this basis, we identify $\End(\Lambda) =\Mat_4(\Z)$.
 So Lemma~\ref{lem:Iden} applies here. We will use subscript $0$ to indicate the dependence on the polarization $\lambda_0$,

With respect to the chosen standard basis above,  the natural  embedding $\iota:  \End(A) \rightarrow \End(\Lambda)$ is given by the following lemma.

\begin{lemma}
\label{lem:embedding}
For $\alpha =\kzxz{x}{y}{z}{w} \in \End(A) \subset \Mat_2(\kay)$, let $\iota(\alpha) \in \Mat_4(\Z) $ be  the matrix of $\iota(\alpha)$ with respect to the basis $\{ e_1, e_2, e_3, e_4\}$. Write ($a_1 =1$ and $a_2 =a$ in our notation)
\begin{align*}
(x\omega_1, x a_1) &= (\omega_1, a_1) \kzxz {x_1} {x_2} {x_3} {x_4},   \quad  (w \omega_2, w a_2) =  (\omega_2, a_2) \kzxz {w_1} {w_2} {w_3} {w_4},
\\
(z \omega_1, z a_1) &= (\omega_2, a_2) \kzxz {z_1} {z_2} {z_3} {z_4},   \quad  (y\omega_2, ya_2) =  (\omega_1, a_1) \kzxz {y_1} {y_2} {y_3} {y_4}.
\end{align*}
Then  we have
\begin{equation} \label{eq:alpha}
\iota(\alpha)
=
 \begin{pmatrix}
   x_1 & y_1 &x_2 &y_2
   \\
   z_1 & w_1 & z_2 & w_2
   \\
   x_3 & y_3 &x_4 & y_4
   \\
   z_3 &w_3 &z_4 &w_4
 \end{pmatrix}.
\end{equation}
In  particular
$$
\iota(\tilde{\mathcal P}_0) =\{ \iota(\kzxz {\alpha_1} {\frac{t}a \bar{\beta}} {\beta} {\alpha_2} )
=\begin{pmatrix}
 \alpha_1 & z_4 & 0 & -z_2
   \\
   z_1 &\alpha_2  & z_2 &0
   \\
  0 & - z_3 &\alpha_1 & z_1
   \\
   z_3 &0 &z_4 &\alpha_2
 \end{pmatrix}:\,  \alpha_1, \alpha_2 \in \Z,  \beta \in  \mathfrak a\}.
$$
Here the $z_i$ are given by
$$
(\beta \omega_1,  \beta a_1) = ( w_2, a_2) \kzxz {z_1} {z_2} {z_3} {z_4}.
$$
So $\iota(\alpha) \in  L_0$ if and only if $\alpha_2 =-\alpha_1$.
\end{lemma}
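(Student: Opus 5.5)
The plan is a direct computation: we evaluate $\iota(\alpha)$ on each basis vector and read off the columns. We then specialize to Rosati-symmetric $\alpha$, using Lemma~\ref{lem:Iden}(1) to avoid computing the off-diagonal entries $y_i$ by hand.

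\emph{Step 1: the general matrix.} An element $\alpha=\kzxz{x}{y}{z}{w}\in\End(A)$ acts on column vectors in $\Lambda=\co_{d_1}\oplus\mathfrak a$. We have $\alpha e_1={}^t(x\omega_1,z\omega_1)$, $\alpha e_2={}^t(y\omega_2,w\omega_2)$, $\alpha e_3={}^t(x,z)$ and $\alpha e_4={}^t(ya,wa)$. Each coordinate is then expanded in the appropriate basis: $\{\omega_1,1\}$ for the first coordinate (which lies in $\co_{d_1}$) and $\{\omega_2,a\}$ for the second (which lies in $\mathfrak a$). The expansions are exactly the defining relations of the $x_i,y_i,z_i,w_i$. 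For example, $x\omega_1=x_1\omega_1+x_3$ and $z\omega_1=z_1\omega_2+z_3a$, so the first column is ${}^t(x_1,z_1,x_3,z_3)$ in the ordering $e_1,e_2,e_3,e_4$. The second column comes from $y\omega_2=y_1\omega_1+y_3$ and $w\omega_2=w_1\omega_2+w_3a$. The third and fourth columns come from the second columns of the defining $2\times2$ matrices. This gives \eqref{eq:alpha}.

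\emph{Step 2: specialization to $\tilde{\mathcal P}_0$.} Take $x=\alpha_1$, $w=\alpha_2\in\Z$, $z=\beta\in\mathfrak a$ and $y=\frac ta\bar\beta$. Multiplication by an integer gives $x_1=x_4=\alpha_1$, $x_2=x_3=0$, and likewise for $w$. Rather than expand $\frac ta\bar\beta\,\omega_2$ directly, we use that such $\alpha$ satisfies $\alpha_0^\dagger=\alpha$, by the preceding lemma. By Lemma~\ref{lem:Iden}(1), $\iota(\alpha)=\kzxz{A}{B}{C}{D}$ is then Rosati-symmetric exactly when $D={}^tA$ and $B$, $C$ are antisymmetric.

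Comparing with \eqref{eq:alpha}, this forces $y_4=z_1$, $y_1=z_4$, $y_2=-z_2$ and $y_3=-z_3$. It also reconfirms that the diagonal entries of $B$ and $C$ vanish. The result is the displayed matrix for $\iota(\tilde{\mathcal P}_0)$. As a consistency check, one can verify one of these relations, say $y_4=z_1$, directly from $\bar\beta\omega_2+\beta\bar\omega_2$ and $\norm(\mathfrak a)=a$. The only real obstacle is keeping track of which basis ($\{\omega_1,1\}$ or $\{\omega_2,a\}$) and which side each coefficient refers to. I would guard against errors by this one explicit norm check, and otherwise lean on the symmetry argument.

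\emph{Step 3: the trace condition.} The trace of the displayed matrix is $2\alpha_1+2\alpha_2$. Since $\iota(\alpha)$ is already Rosati-symmetric, it lies in $L_0$ if and only if this trace vanishes, that is, if and only if $\alpha_2=-\alpha_1$.
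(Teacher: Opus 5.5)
Your proposal is correct. The paper gives no written proof of this lemma; it is left as a direct calculation, and your Step~1 is exactly that calculation.

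The real difference is in Step~2. The route the paper leaves to the reader expands $y\omega_2=\frac ta\bar\beta\,\omega_2$ and $ya=t\bar\beta$ in the basis $\{\omega_1,1\}$. This uses $\omega_2\bar\omega_2=ac$ and $t\omega_2-\omega_1=\frac{tb-b_1}{2}\in\Z$, with $\sqrt{d_1}=t\sqrt{d_2}$. One then checks $y_1=z_4$, $y_2=-z_2$, $y_3=-z_3$, $y_4=z_1$ by hand. You instead get all four relations at once from $\iota(\alpha)^\dagger=\iota(\alpha)$ together with Lemma~\ref{lem:Iden}(1). That is legitimate for three reasons:
\begin{enumerate}
\item the Rosati involution depends only on $\lambda_0$ restricted to $\Lambda$, so $\iota(\alpha^{\dagger_0})=\iota(\alpha)^\dagger$;
\item the basis $e_1,\dots,e_4$ has Gram matrix $J_2$ for $\lambda_0$;
\item every element of $\tilde{\mathcal P}_0$ is $\dagger_0$-fixed by the preceding lemma.
\end{enumerate}

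Your route is shorter and explains why the matrix has the displayed shape. The brute-force route additionally gives explicit values for $\beta=z_2\omega_2+z_4a$, namely $z_1=\frac{b_1+tb}{2}z_2+taz_4$ and $z_3=-tcz_2-\frac{tb-b_1}{2}z_4$. Those explicit values are what the paper actually needs right afterwards to write down $p_2$ and $p_3$, and then $n_1$ and $n_2$. Your symmetry argument alone does not supply them.
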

We will identify $\alpha$ with $\iota(\alpha)$ and drop $\iota$ from the notation for simplicity when there is no risk of confusion.  By Lemma \ref{lem:embedding}, $\mathcal P_0 \subset L_0$ has the  basis
\begin{align*}
p_1&= \iota( \kzxz {1} {0} {0} {-1} ) =\hbox{Diag}(1,-1, 1, -1),
\\
p_2 &=\iota(\kzxz {0} {t} {a} {0} ) = \begin{pmatrix}
{0}  &{1} &{0} &{0}
\\
{ta} &{0} &{0} &0
\\
0 &-(b_1 -t b)/2  &0 &ta
\\
(b_1 -t b)/2 &0 &1 &0
\end{pmatrix},
\\
p_3 &=\iota(\kzxz {0} {\frac{t}a \bar{\omega}_2} {\omega_2} {0})
 =\begin{pmatrix}
 0 &0 &0 & -1
 \\
 (b_1 + t b)/2 &0 &1 &0
 \\
 0 & tc &0 &(b_1 + t b)2
 \\
 -t c &0 &0 &0
 \end{pmatrix}.
 \end{align*}
The lattice $\tilde{\mathcal P}_0$  has a basis $\{ p_0= \hbox{Diag}(1, 0, 1, 0), p_1,  p_2, p_3\}$.

\begin{lemma}
\label{lem:N0}
Let the notation be as above. Then
$$
\mathcal N_0 =\left\{n=\begin{pmatrix}
0 &x &0 &y
\\
w &0 &-y &0
\\
0 &z &0 &w
\\
-z &0 &x &0
\end{pmatrix}:\;
 z = tc y -  \frac{b_1+ tb}2  x , \quad w = -  \frac{b_1 - t b}2 y   -  ta x\right\}
$$
has the basis $ n_1, n_2$ with
$$
n_1= \begin{pmatrix}
0 &1 &0 &0
\\
-ta &0 &0 &0
\\
0  & -\frac{b_1 +t b}{2} &0 & -ta
\\
\frac{b_1 +t b}{2} &0 &1 &0
\end{pmatrix},
\hbox{ and }
n_2 = \begin{pmatrix}
0 &0 &0 &1
\\
-\frac{b_1-t b}{2}  &0 &-1 &0
\\
0 &tc &0 &-\frac{b_1-t b}{2}
\\
-tc &0 &0 &0
\end{pmatrix}.
$$
The Gram matrix of $\calN_0$ with respect to the basis $n_1, n_2$ is  $\kzxz {-2ta} {tb} {tb} {-2tc} $, i.e.,
$$
\left(\mathcal N_0,  Q(n) =\frac{1}4 \tr n^2\right) \cong \left(\bar{\mathfrak a},  Q(x) =- \frac{t}a x \bar x\right).
$$
\end{lemma}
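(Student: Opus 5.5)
By Lemma~\ref{lem:Iden}(4), $\mathcal N_0$ is the orthogonal complement of $\tilde{\mathcal P}_0$ in $\tilde L_0$, and $\tilde{\mathcal P}_0$ has the explicit basis $\{p_0,p_1,p_2,p_3\}$. The plan is therefore to write a general element $n\in\tilde L_0$ using the parametrization of Lemma~\ref{lem:Iden}(2), $n=\kzxz{{}^tA}{J_2 a'}{-J_2 b'}{A}$ with $A=\kzxz{r}{c'}{d'}{s}$. We then impose the four linear conditions $(n,p_i)=0$, where the bilinear form is $(x,y)=\tfrac12\tr(xy)$ (coming from $Q(x)=\tfrac14\tr(x^2)$).

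The conditions from $p_0=\operatorname{Diag}(1,0,1,0)$ and $p_1=\operatorname{Diag}(1,-1,1,-1)$ should kill the diagonal entries, giving $r=s=0$. The element then has the zero pattern displayed in the statement, with four free entries, which we call $x,y,z,w$. Checking that the shape $\kzxz{{}^tA}{J_2a'}{-J_2b'}{A}$ forces exactly the sign pattern shown ($-y$ in position $(2,3)$, $-z$ in position $(4,1)$, and so on) is routine bookkeeping.

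Next, pairing with $p_2$ and $p_3$ gives two linear equations in $x,y,z,w$. Computing $\tr(np_2)$ and $\tr(np_3)$ entrywise should yield
\[
w=-\tfrac{b_1-tb}{2}\,y-ta\,x,\qquad z=tc\,y-\tfrac{b_1+tb}{2}\,x .
\]
Here one has to be careful with the entries of $p_2,p_3$, and should fix the evident typo ``$(b_1+tb)2$'' to $(b_1+tb)/2$. Since $x,y$ are free integers and $z,w$ are then determined, we need $z,w\in\Z$. This holds because $b_1\equiv tb \pmod 2$, which follows from $b_1^2\equiv d_1=t^2d_2\equiv t^2b^2\pmod 4$. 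So $\mathcal N_0$ is free of rank $2$, with basis $n_1$ (from $x=1,y=0$) and $n_2$ (from $x=0,y=1$). Both can be read off directly.

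For the Gram matrix, compute $Q(n_1)=\tfrac14\tr(n_1^2)$, $Q(n_2)$ and $(n_1,n_2)$. Since each $n_i$ has the checkerboard shape, $n_i^2$ is block-diagonal and the traces are short. One expects $Q(n_1)=-ta$, $Q(n_2)=-tc$ and $(n_1,n_2)=tb$. The last entry should come out as a sum like $\tfrac{b_1+tb}{2}-\tfrac{b_1-tb}{2}=tb$. This gives the Gram matrix $\kzxz{-2ta}{tb}{tb}{-2tc}$.

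Finally, compare with $\bar{\mathfrak a}=\Z\bar\omega_2+\Z a$ under $-\tfrac{t}{a}\norm$. We have $-\tfrac ta\norm(a)=-ta$ and $-\tfrac ta\norm(\bar\omega_2)=-tc$. For the cross term, $\tr(a\omega_2)=ab$, so the polarization is $-\tfrac{t}{a}\,ab=-tb$. Replacing $\bar\omega_2$ by $-\bar\omega_2$ fixes the sign, and the map $n_1\mapsto a$, $n_2\mapsto-\bar\omega_2$ is then an isometry.

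The main obstacle is purely computational: evaluating $\tr(np_2)$ and $\tr(np_3)$ correctly with all signs. I would double-check these by verifying directly that $n_1$ and $n_2$ are orthogonal to each $p_i$.
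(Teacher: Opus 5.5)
Your proposal is correct and is exactly the direct calculation the paper intends; the paper gives no written proof of this lemma, but it sets up Lemma~\ref{lem:Iden}(4) and the basis $p_0,\dots,p_3$ of $\tilde{\mathcal P}_0$ for precisely this purpose. Carrying it out confirms your expectations. The pairings with $p_0,p_1$ give $r=s=0$, and those with $p_2,p_3$ give $2w+2tax+(b_1-tb)y=0$ and $2z+(b_1+tb)x-2tcy=0$. Then $Q(n)=xw-yz$ yields the Gram matrix $\kzxz{-2ta}{tb}{tb}{-2tc}$, and $n_1\mapsto a$, $n_2\mapsto -\bar\omega_2$ is an isometry onto $\bar{\mathfrak a}$.
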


\subsection{General principal polarizations}

\begin{lemma}  \label{lem:dual} Let $ A= A(\mathfrak a, t)$ be as (\ref{eq:canonical}), and let    $ \lambda_0$ be the product polarization on $A$ with associated positive definite Hermitian form $H_0$ defined in (\ref{eq:Hermitian}).  Let $\lambda$ be another  principal polarization on $A$ with associated Hermitian form   $H$. Then there is an $\alpha =\kzxz {\alpha_1} {\frac{t}a \bar \beta }{ \beta} {\alpha_2}  \in  \tilde{\mathcal P}_0$ such that $\alpha_i \in \Z_{>0}$ and $\det \alpha =1$  and
$$
H(x, y) = H_0(\alpha(x), y),
$$
or equivalently $\lambda(x, y) = \lambda_0(\alpha(x), y)$. The converse is also true: for $\alpha$  as above, $\lambda =\lambda_0\circ \alpha$  gives a principal polarization.
\end{lemma}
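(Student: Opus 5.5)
The plan is to use the standard dictionary between polarizations and symmetric endomorphisms. Every polarization $\lambda$ on $A$ differs from the fixed principal polarization $\lambda_0$ by a Rosati-symmetric element of $\End^0(A)$.

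\textbf{Step 1: existence of $\alpha$.} Given $\lambda$, consider the isogeny $\phi_\lambda: A \to A^\vee$ and the isomorphism $\phi_{\lambda_0}$. Set $\alpha = \phi_{\lambda_0}^{-1}\circ \phi_\lambda \in \End(A)$. It lies in $\End(A)$ and not merely in $\End^0(A)$, because $\phi_{\lambda_0}$ is an isomorphism. Then $\lambda(x,y)=\lambda_0(\alpha x, y)$ on $\Lambda$. The symmetry of the Hermitian forms, or equivalently the alternating property of $\lambda$, gives $\alpha_0^\dagger=\alpha$, so $\alpha\in\tilde{\mathcal P}_0$. By the preceding lemma it therefore has the shape $\kzxz{\alpha_1}{\frac{t}{a}\bar\beta}{\beta}{\alpha_2}$ with $\alpha_i\in\Z$ and $\beta\in\mathfrak a$. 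Translating to Hermitian forms gives $H(x,y)=H_0(\alpha x,y)$. This uses the renormalization (\ref{eq:Hermitian}) and the fact that $\alpha$ is $\kay$-linear, since $\alpha$ commutes with $\sqrt{d_1}$.

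\textbf{Step 2: the conditions on $\alpha$.} Principality means $\lambda$ is unimodular on $\Lambda$. Since $\lambda_0$ is unimodular, this forces $\alpha$ to be an automorphism of $\Lambda$, so $\det_{\Z}\iota(\alpha)=\pm1$. The determinant of $\iota(\alpha)$ over $\Z$ equals $\norm_{\kay/\Q}(\det_\kay\alpha)$. The quantity $\det_\kay \alpha=\alpha_1\alpha_2-\frac{t}{a}\norm(\beta)$ is rational, so $\det_{\Z}\iota(\alpha)=(\det\alpha)^2$ and $\det\alpha=\pm1$. Positivity of $H$ says that the Hermitian matrix
\[
\begin{pmatrix} 2\alpha_1 & \frac{2t}{a}\bar\beta\\ \frac{2t}{a}\beta & \frac{2t}{a}\alpha_2\end{pmatrix},
\]
which is $H_0$ composed with $\alpha$ and is to be computed explicitly from $H_0(\alpha x,y)$, is positive definite. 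Its diagonal entries being positive gives $\alpha_1,\alpha_2>0$. Its determinant is a positive multiple of $\alpha_1\alpha_2-\frac{t}{a}\norm\beta=\det\alpha$, so $\det\alpha>0$ and hence $\det\alpha=1$.

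\textbf{Step 3: the converse.} Given such an $\alpha$, define $\lambda=\lambda_0\circ\alpha$. Rosati-symmetry of $\alpha$ makes $\lambda$ alternating. Since $\alpha\in\End(A)$ commutes with the complex structure, $\lambda(ix,iy)=\lambda(x,y)$. The same Hermitian matrix computation, with positive diagonal and positive determinant, gives positivity. Finally, $\det\iota(\alpha)=(\det\alpha)^2=1$ gives unimodularity on $\Lambda$, so $\lambda$ is a principal polarization.

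\textbf{Main obstacle.} The delicate point is the bookkeeping in Steps 2 and 3. One must match the $\Z$-determinant of $\iota(\alpha)$ with $(\det_\kay\alpha)^2$, and must compute the Gram matrix of $H_0(\alpha\cdot,\cdot)$ correctly with the factor $\frac{t}{a}$ in the second coordinate. I would do this in the $\kay$-basis of $\kay^2$, where $H_0=\mathrm{diag}(2,2t/a)$. The form $H_0(\alpha x,y)$ then has matrix ${}^t\alpha\,\mathrm{diag}(2,2t/a)$, suitably conjugated, and its determinant is $\frac{4t}{a}\det\alpha$. This makes the sign conditions transparent.
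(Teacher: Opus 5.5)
Your proposal is correct and follows the same route as the paper. You write $\lambda=\lambda_0\circ\alpha$ with $\alpha=\phi_{\lambda_0}^{-1}\phi_\lambda\in\Aut(A)$, use Hermitian symmetry to get $\alpha^\dagger_0=\alpha$ (so $\alpha\in\tilde{\mathcal P}_0$), and then let principality and positivity force $\det\alpha=1$ and $\alpha_i>0$. Your explicit checks, namely $\det_\Z\iota(\alpha)=(\det\alpha)^2$ and the Gram matrix of $H_0(\alpha\,\cdot,\cdot)$ having determinant $\tfrac{4t}{a}\det\alpha$, together with your argument for the converse, fill in exactly the steps the paper's brief sketch leaves implicit.
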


\begin{proof}
This is well-known and is true in greater generality. We sketch a proof here for completeness. Write $W =\C^2$  and $ \Lambda = \mathcal O_{d_1} \oplus \mathfrak a$.  We identify the dual of $A$ as $A^\vee =W^\vee/\Lambda^\vee $, where $W^\vee = \Hom_\C(W, \bar\C)$ and
$$
\Lambda^\vee =\{ f \in W^\vee:\,  \hbox{Im}(f/\sqrt{d_1}) (\Lambda) \subset \Z\}.
$$
Then  $H$ (respectively $H_0$) gives the polarization
$$
\lambda:  A \rightarrow A^\vee, \quad \lambda_H(x) (y) =H(x, y).
$$
So $\lambda = \lambda_0 \circ \alpha$ means that $H(x,y) =H_0(\alpha(x), y)$ with   $\alpha \in \GL(V)$, and $\alpha (\Lambda)  = \Lambda$. Furthermore,
$\overline{H(x, y)} = H(y, x)$ if and only if $\alpha^\dagger_0=\alpha$. The fact that  $\lambda$ and $\lambda_0$ are both principal polarizations  implies that $\det \alpha =1$ and $\alpha_1, \alpha_2 >0$ as expected.
\end{proof}

\begin{proposition}
\label{prop:Dagger}
Let $\alpha =\kzxz {\alpha_1} {\frac{t}a \bar \beta }{ \beta} {\alpha_2}  \in  \tilde{\mathcal P}_0$ such that $\alpha_i >0$ and $\det \alpha =1$. Let $\lambda=\lambda_0 \circ \alpha$ and $H=H_0 \circ \alpha$ be the associated principal polarization on $A=A( \mathfrak a, t)$.   Then
 we have the following   (the subscript $\lambda$ indicates the dependence on $\lambda$).
\begin{enumerate}

\item  For $g \in \End(\Lambda)$,
 \begin{equation} \label{eq:Dagger}
g^\dagger_\lambda = \alpha^{-1} g_0^\dagger \alpha.
\end{equation}
\item
$$
\tilde L_\lambda  = \alpha^{-1} \tilde L_0 = \tilde L_0 \alpha .
$$

\item
$$
\tilde{\mathcal P}_{\lambda} = \alpha^{-1} \tilde{\mathcal P}_0 = \tilde{\mathcal P}_0 \alpha, \quad  \mathcal P_\lambda= \tilde{\mathcal P}_\lambda^{\tr =0}.
$$
and

\item
$$
\mathcal N_{\lambda} =  \alpha^{-1} \mathcal N= \mathcal N_0 \alpha .
$$
\end{enumerate}
\end{proposition}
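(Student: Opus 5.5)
The plan is to derive everything from the defining identity \eqref{eq:Rosati} together with $\lambda(x,y)=\lambda_0(\alpha x,y)$ from Lemma~\ref{lem:dual}, using that $\alpha$ is a unit of $\End(A)$ (so $\alpha\Lambda=\Lambda$ and $\alpha^{-1}\in\End(A)$) and that $\alpha_0^\dagger=\alpha$.

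\textbf{Step 1, claim (1).} For $g\in\End(\Lambda)$ and $x,y\in\Lambda$ we compute
\[
\lambda(\alpha^{-1}g_0^\dagger\alpha x,y)=\lambda_0(g_0^\dagger\alpha x,y)=\lambda_0(\alpha x,gy)=\lambda(x,gy).
\]
By uniqueness of the adjoint with respect to the nondegenerate form $\lambda$, this gives $g_\lambda^\dagger=\alpha^{-1}g_0^\dagger\alpha$.

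\textbf{Step 2, claim (2).} By (1), $g_\lambda^\dagger=g$ is equivalent to $g_0^\dagger\alpha=\alpha g$, i.e.\ to $(\alpha g)_0^\dagger=g_0^\dagger\alpha_0^\dagger=g_0^\dagger\alpha=\alpha g$. So $g\in\tilde L_\lambda$ if and only if $\alpha g\in\tilde L_0$, which gives $\tilde L_\lambda=\alpha^{-1}\tilde L_0$; integrality is preserved because $\alpha^{\pm1}\in\End(\Lambda)$. Symmetrically, $g^\dagger_\lambda=g$ is equivalent to $(g\alpha^{-1})^\dagger_0=\alpha^{-1}g_0^\dagger=g\alpha^{-1}$, so $g\alpha^{-1}\in\tilde L_0$, which gives $\tilde L_\lambda=\tilde L_0\alpha$. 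One can also note directly that $\alpha^{-1}h\alpha^{-1}\cdot\alpha$ interpolates between the two descriptions, since $h\mapsto \alpha^{-1}h$ and $h\mapsto h\alpha$ differ by the bijection $h\mapsto\alpha h\alpha$ of $\tilde L_0$, which is valid because $\alpha$ is $\dagger_0$-symmetric.

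\textbf{Step 3, claim (3).} Intersecting with $\End(A)$, which is stable under left and right multiplication by $\alpha^{\pm1}$, gives $\tilde{\mathcal P}_\lambda=\alpha^{-1}\tilde{\mathcal P}_0=\tilde{\mathcal P}_0\alpha$. The identity $\mathcal P_\lambda=\tilde{\mathcal P}_\lambda^{\tr=0}$ is just the definition.

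\textbf{Step 4, claim (4).} By Lemma~\ref{lem:Iden}(4), $\mathcal N_\lambda$ is the orthogonal complement of $\tilde{\mathcal P}_\lambda$ in $\tilde L_\lambda$ with respect to the pairing $(f,g)=\frac12\tr(fg)$ (polarizing $Q(f)=\frac14\tr(f^2)$). The key observation is that the map $h\mapsto\alpha^{-1}h$ from $\tilde L_0$ to $\tilde L_\lambda$ is \emph{not} obviously an isometry, since $\tr(\alpha^{-1}h\alpha^{-1}h')\neq\tr(hh')$ in general. So we instead use the mixed description: for $n\in\mathcal N_0$ and $p\in\tilde{\mathcal P}_0$,
\[
\tr\bigl((\alpha^{-1}n)(p\alpha)\bigr)=\tr(\alpha^{-1}np\alpha)=\tr(np)=0.
\]
Hence $\alpha^{-1}\mathcal N_0\perp\tilde{\mathcal P}_\lambda=\tilde{\mathcal P}_0\alpha$, and likewise $\mathcal N_0\alpha\perp\alpha^{-1}\tilde{\mathcal P}_0$. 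By rank, $\alpha^{-1}\mathcal N_0$ is then contained in $\mathcal N_\lambda$ with finite index. Saturation follows because left multiplication by $\alpha^{-1}$ is a $\Z$-automorphism of $\End(\Lambda)$ carrying $\tilde L_0$ onto $\tilde L_\lambda$, so primitive sublattices go to primitive sublattices.

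\textbf{Main obstacle.} The expected difficulty is the equality $\alpha^{-1}\mathcal N_0=\mathcal N_0\alpha$, i.e.\ $\alpha\mathcal N_0\alpha=\mathcal N_0$. I would check this on the basis $n_1,n_2$ of Lemma~\ref{lem:N0}. Structurally, $\mathcal N_0$ consists of elements anticommuting with $\kay$ acting on $\Lambda$ (semilinear maps), while $\alpha$ is $\kay$-linear. So $\alpha n\alpha$ is again semilinear and $\dagger_0$-symmetric; it has trace zero since semilinear maps have trace zero. It therefore lies in $\mathcal N_0$, and equality follows by symmetry using $\alpha^{-1}$.
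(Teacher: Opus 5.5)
Your proposal is correct and follows essentially the paper's route. Claim (1) comes from the adjoint identity for $\lambda=\lambda_0\circ\alpha$, claims (2)--(3) are immediate consequences, and claim (4) follows from $\tr(\alpha^{-1}g_1g_2\alpha)=\tr(g_1g_2)$ for $g_1\in\tilde{\mathcal P}_0$, $g_2\in\mathcal N_0$, which is exactly the mixed pairing the paper uses. Your ``main obstacle'' is not really one: you already have $\mathcal N_0\alpha\perp\alpha^{-1}\tilde{\mathcal P}_0=\tilde{\mathcal P}_\lambda$, and the same saturation argument with right multiplication by $\alpha$ gives $\mathcal N_\lambda=\mathcal N_0\alpha$ directly, so the (correct) semilinearity argument is only a redundant cross-check.
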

\begin{proof}
For $g \in \hbox{End}(\Lambda)$, $x, y \in \Lambda$,  we have
$$
\lambda( g x, y) = \lambda_0(\alpha g(x),  y) = \lambda_0(x,   (\alpha g)^\dagger_0 P (y))= \lambda_0(x,  g^\dagger_0 \alpha y),
$$
and
$$
\lambda(x, g^\dagger_\lambda y) = \lambda_0( \alpha x,   g^\dagger_\lambda y) = \lambda_0(x,  \alpha  g^\dagger_\lambda y).
$$
So (\ref{eq:Dagger}) is true.
Now (2) and (3)  are  clear. Finally, (4) follows from
$$
(\alpha^{-1} g_1, g_2 \alpha) = \frac{1}4 \tr \alpha^{-1} g_1  g_2 \alpha =  \frac{1}4 \tr g_1 g_2
$$
for $g_1 \in \tilde{\mathcal P}_0$ and $g_2 \in \mathcal N_0$.
\end{proof}

Notice that $L_\lambda \ne L_0 \alpha$ and $ \mathcal P \ne \mathcal P_0 \alpha$ in general.

\begin{proof}[Proof of Proposition \ref{prop:CM}]
We are now are ready to prove (2) and (3) of Proposition~\ref{prop:CM}.
When  $\lambda=\lambda_0$ is the product polarization, clearly $\iota: \kay \rightarrow \Mat_2(\kay)$, $\iota(r) = r I_2$ satisfies $\iota(r)^\dagger_0 =\iota(\bar r)$, and $\iota(\kay) $ is in the center of $\Mat_2(\kay)$. For general $\lambda$, (\ref{eq:Dagger}) implies that the above map still satisfies $\iota(r)^\dagger_\lambda =\iota(\bar r)$ for all $r \in  \kay$. Now for any $g \in \mathcal P_\lambda$, we have $g^2= D I_2$ for some $D \in \Z_{>0}$ by \cite[Lemma 2.2]{KRSiegel}. So the map
$$
\iota: E=\Q(\sqrt D,  \sqrt d) \rightarrow \Mat_2(\kay),  \quad \iota(\sqrt d) =\sqrt d I_2, \, \iota(\sqrt D) =g
$$
makes $\mathbf A=(A, \lambda)$ a CM abelian surface of CM type $( E, \Phi)$.  Clearly there are infinitely many such square-free $D$. This proves (2) and (3) of Proposition \ref{prop:CM}.
\end{proof}

A main technical result of this paper is the following proposition.

\begin{proposition} \label{prop:N} Let $\mathbf A =(A(\mathfrak a, t),  \lambda)$   be a principally polarized  CM  abelian surface with $\lambda =\lambda_0 \circ \alpha$ as above and $\alpha =\kzxz {\alpha_1} {\frac{t}a \bar \beta }{ \beta} {\alpha_2}  \in  \tilde{\mathcal P}_0$,  and $\det \alpha =1$.  Then
\begin{enumerate}
\item  we have $(\mathcal N,  Q) \cong (\bar{\mathfrak a},  - \frac{t}a x \bar x)$, and
$\mathcal N'/\mathcal N \cong ( \frac{1}{\sqrt{d_1}} \bar{\mathfrak a}/ \bar{\mathfrak a}, \,x\mapsto -\frac{t}a x \bar x)$;

\item   we have  $\tilde{\mathcal P}'/\tilde{\mathcal P}   \cong (\frac{1}{\sqrt{d_1}} \bar{\mathfrak a}/ \bar{\mathfrak a}, x\mapsto \frac{t}a x \bar x) $;

\item when  $d_1$ is odd,  we have
$$
{\mathcal P}'/{\mathcal P}   \cong \left(\frac{1}2\Z/\Z \oplus \frac{1}{\sqrt{d_1}} \bar{\mathfrak a}/ \bar{\mathfrak a},\, (y, x) \mapsto  y^2+ \frac{t}a x \bar x\right).
$$

 \end{enumerate}
\end{proposition}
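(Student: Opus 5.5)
Part (1) follows from Proposition~\ref{prop:Dagger}(4), which gives $\mathcal N_\lambda = \mathcal N_0\alpha$. The first step is to check that right multiplication by $\alpha$ is an isometry from $\mathcal N_0$ onto $\mathcal N_\lambda$. For $n\in\mathcal N_0$ we have $n\alpha = \alpha^{-1}n'$ for some $n'\in\mathcal N_0$, again by Proposition~\ref{prop:Dagger}(4). More directly, one shows that $n\alpha\alpha n = n\alpha^2 n$ has the same trace as $n^2$. I would argue this through the $\kay$-structure. Elements of $\tilde{\mathcal P}_0$ commute with the $\kay$-action, and elements of $\mathcal N_0$ are $\kay$-antilinear. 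The antilinearity can be read off from the explicit matrices in Lemma~\ref{lem:N0}: they are orthogonal to $\tilde{\mathcal P}_0\supset \Mat_2(\kay)^{\dagger_0=1}$, and their span with $\tilde{\mathcal P}_0$ exhausts $\tilde L_0$. It follows that $n\alpha = \alpha^{\dagger\text{-conj}}n$ for an antilinear $n$; concretely $n\alpha = \bar\alpha' n$, where $\alpha'$ is the adjugate-type conjugate. Then $\tr((n\alpha)^2) = \tr(n\,\bar\alpha'\alpha\, n)$, and we need $\bar\alpha'\alpha = \det\alpha = 1$.

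Rather than rely on this structural argument, I would verify the step by direct computation in the basis $n_1,n_2$ of Lemma~\ref{lem:N0}. Write $\alpha = \alpha_1 p_0 + \dots$ in the basis $p_0,\dots,p_3$, compute $\tr(n_i\alpha n_j\alpha)$, and reduce using $\det\alpha = \alpha_1\alpha_2 - \frac{t}{a}\norm(\beta) = 1$. The expected outcome is the Gram matrix $\kzxz{-2ta}{tb}{tb}{-2tc}$ again. Combined with Lemma~\ref{lem:N0}, this gives $(\mathcal N,Q)\cong(\bar{\mathfrak a}, -\frac{t}{a}x\bar x)$. The discriminant group then follows from a standard computation. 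The bilinear form is $(x,y) = -\frac{t}{a}\tr(x\bar y)$, and the dual of $\bar{\mathfrak a}$ under $\tr$ is $\frac{1}{\sqrt{d_2}}\bar{\mathfrak a}^{-1}$-type, twisted by $\frac{a}{t}$. Using $\bar{\mathfrak a}\mathfrak a = a\co_{d_2}$ and the relation $d_1 = t^2 d_2$, this yields $\mathcal N' = \frac{1}{\sqrt{d_1}}\bar{\mathfrak a}$ (up to the factor bookkeeping, which I would check on the basis $\omega_2, a$).

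For part (2), I would use that $\tilde L$ is unimodular (Lemma~\ref{lem:Iden}(2)) and that $\mathcal N$ is the orthogonal complement of $\tilde{\mathcal P}$ in $\tilde L$ (Lemma~\ref{lem:Iden}(4)). Both lattices are primitive, so the standard gluing argument for primitive orthogonal complements in a unimodular lattice gives an anti-isometry $\tilde{\mathcal P}'/\tilde{\mathcal P}\cong \mathcal N'/\mathcal N$ with the quadratic form negated. Applied to part (1), this gives (2) directly.

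For part (3), the same gluing applies inside $L$, which has discriminant $\frac12\Z/\Z$; here $\mathcal P$ and $\mathcal N$ are orthogonal complements in $L$ by definition. Gluing then makes $\mathcal P'/\mathcal P$ an extension of the relevant pieces, of order $|L'/L|\cdot|\mathcal N'/\mathcal N| = 2|d_1|$. The cleaner route is $\tilde{\mathcal P}\supset \mathcal P\oplus \Z I_4$ with $Q(I_4)=1$, so the discriminant of $\Z I_4$ is $(\frac12\Z/\Z, y^2)$. When $d_1$ is odd, the index $[\tilde{\mathcal P}:\mathcal P\oplus\Z I_4]$ divides $2$ while $|\tilde{\mathcal P}'/\tilde{\mathcal P}| = |d_1|$ is odd. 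I expect this index to equal $1$: an element with $\tr f = 2$ in $\tilde{\mathcal P}$ would give a class of order $2$ in the odd group. Hence $\mathcal P\oplus\Z I_4 = \tilde{\mathcal P}$, and $\mathcal P'/\mathcal P$ is the orthogonal complement of $\frac12\Z/\Z$ in the odd group. This yields exactly the decomposition claimed in (3). The main obstacle is the index-$2$ and discriminant bookkeeping in (1) and (3); I would settle it by the explicit basis computations in the case $\lambda=\lambda_0$ from the preceding lemmas, transported along $\alpha$.
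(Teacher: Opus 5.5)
Your parts (1) and (2) match the paper: compute the Gram matrix of $n_1\alpha,n_2\alpha$ and reduce it using $\det\alpha=1$, then glue inside the unimodular $\tilde L$ via Proposition~\ref{prop:fqm2}. Your structural shortcut for (1) is also valid and avoids the tedious computation. Each $n\in\calN_0$ is $\kay$-antilinear, and one checks that $n\alpha=\operatorname{adj}(\alpha)\,n$. Hence $(n\alpha)^2=n\,\alpha\operatorname{adj}(\alpha)\,n=\det(\alpha)\,n^2=n^2$. Note that the trace that matters is that of $n\alpha n\alpha$, not $n\alpha\alpha n$ as you first wrote.

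The genuine gap is the ``cleaner route'' in (3). The claim $\tilde{\calP}=\calP\oplus\Z I_4$ is false: the index is exactly $2$. For $\lambda_0$ this is visible directly. The element $p_0=\mathrm{Diag}(1,0,1,0)=\frac12(p_1+I_4)\in\tilde{\calP}_0$ has trace $2$, while every element of $\calP\oplus\Z I_4$ has trace in $4\Z$. In general your parity argument runs backwards. Since $\calP\perp I_4$, index $1$ would give an orthogonal splitting, so $\tilde{\calP}'/\tilde{\calP}\cong\calP'/\calP\oplus\frac12\Z I_4/\Z I_4$ would have even order. That contradicts $|\tilde{\calP}'/\tilde{\calP}|=|d_1|$ being odd, so oddness of $d_1$ forces index $2$, not $1$. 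Your concluding sentence also fails on its own terms. An odd-order group contains no copy of $\frac12\Z/\Z$. Moreover, the conclusion would make $|\calP'/\calP|$ divide $|d_1|$, contradicting the order $2|d_1|$ you derived one sentence earlier.

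What works is the route you sketched first, which is the paper's proof. The hypothesis that $d_1$ is odd is used exactly to make $|\calN'/\calN|=|d_1|$ coprime to $|L'/L|=2$. That coprimality is the hypothesis of Proposition~\ref{prop:fqm2}. Applied to $(L,\calN)$, it gives an exact sequence $0\to L'/L\to\calP'/\calP\to\calN'/\calN\to 0$, with $L'/L\cong(\frac12\Z/\Z,y^2)$ embedded isometrically. The sequence splits because the orders are coprime. By Remark~\ref{rem:quad}, the odd part of $\calP'/\calP$ is anti-isometric to $\calN'/\calN$, which turns $-\frac ta x\bar x$ into $\frac ta x\bar x$. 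Your $I_4$ approach could only be salvaged by treating $\tilde{\calP}$ as an index-$2$ overlattice of $\calP\oplus\Z I_4$ and working out the glue vector, which is more work than the direct argument.
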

\begin{proof}
 (1)\quad By Proposition \ref{prop:Dagger},  $\mathcal N$ has a basis given by $n_j'=n_j\iota(\alpha)$ where $\{n_1,n_2\}$ is the basis of $\mathcal N_0$ given by Lemma \ref{lem:N0}.
A direct and tedious calculation yields
\begin{eqnarray*}
(n_1', n_1')= \tr(n_1'n_1')/2&=&-2\alpha_1\alpha_2ta+2(ab\beta_1\beta_2+a^2\beta_2^2+a\beta_1^2c)t^2\\
&=&-2(\alpha_1\alpha_2-1)ta-2ta+\frac{t^2}{2}\left((b\beta_1+2a\beta_2)^2+(4ac-b^2)\beta_1^2\right)\\
&=&-2ta+\frac{t^2}{2}\left(-4\norm(\beta)+\tr(\beta)^2-d_2\beta_1^2\right)\\
&=&-2ta .
\end{eqnarray*}
Similarly, we have
$$
(n_1', n_2') =\frac{1}2 \tr(n_1'n_2') = tb  \quad \hbox{ and } \quad
(n_2', n_2') =\frac{1}2  \tr(n_2'n_2') =-2tc.
$$
Therefore, the Gram matrix of $\{n_1',n_2'\}$ is
$$B=(\frac{1}2 \tr(n_i'n_j'))_{1\leq i,j\leq2}=\mat{-2ta&tb\\tb&-2tc}$$
with $\det(B)=t^2(4ac-b^2)=-t^2d_2=-d_1$. This proves (1).

(2) \quad  Notice that $\tilde L $ is unimodular, and  $|\tilde L'/\tilde L|=1$.  Applying Proposition \ref{prop:fqm2} to $(\tilde L, \mathcal N)$, we obtain $(2)$.

(3) \quad Since $2 \nmid d _1$,  the quantity $|L'/L|=2$ is prime to $|\mathcal N'/\mathcal N|=d_1$. Applying Proposition \ref{prop:fqm2} to $(L, \tilde N)$ gives the exact sequence
$$
1 \rightarrow L'/L\cong   \frac{1}2 \Z/\Z  \rightarrow \mathcal P'/\mathcal P  \rightarrow  \mathcal N'/\mathcal N \cong \frac{1}{\sqrt{d_1}} \bar{\mathfrak a}/ \bar{\mathfrak a} \rightarrow 0.
$$
Since $(2, d_1) =1$, the exact sequence is split, and
$$
\mathcal P'/\mathcal P \cong  \frac{1}2 \Z/\Z \oplus  \frac{1}{\sqrt{d_1}} \bar{\mathfrak a}/ \bar{\mathfrak a}.
$$
The claim about the  quadratic form follows from  Remark \ref{rem:quad}.
\end{proof}

It is natural to  ask when two polarizations on the same abelian surface are equivalent, i.e.,  when there is some $f \in \Aut(A)$ with  $\lambda' = f^\vee \lambda f$.  This was addressed in  \cite[Proposition 3.1]{GHR} when $A=E \times E$. The same argument gives the following proposition whose proof we leave to the reader.

\begin{proposition} Let the notation be as in  Proposition \ref{prop:N}. Then two polarizations $\lambda =\lambda_0 \circ \alpha$ and $\lambda'= \lambda_0 \circ \alpha'$ are equivalent if and only if  there exists a matrix $P =\kzxz {p_1} {\frac{t}a \bar{p}_3} {p_4} {p_2} \in\Aut(A)$  as in (\ref{eq:EndA}) (i.e.,  $p_i \in \co_{d_i}$ for $i=1, 2$, and $p_3, p_4 \in \mathfrak a$) such that $P_0^\dagger\alpha P=\alpha'$.
\end{proposition}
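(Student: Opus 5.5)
The plan is to translate equivalence of polarizations into the matrix identity $P_0^\dagger\alpha P=\alpha'$ by means of Lemma~\ref{lem:dual} and the adjoint relation defining the Rosati involution $g\mapsto g_0^\dagger$ for $\lambda_0$. By definition, $\lambda$ and $\lambda'$ are equivalent if and only if there is $f\in\Aut(A)$ with $\lambda'=f^\vee\lambda f$. At the level of Riemann forms on $\Lambda$ this reads
\[
\lambda'(x,y)=\lambda(f x, f y)\quad\text{for all } x,y\in\Lambda .
\]
Here $f$ is viewed as an element of $\End(A)\subset\Mat_2(\kay)$ preserving $\Lambda$, and $f^{-1}$ also preserves $\Lambda$. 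We will write $f=P$ in the form (\ref{eq:EndA}), namely $P=\kzxz {p_1} {\frac{t}a \bar{p}_3} {p_4} {p_2}$ with $p_i\in\co_{d_i}$ and $p_3,p_4\in\fraka$. The point of (\ref{eq:EndA}) is that $\Aut(A)$ is exactly the set of such matrices which are invertible in $\End(A)$.

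Next we substitute $\lambda=\lambda_0\circ\alpha$ and $\lambda'=\lambda_0\circ\alpha'$. This gives
\[
\lambda_0(\alpha' x,y)=\lambda_0(\alpha P x, P y)=\lambda_0(P_0^\dagger\alpha P x, y),
\]
where the last equality uses (\ref{eq:Rosati}) for $\lambda_0$. Since $\lambda_0$ is nondegenerate on $\Lambda_\Q$, this holds for all $x,y$ if and only if $\alpha'=P_0^\dagger\alpha P$. Conversely, given $P\in\Aut(A)$ with $P_0^\dagger\alpha P=\alpha'$, the same computation read backwards shows $\lambda'(x,y)=\lambda(Px,Py)$, so $\lambda'=P^\vee\lambda P$ and the polarizations are equivalent. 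Lemma~\ref{lem:dual} guarantees that both $\lambda_0\circ\alpha$ and $\lambda_0\circ\alpha'$ are principal polarizations, so no extra conditions need checking. Alternatively, one can work with the Hermitian forms $H=H_0\circ\alpha$, where the relation $H_0(\alpha x,y)=H_0(x,\alpha_0^\dagger y)$ gives the same identity.

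The only point needing care is bookkeeping, not a real obstacle. We must use the correct convention for $f^\vee\lambda f$ and for the order of composition in $\lambda_0\circ\alpha$. Otherwise one might obtain $P\alpha P_0^\dagger$ instead of $P_0^\dagger\alpha P$; the two are related by replacing $P$ with $P_0^\dagger$, which is again an automorphism, so either way the statement holds. We should also check that the Rosati involution taken with respect to $\lambda_0$ is the right one to use even though $P$ is an automorphism of $(A,\lambda)$. It is, because the relation is derived purely from $\lambda_0$ after substitution.
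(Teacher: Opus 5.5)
Your proof is correct and is essentially the intended argument. The paper leaves this proof to the reader, citing the argument of \cite[Proposition 3.1]{GHR}, and that argument is exactly your computation: write $\lambda'=f^\vee\lambda f$ as $\lambda'(x,y)=\lambda(Px,Py)$, rewrite this as $\lambda_0(\alpha' x,y)=\lambda_0(\alpha Px,Py)=\lambda_0(P_0^\dagger\alpha Px,y)$ using the Rosati adjoint relation for $\lambda_0$, and conclude $\alpha'=P_0^\dagger\alpha P$ from the nondegeneracy of $\lambda_0$.
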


\subsection{Appendix: Primitive sublattices and discriminant forms}
\label{sect:app}

Here we recall some results about primitive sublattices of even lattices, their orthogonal complements, and the associated discriminant forms. In particular we slightly generalize Proposition 1.2 of \cite{Ebeling} to the non-unimodular setting.

Let $(L,Q)$ be an even lattice, that is, $L$ is a free $\Z$ module of rank $r$ equipped with a $\Z$-valued non-degenerate quadratic form $Q$. We briefly write $L_\Q=L\otimes_\Z\Q$, and if $p$ is a prime we denote by $L_p=L\otimes_\Z \Z_p$ the associated lattice over the $p$-adic integers.

The integral bilinear form associated with $Q$ is given by $(x,y)= Q(x+y)-Q(x)-Q(y)$. The dual lattice
\[
L'=\{ x\in L_\Q\mid \; \text{$(x,y)\in \Z$ for all $y\in L$}\}
\]
contains $L$ with finite index, and the quotient $L'/L$ is called the discriminant group of $L$.
The quadratic form $Q$ induces a $\Q/\Z$-valued quadratic form on $L'/L$. The discriminant group can be written as the orthogonal sum of its $p$-components,
\[
L'/L = \bigoplus_{\text{$p$ prime}} L_p'/L_p.
\]
For a non-zero integer $d$ we put
\begin{align}
\label{eq:defLS}
(L'/L)_d = \bigoplus_{\text{$p\mid d$ prime}} L_p'/L_p, \qquad (L'/L)^d = \bigoplus_{\text{$p\nmid d$ prime}} L_p'/L_p,
\end{align}
so that $L'/L = (L'/L)_d \oplus (L'/L)^d $.

A sublattice $N\subset L$ is called primitive, if $N= N_\Q\cap L$, or equivalently, if the quotient $L/N$ is free. Then there exists a section $s:L/N\to L$ to the quotient map
$L\to L/N$ such that
\[
L= N\oplus s(L/N),
\]
where the direct sum is in general not orthogonal. This implies that the natural restriction map $\Hom(L,\Z)\to \Hom(N,\Z)$ is surjective.

Let $N\subset L$ be a primitive sublattice, and write $N'\subset N_\Q\subset L_\Q$ for the dual of $N$. It is easily seen that the orthogonal projection $L_\Q\to N_\Q$ restricts to a map
\[
\pi_N:L'\to N'.
\]
This map is surjective: If $y\in N'$ then it induces a homomorphism $N\to \Z$, $x\mapsto (y,x)$. By the previous paragraph, this is the restriction of a homomorphism $L\to \Z$, and hence there exists a $z\in L'$ such that $(y,x)= (z,x)$ for all $x\in N$. This means that $\pi_N(z)=y$.
The kernel of $\pi_N$ is equal to
\[
\tilde N:= L'\cap N_\Q^\perp.
\]
Hence we have an exact sequence
\[
0\to \tilde N \to L' \to N'\to 0.
\]
By restriction it induces an exact sequence
\[
0\to \tilde N \to \tilde N\oplus N \to N\to 0,
\]
and hence an isomorphism of finite abelian groups
\begin{align}
\label{eq:fqm1}
\bar\pi_N:L'/(\tilde N\oplus N)\to N'/N.
\end{align}

We denote by $N^\perp := L\cap N_\Q^\perp$ the orthogonal complement of $N$ in $L$. The inclusion $N^\perp\to \tilde N$ induces an exact sequence
\begin{align}
\label{eq:fqm2}
0\to \tilde N/N^\perp \to L'/(N^\perp \oplus N)\to L'/(\tilde N\oplus N) \to 0.
\end{align}
Moreover, the inclusion $\tilde N\to L'$ induces an injective map
\begin{align}
\label{eq:fqm3}
\tilde N/N^\perp \to L'/L.
\end{align}

\begin{proposition}
\label{prop:fqm1}
Let the notation be as above.

i)
If $|N'/N|$ is coprime to $|L'/L|$, then the homomorphism \eqref{eq:fqm3} is an isomorphism.

ii) If $|(N^\perp)'/N^\perp|$ is coprime to $|L'/L|$, then $\tilde N
 = N^\perp$ and \eqref{eq:fqm3} is trivial.
\end{proposition}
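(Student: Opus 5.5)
The plan is a local analysis, prime by prime, of the injection \eqref{eq:fqm3}. Fix a prime $p$ and pass to $p$-components. We already know that $\tilde N/N^\perp$ injects into $L'/L$, which identifies it with a subgroup of $L'/L$. Part i) then amounts to showing that this image is everything, and part ii) to showing that it is zero. Both come down to comparing orders of finite abelian groups. The key relation is obtained from the exact sequences \eqref{eq:fqm2} and \eqref{eq:fqm1}.

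\textbf{Part i).} The first step is to compute $|L'/(N^\perp\oplus N)|$. It can be written as $[L':L]\,[L:N^\perp\oplus N]$. Since $N$ and $N^\perp$ are orthogonal and together have full rank, we also have
$$[(N^\perp\oplus N)':N^\perp\oplus N]=|N'/N|\cdot|(N^\perp)'/N^\perp|,$$
and this quantity equals $[L:N^\perp\oplus N]^2\,|L'/L|$.

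Combining this with \eqref{eq:fqm2} and the isomorphism \eqref{eq:fqm1} gives
$$|\tilde N/N^\perp|\cdot|N'/N| = |L'/L|\cdot[L:N^\perp\oplus N].$$

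The next step is to show that $[L:N^\perp\oplus N]$ divides $|N'/N|$. For this, consider the map $L\to N'/N$ given by orthogonal projection (restricting $\pi_N$). Its kernel contains $N\oplus N^\perp$, and in fact equals it: if $x \in L$ projects into $N$, then $x$ minus that projection lies in $L\cap N_\Q^\perp=N^\perp$. Hence $L/(N\oplus N^\perp)$ embeds into $N'/N$.

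Now localize at a prime $p$ dividing $|L'/L|$. Under the hypothesis of i), $p\nmid|N'/N|$, so the $p$-part of $[L:N^\perp\oplus N]$ is trivial. The displayed identity then says that $|(\tilde N/N^\perp)_p|=|(L'/L)_p|$.

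At primes $p\nmid |L'/L|$, both groups have trivial $p$-part, because $\tilde N/N^\perp$ injects into $L'/L$. Thus the orders agree at every prime, and the injection \eqref{eq:fqm3} is a bijection.

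\textbf{Part ii).} The argument is symmetric. The group $\tilde N/N^\perp$ is killed by the exponent of $L'/L$. On the other hand, $\tilde N\subset (N^\perp)'$, since elements of $\tilde N \subset L'$ pair integrally with $N^\perp\subset L$ and lie in $N_\Q^\perp=(N^\perp)_\Q$. So $\tilde N/N^\perp$ is a subgroup of $(N^\perp)'/N^\perp$. Its order therefore divides both $|L'/L|$ and $|(N^\perp)'/N^\perp|$. These are coprime by assumption, so the group is trivial, which gives $\tilde N=N^\perp$.

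\textbf{Main obstacle.} The only step that needs care is the index bookkeeping in part i): getting the formula for $[L:N\oplus N^\perp]$ right, and in particular proving that the projection $L\to N'/N$ has kernel exactly $N\oplus N^\perp$. This is where primitivity of $N$ and the surjectivity of $\pi_N$ established above are used.
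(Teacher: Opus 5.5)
Your proof is correct, but it takes a different route from the paper's.

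The paper works locally with the orthogonal splitting of a unimodular sublattice:
\begin{itemize}
\item at primes $p\nmid |N'/N|$ the lattice $N_p$ is unimodular, so $L_p=N_p^\perp\oplus N_p$ and $L_p'=(N^\perp)'_p\oplus N_p$, which forces $\tilde N_p=(N^\perp)'_p$ and hence $\tilde N_p/N_p^\perp\cong L_p'/L_p$;
\item at primes $p\mid |N'/N|$, coprimality gives $L_p'=L_p$;
\item part ii) is the same argument with the roles of $N$ and $N^\perp$ interchanged.
\end{itemize}

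You instead count orders globally. Combining \eqref{eq:fqm2} with \eqref{eq:fqm1} gives $|\tilde N/N^\perp|\cdot|N'/N|=|L'/L|\cdot[L:N^\perp\oplus N]$. The embedding $L/(N\oplus N^\perp)\hookrightarrow N'/N$ then removes the index at primes dividing $|L'/L|$. For ii), the sandwich $N^\perp\subset\tilde N\subset (N^\perp)'$ embeds $\tilde N/N^\perp$ into two groups of coprime order. This is shorter than the paper's local argument for that part.

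Two small remarks. The identity $[(N^\perp\oplus N)':N^\perp\oplus N]=[L:N^\perp\oplus N]^2|L'/L|$ that you record is never actually used. Primitivity of $N$ enters only through \eqref{eq:fqm1}, i.e.\ through the surjectivity of $\pi_N$. It is not needed for the embedding $L/(N\oplus N^\perp)\hookrightarrow N'/N$, which holds for any nondegenerate sublattice.

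As for what each approach buys: yours needs no $\Z_p$-splitting lemma and makes transparent exactly where coprimality is used. It even gives the $p$-part of the isomorphism at every $p\nmid|N'/N|$, since the index has trivial $p$-part there. The paper's argument gives the explicit orthogonal decompositions of $L_p$ and $L_p'$ and the identification $\tilde N_p=(N^\perp)'_p$. These are the concrete local picture behind the diagram in Proposition~\ref{prop:fqm2}.
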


\begin{proof}
i) We have to show that \eqref{eq:fqm3} is surjective. It suffices to show this locally, i.e., that for every prime $p$ the induced injective map of finite $\Z_p$-modules
\begin{align}
\label{eq:fqm4}
\tilde N_p/N_p^\perp \to L_p'/L_p
\end{align}
is surjective.

If $p$ is a prime dividing $|N'/N|$, then $p$ does not divide  $|L'/L|$. Hence $L_p'=L_p$ and the surjectivity of \eqref{eq:fqm4} is trivial.

If $p$ is a prime not dividing $|N'/N|$, then $N_p$ is unimodular over $\Z_p$. Hence there are  orthogonal direct sum decompositions
\begin{align*}
L_p&=N_p^\perp\oplus N_p,\\
L_p'&=(N^\perp)'_p\oplus N_p.
\end{align*}
This also implies $L_p'= \tilde N_p\oplus N_p$ and therefore the surjectivity of \eqref{eq:fqm4}.

ii) As for (i) we argue locally and show that for every prime $p$ we have the equality $N^\perp_p =\tilde N_p$.

If $p$ is a prime dividing $|(N^\perp)'/N^\perp|$, then $L_p'=L_p$ as above. The assertion follows from the injectivity of \eqref{eq:fqm4}.

If $p$ is a prime dividing $|(N^\perp)'/N^\perp|$, then $(N^\perp)_p$ is unimodular over $\Z_p$.
We obtain an orthogonal splitting $L_p= (N^\perp)_p \oplus N_p$ as above and also
$L_p'= (N^\perp)_p \oplus N_p'$. This implies $\tilde N_p=N_p$, concluding the proof of the proposition.
\end{proof}

We now employ the proposition to construct a map of discriminant forms. Let $L$ be an even lattice and $N\subset L$ a primitive sublattice as above. Then the lattice $P:= N^\perp$ is also a primitive sublattice of $L$. Since $N=P^\perp$, we may also apply  the above arguments for $P$ instead of $N$.

Assume that $|N'/N|$ is coprime to $|L'/L|$. Then Proposition \ref{prop:fqm1} (i) implies that
$\tilde N/P\cong L'/L$, and Proposition \ref{prop:fqm1} (ii)
implies $\tilde P = P^\perp =N$.
The exact sequence \eqref{eq:fqm2} gives rise to a commutative diagram with exact rows, where all vertical arrows are isomorphisms:
\begin{align*}
\xymatrix{
0\ar[r] & \tilde N/P \ar[r]\ar[d]  & L'/(P\oplus N) \ar[r]\ar[d]^{\bar\pi_P}   & L'/(\tilde N\oplus N)\ar[r] \ar[d]^{\bar\pi_N}& 0\\
0\ar[r] & L'/L\ar[r] & P'/P  \ar[r]   & N'/N \ar[r] & 0 .
}
\end{align*}
This implies the following result.

\begin{proposition}
\label{prop:fqm2}
Let the notation be as above and assume that $|N'/N|$ is coprime to $|L'/L|$.
Then the map
\[
g:P'/P\to N'/N, \quad x \mapsto g(x):= \bar\pi_N\big(\bar\pi_P^{-1}(x)+(\tilde N+N)\big)
\]
defines a surjective homomorphism  with kernel $L'/L$.
In particular, we have $|P'/P| = |N'/N|\cdot |L'/L|$.
The corresponding $\Q/\Z$-valued quadratic forms satisfy
\[
Q(g(x)) = Q(\bar\pi_P^{-1}(x))-Q(x).
\]
\end{proposition}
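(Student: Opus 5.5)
The statement has three parts: $g$ is a well-defined surjective homomorphism, its kernel is $L'/L$, and the quadratic forms satisfy the stated relation. For the first two I will read everything off the commutative diagram just constructed; for the third I will compute $Q$ on a representative using an orthogonal decomposition in $L'_\Q$.

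\emph{Setting up the diagram.} Under the hypothesis $\gcd(|N'/N|,|L'/L|)=1$, Proposition~\ref{prop:fqm1}(i), applied to $N$, gives $\tilde N/P\cong L'/L$ via \eqref{eq:fqm3}. Proposition~\ref{prop:fqm1}(ii), applied with $P$ in the role of $N$ (so that $P^\perp=N$ and $N'/N$ is coprime to $|L'/L|$), gives $\tilde P=N$. Hence \eqref{eq:fqm1} for $P$ reads $\bar\pi_P:L'/(P\oplus N)\xrightarrow{\sim}P'/P$. This justifies the middle vertical isomorphism. The right vertical arrow is \eqref{eq:fqm1} for $N$. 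The top row is \eqref{eq:fqm2}.

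\emph{Homomorphism, surjectivity, kernel.} I still need the bottom row $0\to L'/L\to P'/P\to N'/N\to0$ to be exact with all squares commuting. For the left square, an element $z\in\tilde N$ has $\pi_P(z)=z$, since $z\perp N$. So the composite $\tilde N/P\to L'/(P\oplus N)\to P'/P$ is the natural inclusion $z+P\mapsto z+P$, which identifies $\tilde N/P$ with a subgroup of $P'/P$ isomorphic to $L'/L$. The map $g$ is, by definition, the composite of $\bar\pi_P^{-1}$, the quotient map of the top row, and $\bar\pi_N$. Hence $g$ is a homomorphism. Since the top row is exact and the vertical maps are isomorphisms, $g$ is surjective with kernel the image of $\tilde N/P\cong L'/L$. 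The cardinality formula follows.

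\emph{Quadratic forms.} This is the only real computation. Take $x\in P'$ and choose $z\in L'$ with $\pi_P(z)=x$, so $z$ represents $\bar\pi_P^{-1}(x)$. Since $P_\Q\oplus N_\Q=L_\Q$ orthogonally, $z=\pi_P(z)+\pi_N(z)=x+\pi_N(z)$. Orthogonality then gives $Q(z)=Q(x)+Q(\pi_N(z))$, and $\pi_N(z)$ represents $g(x)$ in $N'/N$. Hence $Q(g(x))\equiv Q(z)-Q(x)\pmod 1$. To finish I need well-definedness modulo $\Z$, which I expect to be the main obstacle, though it is minor. Changing $z$ by an element of $P\oplus N$ changes $Q(z)$ by an integer, since $(L',P\oplus N)\subset\Z$ and $P\oplus N$ is even. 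Changing $x$ within $x+P$ changes $Q(x)$ by an integer, since $(P',P)\subset\Z$ and $P$ is even. So $Q(g(x))=Q(\bar\pi_P^{-1}(x))-Q(x)$ holds as an identity of $\Q/\Z$-valued forms.
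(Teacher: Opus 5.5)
Your proposal is correct and follows the paper's route: the paper also deduces the statement from the commutative diagram built from Proposition~\ref{prop:fqm1}(i),(ii) and the exact sequence \eqref{eq:fqm2}, with $\bar\pi_P$ and $\bar\pi_N$ as the vertical isomorphisms. Your explicit check of the left square and your computation $Q(z)=Q(x)+Q(\pi_N(z))$ via the orthogonal splitting $L_\Q=P_\Q\oplus N_\Q$ fill in details the paper leaves implicit, in particular the quadratic-form identity.
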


\begin{remark} \label{rem:quad}
Let $d:=|L'/L|$.
Using the notation of \eqref{eq:defLS}, the kernel of the homomorphism  $g$ in Proposition \ref{prop:fqm2} is given by $(P'/P)_d$. Its restriction to the orthogonal complement defines an isomorphism
$g^d:(P'/P)^d\to N'/N$, satisfying $Q(g^d(x)) = -Q(x)\in \Q/\Z$.
\end{remark}

\section{The Borcherds lift and the small CM value formula} \label{sect:Review}

\subsection{Borcherds products on Siegel modular threefolds}  \label{sect:Borcherds}


Here we recall the construction of Borcherds products on Siegel modular threefolds in a setup which is convenient for our purposes, see \cite{GN}, \cite{Bo98}.
Recall the definition of the quadratic space $V$ of signature $(3,2)$ and the even lattice $L\subset V$ from Section \ref{sect:siegel3fold}.

In our setting the Borcherds lift is a map from weakly holomorphic vector-valued elliptic modular forms of weight $-1/2$ to meromorphic modular forms on the Siegel threefold with zeros and poles on special divisors.
In \cite{GN} it is described as a map from weakly holomorphic Jacobi forms of weight $0$ to Siegel modular forms. Here we take advantage of the fact that we are working with full level and describe the input space in terms of scalar valued modular forms. We will also obtain more precise information on the multiplier systems of the corresponding Borcherds products.

The discriminant group $L'/L$ of $L$ is isomorphic to $\tfrac{1}{2}\Z/\Z$. We denote by $\frake_0$, $\frake_1$ the standard basis of the group ring $\C[L'/L]$ corresponding to the trivial and the non-trivial coset in $L'/L$. Let $\rho_L$ be the Weil representation of $\Mp_2(\Z)$ on $\C[L'/L]$ as in \cite{Bo98}.
Let $k$ be an even integer. We denote by $M_{k-1/2,\bar\rho_L}^!$ the space of $\C[L'/L]$-valued weakly holomorphic modular forms for $\Mp_2(\Z)$ of weight $k-1/2$ with representation $\bar\rho_L$. Note that by \cite[Theorem 5.1]{EZ} this space is isomorphic to the space of weakly holomorphic Jacobi forms of weight $k$ and index $1$. If $f\in M_{k-1/2,\bar\rho_L}^!$ we write $f=f_0\frake_0+f_1\frake_1$ for the components of $f$ with respect to the basis $\frake_0,\frake_1$.
Then $f_0,f_1$ are scalar modular forms of weight $k-1/2$ for the principal congruence subgroup of level $4$, and $f$ has a Fourier expansion of the form
\[
f(\tau)=\sum_{\mu\in L'/L} \sum_{\substack{m\in \Z-Q(\mu)\\ m\gg -\infty}}c(m,\mu)q^m\frake_\mu .
\]


Let $M_{k-1/2}^{!}$ be the space of scalar-valued weakly holomorphic modular forms of weight $k-1/2$ for the group $\Gamma_0(4)$, and denote its Kohnen plus space by $M_{k-1/2}^{!,+}$.
Any $g\in M_{k-1/2}^{!,+}$ has a Fourier expansion of the form
\begin{align}
\label{eq:fourierg}
g=\sum_{\substack{ N\in \Z\\
N\equiv 0,3\pmod{4}\\N\gg-\infty}} b(N) q^N.
\end{align}

\begin{proposition}
\label{prop:scalvec}
Let the notation be as above. The assignment
\[
f(\tau) =f_0(\tau)\frake_0+f_1(\tau)\frake_1\mapsto \phi(f)=f_0(4\tau) + f_1(4\tau)
\]
defines an isomorphism $\phi:M_{k-1/2,\bar\rho_L}^!\to M_{k-1/2}^{!,+}$.
The $N$-th Fourier coefficient of $\phi(f)$ is given by
\[
b(N)= \begin{cases}c(N/4,0),&\text{if $N\equiv 0\pmod{4}$,}\\
c(N/4,\tfrac{1}{2}),&\text{if $N\equiv 3\pmod{4}$,}\\
0,&\text{if $N\equiv 1,2\pmod{4}$.}
\end{cases}
\]
The inverse map takes $g\in M_{k-1/2}^{!,+}$ with Fourier expansion as in \eqref{eq:fourierg} to
$\vec g = g_0  \frake_0  + g_1 \frake_1$ with
$$
g_\epsilon(\tau) =  \sum_{\substack{N\in \Z\\N \equiv -\epsilon \!\pmod{4}}} b(N) q^{\frac{N}{4}}, \quad   \epsilon =0, 1.
$$
\end{proposition}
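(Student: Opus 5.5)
The plan is to deduce Proposition~\ref{prop:scalvec} from the standard Eichler--Zagier / Kohnen correspondence, working directly with the Weil representation $\rho_L$. The discriminant form $L'/L\cong\tfrac12\Z/\Z$ carries the quadratic form $Q(\tfrac12)=\tfrac14 \bmod 1$ (take $x=x(\tfrac12,\ast)$, or $r=\tfrac12$, so $Q=r^2=\tfrac14$). This is the discriminant form of the rank one lattice $\Z$ with $Q(x)=x^2$, i.e.\ $A_1$. Hence $\rho_L$ is the Weil representation attached to the index-one theta functions $\theta_{0},\theta_{1}$, where $\theta_\epsilon(\tau)=\sum_{n\equiv \epsilon (2)} q^{n^2/4}$. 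The first step records the explicit action of the generators $T$ and $S$ of $\Mp_2(\Z)$ under $\bar\rho_L$:
\[
\bar\rho_L(T)\frake_\mu=e(-Q(\mu))\frake_\mu,\qquad
\bar\rho_L(S)\frake_\mu=\tfrac{\sqrt{i}^{\,b^+-b^-}}{\sqrt2}\sum_\nu e((\mu,\nu))\frake_\nu ,
\]
up to the usual conjugation conventions, with $b^+-b^-=1$.

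From the $T$-action, the component $f_0$ has exponents in $\Z$ and $f_1$ has exponents in $\Z-\tfrac14$. So $f_0(4\tau)$ has exponents $N\equiv0 \pmod 4$ and $f_1(4\tau)$ has exponents $N\equiv 3\pmod 4$, which gives the formula for $b(N)$ and the plus-space condition at once. Since the two supports are disjoint, the inverse map $g\mapsto\vec g$ as stated is also forced. It therefore remains to show two things: that $\phi(f)$ is modular of weight $k-\tfrac12$ for $\Gamma_0(4)$, and that $\vec g$ transforms under $\bar\rho_L$ whenever $g$ lies in the plus space.

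For the first, the approach is the one in Eichler--Zagier \cite[Theorem 5.4]{EZ} (equivalently Kohnen's original argument). The sum $f_0(4\tau)+f_1(4\tau)$ is obtained by pairing $f$ with the vector $(1,1)$, and the group $\Gamma_0(4)$ is generated by $T$ and $\kzxz{1}{0}{4}{1}=ST^{-4}S^{-1}$ together with $-I$. Invariance under $T$ is immediate from the exponents. For the lower unipotent generator one computes $\bar\rho_L(ST^{-4}S^{-1})$ on $\C[L'/L]$ and checks that, after the substitution $\tau\mapsto4\tau$, the resulting automorphy factor equals the theta multiplier $j(\gamma,\tau)^{2k-1}$ for $\Gamma_0(4)$. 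Since $k$ is even, $2k-1\equiv 3\pmod 4$, and this is precisely the parity condition under which Kohnen's plus-space for weight $k-\tfrac12$ matches $\bar\rho_L$ rather than $\rho_L$. Holomorphy on $\H$ and the meromorphic behaviour at the cusps transfer directly from $f$.

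For the converse, the step I expect to be the main obstacle is recovering the full $\Mp_2(\Z)$-equivariance of $\vec g$ from only $\Gamma_0(4)$-modularity plus the support condition. My first attempt is Kohnen's projection argument: the plus space is the eigenspace of Kohnen's operator built from $W_4$ and $U_4$. Applying $S$ to $\vec g$ is then expressed through $g|W_4$, and the plus-space eigenvalue identity yields exactly the Gauss-sum matrix $\bar\rho_L(S)$ on $(g_0,g_1)$. If that computation proves messy, I would instead avoid it with a dimension/injectivity argument. The map $\phi$ is visibly injective, and it commutes with multiplication by the level-one modular forms $E_4,E_6,\Delta^{-1}$ (with $E_4(4\tau)$ etc.\ on the scalar side). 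Both spaces are then modules over $M^!_*(\SL_2(\Z))$, free of rank two, generated in matching weights by $\theta$-type forms. Comparing generators would give surjectivity, and the identification with index-one Jacobi forms \cite[Theorem 5.1]{EZ} mentioned just before the proposition supplies these generators.
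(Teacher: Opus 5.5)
Your proposal is essentially the paper's own argument. It derives modularity of $\phi(f)$ from the $\bar\rho_L$-transformation law, and gets the plus-space condition, the formula for $b(N)$ and injectivity from the exponents forced by $\bar\rho_L(T)$. Surjectivity then comes from Kohnen's $U_4$/$W_4$ characterization of the plus space; in your fallback it comes from Eichler--Zagier, though there it is Theorem~5.4, not Theorem~5.1 alone, that supplies the scalar side.

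One slip needs fixing. After the substitution $\tau\mapsto 4\tau$ the element you must evaluate is $\kzxz{1}{0}{1}{1}=ST^{-1}S^{-1}$, the conjugate of $\kzxz{1}{0}{4}{1}$ by $\diag(4,1)$. It is not $ST^{-4}S^{-1}$, whose image under $\bar\rho_L$ is trivial. One finds $\bar\rho_L(ST^{-1}S^{-1})=\frac12\kzxz{1+i}{1-i}{1-i}{1+i}$, which fixes the row vector $(1,1)$. This gives exactly the factor $(4\tau+1)^{k-1/2}=j(\gamma,\tau)^{2k-1}$ for $\gamma=\kzxz{1}{0}{4}{1}$.
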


\begin{proof}
The transformation law of $f$ under the Weil representation implies that $\phi(f)$ belongs to  $M_{k-1/2}^{!}$. The plus space condition is a consequence of the shape of the Fourier expansion of $f$. This shows that $\phi$ is a well defined injective map. The surjectivity follows for instance from Kohnen's characterization of the plus space in terms of the action of the $U_4$-operator. Alternatively, one can combine Theorems 5.1 and 5.4 of \cite{EZ}.
\end{proof}

We are mainly interested in the case $k=0$. Therefore we now give an explicit description of the space $M_{-1/2}^{!,+}$. Recall that the Cohen Eisenstein series $E_{k-1/2}^+$ of weight $k$ is the unique Eisenstein series in $M_{k-1/2}^+$ with constant term $1$. Its Fourier expansion is given by
\[
E_{k-1/2}^+(\tau)= 1+\frac{1}{\zeta(3-2k)}\!\sum_{\substack{N\in \Z_{>0}\\ N\equiv 0,3\pmod{4}}}\!
\Big[L(2-k,\chi_{-N_0})\sum_{d\mid f}\mu(d)\chi_{-N_0}(d) d^{k-2} \sigma_{2k-3}(f/d)\Big] q^N,
\]
where we have written $N = N_0 f^2$ with $-N_0$  the discriminant of the imaginary quadratic field $\Q(\sqrt{-N})$ and $f\in \Z_{>0}$. Moreover, $\chi_{-N_0}=\leg{-N_0}{\cdot}$ denotes the corresponding quadratic Dirichlet character of conductor $N_0$. Finally, $\sigma_r(n)$ is the sum of $r$-th power of positive divisors of $n$, and $\mu(d)$ the M\"obius function.
The $q$-expansions of these Eisenstein series for $k=4$ and $k=6$ begin with
\begin{align*}
E_{7/2}^+ &= 1 + 56q^3 + 126q^4 + 576q^7 + 756q^8 + 1512q^{11} + 2072q^{12} +\ldots,\\
E_{11/2}^+ &= 1 -88q^3 - 330q^4 - 4224q^7 - 7524q^8 - 30600q^{11} - 46552q^{12} - \ldots .
\end{align*}
Let $E_k= 1-\frac{2k}{B_k}\sum_{n\geq 1} \sigma_{k-1}(n)q^n$ be the classical Eisenstein series of weight $k$ for $\SL_2(\Z)$, and write $\Delta=\frac{1}{1728}(E_4^3-E_6^2)$ for the normalized cusp form of weight $12$. We define weakly holomorphic modular forms in $M_{-1/2}^{!,+}$ by
\begin{align*}
f_1 &= \frac{1}{144\Delta(4\tau)}\left( E_4^2(4\tau) E_{7/2}^+ - E_6(4\tau)E_{11/2}^+\right),\\
f_4 &= \frac{1}{18\Delta(4\tau)} \left( 11E_4^2(4\tau) E_{7/2}^+ +7 E_6(4\tau)E_{11/2}^+\right) .
\end{align*}
The Fourier expansions of these forms begin as follows:
\begin{align}
f_1 &= q^{-1} + 10 - 64q^3 + 108q^4 - 513q^7 + 808q^8+\ldots, \label{eq:f1}\\
f_4 &=q^{-4} + 70 + 32384q^3 + 131976q^4 + 4451328q^7  + \ldots. \label{eq:f4}
\end{align}

Let $M^!_{0,\Z}=\Z[j]$ be the ring of weakly holomorphic modular forms of weight $0$ for $\SL_2(\Z)$ with integral Fourier coefficients, where $j$ denotes the usual Klein $j$-function. Denote by $M_{-1/2,\Z}^{!,+}$ the $\Z$-submodule of forms in $M_{-1/2}^{!,+}$ with integral Fourier coefficients.
Then $M_{-1/2,\Z}^{!,+}$ is a $M^!_{0,\Z}$-module via the map
\[
M^!_{0,\Z}\times M_{-1/2,\Z}^{!,+} \to M_{-1/2,\Z}^{!,+},\quad (h(\tau),f(\tau))\mapsto h(4\tau)f(\tau).
\]

\begin{proposition}
\label{prop:4.2}
The module $M_{-1/2,\Z}^{!,+}$ is a free  $M^!_{0,\Z}$-module of rank $2$. A basis is given by $f_1,f_4$. The constant term in the $q$-expansion of any element of $M_{-1/2,\Z}^{!,+}$ is even.
\end{proposition}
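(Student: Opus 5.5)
We reduce everything to the structure of the ring $M^!_{0,\Z}[\![q]\!]$-module of forms with a pole only at $\infty$ in $q$-expansion. The key invariant is the principal part: a form $g\in M_{-1/2}^{!,+}$ has Fourier expansion $\sum_{N\equiv 0,3 \pmod 4} b(N)q^N$, so its principal part involves only exponents $N<0$ with $-N\equiv 0,1\pmod 4$, i.e. $N=-1,-4,-5,-8,-9,\dots$. First we check that $M_{-1/2}^{!,+}$ has no nonzero holomorphic elements (or cusp forms) at all. A holomorphic form of weight $-1/2$ on $\Gamma_0(4)$ vanishes, since negative weight holomorphic modular forms are zero. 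Hence every $g\in M_{-1/2}^{!,+}$ is determined by its principal part.

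Next we show that $f_1$ and $f_4$, together with their multiples by $j(4\tau)^n$, realize every admissible principal part, with integral leading coefficient $1$. Note that $j(4\tau)=q^{-4}+744+\dots$. Then $j(4\tau)^n f_1 = q^{-4n-1}+\dots$ and $j(4\tau)^n f_4=q^{-4n-4}+\dots$. Every admissible negative exponent $-4n-1$ or $-4n-4$ ($n\ge 0$) thus occurs exactly once as the leading exponent of some $j(4\tau)^nf_\epsilon$, and these leading coefficients equal $1$.

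Now take $g\in M_{-1/2,\Z}^{!,+}$ with leading term $b(-N)q^{-N}$, $b(-N)\in\Z$. Subtract $b(-N)$ times the corresponding $j(4\tau)^nf_\epsilon$. The difference still lies in $M_{-1/2,\Z}^{!,+}$, because $j$ and $f_1,f_4$ have integral coefficients. For $f_1,f_4$ this integrality is visible from their construction; rigorously, one uses that these are the unique forms with the given principal part, and that Galois/integrality follows from the integrality of $E^+_{7/2}$, $E^+_{11/2}$, $E_4$, $E_6$ and $1/\Delta$, after checking that the division by $144$ and $18$ preserves integrality. This is the step I expect to be the main obstacle. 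I would try to verify it by expressing $f_1,f_4$ as $\theta$-type quotients, or by checking enough coefficients via a Sturm-type bound applied to $\Delta(4\tau)f_\epsilon$, a holomorphic form of weight $23/2$. The difference has a strictly smaller pole order, so descending induction terminates at a holomorphic form, which is $0$. This exhibits $g$ as an $M^!_{0,\Z}$-combination of $f_1,f_4$. Linear independence holds because a relation $h_1(4\tau)f_1+h_4(4\tau)f_4=0$ would force an identity of leading exponents, which lie in distinct residue classes mod $4$. Hence the module is free of rank $2$.

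Finally, for the evenness of the constant term, write $g=h_1(4\tau)f_1+h_4(4\tau)f_4$. It suffices to show that $j^nf_1$ and $j^nf_4$ have even constant terms for all $n$. For $n=0$ these constant terms are $10$ and $70$. For general $n$, I would use the residue pairing instead. The constant term of $g$ equals a pairing of the principal part of $g$ against a weight $5/2$ holomorphic form, namely Zagier's theta-type form $\theta^{\,\prime}$-relative, essentially the Cohen Eisenstein series $E^+_{5/2}$ or $-12$ times the class number generating function $\mathcal H(\tau)=\sum H(N)q^N$. This pairing is the weight $2$ constant term of $g\cdot\mathcal H$, which vanishes. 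Concretely, one gets $c(0)=\sum_{N>0} b(-N)\cdot(\text{coefficient } 12H(N)\text{-type})$, up to normalization. Since $E_{5/2}^+$-type coefficients $-120\,(\dots)$, or rather the relevant coefficients $r_3(N)$-style, are even, evenness follows. I would first test this against $f_1$ and $f_4$ to fix the normalization.
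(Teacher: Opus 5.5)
This is essentially the paper's proof: you get generation by descending induction on the pole order using $j(4\tau)^n f_1=q^{-4n-1}+\cdots$ and $j(4\tau)^n f_4=q^{-4n-4}+\cdots$, linear independence because the leading exponents lie in different classes mod $4$, and evenness of the constant term via the residue pairing. (The paper likewise leaves implicit the integrality of $f_1,f_4$ that you flag, and also that both ``no principal part at $\infty$ implies holomorphic at all cusps of $\Gamma_0(4)$'' and the vanishing of the constant term of the pairing come from the isomorphism with $\bar\rho_L$-valued forms on $\Mp_2(\Z)$, which has only one cusp.) In the last step, discard the weight $3/2$ candidates $\mathcal H$ and $\theta^3$, which have the wrong weight; $\mathcal H$ moreover has the same plus-space congruence $N\equiv 0,3 \pmod 4$ as $g$. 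The partner must be the weight $2-(-\tfrac12)=\tfrac52$ Cohen Eisenstein series $E^+_{5/2}=1+\sum_{N>0}c_E(N)q^N=1-10q-70q^4-48q^5-\cdots$, which gives $c(0)=-\sum_{N>0}b(-N)c_E(N)$ (reproducing $10$ and $70$ for $f_1$ and $f_4$), and the only arithmetic input needed is that all $c_E(N)$ with $N>0$ are even, which is exactly what the paper invokes.
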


\begin{proof}
By induction on the pole order, it is easily seen that $f_1,f_4$ generate $M_{-1/2,\Z}^{!,+}$.
Considering the leading terms of the $q$-expansions, one finds that the two forms are linearly independent over $M^!_{0,\Z}$. The assertion on the constant term follows by considering the residue pairing of any $f\in M_{-1/2,\Z}^{!,+}$ with the normalized Cohen Eisenstein series of weight $5/2$. The $q$-expansion of the latter series has constant term $1$, while all other coefficients are even.
\end{proof}

\begin{corollary}
\label{cor:4.3}
For every $d\in \Z_{>0}$ with $d\equiv 0,1\pmod{4}$ there exists a unique element $f_d\in M_{-1/2,\Z}^{!,+}$ whose Fourier expansion begins with $f_d= q^{-d}+O(1)$.
\end{corollary}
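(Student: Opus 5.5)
We prove existence and uniqueness of $f_d$ directly from Proposition~\ref{prop:4.2}, by induction on $d$ and reduction of principal parts.

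\emph{Uniqueness.} Suppose $f,\tilde f\in M_{-1/2,\Z}^{!,+}$ both have expansion $q^{-d}+O(1)$. Then $g=f-\tilde f$ lies in $M_{-1/2}^{!,+}$ and its principal part vanishes, so $g$ is holomorphic at the cusps of $\Gamma_0(4)$. Here we use the plus-space structure: via the isomorphism $\phi$ of Proposition~\ref{prop:scalvec}, $g$ corresponds to a vector-valued form $\vec g$ for $\Mp_2(\Z)$, whose principal part is read off from the coefficients $b(N)$ with $N<0$. Thus $\vec g$ is a holomorphic vector-valued modular form of weight $-1/2<0$, hence $\vec g=0$ and $g=0$.

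\emph{Existence.} Fix $d>0$ with $d\equiv 0,1\pmod 4$; then $-d\equiv 0,3\pmod 4$, so the exponent $-d$ is allowed in the plus space. The exponents occurring in $\Z[j(4\tau)]f_1$ are $-1-4n$ ($n\ge0$), giving all negative $N\equiv 3\pmod 4$. Those in $\Z[j(4\tau)]f_4$ are $-4-4n$, giving all negative $N\equiv 0\pmod 4$. Hence for $d\equiv 1\pmod 4$ we take $d=1+4n$ and start from $j(4\tau)^n f_1$; for $d\equiv 0\pmod 4$ we take $d=4+4n$ and start from $j(4\tau)^n f_4$. Both $j$ and $f_1,f_4$ have integral coefficients and leading coefficient $1$, so this starting form is $q^{-d}+(\text{higher terms})$ with integer coefficients. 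We then subtract integer multiples of the already constructed $f_{d'}$ for $0<d'<d$, $d'\equiv 0,1\pmod 4$. These exist by induction, the base cases being $f_1$ and $f_4$ themselves. This kills all remaining negative-exponent terms, so the result lies in $M_{-1/2,\Z}^{!,+}$ and equals $q^{-d}+O(1)$.

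\emph{Main point to check.} The only genuinely non-formal input is the integrality and monicity of the leading terms of $f_1$ and $f_4$, and that these products remain in the plus space. Integrality is built into Proposition~\ref{prop:4.2}, since $f_1,f_4$ lie in $M_{-1/2,\Z}^{!,+}$. Monicity is read off from \eqref{eq:f1} and \eqref{eq:f4}. Plus-space membership of $j(4\tau)^n f_i$ holds because multiplication by $h(4\tau)$ shifts exponents by multiples of $4$. The remaining steps are bookkeeping of exponents modulo $4$.
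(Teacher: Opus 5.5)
Your proof is correct and is essentially the paper's argument. The paper deduces the corollary from Proposition~\ref{prop:4.2}, whose proof is exactly your induction on the pole order using $j(4\tau)^n f_1$ and $j(4\tau)^n f_4$. Your uniqueness step (a difference with no principal part corresponds, via Proposition~\ref{prop:scalvec}, to a holomorphic vector-valued form of weight $-1/2$, hence vanishes) is the standard fact implicitly used there, together with the leading-term independence of $f_1$ and $f_4$.
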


\begin{remark}
\label{rem:4.4}
Statements analogous to Proposition \ref{prop:4.2} and Corollary \ref{cor:4.3} also hold for the space $M_{-5/2}^{!,+}$. They fail to hold for  $M_{-1/2-2j}^{!,+}$ with $j\in \Z_{\geq 2}$.
\end{remark}

We now turn to Borcherds' theorem \cite[Theorem 13.3]{Bo98}  in our particular case. Recall that for every $\mu\in L'/L$ and every positive $m\in \Z+Q(\mu)$  there is a special divisor $Z(m,\mu)$ on $X=\Sp_4(\Z)\bs \H_2$.
For $d\in \Z_{>0}$ we put
\begin{equation} \label{eq:divisor}
Z(d)=\begin{cases}Z(d/4,0),&\text{if $d\equiv 0\pmod{4}$,}\\
Z(d/4,\tfrac{1}{2}),&\text{if $d\equiv 1\pmod{4}$,}\\
0,&\text{if $d\equiv 2,3\pmod{4}$.}
\end{cases}
\end{equation}
It is easily seen that $Z(d)$ is the Humbert surface of discriminant $d$ on $X$.
Any $f\in M_{-1/2,\Z}^{!,+}$ with Fourier coefficients $c(N)$ determines a special divisor
\[
Z(f) = \sum_{d\in \Z_{>0}} c(-d) Z(d).
\]

Also recall from \cite{Ma}, that the group $\Sp_4(\Z)$ only admits multiplier systems of integral weight, which are then unitary characters $ \Sp_4(\Z)\to \C^\times$. The group of such characters has order two and is generated by the character of the Siegel modular form $\Delta^{(2)}$ of weight $5$ given by the product of the 10 even theta constants. 

\begin{theorem}[Borcherds] \label{thm:Borcherds}
Let  $f\in M_{-1/2,\Z}^{!,+}$ with Fourier coefficients $c(N)$. There exists a meromorphic Siegel modular form $\Psi(f)$ for  $\Sp_4(\Z)$ (with a quadratic character)  such that:
\begin{itemize}
\item[(i)] The weight of $\Psi(f)$ is $c(0)/2$.
\item[(ii)] The divisor of $\Psi(f)$ is $Z(f)$.
\item[(iii)] The function
$$
\Phi(z,f) = -c(0)( \log(4\pi) + \Gamma'(1))
             - 2 \log ( |\Psi(z,f)|^2 \det(y)^{c(0)/2} )
$$
is the regularized theta integral of $\vec f$ as defined in \cite{Bo98}, see also  \cite[(4.7)]{BY09}.
\item[(iv)] Let $W\subset \Sym_2(\R)_{>0}$ be a Weyl chamber for $f$, and let $\rho_{W,f}\in \Sym_2(\Q)$ be the Weyl vector associated with $W$ and $f$ as in \cite{Bo98}. The function $\Psi(f)$ has the infinite product expansion
\[
\Psi(z, f)= e(\tr(\rho_{W,f}z))\prod_{\substack{T\in \Sym_2(\Z)^\vee\\ (T,W)>0}} \big( 1-e(\tr(Tz))\big)^{c(4\det(T))},
\]
which converges if  $\det(y)$ is sufficiently large. Here $\Sym_2(\Z)^\vee$ denotes the lattice of half-integral symmetric $(2\times 2)$-matrices, and $z=x+iy\in \H_2$.
\end{itemize}
\end{theorem}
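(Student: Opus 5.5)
This theorem is essentially Borcherds' Theorem 13.3 in \cite{Bo98}, specialized to the lattice $L$ of signature $(3,2)$ from Section~\ref{sect:siegel3fold} under the identification $\Sp_4(\Z)\backslash\H_2\cong X_L$ of Lemma~\ref{lem: Identification}. The plan is to transport the statement from $\C[L'/L]$-valued input to scalar plus-space input and then read off (i)--(iv).

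\textbf{Step 1: translate the input.} Given $f\in M^{!,+}_{-1/2,\Z}$, Proposition~\ref{prop:scalvec} produces the vector-valued form $\vec f=f_0\frake_0+f_1\frake_1\in M^!_{-1/2,\bar\rho_L}$. Its coefficients satisfy $c(N/4,\mu)=c(N)$, with $\mu=0$ for $N\equiv 0\pmod 4$ and $\mu=\tfrac12$ for $N\equiv 3\pmod 4$. All these coefficients are integers, so Borcherds' theorem applies to $\vec f$, and the weight is $c(0,0)/2=c(0)/2$. This gives (i). By Proposition~\ref{prop:4.2} the weight is an integer.

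\textbf{Step 2: the divisor.} Borcherds' divisor is $\tfrac12\sum_{\mu}\sum_{m>0}c(-m,\mu)Z(m,\mu)$. The factor $\tfrac12$ cancels because $x$ and $-x$ both contribute and $c(m,\mu)=c(m,-\mu)$; here $\mu=-\mu$ in $L'/L\cong\Z/2$. Substituting $m=d/4$ and using definition~\eqref{eq:divisor} turns this into $\sum_d c(-d)Z(d)=Z(f)$, which is (ii). The exponent $c(-d)$ for $d\equiv 2,3\pmod 4$ vanishes automatically by the plus-space condition, with the sign convention $-N\equiv 0,1$.

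\textbf{Step 3: the Green function.} Part (iii) is Borcherds' Theorem 13.3(ii) as normalized in \cite[(4.7)]{BY09}. The only work is to match the Petersson norm. Under $\Xi$, the norm $|\Psi|^2|y_{\mathcal L}|^{c(0)/2}$ on $\mathbb D$ corresponds to $|\Psi(z)|^2\det(y)^{c(0)/2}$ on $\H_2$. To see this, compute $(\Xi(z),\overline{\Xi(z)})$ from the explicit formula for $\Xi$ in Lemma~\ref{lem: Identification}; one gets a constant multiple of $\det y$. That constant must then be absorbed into the normalization of $\Psi$.

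\textbf{Step 4: the product expansion.} For (iv), expand at the standard $0$-dimensional cusp given by the isotropic vector $x(0,1,0,0,0)$ (or its partner) in $L$. The splitting $L=K\oplus U$ has $K\cong\{x(0,0,c,d,r)\}$ with $Q=-\det\tau$, so $K'$ is identified with $\Sym_2(\Z)^\vee$. In this identification $Q(\lambda)=-\det T$, and $\lambda\in K'$ lies in the coset $\mu$ according to the parity of $4\det T$. Borcherds' exponent $c(Q(\lambda),\lambda)$ therefore becomes $c(-\det T\cdot(-4))=c(4\det T)$ after the sign conventions for signature $(2,3)$ versus $(3,2)$ are sorted out. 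This identification is the step I expect to be the main obstacle. It has two parts: (a) getting the signs and the factor $4$ right, and (b) confirming that the sublattice $K$ has level $1$ at the cusp, so that no extra factors of the form $(1-e(\delta/N))$ appear.

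\textbf{Step 5: the multiplier.} The character is a priori a multiplier of integral weight $c(0)/2$ on $\Sp_4(\Z)$. By \cite{Ma} it is a character of order at most two, hence quadratic.
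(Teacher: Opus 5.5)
This matches the paper, which gives no argument of its own and simply invokes \cite[Theorem~13.3]{Bo98} for the lattice $L$ of Section~\ref{sect:siegel3fold}, with Proposition~\ref{prop:scalvec} converting plus-space input into $\bar\rho_L$-valued input; your Steps~1--5 are the natural way to carry out that specialization. One correction to Step~3: the constant coming from $(\Xi(z),\overline{\Xi(z)})=-4\det y$ cannot be absorbed into $\Psi$, because the product expansion (iv) already fixes $|\Psi|$. Instead it enters the additive constant: Borcherds' term $-2c(0)(\log|Y|+\tfrac12\Gamma'(1)+\log\sqrt{2\pi})$ with $|Y|^2=(Y,Y)=2\det y$ equals $-c(0)\log\det y-c(0)(\log(4\pi)+\Gamma'(1))$, and this is where the $\log(4\pi)$ in (iii) comes from.
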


If $m$ is a positive integer with $m\equiv 0,1\pmod{4}$ we write $\Psi_m:=\Psi(f_m)$ for the Borcherds lift of the element $f_m\in M_{-1/2,\Z}^{!,+}$ of Corollary~\ref{cor:4.3}. It has integral weight and divisor $Z(m)$.
This divisor and (the square of) the Borcherds product $\Psi_m$ can be extended to the moduli stack $\calA_2$ over $\Z$ as follows.
Let $S$ be a scheme and let $\mathbf A \in \calA_2(S)$ be an $S$-valued point. Following  \cite[Section 2]{KRSiegel} we call an endomorphism $\phi\in \End_S(\mathbf A)$ \emph{special}, if it is symmetric (i.e. invariant under the Rosati involution) and of trace $0$. If $\phi$ is a  special endomorphism, then $\phi^2\in \Q\id_{\mathbf A}$. The quadratic form $\phi\mapsto \phi^2$ on the lattice of special endomorphisms is positive definite.

Let $\mathcal Z(m)$ be the moduli stack over $\Z$ whose functor of points is given by assigning to a scheme $S$ a pair $(\mathbf A, j)$  where $\mathbf A \in \mathcal A_2(S)$ and $j\in\End_S(\mathbf A)$ is a special endomorphism with $j^2 = m  \id_\mathbf A$.
Let $\mathcal M_k$ be the line bundle over $\mathcal A_2$ of Siegel modular forms of weight $k$.

\begin{proposition} \label{prop:integral}
Let the notation be as above.

(1)  The forgetful map  $\mathcal Z(m) \rightarrow \mathcal A_2$ is finite and unramified. It defines a horizontal divisor on $\mathcal A_2$ with complex points $\calZ(m)(\C)=Z(m)$.

(2)  The Borcherds product $\Psi_m^2$ extends to a section of $\mathcal M_{2k}$ on $\mathcal A_2$ with divisor $\dv \Psi_m^2 = 2\mathcal Z(m)$. Here $k= c(0)/2$ is determined by the constant term $c(0)$ of $f_m$.

\end{proposition}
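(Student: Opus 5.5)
We need to prove Proposition \ref{prop:integral}: (1) the forgetful map $\mathcal Z(m)\to\mathcal A_2$ is finite and unramified, and its image is a horizontal divisor with complex points $Z(m)$; (2) $\Psi_m^2$ extends to a section of $\mathcal M_{2k}$ over $\calA_2$ with divisor $2\mathcal Z(m)$.

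For (1), the plan is to argue via deformation theory of endomorphisms, following \cite[Section 2]{KRSiegel}.

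\emph{Unramified.} The map is unramified because of rigidity of endomorphisms of abelian schemes. An endomorphism $j$ of $\mathbf A$ over $S$ extends to a nilpotent thickening $S'$ in at most one way. Hence the relative tangent space of $\mathcal Z(m)\to\calA_2$ vanishes.

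\emph{Finite.} To see that the map is finite, it suffices to check two things.
\begin{itemize}
\item Properness. This comes from the valuative criterion together with Néron-type extension of endomorphisms, i.e.\ homomorphisms extend over normal bases.
\item Quasi-finiteness. Over any point, the special endomorphisms with $j^2=m$ are the vectors of norm $m$ in the positive definite lattice of special endomorphisms, so there are only finitely many of them.
\end{itemize}

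\emph{Divisor and complex points.} Over $\C$, the pairs $(\mathbf A,j)$ are exactly the points $z\in\mathbb D^+$ orthogonal to a vector $x\in L$ (via Lemma \ref{lem:Iden}) with $Q(x)=m/4$, in the appropriate coset. This is the identification of $L_{\mathbf A}\cap \End(A)$ with $x^\perp$ conditions. It gives $\mathcal Z(m)(\C)=Z(m)$, with the normalization $j^2=m$ versus $Q=m/4$ matching \eqref{eq:divisor}.

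\emph{Horizontality.} For horizontality (no vertical components) I would use the Kudla–Rapoport local deformation computation. In characteristic $p$, locally the locus where a special endomorphism deforms is cut out by one equation in the 3-dimensional deformation space. Therefore $\mathcal Z(m)$ is flat of relative dimension $2$ over $\Z$, i.e.\ a Cartier divisor with no components in fibers. This equidimensionality and flatness at small primes such as $p=2$ is the step I expect to be hardest. I would rely on \cite[Prop.~2.x]{KRSiegel}, which treats all primes via Grothendieck–Messing and Dieudonné theory.

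For (2), the plan has four steps.

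\emph{Step 1: integrality and the character.} Use the $q$-expansion principle on $\calA_2$ over $\Z$ (Chai–Faltings). The product expansion in Theorem \ref{thm:Borcherds}(iv) has integer exponents $c(4\det T)$, since $f_m$ has integral coefficients. After squaring to kill the quadratic character, and normalizing the leading Fourier–Jacobi coefficient, $\Psi_m^2$ has integral Fourier coefficients with content $1$. The Weyl vector $\rho_{W,f}$ lies in $\tfrac12\Sym_2(\Z)^\vee$, which is another reason for squaring.

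\emph{Step 2: extension.} Hence $\Psi_m^2$ defines a rational section of $\mathcal M_{2k}$ over $\calA_2$. Since the content is $1$, it does not vanish identically on any fiber $\calA_{2,\mathbb F_p}$, which is irreducible.

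\emph{Step 3: no vertical components.} Since $\calA_2$ is normal, the divisor of this section is the closure of its generic divisor plus possible vertical components. Step 2 rules out vertical components. In the other direction, the absence of vertical poles follows from the $q$-expansion principle applied to $\Psi_m^{-2}$ near the boundary, given that the generic divisor is effective.

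\emph{Step 4: matching with $2\mathcal Z(m)$.} By Theorem \ref{thm:Borcherds}(ii), $\dv\Psi_m^2$ is the flat closure of $2Z(m)$. By part (1), $\mathcal Z(m)$ is flat with generic fiber $Z(m)$, so the two divisors agree.
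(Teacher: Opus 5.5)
\textbf{The gap: horizontality in (1).} You argue that the locus where a special endomorphism deforms is cut out by one equation, and conclude that $\mathcal Z(m)$ is flat over $\Z$. The first fact (from the Grothendieck--Messing analysis of \cite[Section~2]{KRSiegel}) only says that $\mathcal Z(m)$ is a Cartier divisor on $\calA_2$; this, together with finiteness and unramifiedness, is all the paper takes from \cite[Proposition~2.5]{KRSiegel}. It does not give flatness, because the local equation $f\in W[[t_1,t_2,t_3]]$ at a characteristic-$p$ point might be divisible by $p$. Since $\calA_{2,\bar{\mathbb F}_p}$ is irreducible of dimension $3$, a vertical component of a divisor must be the entire fiber. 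So horizontality says exactly that $f \bmod p\neq 0$, i.e.\ that the special endomorphism does not deform over the whole equicharacteristic deformation space. Your step ``one equation, therefore flat'' assumes this.

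The missing input is that the fiber is not contained in $\mathcal Z(m)$. The paper gets it from an argument of Howard: by irreducibility, it suffices to exhibit one point of $\calA_2(\bar{\mathbb F}_p)$ outside $\mathcal Z(m)$. Take a principally polarized abelian surface with CM by a non-biquadratic quartic CM field $K$ in which $p$ splits completely, and whose real quadratic subfield $\Q(\sqrt D)$ satisfies $m\notin D(\Q^\times)^2$. Its reduction at a prime above $p$ is ordinary and simple with endomorphism algebra $K$. Hence its special endomorphisms lie in $\Q\sqrt D$, and none squares to $m$. Your Step~4 in (2) uses flatness of $\mathcal Z(m)$, so without this argument you only get that $\dv\Psi_m^2$ equals the horizontal part of $2\mathcal Z(m)$.

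\textbf{Agreement with the paper, and one caveat.} Otherwise your outline follows the paper. You sketch the rigidity and valuative-criterion proof of finiteness and unramifiedness where the paper cites \cite{KRSiegel}. Part (2) is the paper's argument: integral coprime coefficients from the product expansion, squaring to kill the character, the $q$-expansion principle, and comparison on the generic fiber. In Step~3, $\Psi_m^{-2}$ plays no role: integrality of the coefficients of $\Psi_m^2$ excludes vertical poles, and content one excludes vertical zeros.

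The caveat concerns your ``normalization matching'', which the paper's proof also takes for granted. Read literally ($j$ special with $j^2=m$), the moduli problem has complex points $\{z:\ z\perp j \text{ for some } j\in L,\ Q(j)=m\}=Z(m,0)=Z(4m)$. This is strictly larger than $Z(m)=Z(\tfrac m4,\frake_m)$. For instance, $j=x(1,1,0,0,0)$ satisfies $j^2=\id$ but $j/2\notin L'$. Geometrically, a bielliptic involution of a smooth genus two curve $C$ induces a special endomorphism of $J(C)$ with square $1$, although $J(C)$ is not a product. Matching $j^2=m$ with $Q(x)=\tfrac m4$, $x\in\frake_m+L$, requires $x=j/2\in L'$, equivalently $(j-m)/2\in\End(\mathbf B)$. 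This congruence condition has to be built into the definition of $\mathcal Z(m)$; with it, $\mathcal Z(1)$ becomes the decomposable locus.
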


\begin{proof}
(1) By \cite[Proposition 2.5]{KRSiegel}, the forgetful map $\mathcal Z(m)\to \calA_2$ is finite, unramified, and determines a divisor on $\mathcal A_2$ with complex points $Z(m)$.
%
The following argument, which was communicated to us by Ben Howard, shows that $\mathcal Z(m)$ is  horizontal. 

Let $p$ be a prime.
Since $\mathcal A_2$ is irreducible over $\bar{\mathbb F}_p$ (see  Corollary 1 in Chapter IV.6.8 of \cite{FC}),  it suffices to show that $\mathcal Z(m)(\bar{\mathbb F}_p)$ is not the whole fiber $\mathcal A_2(\bar{\mathbb F}_p)$.
Let $(A, \lambda) $ be a simple abelian surface with  principal polarization $\lambda$  over $\bar{\mathbb F}_p$ for which $K=\End^0(A) $ is  a quartic CM field with real quadratic subfield $F=\Q(\sqrt D)$, where $D$ is a positive fundamental discriminant. Hence the space of special endomorphisms of $(A, \lambda)$  is equal to the line  $\Q \sqrt D$.   Consequently $(A, \lambda) \notin \mathcal Z(m)(\bar{\mathbb F}_p)$ unless $m=D b^2$ (which can happen for at most one $D$). So   given $m$,  there is always some principally polarized abelian surface $(A, \lambda) \notin \mathcal Z(m)(\bar{\mathbb F}_p)$. 
Indeed, for a fixed $m =D b^2$, choose a non-biquadratic CM field $K$ which does not contain  $\Q(\sqrt D)$ and such that $p$ splits completely in $K$. If $(A, \lambda)$ is a principally polarized abelian surface over some number field with CM by $K$, then its reduction at some prime over $p$ is ordinary, simple, and has CM by $K$. Thus it does not belong to  $\mathcal Z(m)(\bar{\mathbb F}_p)$.

(2) The product expansion in Theorem \ref{thm:Borcherds} of the Siegel modular form  $\Psi_m^2$   implies that it has integral and coprime Fourier coefficients. Because $\Psi_m^2$ is a square, it has trivial character. By the $q$-expansion principle, this implies that $\Psi_m^2$ extends to a rational section of $\mathcal M_{2k}$ whose divisor is horizontal.  Since the divisors $\dv(\Psi_m^2)$ and $2\mathcal Z(m)$ agree on the generic fiber 
and since both are horizontal, they agree on the whole moduli stack $\calA_2$.
\end{proof}

\begin{example}
The theorem implies that $\Psi_1^2$, the Borcherds lift of $2f_1$, is a Siegel modular form of weight $10$ with divisor $2\calZ(1)$. Note that $\calZ(1)$ is the decomposable locus of $\calA_2$, that is, the image of the natural map  $\mathcal A_1 \times \mathcal A_1 \rightarrow \mathcal A_2$. Hence $\Psi_1$ must be a multiple of the Siegel modular form $\Delta^{(2)}$ given by the product of the 10 even theta constants. By looking at the Fourier expansion one finds that
\[
\Psi_1 = 2^{-6} \Delta^{(2)}.
\]
Consequently,  $\Psi_1^2$ is equal to the Igusa cusp form $\chi_{10}$ (normalized to have integral and coprime Fourier coefficients).
\end{example}

\subsection{Incoherent Eisenstein series and the holomorphic part of its derivative}
\label{sect:inceis}

This subsection gives a slight generalization of \cite[Theorem 2.6]{BY09}. For the explicit calculation of local Whittaker functions, see \cite[Section 5]{YYY}.
Let $d<0$ be a fundamental discriminant and let $\kay=\Q(\sqrt d)$ be the corresponding imaginary quadratic field with ring of integers $\co_\kay =\co_d$. Let $\mathfrak a = \Z \omega_2 + \Z a$ be a quadratic lattice in $\kay$  as in \eqref{eq:omega2} with endomorphism ring $\co_{\mathfrak a} =\co_{d_2}$, $d_2 =d t_2^2$. For $ t \in  \Z_{>0}$
we consider the lattice  $\mathcal N = \mathfrak a$ with the quadratic form $Q(x) = - \frac{t}a x \bar x$, and $U=\mathcal N \otimes_\Z \Q =\kay$.
Then
$\mathcal N' =\frac{1}{\sqrt{d_1}} \mathfrak a $, $d_1 =d_2 t^2 =d t_1^2$,  $t_1=t_2 t$. We will need the incoherent Eisenstein series of weight $1$ associated to $\mathcal N$ and the holomorphic part of its central derivative. We now summarize what we need and refer to \cite[Section~5]{YYY} and \cite[Section~5]{LYY} for details (see also  \cite{BY09}, \cite{KY10}, \cite{BKY}).  We begin by setting up some notation.

Let $\chi=\chi_{\kay/\Q} =\prod \chi_p$ be the quadratic idele class character of $\A^\times$ associated to the quadratic field extension $\kay/\Q$, and  denote by $I(s,\chi)=\otimes' I(s, \chi_p) $ be the associated  induced principal series representation of $\SL_2(\A)$. Let $\mathcal  C =\otimes' \mathcal C_p$ be the incoherent binary quadratic space over $\A$ such that $\mathcal C_p = U_p$ for $p <\infty$ and $\mathcal C_\infty$ is positive definite of dimension $2$. Denote by
$$
\lambda: S(\mathcal C_p ) \rightarrow I(0, \chi_p), \quad \lambda(\phi)(g) =\omega(g)\phi(0),
$$
the (equivariant) Rallis map, where $\omega$ is the Weil representation of $\SL_2(\Q_p)$ (respectively $\SL_2(\A)$) on $S(\mathcal C_p)$ (respectively  $S(\mathcal C)$) with respect to the standard additive character $\psi=\prod \psi_p$. We also write $\lambda(\phi)$  for the associated standard section of $I(s, \chi)$ with $\lambda(\phi)(g, 0) = \lambda(\phi)(g)$. Let $\phi_\infty (x) =e^{-2 \pi Q(x)}$, then $\lambda(\phi_\infty)$ is the standard weight $1$ spherical section $\Phi_\infty^1$.  For any $\phi \in S(U(\A_f)) =S(\mathcal C_f)$, we have an incoherent Eisenstein series
\begin{equation}
E(\tau, s, \phi) = v^{-1/2} E(g_\tau, s, \phi) = v^{-1/2} \sum_{\gamma \in \Gamma_\infty \backslash \SL_2(\Z)} \lambda(\phi)(\gamma_f, s) \lambda(\phi_\infty)(\gamma_\infty g_\tau, s)
\end{equation}
of weight $1$, where $\Gamma_\infty$ denotes the stabilizer of $\infty $ in $\SL_2(\Z)$. This Eisenstein series has an odd functional equation under $s\mapsto -s$ and hence satisfies $E(\tau, 0, \phi)=0$.
The central derivative  $E'(\tau, 0, \phi)$ is a harmonic Maass form of weight $1$. Let
\begin{equation}
\mathcal E(\tau, \phi) = \sum_{m \ge 0} \kappa (m, \phi) q^m
\end{equation}
denote the holomorphic part of $E'(\tau, 0, \phi)$  with Fourier coefficients
\begin{equation} \label{eq:EisCoeff}
\kappa(m, \phi)q^m = \begin{cases}
E_m'(\tau, 0, \phi) &\hbox{if }  m >0,
\\
E_m'(\tau, 0, \phi) - \phi(0) \log v  &\hbox{if }  m =0.
\end{cases}
\end{equation}
The difference
$$
E'(\tau, 0, \phi) -\mathcal E(\tau, \phi) - \phi(0) \log v
$$
decays rapidly when $v \rightarrow \infty$. We denote by  $\phi_\mu$ the characteristic function of the coset $\mu+ \hat{\mathcal N}\subset U(\A_f)$. We put $\kappa_\mathcal N(m, \mu) = \kappa(m, \phi_\mu)$ and
\begin{equation} \label{eq:Eisenstein}
\mathcal E_\calN(\tau) = \sum_{\mu \in \mathcal N'/\mathcal N}  \mathcal E(\tau, \phi_\mu)  \phi_\mu =\sum_{\substack{m \ge 0 \\\mu \in \mathcal N'/\mathcal N}} \kappa_\calN(m, \mu) q^m \phi_\mu .
\end{equation}

\begin{theorem}
\label{theo4.6}
Let the notation be  as above.
\label{thm:eiscoeff}
\begin{enumerate}
\item[(1)]
If  $m>0$, then $\kappa_{\mathcal N}(m, \mu) =\sum_p a_p \log p$ where the sum runs over all primes $p$, and $a_p\in \Q$.
Moreover, if $a_p\ne 0$ then the following holds:
\begin{enumerate}
\item[(a)] $p$ is inert or ramified in $\kay$;

\item[(b)]  if  $p\nmid d_1$, then $p$ is inert and  $o_p(m) \ge 1$ is odd. Here $o_p(m)$ is the largest integer $r$ such that $mp^{-r}$ is $p$-integral.
\end{enumerate}

\item[(2)] We have
$$
\kappa_{\mathcal N}(0, \mu) = -2 \frac{\Lambda'(0, \chi)}{\Lambda(0, \chi)} \delta_{0, \mu} -\prod_{p|d,\,p\nmid t_1} \delta_{0, \mu_p}  \cdot \frac{d}{ds}\Big|_{s=0}\prod_{p|t_1} \tilde{W}_{0, p}(1, s, \phi_\mu).
$$
Here
$\mu_p$ is the $p$-th component of $\mu$,
$$
\tilde W_{0, p} (s, \phi_\mu) = \frac{L_p(s+1, \chi)}{L_p(s, \chi)} \cdot \frac{W_{0, p}(s, \phi_\mu)}{\gamma(\mathcal C_p) |d|_p^{\frac{1}2}}
$$
is a renormalized local Whittaker function, and
$$
\Lambda(s, \chi) = |d|^{\frac{s}2} L(s, \chi) \pi^{-\frac{s+1}2}\Gamma(\frac{s+1}2) =|d|^{\frac{s}2} \prod_{p\le \infty} L_p(s, \chi)
$$
is  the complete $L$-function of $\chi$ with functional equation $\Lambda(1-s, \chi) =\Lambda(s, \chi)$. Notice that $\Lambda(0, \chi) = \frac{2 h_d}{w_d}$, where $h_d$ denotes the  class number of $\kay$ and $w_d$  the number of units in $\calO_\kay$.
\end{enumerate}
\end{theorem}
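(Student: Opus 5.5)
Theorem~\ref{theo4.6} is proved from the factorization of the Fourier coefficients of the incoherent Eisenstein series into local Whittaker functions, as in \cite[Theorem 2.6]{BY09} and \cite[Section 5]{YYY}. For $m\neq 0$ and $\phi_\mu=\otimes_p\phi_{\mu,p}$ the $m$-th coefficient of $E(\tau,s,\phi_\mu)$ is
\[
E_m(\tau,s,\phi_\mu)=v^{-1/2}\,W_{m,\infty}(g_\tau,s,\Phi^1_\infty)\prod_{p<\infty}W_{m,p}(s,\phi_{\mu,p}).
\]
Using the renormalization $\tilde W_{m,p}$ (dividing by $\gamma(\mathcal C_p)|d|_p^{1/2}$ and multiplying by $L_p(s+1,\chi)/L_p(s,\chi)$), one gets $\tilde W_{m,p}(0,\phi_{\mu,p})=1$ for almost all $p$. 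Moreover, for $p\nmid d_1$ the lattice $\mathcal N_p$ is unimodular, so the classical formula applies:
\[
\tilde W_{m,p}(s)=\sum_{k=0}^{o_p(m)}(\chi_p(p)p^{-s})^k .
\]
The incoherence of $\mathcal C$ gives $E_m(\tau,0,\phi_\mu)=0$. Hence at $s=0$ either two local factors vanish, and the derivative is zero, or exactly one local factor $W_{m,p_0}$ vanishes at $s=0$. In the latter case
\[
\kappa_{\mathcal N}(m,\mu)=c\cdot \frac{d}{ds}\Big|_{s=0}\tilde W_{m,p_0}(s)\prod_{p\neq p_0}\tilde W_{m,p}(0),
\]
with an archimedean normalizing constant $c$ that absorbs $e^{-2\pi m v}$-type factors into $q^m$. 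The prime $p_0$ is precisely one at which $\mathcal C_{p_0}$ does not represent $m$ (for $m>0$; for $m<0$ the archimedean place intervenes and the coefficient does not enter $\mathcal E$).

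For (1), note that each $\tilde W_{m,p}(s)$ is a polynomial in $p^{-s}$ with rational coefficients, since $\phi_\mu$ is rational-valued and the local Whittaker integral is a finite Gauss-sum computation. So $\frac{d}{ds}|_{s=0}$ produces a rational multiple of $\log p_0$, and $\kappa_{\mathcal N}(m,\mu)=a_{p_0}\log p_0$ with $a_{p_0}\in\Q$. For (a), if $p$ splits in $\kay$ then $\mathcal C_p=U_p\cong \Q_p^2$ is hyperbolic and represents every $m$, so $p$ can never be the distinguished prime $p_0$. For (b), let $p\nmid d_1$. Then $p$ is unramified, hence inert by (a), and the explicit formula above vanishes at $s=0$ exactly when $\sum_{k=0}^{o_p(m)}(-1)^k=0$, i.e.\ when $o_p(m)$ is odd (in particular $\geq1$). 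Also $U_p$, the unramified quadratic extension scaled by a unit, fails to represent $m$ exactly in this case.

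For (2), the constant term equals $\phi_\mu(0)v^{s}$ plus the $M(s)$-term $v^{-s}\,W_{0,\infty}\prod_p W_{0,p}(s,\phi_{\mu,p})$. Rewritten with the renormalization, the unramified factors with $p\nmid d_1$ combine, via $\prod_p L_p(s,\chi)/L_p(s+1,\chi)$ and the archimedean factor, into $-\Lambda(s,\chi)/\Lambda(s+1,\chi)$ up to the normalization used in \cite{BY09}. At primes $p\mid d$ with $p\nmid t_1$ the lattice is maximal and $\tilde W_{0,p}(s,\phi_{\mu,p})=\delta_{0,\mu_p}$, independent of $s$. The remaining factors are the $\tilde W_{0,p}$ with $p\mid t_1$. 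Differentiating at $s=0$ and subtracting $\phi_\mu(0)\log v$ gives two contributions. The first is the derivative of $\Lambda(s,\chi)/\Lambda(s+1,\chi)$; by the functional equation $\Lambda(1-s,\chi)=\Lambda(s,\chi)$ this yields $-2\Lambda'(0,\chi)/\Lambda(0,\chi)$, multiplied by $\prod_{p}\tilde W_{0,p}(0)=\delta_{0,\mu}$. The second is the derivative of the product over $p\mid t_1$. It is not multiplied by $\Lambda$-terms because, due to the incoherence, the value of $\Lambda(s,\chi)/\Lambda(s+1,\chi)$ multiplying $\prod\tilde W_{0,p}$ has the correct normalization $-1$ at $s=0$ (consistent with $E(\tau,0)=0$).

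The main obstacle is the careful bookkeeping of normalizing constants, namely $\gamma(\mathcal C_p)$, $|d|_p^{1/2}$, and the archimedean $W_{0,\infty}$. These constants make the product telescope to exactly $-\Lambda(s,\chi)/\Lambda(s+1,\chi)$ with the stated signs, and they are also needed to verify $\tilde W_{0,p}(0,\phi_\mu)=\delta_{0,\mu_p}$ at $p\mid d$ with $p\nmid t_1$. I would first reduce to the maximal-order case treated in \cite{BY09} and then track the modifications at $p\mid t_1$ using the local computations of \cite[Section 5]{YYY}.
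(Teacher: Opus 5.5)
Your proposal is correct and follows essentially the same route as the paper's sketch. For (1) you factor the Fourier coefficients into local Whittaker functions and use incoherence to isolate a single vanishing local factor at a prime where $\mathcal C_p$ does not represent $m$; for (2) you collect the unramified factors into $-\Lambda(s,\chi)/\Lambda(s+1,\chi)=-\Lambda(s,\chi)/\Lambda(-s,\chi)$, use $\tilde W_{0,p}=\delta_{0,\mu_p}$ for $p\mid d$, $p\nmid t_1$, and use $\prod_{p\mid d_1}\tilde W_{0,p}(1,0,\phi_\mu)=\phi_\mu(0)$, which follows from $E_0(\tau,0,\phi_\mu)=0$.

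One small slip: for $m\neq 0$, the normalization that gives $\tilde W_{m,p}(s)=\sum_{k=0}^{o_p(m)}(\chi_p(p)p^{-s})^k$ (and hence $\tilde W_{m,p}(0)=1$ for almost all $p$) is multiplication by $L_p(s+1,\chi)$ alone. The factor $L_p(s+1,\chi)/L_p(s,\chi)$ is the $m=0$ normalization, and using it for $m\neq0$ would introduce factors $1-\chi(p)p^{-s}$ that vanish at split primes.
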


\begin{proof}
The claims follow from \cite[Section 2]{BY09}, \cite[Section 5]{YYY}, and \cite[Section 4]{HY12}.
We briefly sketch the basic idea and leave the details to the reader. For $m >0$,  write the $m$-th Fourier coefficient
$$
E_m(\tau, s, \phi_\mu)   =W_{m, \infty}(\tau, s)  \prod_{p <\infty} W_{m, p}(1, s, \phi_\mu)
$$
as a product of local Whittaker functions,  and $W_{m, \infty}(\tau, 0) =c_\infty q^m $ for some $c_\infty \ne 0$.  Notice that the local Whittaker function
$$
W_{m, p}(1, s, \phi_\mu) =\int_{\Q_p} \lambda(\phi_\mu)(w n(b)) (0) | a(w n(b))|_p^{s} \psi_p(- bm) db
$$
is a polynomial of $p^{-s}$.  It is zero unless $m \in  \frac{t}{a}
\frac{\norm(\mathfrak a)}{d_1} \Z_p= \frac{1}{d_2 t} \Z_p$ by
\cite[Appendix A]{YYY}.  So $E_m(\tau, s, \phi_m)=0$ unless $m \in
\frac{1}{d_2 t}\Z$.

When $U_p$ does not represent $m$, we have $W_{m, p}(1, 0, \phi_\mu)=0$ and
$W_{m, p}'(1, 0, \phi_\mu)  = a_p \log p$ for some rational number $a_p$.
Since $\mathcal C$ is incoherent, there is always some $p<\infty$ which is non-split in $\kay$  such that  $\mathcal C_p=U_p$  does not represent $m$.  In such a case,
$
E_m(\tau, 0, \phi_\mu)=0
$, and
$$
E_m'(\tau,0, \phi_\mu) =\left[ c_p c_\infty \prod_{q \ne p, \infty} W_{m, q}(1, s, \phi_\mu)\right]_{s=0} \log p= a_p  \log p
$$
for some rational number $c_p$.
Next, when  $p \nmid d_1$, and $a_p\ne 0$, then  $p$ is inert and $\phi_\mu =\phi_0$. In such a case, $o_p(m) \ge 1$ is odd.

Assertion (2) follows from
\begin{equation}
 E_0(\tau, s, \phi_\mu) =  v^{-\frac{s}2} \phi_\mu(0) + W_{0, \infty}(\tau,s, \phi_\mu) \prod_{p<\infty} W_{0, p}(s, \phi_\mu).
\end{equation}
A local calculation shows
$$
W_{0, \infty}(\tau,s, \phi_\mu) \prod_{p<\infty} W_{0, p}(s, \phi_\mu) =- v^{\frac{s}2} \frac{\Lambda(s, \chi)}{\Lambda(-s, \chi)} \prod_{p|d_1} \tilde{W}_{0, p}(1, s, \phi_\mu)
$$
for the renormalized local Whittaker function
$
\tilde W_{0, p} (s, \phi_\mu)
$,
 which is a rational function of $p^{-s}$.
  The fact that $E_0(\tau, 0, \phi_\mu)=0$ implies that
\begin{equation} \label{eq:value0}
\prod_{p|d_1} \tilde{W}_{0, p}(1, 0, \phi_\mu) =\phi_\mu(0),
\end{equation}
and
\begin{equation} \label{eq:der0}
E_0'(\tau, 0, \phi_\mu) =-2 \frac{\Lambda'(0, \chi)}{\Lambda(0, \chi)} \phi_{\mu}(0)   - \phi_\mu(0) \log v  - \frac{d}{ds}\Big|_{s=0}\prod_{p|d_1} \tilde{W}_{0, p}(1, s, \phi_\mu).
\end{equation}
Finally,  when  $p |d$ but $p \nmid t_1$, \cite[Corollary 5.6, Proposition 5.8(3)]{YYY} implies
\begin{equation} \label{eq4.12}
\tilde W_{0, p}(1, s, \phi_\mu) = \delta_{0, \mu_p}.
\end{equation}
This proves (2).
\end{proof}

We remark that the local Whittaker functions are computed in \cite[Section 5]{YYY} so that the coefficients $\kappa_\mathcal N(m,\mu)$ are computable. In fact, when $t=1$ and $d_2=d$, an explicit formula was given in \cite[Theorem 2.6]{BY09}, which we recall here for convenience:
\begin{align} \label{eq:kappa}
-\Lambda(0, \chi) \kappa_{\mathcal N}(m, \mu)
 &=  \eta_0(m, \mu)  \sum_{\text{$p$ inert} } (\ord_p(m) +1) \rho(m|d|/p) \log p  \notag
  \\
   &\qquad +  \rho(m|d|) \sum_{p|d} \eta_p(m, \mu)
(\ord_p(m) +1) \log p,
  \end{align}
  where
\begin{align*}
\eta_p(m, \mu) &=(1-\chi_p(-m\norm(\mathfrak a))) \prod_{\substack{q|d, \, q\ne p\\
\mu_q =0}} (1+ \chi_q(-m\norm(\mathfrak a))),\\
\eta_0(m, \mu) &= \prod_{\substack{q|d\\
\mu_q =0}} (1+ \chi_q(-m\norm(\mathfrak a))).
\end{align*}
Here  we take $\eta_0(m, \mu)=1$ and $\eta_p(m, \mu)=0$ if $\mu_q\ne
0$ for all $q|d$. Finally,
$
\rho(n)
$
is the number of integral ideals of $\kay$ with norm $n$, and
\begin{equation} \label{eq:kappa0}
\kappa_\mathcal N(0, \mu) =-2 \frac{\Lambda'(0, \chi)}{\Lambda(0, \chi)} \delta_{0, \mu}.
\end{equation}

\subsection{The small CM value formula}
\label{sect:4.3}

We recall Schofer's small CM value formula in our special case (\cite{Schofer}, see also \cite[Theorem 1.3]{BY09} for a generalization). Let $ L, \mathcal P$ and $\mathcal N$ be as in Section \ref{sect:set-up}. Then $\calN_\R$ gives rise to  a small CM cycle $Z(\calN)$ on $X_L$, see \eqref{eq:smallcm}.
Let
$$
\mathcal E_{\mathcal N}(\tau)=\sum_{\substack{m \ge 0\\\mu \in \mathcal N'/\mathcal N}} \kappa_{\mathcal N}(m, \mu) q^m \phi_\mu
$$
be the holomorphic part of the  central derivative  of the incoherent Eisenstein series defined in (\ref{eq:Eisenstein}), and let
$$
\theta_\mathcal P(\tau) = \sum_{\substack{m \ge 0\\ \mu \in \mathcal P'/\mathcal P}} a_\mathcal P(m, \mu) q^m \phi_\mu
$$
be the vector valued theta series of weight $3/2$ associated to $\mathcal P$, where $a_\mathcal P(m, \mu)$ is the number of representations of $\mu + \mathcal P$ representing $m$. The following theorem is a consequence of the main result of Schofer's thesis \cite{Schofer}.

\begin{theorem}[Schofer]
\label{thm:SmallCM}
Let $f =\sum c_f(m) q^m  \in M_{-\frac{1}2}^{!, +}$.
Then
$$
\Phi(Z(\mathcal N), f) = \deg Z(\mathcal N) \cdot\operatorname{CT}\left[\langle\vec f ,\mathcal E_{\mathcal N} \otimes \theta_\calP\rangle\right],
$$
where $\operatorname{CT}[\cdot]$ denotes the constant term of a $q$-series and $\langle\cdot,\cdot \rangle$ the natural $\C$-bilinear pairing on the space of the Weil representation as in \cite[Section 3]{BY09}. Equivalently, we have
$$
\Phi(Z(\mathcal N), f) = \deg Z(\mathcal N) \sum_{\substack{m\in \Z_{\geq 0}}}
c_f(-m)
\sum_{\substack{\mu_1 \in \mathcal N'/\mathcal N\\ \mu_2 \in \mathcal P'/\mathcal P \\
\mu_1+\mu_2\equiv \frake_{m}\,(L)}}\sum_{\substack{m_1,m_2\in \Q_{\ge 0}\\ m_1 + m_2 = \frac{m}4}}
	\kappa_{\mathcal N}(m_1, \mu_1) a_\mathcal P(m_2, \mu_2),
$$
where $\frake_m\in L'/L$ denotes the neutral (respectively non-neutral) element if $m$ is even (respectively odd). The second sum on the right hand side runs through all $\mu_1 \in \mathcal N'/\mathcal N$ and $\mu_2 \in \mathcal P'/\mathcal P$ for which the image of $\mu_1+\mu_2$ under the natural map to $(\calN'\oplus \calP')/L$ lies in the subgroup $L'/L$ and is equal to $\frake_m$.
\end{theorem}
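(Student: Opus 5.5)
The strategy is to specialize Schofer's general small CM value formula, as stated in the literature (\cite{Schofer}, \cite[Theorem~1.3]{BY09}), to our lattice $L$ of signature $(3,2)$, the negative definite binary sublattice $\calN$ and its positive definite complement $\calP$, and then to rewrite the result in scalar-valued language via Proposition~\ref{prop:scalvec}. The general theorem asserts that for a weakly holomorphic $\vec f\in M^!_{-1/2,\bar\rho_L}$ the regularized theta lift $\Phi(z,\vec f)$ evaluated on $Z(\calN)$ equals $\deg Z(\calN)\cdot \operatorname{CT}\langle \vec f^{\,\calP\oplus\calN}, \mathcal E_\calN\otimes\theta_\calP\rangle$. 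Here $\vec f^{\,\calP\oplus\calN}$ is the pullback of $\vec f$ from $L$ to the finite-index sublattice $\calP\oplus\calN$, so that it becomes a $\C[(\calP'\oplus\calN')/(\calP\oplus\calN)]$-valued form. The first step is therefore to recall this general statement together with its hypotheses: $\calN=\calP^\perp$ is a negative definite plane in $L$, and $Z(\calN)$ is the small CM cycle \eqref{eq:smallcm}. The identification of $\Phi(z,f)$ with the regularized theta integral is part of Theorem~\ref{thm:Borcherds}(iii).

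The second step is to make the lattice bookkeeping explicit. The restriction map $\C[L'/L]\to\C[(\calP\oplus\calN)'/(\calP\oplus\calN)]$ sends $\frake_\nu$ to the sum of $\phi_{\mu_1}\otimes\phi_{\mu_2}$ over all pairs $(\mu_1,\mu_2)\in\calN'/\calN\times\calP'/\calP$ whose sum $\mu_1+\mu_2$ lies in $L'$ and represents $\nu$ modulo $L$. Pairing this against $\mathcal E_\calN\otimes\theta_\calP=\sum\kappa_\calN(m_1,\mu_1)a_\calP(m_2,\mu_2)q^{m_1+m_2}\phi_{\mu_1}\otimes\phi_{\mu_2}$ and taking the constant term picks out, for each component $f_\nu=\sum c(n,\nu)q^n$, the terms with $m_1+m_2=-n$. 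Only terms with $n\le 0$ contribute, since $m_1,m_2\ge0$.

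The third step is the translation to the Kohnen plus space. By Proposition~\ref{prop:scalvec}, $c(-m/4,\nu)=c_f(-m)$, where $\nu=0$ if $m\equiv0\pmod 4$ and $\nu=\tfrac12$ if $-m\equiv 3$, i.e.\ $m\equiv1\pmod 4$. For other residues of $m$ the coefficient $c_f(-m)$ vanishes, so the parity convention $\frake_m$ (neutral for $m$ even, non-neutral for $m$ odd) correctly encodes the coset, because the relevant $m$ are $\equiv0,1\pmod 4$. Substituting $n=-m/4$ yields exactly the displayed double sum with $m_1+m_2=m/4$.

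The main obstacle is checking that the normalizations match. This covers three points: the definition of $\Phi$ in Theorem~\ref{thm:Borcherds}(iii) versus Schofer's lift, a possible factor of $2$; the convention that $\deg Z(\calN)$ counts both $Z^\pm(\calN)$; and the use of $\bar\rho_L$ versus $\rho_L$ in the pairing. I would fix these by comparing with \cite[Theorem~1.3 and (4.7)]{BY09}. Since $\calP\oplus\calN$ splits into a definite and a negative definite part exactly as in that setting, the statement transfers verbatim.
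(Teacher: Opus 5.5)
Your proposal is correct and follows the paper's route. The paper proves nothing further here: it states the result as a direct consequence of Schofer's main theorem (in the general form of Bruinier--Yang, where the extra term $L'(\xi(f),U,0)$ vanishes because $f$ is weakly holomorphic), and the ``equivalently'' formula is exactly your unfolding of the constant term via the restriction of $\vec f$ to $\calP\oplus\calN$ and Proposition~\ref{prop:scalvec}, with the normalization points you flag likewise left to the cited source.
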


\section{The Faltings height and the main theorems} \label{sect:MainFormula}

\subsection{The Faltings height -- set up and an example}
\label{sect:fh}

Let $A$ be an abelian variety over a number field $K$ with semi-stable reduction everywhere, and let $\mathcal A$ be its Neron model over $\co_K$ with zero section $s: \co_K  \rightarrow \mathcal A$.  Let $\omega_{\mathcal A} =s^* \Omega_{\mathcal A/\co_K}^{\dim(A)}$ be the pull back of the canonical line bundle on $\co_K$ with the following normalized natural metric: for each embedding $\sigma: K  \rightarrow \C$, and a section $\alpha$ of $\omega_\mathcal A$,
$$
\| \alpha \|_\sigma^2 = \frac{1}{(2 \pi)^g} \int_{A_\sigma(\C)} \alpha \wedge \bar\alpha.
$$
This determines a metrized line bundle $\widehat{\omega}_\mathcal A = ({\omega}_\mathcal  A , \{\| \, \|_\sigma,  \sigma: K \rightarrow \C\})$. Then the (stable) Faltings height of $A$ is defined  as
\begin{equation}
h_{\Fal}(A)  = \frac{1}{[K:\Q]} \widehat{\deg} (\widehat{\omega}_\mathcal A)
             = \frac{1}{[K:\Q]} \left[ \log| \omega_{\mathcal A}/\co_K \alpha| - \sum_{\sigma} \log\| \alpha \|_\sigma \right]
\end{equation}
for any non-zero section $\alpha$ of $\omega_\mathcal A$.  It does not depend on the choice of $\alpha$.

If $A$ is endowed with a principal polarization $\lambda$ which is also defined over $K$, then  $(A, \lambda)$ defines a point in $\mathcal A_g(K)$, where $\mathcal A_g$ is the moduli stack over $\Z$ of principal polarized abelian schemes of relative dimension $g$. In this case, the Faltings height can be calculated via an arithmetic intersection (see \cite{Ya10}). Let $\mathcal M_k$ be the line bundle  over $\mathcal A_g$ of Siegel modular forms of weight $k$ and genus $g$, and let
$$
\| f(\tau) \|_{\Pet} =(4 \pi)^{gk/2}  |f(\tau) (\det \operatorname{Im} \tau)^{\frac{k}2 }|
$$
be the normalized  Petersson metric on  $\mathcal M_k(\C)$. In this way, we obtain a metrized line bundle $\widehat{\mathcal M}_k= (\mathcal M_k,  \| \cdot \|)$ over $\mathcal A_g$, and there is a natural  isomorphism  $\widehat{\mathcal M}_k\cong\widehat{\omega}^{\otimes k} $, where $\widehat{\omega}$ denotes the metrized Hodge bundle on $\calA_g$. For an  element $\mathbf A= (A, \lambda) \in \mathcal A_g(K)$,  let $\underline{\mathbf A} $ be its N\'eron model over $\co_K$.  We have by \cite[Section 2]{Ya10}
\begin{equation} \label{eq:FaltingsHeight}
k \cdot h_{\Fal}(A) = \frac{1}{ [K:\Q]} \big(\underline{\mathbf A}\cdot \dv(\Psi) - \sum_{\sigma: K \hookrightarrow \C} \log \| \Psi((A^\sigma, \lambda^\sigma))\|_{\Pet} \big)
\end{equation}
for any rational section  $\Psi$ of  $\mathcal M_k$ over $K$, where
\begin{equation}
 \underline{\mathbf A} \cdot \dv(\Psi) =\sum_{\mathfrak p} e_{\mathfrak p} \log \norm(\mathfrak p).
\end{equation}
Here $e_{\mathfrak p}$ is the local intersection index, that is, the greatest integer $n$ such that $\underline{\mathbf A} \in  \dv(\Psi) \mod \mathfrak p^n$, and the value $\Psi(\mathbf A^\sigma)$ is the usual value (not in the stack sense).  Indeed, by \cite[Section 2]{Ya10}, we have in our current notation
$$
\frac{k \cdot h_{\Fal}(A)} { |\Aut(\mathbf A)| } =h_{\widehat{\mathcal M}_k}(\mathbf A)= \frac{1}{ [K:\Q]} \bigg(\frac{(\mathcal A, \lambda)\cdot \dv(\Psi)}{|\Aut(\mathbf A)|} - \sum_{\sigma: K \hookrightarrow \C}\frac{ \log \| \Psi(\mathbf A^\sigma)\|_{\Pet}}{|\Aut(\mathbf A^\sigma)|} \bigg).
$$
Since $|\Aut(\mathbf A^\sigma)|= |\Aut(\mathbf A)|$, we obtain (\ref{eq:FaltingsHeight}) by  multiplying  $|\Aut(\mathbf A)|$ to both sides of the above identity.
We remark that $h_{\Fal}(A)$ does not depend on the choice of $K$ or on the choice of the polarization $\lambda$.
Finally, recall that $h_{\Fal}(A_1\times A_2) = h_{\Fal}(A_1) + h_{\Fal}(A_2)$.

We end this subsection with a proposition which is needed in the proof of our main formula in next subsection. The proof is actually similar to that of the main formula.

\begin{proposition} \label{prop5.3}  Let $E_1 =\C/\mathcal O_{d_1}$ and $E_2=\C/\mathfrak a$ be two CM elliptic curves where the order of $\mathfrak a$ is $\mathcal O_{d_2}$ with  $d_1 =d_2 t^2 =d t_1^2$ as before. Let $\mathcal N =(\bar{\mathfrak a},  -\frac{t}a \norm)$ be as in  Proposition \ref{prop:N}. Then
$$
h_{\Fal}(E_1) + h_{\Fal} (E_2) = \frac{1}2 \kappa_{\mathcal N}(0, 0) + \frac{1}2 (\log 4\pi + \Gamma'(1)).
$$
\end{proposition}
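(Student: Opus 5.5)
We prove the proposition by running the argument of the main formula on the decomposable locus rather than on $\mathcal A_2$, as the paragraph before the statement suggests. The product $E_1\times E_2$ is a point of $\mathcal A_1\times\mathcal A_1$, which we regard as the $\GSpin$ Shimura variety of the even unimodular lattice $M$ of signature $(2,2)$, namely $M=\Mat_2(\Z)$ with $Q=\det$ (or $-\det$, depending on sign conventions). Inside $M$, the pair $(E_1,E_2)$ is a small CM point. The positive definite complement $\mathcal P_M$ is the rank-two lattice $\Hom(E_2,E_1)$ with the degree form; the negative definite part is again $\mathcal N$. This matches Proposition~\ref{prop:N}(2): $\tilde{\mathcal P}$ equals $\Z\id\oplus\mathcal P_M$, up to the factor $\frac12$ in the diagonal entries. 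Because $M$ is unimodular, Proposition~\ref{prop:fqm2} gives $\mathcal P_M'/\mathcal P_M\cong\mathcal N'/\mathcal N$ with the form negated, so Schofer's formula applies cleanly.

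\textbf{Choice of Borcherds product.} Take the weakly holomorphic input $j(\tau)-744$, scalar valued of weight $0$, which is the correct weight $1-n/2$ for $n=2$. Its Borcherds lift is $j(z_1)-j(z_2)$, of weight $c(0)/2$, and the normalization is arranged so that the lift of $f=q^{-1}+c(0)+\dots$ has divisor the Hecke graph $T_1$, i.e.\ the diagonal. Taking $c(0)=0$ would give weight zero and no Faltings height term, so a constant term is needed. The cleanest choice is $f=j+24=q^{-1}+\dots$ with $c(0)$ adjusted to make the weight $12$. Then $\Psi(f)=\Delta(z_1)\Delta(z_2)\,(j(z_1)-j(z_2))$ up to normalization.

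Alternatively, and more simply, we can use the constant input $f=c(0)=2k$. Its regularized theta lift is $-c(0)(\log 4\pi+\Gamma'(1))-2\log\bigl(|\Psi|^2\det(y)^{k}\bigr)$, where $\Psi$ is a nowhere-vanishing section, $\Delta(z_1)\Delta(z_2)$, of weight $12$ on each factor. The plan is therefore as follows.

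\textbf{Step 1.} Apply \eqref{eq:FaltingsHeight} in genus one to each $E_i$, with $\Psi=\Delta$, which is a nowhere-vanishing section of $\mathcal M_{12}$ over $\mathcal A_1/\Z$. The intersection term vanishes, so
\[
12\,h_{\Fal}(E_i)=-\frac{1}{[K:\Q]}\sum_\sigma\log\|\Delta(E_i^\sigma)\|_{\Pet}.
\]

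\textbf{Step 2.} Sum over the CM cycle $Z(\mathcal N)$ of pairs. All Galois conjugates have the same Faltings height, so the average of $\log\|\Delta(z_1)\Delta(z_2)\|$ over the cycle equals $-12(h_{\Fal}(E_1)+h_{\Fal}(E_2))$.

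\textbf{Step 3.} Express this sum as the theta lift of the constant vector $24\,\phi_0$ at $Z(\mathcal N)$, using the Borcherds identity (iii). Evaluate the lift by Schofer's formula (Theorem~\ref{thm:SmallCM} for the lattice $M$). Only the constant term survives: $24\,\kappa_{\mathcal N}(0,0)\,a_{\mathcal P_M}(0,0)$ with $a_{\mathcal P_M}(0,0)=1$. Comparing the two sides, dividing by $\deg Z(\mathcal N)$, and adjusting the constant $\log4\pi+\Gamma'(1)$ yields the stated identity, with coefficient $\frac12$ because $c(0)=24$ and the weight is $12$.

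\textbf{Main obstacle.} The delicate point is Step 3. It requires that Schofer's formula, or Borcherds' theorem in its constant-input form, applies to the signature $(2,2)$ lattice with the identification $\Delta(z_1)\Delta(z_2)$. It also requires exact bookkeeping of the factor $2$ (from $\log|\cdot|^2$), of the $(4\pi)^{k}$ in the Petersson metric, and of the degree of $Z(\mathcal N)$ against the number of pairs $(E_1^\sigma,E_2^\sigma)$. If the constant-input lift is problematic, fall back on $f=j-744+24$. Its divisor is the diagonal, and $Z(\mathcal N)$ misses the diagonal when $E_1\not\cong E_2$. Then $\kappa_{\mathcal N}(m_1,\mu)a_{\mathcal P_M}$ terms appear, but they are cancelled by the finite intersection with $j(z_1)-j(z_2)$ via Gross–Zagier. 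This route is messier, so the constant-input version is preferred.
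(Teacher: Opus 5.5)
Your main route is essentially the paper's proof: the constant input $24$, whose Borcherds lift on the $\Orth(2,2)$ Shimura variety of $(\Mat_2(\Z),\det)\cong\mathcal A_1\times\mathcal A_1$ is the nowhere-vanishing $\Delta(z_1)\Delta(z_2)$; then \eqref{eq:FaltingsHeight} with vanishing finite intersection term, the identification of the Galois orbit of $E_1\times E_2$ with the small CM cycle $Z_0(\mathcal N)$ of degree $2h_{d_1}$, and Schofer's formula, in which only $24\,\kappa_{\mathcal N}(0,0)$ survives. The preliminary discussion of $j$-function inputs (where $j+24$ should read $j-720$) and the Gross--Zagier fallback are unnecessary and can be dropped.
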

\begin{proof} Let $Y_0$ be the Shimura variety associated to the quadratic lattice $(M_2(\Z), \det )$, which is well-known to be isomorphic the product $Y_0(1) \times Y_0(1)$ of classical modular curves, whose canonical model $\mathcal Y_0$ over $\Z$  is isomorphic to $\mathcal A_1 \times \mathcal A_1$.  Let $\mathcal X_0= \mathcal X_0(1) \times \mathcal X_0(1)$ be its compactification.  Then  $\mathbf A_0= (A(\mathfrak a, t), \lambda_0=\lambda_1 \times \lambda_2)$, the product $E_1 \times E_2$ with the product polarization, belongs to $\mathcal Y_0(H_{d_1})$.  Choose a finite field extension  $K$ of $H_{d_1}$ such that $\mathbf A_0$ has good reduction everywhere, i.e.,  its Neron model $\underline{\mathbf A_0}$ belongs to $\mathcal Y_0(\mathcal O_K)$. On the other hand,   $\Delta(z_1) \Delta(z_2)  =\Psi(z_1, z_2, 24)$ is the Borcherds lifting of the constant function $24$  with divisor being the boundary $\mathcal X_0^b= \mathbb P^1 \times \{ \infty\} \cup \{ \infty \} \times \mathbb P^1 $ of $\mathcal X_0$, and
$$
- 4 \log \| \Psi(z, 24)\|_{\Pet}  -24 C_0 = \Phi(z, 24)
$$
is the regularized theta  integral of $24$ as defined in \cite{Bo98}, see also  \cite[(4.7)]{BY09}, where $z=(z_1, z_2) \in \H^2$ and $C_0 = \log 4\pi + \Gamma'(1)$. By (\ref{eq:FaltingsHeight}),
\begin{align*}
&12(h_{\Fal}(E_1) + h_{\Fal} (E_2)) = h_{\widehat{\mathcal M}_{12}}(\underline{\mathbf A_0})
\\
&= \frac{ \underline{\mathbf A_0} \cdot \mathcal X_0^b}{[K:\Q]} - \frac{1}{[K:\Q]}\sum_{\sigma: K \hookrightarrow \C}  \log \| \Psi(\mathbf A_0^\sigma, 24)\|_{\Pet}
\\
&= 6 C_0 + \frac{1}{8h_{d_1}} \sum_{ \sigma: H_{d_1} \hookrightarrow \C}  \Phi(\mathbf A_0^\sigma, 24 ).
\end{align*}
Here we used the fact that $\underline{\mathbf A_0}$ has good reduction everywhere and does not intersect the boundary $\mathcal X_0^b$.
As in  \cite[Section 2]{LYY}, we have that
$$
\sum_{\sigma: H_{d_1} \hookrightarrow \C} \mathbf A_0^\sigma =Z_0(\mathcal N)
$$
is the small CM cycle $Z_0(\mathcal N)$ associated to $\mathcal N$ in $X_0$ (as an $\Orth(2,2)$ Shimura variety). So the small CM value formula in  Schofer's thesis \cite{Schofer} (see Theorem  \ref{thm:SmallCM} for an similar  $\Orth(3, 2)$-case) gives
\begin{align*}
&\sum_{ \sigma: H_{d_1} \hookrightarrow \C}  \Phi(\mathbf A_0^\sigma, 24 )= \Phi(Z_0(\mathcal N), 24)
=2h_{d_1} \cdot  24 \kappa_{\mathcal N}(0, 0).
\end{align*}
Therefore,
$$
h_{\Fal}(E_1) + h_{\Fal} (E_2))= \frac{1}2 C_0 + \frac{1}2 \kappa_{\mathcal N}(0, 0)
$$
as claimed.
\end{proof}

 \begin{remark}
In Proposition \ref{prop5.3}, take  $E_2 =E_1$, so that we have $t=1$ and $\mathcal N =(\mathcal O_{d_1},  -\norm) $. We obtain
 $$
 2 h_{\Fal}(E_1) = \frac{1}2( C_0 + \kappa_{\mathcal N}(0, 0)).
 $$
 When  $d_1=d$ is fundamental,  then  \eqref{eq:kappa0} implies the well-known Chowla-Selberg formula as reformulated by Colmez:
 $$
 2 h_{\Fal}(E_1) = -\frac{\Lambda'(0,\chi_d)} {\Lambda(0, \chi_d)}  +  \frac{1}2 (\log 4\pi + \Gamma'(1)).
 $$

 In the general case,  we have  by Theorem \ref{theo4.6} and the Chowla-Selberg formula above
 $$
 h_{\Fal} (E_1) = h_{\Fal}(E) - \frac{1}2 \frac{d}{ds}\Big|_{s=0}\prod_{p|t_1} \tilde{W}_{0, p}(1, s, \phi_{0, p} ).
 $$
with $\phi_{0, p} = \hbox{Char}( \mathcal N_p)$.  By (\ref{eq:value0}) and (\ref{eq4.12}), we have
$$
\tilde{W}_{0, p}(1, 0, \phi_{0, p}) =1,
$$
and
$$
\frac{d}{ds}\Big|_{s=0}\prod_{p|t_1} \tilde{W}_{0, p}(1, s, \phi_{0, p}) =\sum_{p|t_1} \frac{\tilde{W}'_{0, p}(1, 0, \phi_{0, p} )}{\tilde{W}_{0, p}(1, 0, \phi_{0, p} )}
$$
For $p\ne 2$, by \cite[Proposition 5.1]{LYY} we have
$$
 C \tilde{W}_{0, p}(1, s, \phi_{0, p})=  p^{o_p(t_1)(1-s)} + \frac{L_p(s+1, \chi_d)}{L_p(s, \chi_d)} (1 -p^{-s}) \sum_{0 \le k < o_p(t_1)} p^{k(1-s)}
$$
for some non-zero constant $C$. Hence
$$
\frac{\tilde{W}'_{0, p}(1, 0, \phi_{0, p} )}{\tilde{W}_{0, p}(1, 0, \phi_{0, p} )} = - \sum_{p|t_1} \left[o_p(t_1) - \frac{1 -\chi(p)}{p-\chi(p)} \frac{ 1- p^{-o_p(t_1)}}{1-p^{-1}} \right] \log p.
$$
 So we obtain the following general Colmez formula for CM elliptic curves
 $$
 h_{\Fal}(E_1) = h_{\Fal}(E) + \frac{1}2 \sum_{p|t_1} \left[o_p(t_1) - \frac{1 -\chi(p)}{p-\chi(p)} \frac{ 1- p^{-o_p(t_1)}}{1-p^{-1}} \right] \log p
 $$
 up to a multiple of $\log 2$. A similar calculation should give the identity for $\log 2$-contribution,  too.  This formula   was already proved in \cite[Theorem 8.5]{Mocz} by Mocz  and in \cite{NT}  by Nakkajima and Taguchi. They both use Faltings'
isogeny theorem, which is quite different from the above proof.
 \end{remark}

\subsection{The main intersection formula}
\label{sect:5.2}
We are now ready to state and prove our main intersection formula.
Let  $\mathbf A =(A(\mathfrak a, t),  \lambda)$  be the product of two CM elliptic curves endowed with a principal polarization $\lambda=\lambda_0 \circ \alpha$ as in Proposition \ref{prop:Dagger}, where $\lambda_0$ is the product polarization and $\alpha =\kzxz {\alpha_1} {\frac{t}a \bar \beta }{ \beta} {\alpha_2}  \in  \tilde{\mathcal P}_0$ such that $\alpha_i >0$ and $\det \alpha =1$.  Recall that
$$
A(\mathfrak a, t) = \C/\mathcal O_{d_1} \times \C/\mathfrak a =E_1 \times E_2,   \quad \mathcal O_{\mathfrak a} =\co_{d_2},\quad   d_i =d t_i^2,  \hbox{ and }  t_1 =t t_2,
$$
and that $\mathbf A$ is defined over $H_{d_1}$. Here $E_1= \C/\mathcal O_{d_1}$ and $ E_2=  \C/\mathfrak a$.

Let $K$ be a finite field extension of $H_{d_1}$ such that the Neron model $\underline{\mathbf A}$ of $\mathbf{A}$ over $\co_K$ is smooth, i.e., has good reduction everywhere, and defines a point  $\underline{\mathbf A} \in  \mathcal A_2(\co_K)$. We are ready to give a formula for the normalized arithmetic intersection number
\begin{equation}
\frac{1}{[K:H_{d_1}]}\underline{\mathbf A} \cdot \mathcal Z(m) = \frac{1}{[K:H_{d_1}]} \sum_{\mathfrak p }  e_{\mathfrak p} \log \norm(\mathfrak p),
\end{equation}
when  the intersection is proper, that is,  $\mathbf A \notin Z(m)$.   Here the sum runs over all prime ideals $\frakp$ of $\co_K$, and $ e_{\mathfrak p}$ is the largest non-negative integer $n$ such that $\underline{\mathbf A}\in \mathcal Z(m) $ modulo $ \mathfrak p^n$.

 Notice that the normalized intersection number is independent of the choice of $K$. Intuitively,  if we write $\underline{\mathbf A}$ for the N\'eron model of an abelian variety over $H_{d_1}$, and if we pretend that
$$
\mathcal Z(\mathbf A) = \sum_{\sigma \in \operatorname{Gal}(H_{d_1}/\Q)} \underline{\sigma(\mathbf{A})}
                      =\sum_{ \mathbf B \in Z(\mathbf A)} \underline{\mathbf{B}}
$$
is a well-defined integral model of $Z(\mathbf A)$ over $\Z$,
then the  normalized arithmetic intersection number is just $\mathcal Z(\mathbf A) \cdot \mathcal Z(m)$. When  $d_1=d_2 =d$ and $\mathfrak a=\co_{\kay}$, then $\mathcal Z(\mathbf A)$ is equal to the cycle $2\,\mathcal{C}\mathcal{M}(\co_\kay,  \alpha)$ of \cite[Page 18]{GRV}. Now we are ready to prove the main intersection formula, Theorem \ref{theo: MainFormula}, and restate it as the following theorem.

\begin{theorem}
\label{theo: mainIntSect}
Let the notation be as above, and assume  $\mathbf A \notin  Z(m)$, that is,  $a_\mathcal P(\frac{m}4, \frake_m) =0$. Define the rational numbers $a_p^{(0)}$ and $a_p^{(m)}$ via
\begin{align} \label{eq:bp}
 \sum_{\substack{\text{$p$ prime} \\ p|t_1}} a_p^{(0)} \log p &= -\sum_{\substack{\mu_1 + \mu_2 =\frake_{m} \in L'/L \\  0 \ne \mu_1 \in \mathcal N'/ \mathcal N  }}\kappa_{\mathcal N}(0, \mu_1) a_\mathcal P(m, \mu_2),
 \\
\sum_{\text{$p$ prime}} a_p^{(m)} \log p
     &= -\sum_{\substack{m_1 + m_2 = \frac{m}4 \\ \mu_1 + \mu_2 = \frake_{m} \in L'/L \\  m_1>0,\, m_2 \ge 0  }  }\kappa_{\mathcal N}(m_1, \mu_1) a_\mathcal P(m_2, \mu_2).  \notag
\end{align}
Then
\begin{align*}
\frac{\underline{\mathbf A} \cdot \mathcal Z(m)}{[K:H_{d_1}]}
&= \frac{h_{d_1} }{2 }\Bigg[   \sum_{p|t_1} a_p^{(0)}  \log p  +  \sum_{\text{$p|d_1$ or $p \le \frac{m |d_2 t|}4$ inert} } a_p^{(m)} \log p \Bigg]
\\
&=-\frac{h_{d_1} }{2 } \sum_{\substack{m_1 + m_2 = \frac{m}4, \, m_i \ge 0 \\ \mu_1 + \mu_2 = \frake_{m} \in L'/L \\  (m_1,\mu_1) \ne (0,0)  }  }\kappa_{\mathcal N}(m_1, \mu_1) a_\mathcal P(m_2, \mu_2).
\end{align*}
Here $h_{d_1}$   is  again the ring class number  of $\co_{d_1}$.  In particular, $\underline{\mathbf A}$ and $\mathcal Z(m)$ do not intersect in the fiber above $p$ unless $p|d_1$ or $p \le \frac{m|d_2t|}4$ is inert in $\kay$.
\end{theorem}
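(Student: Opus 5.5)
We combine the Faltings height identity \eqref{eq:FaltingsHeight}, the Borcherds product $\Psi_m$ of Proposition~\ref{prop:integral}, Schofer's formula (Theorem~\ref{thm:SmallCM}), and Proposition~\ref{prop5.3}. Apply \eqref{eq:FaltingsHeight} to the section $\Psi_m^2$ of $\mathcal M_{2k}$, where $k=c(0)/2$ and $c(0)$ is the constant term of $f_m$. By Proposition~\ref{prop:integral}(2), $\dv\Psi_m^2=2\mathcal Z(m)$ on $\mathcal A_2$. Hence
\[
c(0)\,h_{\Fal}(A)=\frac{1}{[K:\Q]}\Big(2\,\underline{\mathbf A}\cdot\mathcal Z(m)-\sum_{\sigma:K\hookrightarrow\C}\log\|\Psi_m^2(\mathbf A^\sigma)\|_{\Pet}\Big).
\]
This is legitimate because $\mathbf A\notin Z(m)$: by Lemma~\ref{lem:inclusion} and Theorem~\ref{theo:Galois}, this means $a_\calP(\frac m4,\frake_m)=0$, so $\Psi_m$ neither vanishes nor has a pole on the whole cycle $Z(\mathbf A)$. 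Since $[K:\Q]=2h_{d_1}[K:H_{d_1}]$ and the embeddings of $K$ cover $Z(\mathbf A)$, each point $[K:H_{d_1}]$ times, we can solve for $\underline{\mathbf A}\cdot\mathcal Z(m)/[K:H_{d_1}]$. It equals $\frac{c(0)h_{d_1}}{2}h_{\Fal}(A)$ plus the sum of $\log\|\Psi_m\|_{\Pet}$ over $Z(\mathbf A)$; this is \eqref{eq:intro1}.

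Next, rewrite the Petersson norm term via Theorem~\ref{thm:Borcherds}(iii). With the normalization $\|\Psi\|_{\Pet}=(4\pi)^{k}|\Psi|\det(y)^{k/2}$, the term $2\log\|\Psi_m\|_{\Pet}$ becomes $-\frac12\Phi(z,f_m)$ plus a multiple of $c(0)C_0$, where $C_0=\log4\pi+\Gamma'(1)$; I will track this constant carefully. By Theorem~\ref{theo:Galois}, $Z(\mathbf A)=Z(\calN)$, of degree $2h_{d_1}$. Schofer's formula then expresses $\Phi(Z(\calN),f_m)$ as $2h_{d_1}$ times the constant term of $\langle \vec f_m,\mathcal E_\calN\otimes\theta_\calP\rangle$.

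The constant term splits into two pieces. The $c(0)$-part contributes $\sum_{\mu_1+\mu_2=0}\kappa_\calN(0,\mu_1)a_\calP(0,\mu_2)=\kappa_\calN(0,0)$, because $\calP$ is positive definite, so only $\mu_2=0$ represents $0$, which forces $\mu_1=0$. The $q^{-m}$-part gives the double sum over $m_1+m_2=\frac m4$. For the terms with $m_1=0$ and $\mu_1\neq0$ we need $a_\calP(\frac m4,\mu_2)$. Proposition~\ref{prop5.3} supplies $h_{\Fal}(A)=\frac12\kappa_\calN(0,0)+\frac12C_0$. Substituting, the terms $c(0)\kappa_\calN(0,0)$ and $c(0)C_0$ should cancel exactly, leaving
\[
-\frac{h_{d_1}}{2}\sum_{(m_1,\mu_1)\neq(0,0)}\kappa_\calN(m_1,\mu_1)\,a_\calP(m_2,\mu_2).
\]
The term with $(m_1,\mu_1)=(0,0)$ is absent anyway, since it involves $a_\calP(\frac m4,\frake_m)=0$. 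The main obstacle is this bookkeeping of constants: the factors $(4\pi)^k$, the $-2\log(|\Psi|^2\det y^{c(0)/2})$ normalization, and the factor $2$ between $\Psi_m$ and $\Psi_m^2$. I would check the final normalization against the example $m=1$, $\log(2^4\cdot3^2)$, as a sanity test.

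Finally, for the prime support I split the sum as in \eqref{eq:bp}. By Theorem~\ref{theo4.6}(2), the $m_1=0$, $\mu_1\ne0$ terms involve only the derivatives of $\tilde W_{0,p}$ for $p\mid t_1$, since $\Lambda'/\Lambda$ appears only with $\delta_{0,\mu_1}$; so they are combinations of $\log p$ with $p\mid t_1$. For $m_1>0$, Theorem~\ref{theo4.6}(1) says that $\kappa_\calN(m_1,\mu_1)$ involves only primes $p\mid d_1$, or inert primes $p$ with $o_p(m_1)\ge1$. The coefficient vanishes unless $m_1\in\frac{1}{d_2t}\Z$, as noted in that proof. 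Then $p\mid |d_2t|m_1$ with $p\nmid d_1$ gives $p\le |d_2t|m_1\le\frac{m|d_2t|}{4}$, using $m_2\ge0$.
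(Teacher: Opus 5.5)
Your proposal is the paper's proof. It uses the same ingredients in the same order:
\begin{itemize}
\item the Faltings height identity \eqref{eq:FaltingsHeight} for the Borcherds product (your use of $\Psi_m^2$, which is what Proposition~\ref{prop:integral}(2) actually extends over $\mathcal A_2$, is if anything slightly more careful);
\item Theorem~\ref{theo:Galois} together with Schofer's formula;
\item Proposition~\ref{prop5.3} for $h_{\Fal}(A)$;
\item Theorem~\ref{theo4.6} with $m_1\in\frac{1}{|d_2t|}\Z$ for the prime support.
\end{itemize}

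There is one slip to fix. Your own display gives
\[
\frac{\underline{\mathbf A}\cdot\mathcal Z(m)}{[K:H_{d_1}]}=c(0)h_{d_1}h_{\Fal}(A)+\sum_{z\in Z(\mathbf A)}\log\|\Psi_m(z)\|_{\Pet},
\]
not $\frac{c(0)h_{d_1}}{2}h_{\Fal}(A)+\cdots$; the coefficient in \eqref{eq:intro1} is off by a factor of $2$. It is exactly the coefficient $c(0)h_{d_1}$ that makes $\frac{h_{d_1}c(0)}{2}\big(C_0+\kappa_{\mathcal N}(0,0)\big)$ cancel against the matching part of
\[
\sum_{z}\log\|\Psi_m(z)\|_{\Pet}=-\tfrac14\big(2h_{d_1}c(0)C_0+\Phi(Z(\mathcal N),f_m)\big).
\]
With your factor $\frac{c(0)h_{d_1}}{2}$ the cancellation would fail by $\frac{c(0)h_{d_1}}{4}\big(C_0+\kappa_{\mathcal N}(0,0)\big)$.

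The constant bookkeeping is also less delicate than you fear. Proposition~\ref{prop5.3} is derived from the same relation between $\log\|\Psi\|_{\Pet}$ and $\Phi$. So the precise universal constant per unit of $c(0)$ (your $C_0$ versus whatever the $(4\pi)^k$ normalization produces) drops out of the final formula.
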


\begin{proof}
We consider the unique weakly holomorphic modular form
\begin{equation}
f_m (\tau) = q^{-m} + c(0) + c(1) q + \cdots  \in M_{-1/2}^{!, +}(\Gamma_0(4))
\end{equation}
and let $\Psi_m$ be its Borcherds lifting as in Theorem~\ref{thm:Borcherds}. Then  $(\mathcal Z(m), -\log\| \Psi_m\|^2)$ is an arithmetic divisor associated to the hermitean line bundle $\widehat{\mathcal M}_k$ with $ k=c(0)/2$, by Proposition~\ref{prop:integral}. Since $\mathbf A \notin \mathcal Z(m) (\C)$, the cycles $\underline{\mathbf{A}}$ and $\mathcal Z(m)$ intersect properly over $\co_K$.  This implies by  Lemma \ref{lem:inclusion} and \eqref{eq:divisor} that
\begin{equation} \label{eq:proper}
a_\mathcal P(\frac{m}4, \frake_{m}) =0.
\end{equation}
Formula (\ref{eq:FaltingsHeight}) implies
\begin{align*}
\frac{\underline{\mathbf A}  \cdot \mathcal Z(m)}{[K:\Q]}
 &= h_{\widehat{\mathcal M}_k}(\underline{\mathbf{ A}} ) + \frac{1}{[K:\Q]} \sum_{\sigma: K \hookrightarrow \C} \log\| \Psi_m( \mathbf A^\sigma)\|
 \\
 &= \frac{ c(0)}{2 }(h_{\Fal}(E_1) + h_{\Fal}(E_2)) + \frac{1}{2h_{d_1}}\log\| \Psi_m(Z(\mathbf A))\|.
\end{align*}
 According to Proposition \ref{prop5.3}, we have
\begin{equation}
h_{\Fal}(E_1) + h_{\Fal}(E_2) = \frac{1}2 ( C_0 + \kappa_{\mathcal N}(0, 0)),  \quad C_0 = \log 4 \pi + \Gamma'(1).
\end{equation}

 Recall that $\deg Z(\mathcal N) = 2 h_{d_1}$ and  $Z(\mathbf A) =Z(\mathcal N)$ by Theorem \ref{theo:Galois}.  By means of Theorems~\ref{thm:Borcherds} and~\ref{thm:SmallCM} we find
\begin{align*}
 &-4\log\| \Psi_m(Z(\mathbf A))\|_{\Pet}
 \\
 &= 2 h_{d_1} c(0) C_0 + \Phi(Z(\mathcal N), f_m)
 \\
  &= 2 h_{d_1}  c(0) ( C_0 +\kappa_{\mathcal N}(0, 0))    +
  2h_{d_1} \sum_{\substack{m_1 + m_2 = \frac{m}4, \, m_i\ge 0 \\ \mu_1 + \mu_2 = \frake_{m} \in L'/L \\  (m_1,\mu_1) \ne (0,0)} }\kappa_\mathcal N(m_1, \mu_1) a_\mathcal P(m_2, \mu_2).
\end{align*}
Consequently,
\begin{align*}
\frac{\underline{\mathbf A}  \cdot \mathcal Z(m)}{[K:H_{d_1}]}&=-\frac{h_{d_1}}2 \sum_{\substack{m_1 + m_2 = \frac{m}4, \, m_i\ge 0 \\ \mu_1 + \mu_2 = \frake_{m} \in L'/L \\  (m_1,\mu_1) \ne (0,0)} }\kappa_\mathcal N(m_1, \mu_1) a_\mathcal P(m_2, \mu_2)
\\
&= \frac{h_{d_1}}2 \bigg( \sum_{p |t_1} a_p^{(0)} \log p + \sum_{\text{$p$ prime}} a_p^{(m)} \log p \bigg) .
\end{align*}

Finally, we prove that the last sum is running only over primes with $p|d_1$ or $p \le \frac{m|d_2 t|}4$ inert in $\Q(\sqrt d)$. Assuming $p \nmid d_1$, it suffices to verify that  $a_p^{(m)} \ne 0$ implies $p \le \frac{m|d_2t|}{4}$. For each  $m/4 =m_1 +m_2$ in the sum, we have $0 < m_1 \le m/4$.
 Notice that $\mathcal N'\cong \frac{1}{\sqrt{d_1}} \bar{\mathfrak a}$ with quadratic form  $Q(x) = -\frac{t}{a} x \bar x$. As is seen in Theorem~\ref{theo4.6} (1), this implies
$m_1 = \frac{n}{|d_2t|}$ for some positive integer  $0 < n \le \frac{m |d_2t|}4$. If $\log p$ appears in some $\kappa_{\mathcal N}(m_1, \mu_1)$ with non-zero multiplicity,
we have  $o_p(n) =o_p(m_1) \ge 1$ by Theorem~\ref{theo4.6} (2). So $ p \le n \le \frac{m|d_2t|}4$ as claimed.
\end{proof}

\begin{corollary}
\label{cor:Special}
Let the notation be as at the beginning of this subsection,
and assume in addition that $d=d_1=d_2$ is fundamental. If  $\mathbf A \notin  Z(m)$, then
$$
\frac{\underline{ \mathbf  A} \cdot \mathcal Z(m)}{[K:H_{d}]}
=   - \frac{h_{d}}{2 }  \sum_{\substack{m_1 + m_2 = \frac{m}4,\, m_1 >0 \\ \mu_1 + \mu_2 = \frake_{m} \in L'/L} }\kappa_{\mathcal N}(m_1, \mu_1) a_\mathcal P(m_2, \mu_2) =\frac{h_{d_1}}{2 } \sum_{\text{$p$ prime}} a_p^{(m)} \log p.
$$
Here  $\kappa_{\mathcal N}(m_1, \mu_1)$ is given by (\ref{eq:kappa}).
\end{corollary}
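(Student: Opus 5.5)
This is a specialization of Theorem~\ref{theo: mainIntSect}, so the plan is to start from the second expression there and show that the terms with $m_1=0$ contribute nothing when $d=d_1=d_2$ is fundamental. In that case $t=t_1=t_2=1$, so $\mathcal N\cong(\bar{\mathfrak a},-\norm(\cdot)/a)$ with $\mathfrak a$ an $\co_d$-ideal, and the formula \eqref{eq:kappa} of \cite[Theorem 2.6]{BY09} applies to $\kappa_{\mathcal N}(m_1,\mu_1)$ for $m_1>0$. The sum in Theorem~\ref{theo: mainIntSect} runs over $(m_1,\mu_1)\neq(0,0)$. I will split it into the part with $m_1>0$ and the part with $m_1=0$, $\mu_1\neq 0$; the latter is exactly $\sum_{p\mid t_1}a_p^{(0)}\log p$ by \eqref{eq:bp}.

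The first key step is to observe that the set of primes dividing $t_1$ is empty. Hence the sum defining $a_p^{(0)}$ is empty and this contribution vanishes. To avoid depending only on that notation, I will confirm it directly from \eqref{eq:kappa0}: for $d_1=d$ fundamental, $\kappa_{\mathcal N}(0,\mu_1)=-2\frac{\Lambda'(0,\chi)}{\Lambda(0,\chi)}\delta_{0,\mu_1}$, which is zero for $\mu_1\neq 0$. This also follows from Theorem~\ref{theo4.6}(2). There, for $p\mid d$ with $p\nmid t_1$ (all $p\mid d$ here), \eqref{eq4.12} gives $\tilde W_{0,p}=\delta_{0,\mu_p}$, so the product over $p\mid t_1$ is empty and equals $1$, and its derivative is $0$. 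Either way, every term with $m_1=0$ and $\mu_1\ne0$ vanishes. What remains is the sum over $m_1>0$ with all $\mu_1$, with prefactor $-h_d/2$ (here $h_{d_1}=h_d$).

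Finally, by the definition \eqref{eq:bp}, that remaining sum equals $\sum_p a_p^{(m)}\log p$. This gives the second equality, with the overall factor $h_{d_1}/2=h_d/2$. The properness hypothesis $\mathbf A\notin Z(m)$ is the same as in Theorem~\ref{theo: mainIntSect}, so no further work is needed there.

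I expect the only subtle point to be checking that \eqref{eq:kappa0} really covers every $\mu_1\in\mathcal N'/\mathcal N$ when $\mathfrak a$ is not principal. The nonzero cosets $\mu_1$ live only at primes $p\mid d$, and by \eqref{eq4.12} the local factor $\delta_{0,\mu_p}$ then kills the constant term whenever $\mu_1\neq 0$, so I expect this check to go through.
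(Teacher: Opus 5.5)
Your proposal is correct and matches the paper's proof. The paper also specializes Theorem~\ref{theo: mainIntSect} and uses \eqref{eq:kappa0} (equivalently, $t_1=1$ in Theorem~\ref{theo4.6}(2)) to conclude that the terms with $m_1=0$, $\mu_1\neq 0$ vanish, i.e.\ $a_p^{(0)}=0$; your argument just spells that step out in more detail.
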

\begin{proof}  In this case equation (\ref{eq:kappa0}) implies $a_p^{(0)}=0$.
\end{proof}

We remark that by Proposition \ref{prop:product} the abelian surfaces $A =E\times E$ considered in  \cite{GRV} are special cases of those in Corollary \ref{cor:Special}.

\begin{remark} Let the notation and hypotheses be as in Corollary~\ref{cor:Special}.  Then Theorem~1.2 of \cite{GR26} indicates that the cycle $Z(\mathbf A)$ should be equal to the special cycle $Z(T)$ in the sense of Kudla, where $T$ is a Gram matrix of the rank three positive definite lattice $\mathcal P=\mathcal P(\mathbf A)$ (but notice that our lattice $\mathcal P$ is slightly different from the refined Humbert invariant $q_{\mathbf A}$ of \cite{GR26}).
Hence \cite[Theorem 0.2]{KRSiegel} should imply that
$$
\frac{\underline{ \mathbf  A} \cdot \mathcal Z(m)}{[K:H_{d}]}
\doteq
(\mathcal Z(T)\cdot \mathcal Z(m))^{\text{proper}} \doteq \sum_{S= \kzxz {T} {a} {{}^t a} {m} >0} E'_S(\tau, 0) e^{-2\pi i\tr(S\tau)}
$$
in the notation of  \cite{KRSiegel},  where  $E_S'(\tau, 0)$ is the $S$-th coefficient of the  central derivative of a certain incoherent Eisenstein series $E(\tau, s)$  on $\Sp_{8}$ defined in  \cite{KRSiegel}. The sum on the right hand side runs over (the finitely many)  $a\in \frac{1}{2}\Z^3$ such that the matrix $S$ is positive definite, that is, ${}^taT^{-1}a<m$. The dots over the equality signs indicate that minor modifications may be needed to obtain  actual equalities.  Comparing this with Corollary \ref{cor:Special} suggests some intriguing relation between the incoherent Eisenstein series on $\Sp_8$ and the incoherent Eisenstein series on $\SL_2$ in Section \ref{sect:inceis} of this paper.
\end{remark}

\section{Positivity of the normalized arithmetic intersection} \label{sect:positive}

In this section, we prove Theorem \ref{theo:positive}. Our method of proof can be viewed as a variant of the approach of \cite{Li21}. The idea is to use the CM value formula for higher automorphic Green functions of \cite{BEY} to show that the right hand side of the formula of Theorem \ref{theo: MainFormula} is positive (and each term in the sum is non-positive).

We begin by recalling some facts on higher automorphic Green functions in our setting. In particular, we consider the even lattice $L$ of signature $(3,2)$ of Section \ref{sect:siegel3fold} and its associated orthogonal Shimura variety which is isomorphic to $\mathcal{A}_2(\C)$.
We refer to Sections~4 and~5 of \cite{BEY} for a more detailed introduction in greater generality.

Let $j\in \Z_{\geq 0}$ and consider $f\in M^{!,+}_{-1/2-2j}$.
Denote by $\vec f$ the associated vector valued  weakly holomorphic modular form for the dual Weil representation of $L$ as in  Proposition~\ref{prop:scalvec}.
We write
\begin{align*}
R_k  &=2i\frac{\partial}{\partial\tau} + k v^{-1}
\end{align*}
for the Maass raising operator taking (possibly non-holomorphic) modular forms of weight $k$ to forms of weight $k+2$, and
$R_k^j  = R_{k+2(j-1)}\circ\dots\circ R_k$ for its $j$-fold iteration taking forms of weight $k$ to forms of weight $k+2j$.
Denote by
\begin{align}
\Phi^j(z,f):= \frac{1}{(4\pi)^j}\Phi(z,R^j_{-1/2-2j}\vec f\, )
\end{align}
the higher regularized theta lift of $f$ defined in \cite[Section~5]{BEY}. For $j=0$, this is the regularized theta lift appearing in Theorem \ref{thm:Borcherds} (iii). If $f=f_m$ for a  discriminant $m\in \Z_{>0}$, then up to a  constant it can also be described as the constant term in the Laurent expansion of the automorphic Green function
\begin{align}
\label{eq:phim}
\Phi_{m}(z,s) &=2\frac{\Gamma(s+\frac{1}{4})}{\Gamma(2s)}
\sum_{\substack{x\in \frake_m+L\\ Q(x)=\frac{m}{4}}}
\left(\frac{m}{4Q(x_{z^\perp})}\right)^{s+\frac{1}{4}}
F\left(s+\frac{1}{4},s-\frac{1}{4},2s;\frac{m}{4Q(x_{z^\perp})}\right)
\end{align}
at the value $s_0=5/4$ of the spectral parameter $s$.
Here $F(a,b,c;t)$ denotes the Gauss hypergeometric function as in \cite[Chapter 15]{AS} and $x_{z^\perp}$ the orthogonal projection of $x\in L'$ on the orthogonal complement of the  negative two-plane corresponding to $z$.
Note that the series converges absolutely and locally uniformly for $\Re(s)>s_0$. For real $s$ it takes values in $\R\cup \{\infty\}$.

If $j>0$ and $f_m=q^{-m}+O(1)\in M^{!,+}_{-1/2-2j}$, Proposition~4.7 of \cite{BEY} implies that $\Phi^j(z,f_m)$ is
given by the value of the automorphic Green function at $s=s_0+j$ as
\begin{align*}
\Phi^j(z,f_m)= (\tfrac{m}{4})^j j! \,\Phi_m(z,s_0+j).
\end{align*}
For more general $f\in M^{!,+}_{-1/2-2j}$ with Fourier coefficients $c(n)$,  we have
\begin{align*}
\Phi^j(z,f)= j! \sum_{m\in \Z_{>0}}c(-m)(\tfrac{m}{4})^j  \, \Phi_m(z,s_0+j).
\end{align*}
Here we will only require the special case where $j=1$, in which we obtain
\begin{align}
\label{eq:ghfor}
\Phi^1(z,f_m)&= \frac{m}{4} \cdot\Phi_m(z,\frac{9}{4})\\
\nonumber
&=\frac{m\Gamma(\frac{5}{2})}{2\Gamma(\frac{9}{2})}
\sum_{\substack{x\in \frake_m+L\\ Q(x)=\frac{m}{4}}}
\left(\frac{m}{4Q(x_{z^\perp})}\right)^{\frac{5}{2}}
F\left(\frac{5}{2},2,\frac{9}{2};\frac{m}{4Q(x_{z^\perp})}\right)
\end{align}
for $f_m=q^{-m}+O(1)\in M^{!,+}_{-5/2}$ as in Remark~\ref{rem:4.4}.
We now adapt the argument of \cite{Li21} to derive lower bounds for CM values of this function.

We denote by $Z_\eps(m)$ the $\eps$-neighborhood of the divisor $Z(m)$ with respect to the invariant metric on $\D^+\cong \H_2$ defined by
\begin{align*}
Z_\eps(m) &= \bigcup_{\substack{x\in \frake_m+L\\ Q(x)=\frac{m}{4}}}
\{ z\in \D\mid \; \tfrac{4}{m} |Q(x_z)|<\eps\}.
\end{align*}
Note that for vectors $x\in L'$ with  $Q(x)=\frac{m}{4}$ the condition $ \tfrac{4}{m} |Q(x_z)|<\eps$ is equivalent to
\begin{align}
\label{eq:lbound}
\frac{1}{1+\eps}<\frac{m}{4Q(x_{z^\perp})}.
\end{align}
Moreover, the right hand side is bounded by $1$. If $\eps<\eps'$, then $Z_\eps(m)\subset Z_{\eps'}(m)$, and
\[
\bigcap_{\eps>0} Z_\eps(m) = Z(m), \qquad \bigcup_{\eps>0}  Z_\eps(m) =\D.
\]

\begin{theorem}
Let $\eps>0$, and let $Z(\calN)$ be the small CM cycle associated with $\calN$ defined in \eqref{eq:smallcm}. Assume that $Z(\calN)$ and $Z(m)$ are disjoint. Then
\begin{align*}
\Phi^1(Z(\calN),f_m)
&\geq
\frac{2m}{35}
\cdot |Z(\calN)\cap Z_\eps(m)|
(1+\eps)^{-\frac{5}{2}}
F\left(\frac{5}{2},2,\frac{9}{2};\frac{1}{1+\eps}\right).
\end{align*}
\end{theorem}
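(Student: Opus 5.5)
The argument is pointwise positivity of the Green function series \eqref{eq:ghfor}, followed by restricting attention to the CM points lying in $Z_\eps(m)$. Since $Z(\calN)$ and $Z(m)$ are disjoint, at every point $z\in Z(\calN)$ no vector $x\in\frake_m+L$ with $Q(x)=\frac{m}{4}$ is orthogonal to $z$. Hence for each such $x$ we have $Q(x_z)<0$ strictly, $Q(x_{z^\perp})=\frac m4-Q(x_z)>\frac m4$, and the argument $t_x:=\frac{m}{4Q(x_{z^\perp})}$ lies in $(0,1)$. The series \eqref{eq:ghfor} converges at $s=9/4>s_0$, so $\Phi^1(z,f_m)$ is a finite real number there. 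Evaluating on the cycle means summing over the points of $Z(\calN)$ (counted as in $\deg Z(\calN)$).

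The key observation is that every summand in \eqref{eq:ghfor} is positive. For $0<t<1$ the hypergeometric series $F(\frac52,2,\frac92;t)=\sum_n\frac{(5/2)_n(2)_n}{(9/2)_n n!}t^n$ has only positive coefficients, so it is positive and increasing in $t$. Thus $t^{5/2}F(\frac52,2,\frac92;t)$ is positive and increasing on $(0,1)$. Also $\frac{m\Gamma(5/2)}{2\Gamma(9/2)}=\frac{m}{2}\cdot\frac{1}{(7/2)(5/2)}=\frac{2m}{35}$, which is exactly the constant in the statement. Therefore $\Phi^1(z,f_m)\ge 0$ at every point of $Z(\calN)$, and we may drop all terms except a well-chosen one.

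Now take $z\in Z(\calN)\cap Z_\eps(m)$. By definition of $Z_\eps(m)$ there is at least one $x\in\frake_m+L$ with $Q(x)=\frac m4$ and $\frac4m|Q(x_z)|<\eps$. By \eqref{eq:lbound} this means $t_x>\frac1{1+\eps}$. Keeping only this summand and using monotonicity gives
\[
\Phi^1(z,f_m)\ge \frac{2m}{35}(1+\eps)^{-\frac52}F\left(\frac52,2,\frac92;\frac1{1+\eps}\right).
\]
For the remaining points of $Z(\calN)$ we use only $\Phi^1\ge0$. Summing over the points of $Z(\calN)$ then gives the asserted bound with the factor $|Z(\calN)\cap Z_\eps(m)|$.

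The only step that needs care is the bookkeeping: the sum over $x$ must be taken over the full set of lattice vectors (the stated $\pm x$ convention included) so that at least one term survives, and the count $|Z(\calN)\cap Z_\eps(m)|$ must be taken with the same multiplicities used in defining $\Phi^1(Z(\calN),\cdot)$. Both are immediate once the normalizations are fixed; everything else is elementary positivity.
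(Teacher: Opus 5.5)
Your proposal is correct and follows the paper's proof essentially step for step. The steps are: termwise nonnegativity of the absolutely convergent series \eqref{eq:ghfor}; keeping only the term of a vector $x$ that witnesses membership in $Z_\eps(m)$; bounding that term from below via \eqref{eq:lbound} and monotonicity of $t^{5/2}F(\frac{5}{2},2,\frac{9}{2};t)$; and summing over the points of $Z(\calN)\cap Z_\eps(m)$. Your explicit checks that $\frac{m\Gamma(5/2)}{2\Gamma(9/2)}=\frac{2m}{35}$ and that disjointness forces $t_x\in(0,1)$ are details the paper leaves implicit.
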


\begin{proof}
We use the absolutely convergent series \eqref{eq:ghfor} to derive a lower bound for the CM value.
The power series expansion of the Gauss hypergeometric function implies that
\[
F\left(\frac{5}{2},2,\frac{9}{2}; t \right) \geq 1+ \frac{10}{9} t
\]
for $t\in [0,1)$. Moreover, it is monotonously increasing in this interval.

Let $z\in \D$ be a point which maps to an element of $Z(\calN)\cap Z_\eps(m)$ under the quotient map. Then there exists $x^*\in \frake_m+L$ with  $Q(x^*)=\frac{m}{4}$ and $\tfrac{4}{m} |Q(x_{z}^*)|<\eps$. Omitting all terms in the sum over $x$ in  \eqref{eq:ghfor} except for the contribution of $x^*$ we obtain the lower bound
\begin{align*}
\Phi^1(z,f_m)
&\geq
\frac{2m}{35}
\left(\frac{m}{4Q(x^*_{z^{\perp}})}\right)^{\frac{5}{2}}
F\left(\frac{5}{2},2,\frac{9}{2};\frac{m}{4Q(x^*_{z^{\perp}})}\right)\\
&\geq\frac{2m}{35}
(1+\eps)^{-\frac{5}{2}}
F\left(\frac{5}{2},2,\frac{9}{2};\frac{1}{1+\eps}\right).
\end{align*}
Adding the contributions of all elements of $Z(\calN)\cap Z_\eps(m)$, we get the assertion.
\end{proof}

\begin{corollary}
\label{cor:phi1pos}
The CM value $\Phi^1(Z(\calN),f_m)$ is strictly positive.
\end{corollary}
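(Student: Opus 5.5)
The plan is to apply the preceding theorem with $\eps$ chosen so large that every point of the finite cycle $Z(\calN)$ lies in $Z_\eps(m)$. The right-hand side is then a strictly positive multiple of $|Z(\calN)| = 2h_{d_1} > 0$.

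First we check that the hypothesis of the theorem holds. We are in the setting of Theorem~\ref{theo: MainFormula}, where $\mathbf A \notin \calZ(m)(\C)$. By Theorem~\ref{theo:Galois}, $Z(\mathbf A) = Z(\calN)$. Lemma~\ref{lem:inclusion} says that $Z(\calN)$ either lies entirely in $Z(m)$ or is disjoint from it. The first alternative would force $\mathbf A \in Z(m)$, so $Z(\calN)$ and $Z(m)$ are disjoint, as the theorem requires.

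Next we choose $\eps$. The cycle $Z(\calN)$ consists of finitely many points of $\Gamma\backslash\D^+$, namely $2h_{d_1}$ of them, and $\bigcup_{\eps>0} Z_\eps(m) = \D$ with the neighborhoods increasing in $\eps$. Hence for each point $z$ of $Z(\calN)$ there is some $\eps_z$ with $z \in Z_{\eps_z}(m)$. Concretely, we take any $x \in \frake_m + L$ with $Q(x) = m/4$; such a vector exists because the Humbert surface $Z(m)$ is nonempty for $m \equiv 0,1 \pmod 4$. We then set $\eps_z > \tfrac4m |Q(x_z)|$. Let $\eps = \max_z \eps_z$. Then $|Z(\calN) \cap Z_\eps(m)| = |Z(\calN)| \geq 1$.

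Finally, the factor $(1+\eps)^{-5/2} F\bigl(\tfrac52, 2, \tfrac92; \tfrac{1}{1+\eps}\bigr)$ is positive, since the hypergeometric series has positive coefficients and is $\geq 1$ on $[0,1)$. Also $\tfrac{2m}{35} > 0$. The theorem therefore gives $\Phi^1(Z(\calN), f_m) > 0$.

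The only delicate point is nonemptiness, that is, the existence of a vector $x$ of norm $m/4$ in the right coset, so that $Z_\eps(m)$ is not empty. For $m \equiv 0,1 \pmod 4$, the indefinite lattice $L$ of signature $(3,2)$ represents $m/4$ in $\frake_m + L$; explicitly, $x(a,b,0,0,r)$ with $ab + r^2 = m/4$ in the appropriate coset works. With such an $x$ in hand, $Z_\eps(m)$ exhausts $\D$ as $\eps \to \infty$, and the argument goes through.
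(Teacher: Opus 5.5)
Your proposal is correct and follows the paper's proof: apply the preceding lower bound with some $\eps$ for which $Z(\calN)\cap Z_\eps(m)\neq\emptyset$. The paper needs only one point of $Z(\calN)$ in $Z_\eps(m)$, whereas you take $\eps$ large enough to cover the whole cycle; that difference is cosmetic, and you additionally spell out the disjointness hypothesis and the existence of $x\in\frake_m+L$ with $Q(x)=\frac{m}{4}$ (with $r\in\frac12\Z$ in the appropriate coset).
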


\begin{proof}
This follows from the above theorem by choosing $\eps>0$ such that $Z(\calN)\cap Z_\eps(m)$ is non-empty.
\end{proof}

We now recall some facts on Rankin-Cohen brackets, see e.g.~\cite[Section 3.1]{BEY} for more details.
Let $L_1$ and $L_2$ be even lattices and $k,l\in \frac{1}{2}\Z$.
For $f\in M_{k,\rho_{L_1}}$ and $g\in M_{l,\rho_{L_2}}$, the $j$-th Rankin-Cohen bracket is defined by
\begin{align}
\label{eq:defRC}
[f,g]_j = \sum_{s=0}^j (-1)^s \binom{k+j-1}{s} \binom{l+j-1}{j-s} f^{(j-s)} \otimes g^{(s)},
\end{align}
where $f^{(s)}:= \frac{1}{(2\pi i)^s}\frac{\partial^s}{\partial \tau^s} f$. It is a vector valued modular form of weight $k+l+2j$ for the Weil representation $\rho_{L_1\oplus L_2}\cong\rho_{L_1}\otimes \rho_{L_2} $. We will also use the above formula to extend the definition of the Rankin-Cohen bracket to vector valued mock moular forms. For small $j$ we have
\begin{align}
\label{eq:j01}
[f,g]_0 = f\otimes g, \qquad [f,g]_1 = l f^{(1)}\otimes g - kf\otimes g^{(1)}.
\end{align}
We are now ready to state the CM value formula \cite[Theorem~5.4]{BEY} for higher Green functions
in the setting of the present paper.

\begin{theorem}
\label{thm:fund}
Let $f\in M^{!,+}_{-1/2-2j}$.
The value of the higher Green function
$\Phi^j(z,f)$ at the CM cycle $Z(\calN)$ is given by
\begin{align*}
\Phi^j(Z(\calN),f)&=\deg(Z(\calN))\cdot
\operatorname{CT}\big(\langle
\vec f,\, [\theta_{\calP},\calE_{\calN}]_j\rangle\big).
\end{align*}
\end{theorem}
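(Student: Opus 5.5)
This statement is \cite[Theorem~5.4]{BEY} specialized to our lattice, so the plan is to check that its hypotheses hold here and that the normalizations agree, rather than to reprove it.

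\textbf{Setup.} The splitting $L\otimes\Q = \calP_\Q\oplus\calN_\Q$ exhibits the small CM cycle $Z(\calN)$ as the image of the zero-dimensional Shimura variety of $\GSpin(\calN_\Q)\cong\kay^\times$. The Siegel theta kernel of $L$ at a point $z_\calN^\pm$ factors. Since $z_\calN$ is the negative plane $\calN_\R$, the Gaussian splits into the positive definite theta function of $\calP$ and the negative definite Gaussian on $\calN$. The sublattice $\calP\oplus\calN\subset L$ has finite index, and the map $(\calP'\oplus\calN')/(\calP\oplus\calN)\supset L/(\calP\oplus\calN)$ together with the induced map to $L'/L$ identifies $\theta_L$ with the image of $\theta_\calP\otimes\theta_\calN$. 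This is the projection used in Theorem~\ref{thm:SmallCM}, with the condition $\mu_1+\mu_2\equiv\frake_m$. So the same pairing $\langle\cdot,\cdot\rangle$ appears here.

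\textbf{Main argument.} Write $\Phi^j(z,f)=(4\pi)^{-j}\Phi(z,R^j\vec f)$. Move the raising operators onto the theta kernel by self-adjointness of $R$ against $L$ in the regularized Petersson pairing; the boundary terms vanish after regularization, as in \cite[Sec.~5]{BEY}. After summing over $Z(\calN)$, Siegel--Weil gives that the average of $\theta_\calN$ over the torus orbit equals $\deg Z(\calN)$ times the coherent weight $-1$ Eisenstein series. This series is $L_1E'(\tau,0)$ of the incoherent weight $1$ Eisenstein series, i.e. the shadow of $\calE_\calN$. Now apply the lowering-operator (Stokes) argument of Schofer, \cite{BY09}, which moves $L_1$ off the Eisenstein series and leaves the holomorphic part. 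The key identity is that $L_{\,\cdot\,}$ applied to $[\theta_\calP,\widehat{\calE}_\calN]_j$ is, up to the constant $(4\pi)^j$ and $\binom{\cdot}{j}$ factors, $(R^j$-adjoint$)$ applied to $\theta_\calP\otimes L_1\widehat{\calE}_\calN$. Here $\widehat{\calE}_\calN=E'(\tau,0)$ is the harmonic completion, and the identity holds because $\theta_\calP$ is holomorphic and the Rankin--Cohen bracket intertwines the raising operators; this is \cite[Prop.~3.6]{BEY}. The remaining term is a constant term at the cusp, computed by unfolding against the principal part $q^{-m}$ of $\vec f$. The nonholomorphic part of $\widehat{\calE}_\calN$ decays rapidly and contributes nothing. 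This yields $\operatorname{CT}\langle\vec f,[\theta_\calP,\calE_\calN]_j\rangle$.

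\textbf{Main obstacle.} The delicate step is the constant: matching the $(4\pi)^{-j}$ normalization of $\Phi^j$ with the binomial coefficients in \eqref{eq:defRC} for weights $k=3/2$ and $l=1$. I would verify it in the case $j=0$, where it must reduce to Theorem~\ref{thm:SmallCM}. I would then check the general case $j$ against \cite[Theorem~5.4]{BEY}, confirming that their lattice conventions (signature $(n,2)$ with $n=3$, and the dual Weil representation) coincide with ours via Proposition~\ref{prop:scalvec}.
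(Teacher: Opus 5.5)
Your proposal is correct and follows the paper's route exactly: the paper does not reprove this statement but simply quotes \cite[Theorem~5.4]{BEY} in the present setting (the lattice $L$ of signature $(3,2)$, with $\vec f$ attached to $f$ via Proposition~\ref{prop:scalvec}), and notes that $j=0$ recovers Theorem~\ref{thm:SmallCM}. Your sketch of the mechanism works as orientation, with two small corrections. First, the identity $L_{5/2+2j}[\theta_\calP,\widehat{\calE}_\calN]_j=(-4\pi)^{-j}R^j_{1/2}(\theta_\calP\otimes L_1\widehat{\calE}_\calN)$ for the lowering operators $L_k$ needs harmonicity of $\widehat{\calE}_\calN$ (e.g.\ $R_{-1}L_1\widehat{\calE}_\calN=0$) together with the fact that the second weight is $1$, not just holomorphy of $\theta_\calP$. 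Second, the $\log v$ term in the constant term of $\widehat{\calE}_\calN$ does not decay; it is removed by the regularization of the theta integral.
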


For $j=0$ this result reduces to Theorem~\ref{thm:SmallCM}. We now make use of it in the case $j=1$. Recall the Fourier expansions of $\theta_{\calP}$ and $\calE_{\calN}$ as in Section~\ref{sect:4.3}.

\begin{proposition}
\label{prop:nonpos}  The following are true.
\begin{enumerate}

\item For $m_1> 0$ or  $0\ne \mu_1\in \calN'/\calN$ we have $\kappa_\calN(m_1, \mu_1)\leq 0$.

\item  For $m_2\geq 0$ and $\mu_2\in \calP'/\calP$ we have $a_\calP(m_2, \mu_2)\geq 0$.

\item  For all primes $p$, we have $a_p^{(m)} \ge 0$ and $a_p^{(0)} \ge 0$.


\end{enumerate}
\end{proposition}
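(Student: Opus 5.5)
Part (2) is immediate: $a_\calP(m_2,\mu_2)$ counts the vectors of norm $m_2$ in the coset $\mu_2+\calP$, so it is a non-negative integer. Part (3) follows from (1) and (2) by the definitions \eqref{eq:bp}. There $\sum_p a_p^{(m)}\log p$ and $\sum_p a_p^{(0)}\log p$ are minus sums of products $\kappa_\calN(m_1,\mu_1)a_\calP(m_2,\mu_2)$ with $m_1>0$, respectively $\mu_1\neq 0$. By (1) and (2) each such product is $\le 0$, so each total is $\ge 0$. To get the inequality prime by prime, I will prove (1) in the refined form that every $\kappa_\calN(m_1,\mu_1)$ in question equals $\sum_p c_p\log p$ with every $c_p\le 0$. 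Since the $\log p$ are linearly independent over $\Q$, the coefficient $a_p^{(\cdot)}$ of $\log p$ is then $-\sum c_p\,a_\calP\ge 0$.

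The work is in (1). For $m_1>0$, the proof of Theorem~\ref{theo4.6} shows $E'_{m_1}(\tau,0,\phi_{\mu_1})$ is supported at a single prime $p$. This $p$ is non-split, $U_p$ does not represent $m_1$, and the value is
\[
\kappa_\calN(m_1,\mu_1)= c_\infty\, W'_{m_1,p}(1,0,\phi_{\mu_1})\prod_{q\neq p} W_{m_1,q}(1,0,\phi_{\mu_1}).
\]
The plan is to follow Y.~Li \cite{Li21}, who proves non-positivity of these coefficients. First, for $q\ne p$, the value $W_{m_1,q}(1,0,\phi_{\mu_1})$ is (up to a positive normalizing constant $\gamma(\mathcal C_q)|d|_q^{1/2}$) the local representation density of $m_1$ by $\mu_1+\calN_q$, hence $\ge 0$. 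Second, at the archimedean place $c_\infty$ has a definite sign, fixed by the weight-one normalization. Third, at the bad prime $p$, $W_{m_1,p}(1,s,\phi_{\mu_1})$ is a polynomial in $X=p^{-s}$ with non-negative coefficients (counts of local solutions at each level), and it vanishes at $X=1$. Its derivative at $s=0$ is therefore $-\log p\cdot P'(1)$ up to the local sign. The global sign is then pinned down by comparing with the known explicit case \eqref{eq:kappa}, where $-\Lambda(0,\chi)\kappa\ge 0$ visibly. I expect this sign bookkeeping to be the main obstacle, especially at $p\mid t_1$ and $p=2$ for non-maximal orders. There I would rely directly on the explicit Whittaker formulas of \cite[Section~5]{YYY} and \cite[Section~5]{LYY}, or cite Li's general result.

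For $m_1=0$ and $\mu_1\ne0$, Theorem~\ref{theo4.6}(2) gives $\kappa_\calN(0,\mu_1)=-\frac{d}{ds}\big|_{s=0}\prod_{p\mid t_1}\tilde W_{0,p}(1,s,\phi_{\mu_1})$, possibly multiplied by a factor $\prod\delta_{0,\mu_p}$ that is zero or one. Also, \eqref{eq:value0} says $\prod_{p\mid t_1}\tilde W_{0,p}(1,0,\phi_{\mu_1})=\phi_{\mu_1}(0)=0$. So exactly one local factor vanishes at $s=0$, and the derivative reduces to a single prime, as before. Using the explicit form of $\tilde W_{0,p}$, of the shape in the \cite[Proposition~5.1]{LYY} formula, I would check that each factor is non-negative at $s=0$ and that the vanishing factor is increasing in $s$ near $0$. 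This gives $\kappa_\calN(0,\mu_1)\le 0$ with support at that single prime.
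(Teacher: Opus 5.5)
\textbf{Verdict on structure.} Your overall architecture is the paper's. Part (2) is trivial. Part (3) is deduced from (1) and (2); your remark that each $\kappa_\calN(m_1,\mu_1)$ is supported at a single prime, so that the coefficient of each $\log p$ can be read off using linear independence of the $\log p$, is a precision the paper leaves implicit. For $m_1=0$, $\mu_1\neq 0$, the paper does what you propose. It localizes $\frac{d}{ds}\prod_{p\mid t_1}\tilde W_{0,p}(1,s,\phi_{\mu_1})$ at a prime $p\mid t_1$ where that factor vanishes at $s=0$. It then reads off non-negativity of the remaining factors, and of the derivative of the vanishing one, from \cite[Propositions 5.1--5.4]{LYY}. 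It treats split $p$ (where the zero comes from $1/L_p(s,\chi)$) separately from non-split $p$ (where it comes from $W_{0,p}$ itself). Say ``at least one'' rather than ``exactly one'' factor vanishes; if two vanish, then $\kappa_\calN(0,\mu_1)=0$, which is harmless. For $m_1>0$ the paper gives no local argument at all: it simply invokes \cite[Proposition~3.1]{Li21} with $F=\Q$.

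\textbf{The gap for $m_1>0$.} That citation is not an optional fallback for ``hard cases''. The self-contained mechanism you sketch fails even in the simplest case.
\begin{itemize}
\item A polynomial in $X=p^{-s}$ with non-negative coefficients that vanishes at $X=1$ is identically zero. So your description of $W_{m_1,p}$ would force $\kappa_\calN(m_1,\mu_1)=0$, contradicting e.g.\ the nonzero intersection numbers in the paper's examples.
\item In reality the coefficients alternate. For $p$ inert and $\phi_p$ the characteristic function of $\calO_{\kay,p}$, $W_{m_1,p}(1,s,\phi_{\mu_1})$ is, up to a factor not vanishing at $s=0$, equal to $\sum_{k=0}^{e}(-p^{-s})^k$ with $e=o_p(m_1)$ odd. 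Its derivative at $s=0$ is $\frac{e+1}{2}\log p$. The sign comes from this explicit shape, not from positivity of local solution counts.
\item $\gamma(\mathcal C_q)$ is a Weil index (an eighth root of unity), not a positive constant. Only $W_{m_1,q}/\gamma(\mathcal C_q)$ is a non-negative density. The product of the $\gamma(\mathcal C_v)$ over all places of the incoherent collection $\mathcal C$ is $-1$, and that is where the global minus sign comes from.
\item Comparing with \eqref{eq:kappa} can only calibrate normalizations that are uniform. It cannot determine the sign of $W'_{m_1,p}$ for general $\mu_1$, for $p\mid t_1$, or for $p=2$.
\end{itemize}
So either cite Li, as the paper does, or carry out the case-by-case computation with the explicit local Whittaker formulas of \cite[Section~5]{YYY}, keeping track of the Weil indices.
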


\begin{proof}  When  $m_1>0$, then  (1) is a special case of
\cite[Proposition~3.1]{Li21} with $F=\Q$.

The case $m_1=0$ and $\mu_1 \ne 0$ can be proved similarly.
Indeed, let
$$
f(s)  = -\prod_{p|d, \,p\nmid t_1} \delta_{0, \mu_{1, p}} \prod_{p |t_1} \tilde{W}_{p}(1, s, \phi_{\mu_1}) .
$$

Since $\mu_1 \ne 0$ and $E_0(\tau, 0,  \phi_{\mu_1}) =0$, we have $f(0) =0$, and there is at least one prime $p|t_1$ such that $\tilde{W}_{p}(1, 0, \phi_{\mu_1})=0$ (otherwise $f(s) =0$). So
$$
\kappa_{\mathcal N}(0, \mu_1) = f'(0)
 = -\tilde{W}_{p}'(1, 0, \phi_{\mu_1}) \prod_{\substack{\text{$q$ prime }\\ q|t_1,\, q\ne p}} \tilde{W}_{q}(1, 0, \phi_{\mu_1}) \prod_{\substack{\text{$q$ prime} \\q|d,\, q\nmid t_1}} \delta_{0, \mu_{1,p}}.
$$
Looking at \cite[Propositions 5.1-5.4]{LYY}, we see $\tilde{W}_{q}(1, 0, \phi_{\mu_1}) \ge 0$.
When  $p$ is split, $1/L_p(s, \chi_d)$ has a zero at $s=0$, and
$$
\tilde{W}_{p}'(1, 0, \phi_{\mu_1}) =L_p(1, \chi_d) \frac{W_{p}(1, 0, \phi_{\mu_1})}{\gamma(t) \sqrt{|d|}} \log p \ge 0
$$
by the same propositions.

When  $p$ is non-split,
$$
\tilde{W}_{p}'(1, 0, \phi_{\mu_1})= \frac{L_p(1, \chi_d)}{L_p(0,\chi_d)} \frac{W_{p}'(1, 0, \phi_{\mu_1})}{\gamma(t) \sqrt{|d|}} \ge 0
$$
by \cite[Propositions 5.3, 5.4]{LYY}. So $\kappa_{\mathcal N}(0, \mu_1) = f'(0) \le 0$.

Assertion (2) is clear as $\mathcal P$ is positive definite. Finally,  (3) follows from  (1) and (2).
\end{proof}

\begin{proposition}
\label{prop:fund}
(i) Assume that $Z(\calN)$ and $Z(m)$ are disjoint.
Let $f_m\in M^{!,+}_{-5/2}$ be the unique form whose Fourier expansion begins as $f_m=q^{-m}+O(1)$.
Then
\begin{align*}
\Phi^1(Z(\calN),f_m)&=\deg(Z(\calN))
\sum_{\substack{\mu_1 \in \mathcal N'/\mathcal N\\ \mu_2 \in \mathcal P'/\mathcal P \\
\mu_1+\mu_2\equiv \frake_{m}\,(L)}}\sum_{\substack{m_1>0\\m_2\geq 0\\ m_1 + m_2 = \frac{m}{4}}}
	(m_2-\frac{3}{2}m_1)a_\mathcal P(m_2, \mu_2)\kappa_{\mathcal N}(m_1, \mu_1) .
\end{align*}
(ii) There exist $m_1,m_2\in \frac{1}{4|d_2|t}\Z$ with $m_1>0$, $m_2\geq 0$ and $m_1 + m_2 = \frac{m}{4}$, as well as $\mu_1 \in \mathcal N'/\mathcal N$ and $\mu_2 \in \mathcal P'/\mathcal P$ such that  $\mu_1+\mu_2\equiv \frake_{m}\,(L)$, for which
%
\[
(m_2-\frac{3}{2}m_1)a_\mathcal P(m_2, \mu_2)\kappa_{\mathcal N}(m_1, \mu_1)>0.
\]
\end{proposition}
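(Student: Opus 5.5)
Part (i) comes from specializing Theorem~\ref{thm:fund} to $j=1$ and $f=f_m\in M^{!,+}_{-5/2}$, then computing the constant term by hand. Here $\theta_\calP$ has weight $l=3/2$ and $\calE_\calN$ has weight $k=1$; note that in the bracket the first argument is $\theta_\calP$. By \eqref{eq:j01},
\[
[\theta_\calP,\calE_\calN]_1 = 1\cdot \theta_\calP^{(1)}\otimes \calE_\calN - \tfrac{3}{2}\,\theta_\calP\otimes \calE_\calN^{(1)}.
\]
Since $f^{(1)}$ multiplies the $q^n$ coefficient by $n$, the coefficient of $q^{m/4}$ in the $(\mu_2,\mu_1)$ component is
\[
\sum_{m_1+m_2=m/4} \bigl(m_2-\tfrac32 m_1\bigr)\,a_\calP(m_2,\mu_2)\,\kappa_\calN(m_1,\mu_1).
\]
Next we pair with $\vec f_m$. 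As in Proposition~\ref{prop:scalvec}, the principal part of $\vec f_m$ is $q^{-m/4}\frake_{m}$, and there is no constant term contribution issue because the bracket has no $q^0$ term: at $m_1=m_2=0$ the weight factor $(m_2-\tfrac32 m_1)$ vanishes. So the constant term picks out exactly the coefficient above, summed over $\mu_1+\mu_2\equiv\frake_m$. This uses the same bookkeeping of the pairing $\langle\cdot,\cdot\rangle$ that yields the second form of Theorem~\ref{thm:SmallCM}; I will copy that identification of components rather than redo it. The terms with $m_1=0$ are $\tfrac{m}{4}a_\calP(\tfrac m4,\mu_2)\kappa_\calN(0,\mu_1)$. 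These need $a_\calP(\tfrac m4,\mu_2)\neq0$ with $\mu_2$ matching $\frake_m$ modulo $\calN'$ and $\mu_1$ arbitrary. I will argue they vanish by disjointness: a vector $x\in\mu_2+\calP$ of norm $m/4$ lies in $\calP'$. If $\mu_1\neq 0$, this $x$ is not in $L'$ but $x+y$ with $y\in\mu_1+\calN$ is, and it has $Q(x+y)=m/4+Q(y)$. This route looks messy. More simply, $\calE_\calN$ components with $m_1=0$ correspond to $\mu_1$ with $Q(\mu_1)\equiv0$, and the disjointness hypothesis \eqref{eq:proper}, via Lemma~\ref{lem:inclusion}, rules out $a_\calP(m/4,\mu_2)\ne0$ for the compatible $\mu_2$. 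I expect checking this case to be the fiddly point of (i). If it fails, I will keep these terms and note that they have the same sign pattern, since there $m_2-\tfrac32 m_1=m/4>0$.

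For (ii), by Corollary~\ref{cor:phi1pos} the left side of (i) is strictly positive, and $\deg Z(\calN)>0$. Hence at least one summand is positive, which gives the existence claim. Positivity of the product combined with Proposition~\ref{prop:nonpos} (where $\kappa\le0$ and $a\ge0$) forces $m_2<\tfrac32 m_1$, but that is not needed. The constraint $m_1,m_2\in\frac1{4|d_2|t}\Z$ follows because, as in the proof of Theorem~\ref{theo: mainIntSect}, $\kappa_\calN(m_1,\mu_1)\ne0$ forces $m_1\in\frac1{|d_2t|}\Z$, and then $m_2=\tfrac m4-m_1$ lies in $\frac1{4|d_2|t}\Z$.

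The main obstacle is the precise matching of vector-valued components in the pairing, including the restriction $\mu_1+\mu_2\equiv\frake_m$ and the correct sign convention for the bracket order, so that the factor comes out as $(m_2-\tfrac32m_1)$ rather than $(\tfrac32 m_1-m_2)$. Everything else is formal.
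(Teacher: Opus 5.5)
Your argument follows the paper's proof. You specialize Theorem~\ref{thm:fund} to $j=1$ and expand $[\theta_\calP,\calE_\calN]_1=\theta_\calP^{(1)}\otimes\calE_\calN-\tfrac32\,\theta_\calP\otimes\calE_\calN^{(1)}$. You then observe that the bracket has no constant term, so only the $q^{m/4}$-coefficients with $\mu_1+\mu_2\equiv\frake_m$ survive the pairing with $\vec f_m$, and you obtain (ii) from Corollary~\ref{cor:phi1pos}. Your displayed bracket is correct, although with $\theta_\calP$ in the first slot of \eqref{eq:defRC} one has $k=3/2$, $l=1$, not the other way round. Your justification of $m_1,m_2\in\frac{1}{4|d_2|t}\Z$ is fine.

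The step that does not work as written is the disposal of the terms with $m_1=0$. Lemma~\ref{lem:inclusion} and \eqref{eq:proper} only give $a_\calP(\frac m4,\mu_2)=0$ for $\mu_2\equiv\frake_m$, so they kill only the terms with $\mu_1=0$. Now take $\mu_1\neq 0$ with $Q(\mu_1)\equiv 0$. A vector $x\in\mu_2+\calP$ with $Q(x)=\frac m4$ then lies in $\calP'$ but not in $\frake_m+L$, as you noticed in your ``messy'' route. Such an $x$ gives no point of $Z(\calN)\cap Z(m)$, so disjointness says nothing about it.

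These terms $\frac m4\,a_\calP(\frac m4,\mu_2)\,\kappa_\calN(0,\mu_1)$ are, up to the factor $\frac m4$, exactly the products defining $a_p^{(0)}$ in \eqref{eq:bp}, which the $j=0$ formula keeps. They vanish automatically when $t_1=1$ by \eqref{eq:kappa0}, but not in general. The paper's proof passes over them with the same one-line appeal to disjointness, so your proposal and the paper share this gap.

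Your fallback is the right repair. By Proposition~\ref{prop:nonpos}, each such term is $\frac m4\cdot a_\calP\cdot\kappa_\calN\le 0$. So in general (i) holds only with this extra non-positive contribution added; equivalently, the identity becomes the inequality $\Phi^1(Z(\calN),f_m)\le\deg(Z(\calN))\sum_{m_1>0}\cdots$. Part (ii) still follows verbatim, because the $m_1>0$ part must be strictly positive. Only (ii) is used in the proof of Theorem~\ref{theo:positive}, so nothing downstream is affected.
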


\begin{proof}
According to  \eqref{eq:j01} the Fourier expansion of $[\theta_{\calP},\calE_{\calN}]_1$ is given by
\begin{align*}
\sum_{\substack{\mu_1 \in \mathcal N'/\mathcal N\\ \mu_2 \in \mathcal P'/\mathcal P \\ n\geq 0}}
 \sum_{\substack{m_1\geq 0\\m_2\geq 0\\ m_1 + m_2 = n}} (m_2-\frac{3}{2}m_1)a_\mathcal P(m_2, \mu_2)\kappa_{\mathcal N}(m_1, \mu_1) q^n\phi_{\mu_1}\otimes \phi_{\mu_2}.
\end{align*}
Notice that the constant term vanishes. Hence, in the pairing $\operatorname{CT}\big(\langle
\vec f_m,\, [\theta_{\calP},\calE_{\calN}]_1\rangle\big)$ in Theorem~\ref{thm:fund} only the Fourier coefficients of index $\frac{m}{4}$ with $\mu_1+\mu_2\equiv \frake_{m}\,(L)$ give a non-vanishing contribution. Since $Z(\calN)$ and $Z(m)$ are disjoint, the coefficients  $a_\calP(m,\mu_2)$ with $\mu_2\equiv \frake_{m}\,(L)$ vanish, and we only need to sum over $m_1>0$. This leads to the formula claimed in (i).

For (ii), we use in addition the fact that  $\Phi^1(Z(\calN),f_m)>0$ by Corollary~\ref{cor:phi1pos}. Consequently, at least one of the summands in the formula of (i) must be positive.
\end{proof}

\begin{remark}
Because of Proposition~\ref{prop:nonpos}, the pair $(m_1, m_2)$ in  Proposition \ref{prop:fund} (ii) must satisfy $m_1> \frac{2}{5}m$.
\end{remark}

\begin{proof}[Proof of Theorem \ref{theo:positive}] By Proposition \ref{prop:nonpos}, all terms $\kappa_{\mathcal N}(m_1, \mu_1) a_{\mathcal P}(m_2, \mu_2) \le 0$. By Proposition \ref{prop:fund} and the above remark, we see at least one
$$
\kappa_{\mathcal N}(m_1, \mu_1) a_{\mathcal P}(m_2, \mu_2) < 0.
$$
Now the theorem is clear.
\end{proof}

\section{Bad reduction of genus two curves with small CM} \label{sect:BadReduction}

We recall that a curve $C$ of genus $\geq 2$ over a number field $K$  is said to have  stable  bad reduction at a rational prime $p$, if the stable model of $C$
(over some finite extension of $K$)
has non-smooth special fiber at some prime above $p$. Equivalently, there exists a prime above $p$, at which $C$ does not have potentially good reduction.
%
For background on the reduction of algebraic curves and on stable models we refer to \cite[Chapter 10]{Liu-book}.

It is well-known that a principally polarized abelian surface is not isomorphic to the Jacobian of a genus two curve if and only if it is decomposable, that is, isomorphic to the product of two elliptic curves equipped with the product polarization, see e.g.~\cite[Proposition~6]{Ka}.
Let $C$ be a genus two curve over a number field $K$ with CM by a biquadratic field, and assume that its Jacobian $\mathbf J(C) =(J(C), \theta_C)$ is isomorphic to some $\mathbf A=(A(\mathfrak a, t),  \lambda)$ as in
Section~\ref{sect:5.2}.    By enlarging the field of definition $K$ if necessary, we may assume that $\mathbf J(C)$ has good reduction everywhere, i.e., its N\'eron model defines a point
$\underline{\mathbf J(C)}
\in \mathcal A_2 (\co_K)$.
Then the curve $C$ has  stable bad reduction at a rational prime $p$
if and only if $\underline{\mathbf J(C)}$ modulo some prime $\mathfrak p$ of $K$ above $p$ is isomorphic to the  product of two elliptic curves with the product polarization, that is,  $\underline{\mathbf {J}(C)}$ and $\mathcal A_1 \times \mathcal A_1 $ have non-zero intersection  at $p$.
Hence,  Theorem~\ref{theo: mainIntSect}  implies the following result.

\begin{theorem}
\label{theo:Curve} Let the notation and assumption be as above.
\begin{enumerate}
\item We have
$$
\frac{\underline{\mathbf  J(C)} \cdot (\mathcal A_1 \times \mathcal A_1)}{[K:H_{d_1}]}
= \frac{h_{d_1} }{2 }\bigg( \sum_{p|t_1} a_p^{( 0)}  \log p  + \sum_{\text{$p|d$ or $p \le \frac{|d_2 t|}4$ inert} }  a_p^{(1)} \log p \bigg).
 $$


\item The curve  $C$ has stable bad reduction at some rational prime $p$ if and only if
$$
 a_p^{(0)}+  a_p^{(1)} \neq 0 .
$$
In  particular,  if $C$ has stable bad reduction at some rational prime $p$, then $p$ is non-split in $\kay$ and  $p \le \frac{|d_2| t}4$.

\item  When  $d_1=d_2=d$, we have $a_p^{(0)}=0$. Moreover, $\kappa_{\mathcal N}(m_1, \mu_1)$ in the sum defining $a_p^{(1)}$ is given explicitly by (\ref{eq:kappa}).
\end{enumerate}
\end{theorem}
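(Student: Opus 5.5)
Since $\mathbf J(C)$ is the Jacobian of a smooth curve, it is not decomposable over $\C$. Hence $\mathbf A=\mathbf J(C)\notin \mathcal Z(1)(\C)$ and Theorem~\ref{theo: mainIntSect} applies with $m=1$. Here $\mathcal Z(1)\cong\mathcal A_1\times\mathcal A_1$ is the decomposable locus, and $\frake_1$ is the non-neutral element of $L'/L$.

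For part (1) I would specialize the first displayed formula of Theorem~\ref{theo: mainIntSect} to $m=1$. The only point to check is the index set of the second sum. There the theorem gives primes $p\mid d_1$ or $p\le |d_2t|/4$ inert, whereas (1) asserts $p\mid d$ or inert. For $p\mid t_1$ with $p\nmid d$, Theorem~\ref{theo4.6}(1)(a) shows $\log p$ can only occur in some $\kappa_{\calN}(m_1,\mu_1)$ if $p$ is inert or ramified. So split primes dividing $t_1$ contribute nothing to $a_p^{(1)}$, and the remaining primes dividing $t_1$ are inert. For those I still need $p\le |d_2t|/4$. I would get this from the proof of Theorem~\ref{theo: mainIntSect}: $m_1=n/|d_2t|$ with $0<n\le |d_2t|/4$, and an inert $p$ contributing must satisfy $o_p(m_1)\ge1$ by Theorem~\ref{theo4.6}(1)(b). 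This step is delicate when $p\mid t_1$, since (1)(b) assumes $p\nmid d_1$. If it fails I would keep the weaker index set "$p\mid d_1$ or inert $p\le |d_2t|/4$", which suffices for (2).

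For part (2), by the discussion preceding the theorem, $C$ has stable bad reduction at $p$ exactly when $\underline{\mathbf J(C)}$ meets $\mathcal A_1\times\mathcal A_1$ at some $\frakp\mid p$, i.e.\ when some $e_\frakp>0$. Summing over $\frakp\mid p$, the $p$-part of the normalized intersection is $\frac{h_{d_1}}2(a_p^{(0)}+a_p^{(1)})\log p$. Since the $\log p$ are $\Q$-linearly independent, this $p$-part is determined by the coefficients. The key input is Proposition~\ref{prop:nonpos}(3): $a_p^{(0)},a_p^{(1)}\ge0$. Hence the $p$-local intersection, which is a nonnegative combination of $e_\frakp\log\norm\frakp$, is nonzero iff $a_p^{(0)}+a_p^{(1)}\ne0$. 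Strictly speaking, one must identify the $p$-component of the arithmetic side with the $p$-coefficient. This holds because the proof of Theorem~\ref{theo: mainIntSect} is an identity in $\sum_p \Q\log p$ once the archimedean parts cancel, and both sides are of the form $\sum_p c_p\log p$ with rational $c_p$.

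For the "in particular" claim, non-splitness follows from Theorem~\ref{theo4.6}(1)(a) for $a_p^{(1)}$. For $a_p^{(0)}$ it follows from the analysis in Proposition~\ref{prop:nonpos}, where $\log p$ arises from $\tilde W'_{0,p}$ with $p\mid t_1$. The bound $p\le |d_2|t/4$ comes from the $n$-argument above. For primes $p\mid d$ I would note $p\le |d|/4\cdot(\ldots)$ directly or use $m_1=n/|d_2t|$ with $p\mid n$. The main obstacle is the primes $p\mid t_1$ contributing through $a_p^{(0)}$, which need not be small a priori. There I would try to bound them via $p\mid t_1$, so $p\le t_1$, and $t_1\le t|d_2|/4$ whenever $|d_2|\ge 4t_2$. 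The small cases $d_2\in\{-3,-4\}$ would be handled by hand, and split $p$ would be excluded using $\tilde W$ from \cite{LYY}.

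Part (3) is Corollary~\ref{cor:Special}. If $d_1=d_2=d$ then $t_1=1$, so the sum over $p\mid t_1$ is empty and $a_p^{(0)}=0$ by \eqref{eq:kappa0}. Moreover $t=1$, so \eqref{eq:kappa} from \cite[Theorem 2.6]{BY09} gives $\kappa_\calN(m_1,\mu_1)$ explicitly.
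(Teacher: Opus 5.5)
You reduce (1), the equivalence in (2), and (3) to Theorem~\ref{theo: mainIntSect} with $m=1$, which is what the paper does. The paper says everything follows from that theorem except one point. That point, the bound $p\le |d_2|t/4$ for primes $p\mid d_1$, is the entire content of the paper's proof, and your proposal does not establish it.

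Theorem~\ref{theo4.6}(1)(b) only constrains primes $p\nmid d_1$. So for a ramified $p\mid d$ nothing forces $p\mid n$ in $m_1=n/|d_2t|$: a priori $\log p$ can occur in $\kappa_\calN(m_1,\mu_1)$ with $\ord_p(m_1)<0$. Your alternative ``$p\le |d|/4$ directly'' is false; take $d=-\ell$ and $p=\ell$. Likewise, your bound $p\le t_1\le t|d_2|/4$ for contributions through $a_p^{(0)}$ fails exactly when $d_2=-3$ and $t=p$ is prime. That is an infinite family, not a few small cases to check by hand.

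The paper proceeds as follows. It shows that $p\mid d_1$ with $p>|d_2|t/4$ forces one of six explicit configurations, always with $p=\ell$. It discards $d_1=d\in\{-3,-4\}$ and $(d_1,d_2)=(-12,-3)$ by \cite{Kani14}, since these surfaces carry no non-product principal polarization. In the remaining cases it uses $m_1<\frac14$ and $m_1\in\frac{1}{|d|t}\Z$ to get $\ord_\ell(m_1)<0$, hence $\mu_{1,\ell}\neq 0$. It then uses the local Whittaker formulas of \cite[Prop.~5.3, 5.4(1), Cor.~5.6]{YYY} to show two things: $W_{m_1,\ell}(0,\phi_{\mu_1})\neq 0$, so no $\log\ell$ appears in $\kappa_\calN(m_1,\mu_1)$; and $a_\ell^{(0)}=0$. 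Some local input of this kind is needed; the global constraint on $m_1$ alone does not give the bound.

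Your argument for non-splitness of primes entering through $a_p^{(0)}$ also fails. The proof of Proposition~\ref{prop:nonpos} explicitly treats a split $p\mid t_1$. There $1/L_p(s,\chi_d)$ vanishes at $s=0$, and $\tilde W_p'(1,0,\phi_{\mu_1})$ is a nonzero multiple of $W_p(1,0,\phi_{\mu_1})\log p$, which need not vanish. So that analysis produces split contributions rather than excluding them.

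The paper itself excludes split $p$ only in its case (6), where $p=t>|d_2|t/4$. For split $p\mid t_1$ below the bound, neither argument gives non-splitness, and the assertion appears to be false. Take $A=\C/\co_{-100}\times\C/\co_{-4}$ and $\alpha=\kzxz{2}{5}{1}{3}$.
\begin{itemize}
\item One checks there is no nontrivial $\lambda_0\circ\alpha$-symmetric idempotent in $\End(A)$, so $(A,\lambda_0\circ\alpha)$ is a Jacobian.
\item Modulo a prime above the split prime $5$, both factors reduce (by Deuring) to the ordinary curve with $j=1728$ and endomorphism ring $\Z[i]$.
\item Every principal polarization on the square of that curve is a product, since unimodular binary Hermitian forms over $\Z[i]$ form a single class.
\end{itemize}
So the curve has stable bad reduction at $5$, which splits in $\Q(i)$. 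Do not try to prove the non-split claim in this generality.
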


\begin{proof}
Everything follows directly from Theorem~\ref{theo: mainIntSect} except for the claim that $p \le \frac{|d_2| t}4$ when  $p|d_1$.
%
If  $p >  \frac{|d_2| t}4$ and $p|d_1$, then we have the following cases:
\begin{enumerate}

\item  $d_1 =d= -3, -4$,

\item $d_1=-12$ and $d_2=-3$,

\item $d_1=d=  -\ell, -3\ell$ with $\ell> 3$,

\item $d_2 =d=-\ell$ with $\ell\ge 5$ and $t=2, 3$,

\item $d_2=-\ell =-3$ and $t=3$,

\item $d_2=-3$ and  $t=\ell> 3$.

\end{enumerate}
Here $\ell$ denotes a prime number.
Cases (1) and (2) never happen, as there are no non-product principal polarizations on $E_{d} \times E_{d}$ for $d=-3, -4$, and   on $E_{-12} \times E_{-3}$, where $E_d =\C/\mathcal O_d$, see \cite[Theorem 2]{Kani14}.

For the remaining cases the conditions on $p$ imply that $p$ has to be equal to $\ell$.
Recall that
 $\mu  \in \frac{1}{\sqrt{d_2} t} \bar{\mathfrak a}/\bar{\mathfrak a}$ and
 $m \equiv  -\frac{t}a \mu \bar \mu  \pmod 1$. Hence $m \in \frac{1}{|d|t} \Z_{>0}$ as $d_2 =d$ in these cases. If $\ord_p m \ge 0$,  then $ m \in \frac{1}3 \Z_{>0}$ or $\frac{1}2 \Z_{>0}$, and therefore  $m > \frac{1}4$, a contradiction.  Now assume that $\ord_p(m) <0$ and thereby $\mu_p \ne 0$.

For  cases (3) and (4), Theorem  \ref{theo4.6}(2) implies that $a_p^{(0)} =0$ because $p\mid d$ and $p\nmid t$.
Consequently, it suffices to prove that $\kappa_{\mathcal N}(m, \mu)$ does not have a $\log p$ term for $0 < m < \frac{1}4$.  We will use the notation of \cite[Appendix]{YYY}. In particular,
the quantity $\Delta$ there is equal to our $d$, and $p$ is ramified in $\kay$.  We have also   $\beta =1,  2, 3 \in \Z_p^\times$.
 Let   $b=\ord_p(\beta) + [\frac{1}2 (\ord_{\kay_p}(\mu) +1)] $ be the number $a$ defined in  \cite[Proposition~5.4(1)]{YYY} (we change it to $b$ here to avoid a conflict of notation with $a=\norm(\mathfrak a)$) and $a(\mu_p, m) = \ord_p(Q(\mu) -m)$. Then $a(\mu_p, m) \ge b=0$. Proposition 5.4(1) of  \cite{YYY} implies that the  value
$
W_{m, p}(0, \phi_{\mu_p})
$
of the local Whittaker function at $p$ does not vanish.
In particular, the local quadratic space
 $W_{\beta}=(\kay_p, \beta x \bar x)$ represents $m$, and therefore $\log p $ does not show up in the coefficient $\kappa_{\mathcal N}(m, \mu)$.

For case (5), we note that  $\mu \in \frac{1}{3 \sqrt{-3}} \mathcal O_{-3}/\mathcal O_{-3}$.  We have $b=0$  or $1$ depending on whether $\ord_{\kay_3}(\mu)<-1$ or equal to $-1$.
If $b=0$,  the same argument as above shows that $\log p$ does not show up in   $ \kappa_{\mathcal N}(m, \mu)$.  If  $b=1$, then
$Q(\mu) =-3 \mu \bar\mu \equiv 0\pmod 1$.  This implies $m \in \Z_{>0}$, a contradiction to the assumption  $m <\frac{1}4$. We still need to verify $a_3^{(0)}=0$ ($p=3$).    Theorem~\ref{theo4.6}(2) implies
$$
 a_3^{(0)} =  \frac{d}{ds} \tilde{W}_3(1, s,  \phi_{\mu_3})|_{s=0}.
 $$
Notice that $a(\mu_3, 0) =\ord_{3}(-3 \mu \bar\mu) <0$ unless  $\mu\in \frac{1}{\sqrt{-3}} \mathcal O_3/\mathcal O_3$. When  $a(\mu_3, 0) <0$, we find $a_3^{(0)}=0$ by \cite[Corollary 5.6]{YYY}.  When $\mu\in \frac{1}{\sqrt{-3}} \mathcal O_{-3}/\mathcal O_{-3}$, we have $m \in \Z_{>0}$,  again a contradiction to the assumption  $m <\frac{1}4$.

   In case (6),  $p=\ell$ is unramified in $\kay$,  and we will use \cite[Proposition 5.3]{YYY} and the notation there. In this case, $\mu \in \frac{1}{\ell \sqrt{-3}} \mathcal O_{-3}/\mathcal O_{-3}$. Since $\mu_p\ne 0$, and $\beta= p$,
we have $\ord_{\kay_p} (\mu_p \beta) =0$. The same argument as above using \cite[Proposition 5.3]{YYY} shows that $\log p$ does not show up in  $\kappa_\mathcal N(m, \mu)$. We still need to check $a_p^{(0)} =0$ in this case. By Theorem~\ref{theo4.6}(2), we have
 $$
 a_p^{(0)} = \delta_{0, \mu_3} \frac{d}{ds} \tilde{W}_p(1, s,  \phi_{\mu_p})|_{s=0}.
 $$
If  $p$ is inert in $\kay$, then $a(\mu_p, 0) :=\ord_p(\beta \mu_p \bar{\mu_p})=-1 <0$, and consequently  $\tilde{W}_p(1, s,  \phi_{\mu_p})=0$  by  \cite[Corollary 5.6]{YYY}. So  $a_p^{(0)} =0$. If $p$ is split, we write $\mu_p=(\mu', \mu'')$ as in \cite[Corollary 5.6]{YYY}.
The fact that  $E_0(1, 0, \phi_\mu) = -\delta_{0, \mu_3} \tilde{W}_p(1, 0, \phi_{\mu_p})=0$ implies that either $\delta_{0, \mu_3}=0$  or $W_p(1, 0, \phi_{\mu_p}) =0$.

 When $\delta_{0, \mu_3}=0$, clearly $a_p^{(0)} =0$.  When $W_p(1, 0, \phi_{\mu_p}) =0$,   we have  $\mu', \mu'' \notin \Z_p$ by \cite[Corollary~5.6]{YYY}, and   thus $a(\mu_p, 0) =-1<0$. Hence  $a_p^{(0)}=0$. In summary, we always obtain $a_p^{(0)} =0$.
\end{proof}

\begin{proof}[Proof of Theorem \ref{thm:BadReduction}]
By  Proposition  \ref{prop:product}, we have
$$
\mathbf J(C) \cong (E_1 \times E_2, \lambda) \cong  (\C/\co_{l} \times \C/\mathfrak a, \lambda)
$$
with $l=-\operatorname{lcm}(|d_1|, |d_2|)$ and $\co_{\mathfrak a} =\co_{g}$ with $g =-\gcd(|d_1|, |d_2|)$.  Now Theorem \ref{thm:BadReduction}  follows from Theorem \ref{theo:Curve}.
We actually obtain the slightly stronger bound that $p\leq \frac{|g|t}{4}$ where $t\in \Z_{>0}$ is determined by the condition $l=gt^2$.
\end{proof}



\begin{proof}[Proof of Theorem \ref{theo:AlwaysBad}]  By \cite[Corollary 10.6.3]{BL}  and \eqref{eq:canonical}  we may assume that  $\mathbf J(C) \cong  (\C/\co_{d_1} \times \C/\mathfrak a, \lambda) $.     Theorem  \ref{theo:positive} implies that
$$
\frac{\underline{\mathbf  J(C)} \cdot (\mathcal A_1 \times \mathcal A_1)}{[K:H_{d_1}]} >0.
$$
Hence the local arithmetic intersection is positive for at least one prime $p$, i.e., $C$ has stable bad reduction at
$p$.
\end{proof}

To derive Corollary \ref{prop:Conjecture}, we need the following  counting lemma.

\begin{lemma}
\label{lem:explicit-formula}
Assume that $d=-\ell$ for a prime $\ell\equiv 3\pmod 4,~\ell>3$. For $m\in\frac{1}{\ell}\Z_{>0}$ with $m \le \frac{1}4$, and  $\mu\in\mathcal N'/\mathcal N$ with $Q_{\mathcal N}(\mu)\equiv m\pmod{1}$, we have
$$\frac{h_d}{2}\kappa_{\mathcal N}(m,\mu)=F(m\ell).$$
Here, for any positive integer $N$,
$$
F(N) = \sum_{n|N} (\frac{-\ell}n) \log n.
$$
\end{lemma}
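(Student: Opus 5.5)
We work in the setting of Corollary \ref{prop:Conjecture}: $d=d_1=d_2=-\ell$ is fundamental, $t=1$, and $\ell\equiv 3\pmod 4$ is a prime with $\ell>3$. Then $h_d$ is odd, $w_d=2$, $\Lambda(0,\chi)=h_d$, and $\ell$ is the only prime dividing $d$. Hence we may apply the explicit formula \eqref{eq:kappa} from \cite[Theorem 2.6]{BY09}:
\[
-h_d\,\kappa_{\mathcal N}(m,\mu)=\eta_0(m,\mu)\sum_{p\text{ inert}}(\ord_p(m)+1)\rho(m\ell/p)\log p+\rho(m\ell)\,\eta_\ell(m,\mu)(\ord_\ell(m)+1)\log \ell .
\]
Write $N=m\ell\in\Z_{>0}$, so $N\le \ell/4$. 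The plan is to evaluate each term and compare with $F(N)$, up to the sign and factor conventions of the lemma.

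First, the $\ell$-part. Since $N\le \ell/4<\ell$, we have $\ell\nmid N$ and $\ord_\ell(m)=-1$. So the factor $\ord_\ell(m)+1$ vanishes and the ramified term disappears. We also check that $\eta_0(m,\mu)=2$ or $1$ according to whether $\mu=0$ or not. If $\mu_\ell\neq 0$, the empty product gives $\eta_0=1$. If $\mu=0$, then $m\equiv Q(0)\pmod 1$, so $m\in\Z$; but $m\le 1/4$, so this case cannot occur. Because $\mathcal N'/\mathcal N$ is $\ell$-primary, we therefore always have $\mu_\ell\ne 0$ and $\eta_0=1$.

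Next, the inert part. For an inert $p$, $\rho(N/p)\neq0$ requires $\ord_p(N/p)$ to be even, that is, $\ord_p N$ odd. In that case $\rho(N/p)=\rho(N')\cdot\tfrac{\ord_p N+1}{2}$ times the trivial factor, where $N'$ is the prime-to-$p$ part. The task is then the classical identity
\[
\sum_{p\text{ inert}}(\ord_p N+1)\rho(N/p)\log p=-\sum_{n\mid N}\Big(\frac{-\ell}{n}\Big)\log n
\]
up to the normalization factor $2$ coming from $h_d/2$ versus $h_d$. This is a Gross–Zagier type identity, and I would prove it by multiplicativity. Write $r(N)=\sum_{n\mid N}\chi(n)$ and $F(N)=\sum_{n\mid N}\chi(n)\log n$; then $F$ is the logarithmic derivative at $s=0$ of $n\mapsto\sum_{n\mid N}\chi(n)n^{s}$. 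This splits as a sum over primes $p\mid N$ of $\prod_{q\ne p} r(q^{e_q})\cdot\sum_{k\le e_p}k\chi(p)^k\log p$.

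For split $p$, $\chi(p)=1$, and this term would pair with nonzero $r$. However, $\rho(N)=0$ in this situation, and then one checks that the split contributions combine to zero. For inert $p$ with $e=\ord_p N$, we have $\sum_{k\le e}k(-1)^k=-(e+1)/2$ if $e$ is odd and $e/2$ if $e$ is even. When $e_p$ is even for every inert $p$, $r(N)\ne 0$ and there must be some cancellation; I would handle this via the known fact that $F(N)=0$ whenever $\rho(N)\neq0$, combined with $\chi(N)=-1$. Indeed, $Q_\mathcal N(\mu)$ satisfies $-N\equiv a\cdot$(norm), and $\chi(-N\,\norm\fraka)$ relates to the local invariant at $\ell$; since $\mathcal C$ is incoherent, some inert prime divides $N$ to an odd power. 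So exactly one inert $p$ has odd exponent (if there were two, both sides vanish), and the displayed sum collapses to $(e_p+1)\rho(N/p)\log p$. This matches $-2\cdot\tfrac{e_p+1}{2}\,r(N/p^{e_p})\log p$.

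The main obstacle is bookkeeping of signs and the factor $2$: the explicit formula has $-\Lambda(0,\chi)\kappa$ with $\Lambda(0,\chi)=h_d$, whereas the lemma has $\tfrac{h_d}{2}\kappa=F$. I would recheck the normalization of $\kappa$ (equivalently $\eta_0=2$ versus $1$, and whether $\rho$ counts ideals in a way that makes $\rho(N/p)=r(N/p)$) against the case $N=p$ inert, where $F(p)=-\log p$. I would also double-check the requirement $\chi(-N\norm\fraka)=-1$ at $\ell$, which is what forces the parity condition.
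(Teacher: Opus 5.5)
Your strategy is the paper's: apply \eqref{eq:kappa}, where you handle the $\ell$-term and $\eta_0$ correctly; note that only an inert prime $q$ with $\ord_q N$ odd can contribute; and compare with $F(N)=-\tfrac{e+1}{2}\rho(N/q)\log q$. But the one step with real content is not correctly justified. The ``known fact'' that $F(N)=0$ whenever $\rho(N)\neq 0$ is false. Substituting $n\mapsto N/n$ gives $F(N)=\chi(N)\bigl(\rho(N)\log N-F(N)\bigr)$, so $F(N)=\tfrac12\rho(N)\log N$ whenever $\chi(N)=1$; for example, $F(p)=\log p$ for $p$ split. Consequently your ``classical identity'' $\sum_{p\ \text{inert}}(\ord_p N+1)\rho(N/p)\log p=-2F(N)$ is false in general: for $N=p$ split the left side is $0$. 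It holds exactly when $\chi(N)=-1$. So the case where every inert prime divides $N$ to even order does not cancel; it must be ruled out. Ruling it out is precisely the paper's step $\ell\notin\operatorname{Diff}(m,\calN)$. Incoherence of $\mathcal C$ alone does not do this, since it only gives $|\operatorname{Diff}(m,\calN)|$ odd, and a priori $\operatorname{Diff}(m,\calN)=\{\ell\}$ is possible.

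The missing argument uses the hypothesis $Q_\calN(\mu)\equiv m\pmod 1$. Write $\mu=\nu/\sqrt{-\ell}$ with $\nu\in\bar\fraka$. Then $Q_\calN(\mu)=-\norm(\nu)/(a\ell)$, so $N\equiv-\norm(\nu)/a\pmod\ell$. Here $\norm(\nu)/a=\norm(\nu\bar\fraka^{-1})$ is the norm of an integral ideal, and it is prime to $\ell$ because $\ell\nmid N$. Such a norm is a product of split primes and squares of inert primes, so $\chi(\norm(\nu)/a)=1$. Since $\chi(n)=\leg{n}{\ell}$ for positive $n$ prime to $\ell$ and $\leg{-1}{\ell}=-1$, you get $\chi(N)=-1$. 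Thus an odd number of inert primes divide $N$ to odd order, and your case analysis goes through: exactly one such prime gives the formula, and three or more make both sides vanish.

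Two smaller corrections. On the sign you flagged: the local condition at $\ell$ is $\chi_\ell(-m\norm(\fraka))=+1$, which is the statement $\ell\notin\operatorname{Diff}$; what equals $-1$ is the global value $\chi(N)$. Also fix the slip $\rho(N/p)=\rho(N')\cdot\tfrac{\ord_pN+1}{2}$. For $p$ inert with $e=\ord_pN$ odd one has $\rho(N/p)=\rho(N/p^{e})$, and the factor $\tfrac{e+1}{2}$ comes from $\sum_{k\le e}(-1)^k k$ in $F(N)$, as your final comparison correctly uses.
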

\begin{proof}
Recall from Proposition \ref{prop:N} that $(\mathcal N,Q_{\mathcal N})\cong (\overline{\mathfrak a},-\frac{1}{a}x\bar{x})$.
Let
\begin{align*}
\operatorname{Diff}(m, \mathcal N) &= \{ \text{$q<\infty$ prime}:\;  \mathcal N \otimes_\Z \Q_q \text{ does  not represent } m\}\\
&=\{ \text{$q<\infty$ prime}:\;  \chi_q(-m/a) =-1\}
\end{align*}
be the `Diff set' defined by Kudla in \cite{KuAnnal}. Then  $|\hbox{Diff}(m, \mathcal N)|\ge 1 $  odd, and for any $q \in \hbox{Diff}(m, \mathcal N)$, the local Whittaker function $W_{m, q}(0, \phi_\mu) =0$ for any $\mu$. So $\kappa_{\mathcal N}(m, \mu)=0$ unless $\hbox{Diff}(m, \mathcal N) =\{ q\}$  for a single prime  $q$, which has to be non-split.  We first verify that $\ell \notin \hbox{Diff}(m, \mathcal N)$. Indeed,  by assumption,   $N=m\ell \le \frac{\ell}4$ is prime to $\ell$.  The condition  $Q_{\mathcal N}(\mu)\equiv m\pmod{1}$ implies that
$N \equiv Q_\mathcal N(\mu \sqrt{-\ell}) \pmod  \ell$, and that $\mathcal N\otimes_\Z \Q_\ell$ represents $N$. So $\ell \notin \hbox{Diff}(m, \mathcal N)$.

Now assume that $\hbox{Diff}(m, \mathcal N) =\{ q\}$ with $q$ inert in $\kay=\Q(\sqrt{-\ell})$, and write $N=m \ell $.  In this case, $e =\ord_q m=\ord_q N$ is odd, and  (\ref{eq:kappa}) implies
$$
-h_{d} \kappa_{\mathcal N}(m, \mu)
=(e +1) \rho(N/q) \log q.
$$
On the other hand, write $N=q^e N_1$ and recall that $\chi(n) =(\frac{-\ell}{n})$ is the quadratic character associated to $\kay/\Q$. Noticing that $\chi(q) =-1$,  we find    as in \cite[Remark 4.1]{YY19} that
\begin{align*}
F(N)&=\sum_{n_1|N_1}\sum_{j=0}^{e}\chi(q)^{j}\chi(n_1)(j\log q +\log n_1)
\\&=\bigg(\sum_{j=0}^{e}(-1)^{j}j\bigg)\bigg(\sum_{n_1|N_1}\chi(n_1)\bigg)\log q+\bigg(\sum_{j=0}^{e} (-1)^{j}\bigg)F(N_1)
\\
&=-\frac{e+1}{2}\rho(N_1)\log q =-\frac{e+1}{2}\rho(N/q)\log q.
\end{align*}
This proves the lemma.
\end{proof}

\begin{proof}[Proof of Corollary \ref{prop:Conjecture}]  The case $\ell=3$ does not appear as there is no genus two curve $C$ with $J(C) \cong \C/\mathcal O_{-3} \times \C/\mathcal O_{-3}$. We assume $\ell>3$ from now on.
By  Theorem~\ref{theo:Curve} and Lemma~\ref{lem:explicit-formula}, we have
\begin{align*}
\frac{\underline{\mathbf J(C)} \cdot (\mathcal A_1 \times \mathcal A_1)}{[K:H_d]}
&=-\frac{h_d}{2}\sum_{\substack{x\in\mathcal P'\\
\ell (\frac{1}4 - Q(x))\in\Z_{>0}}}\kappa_{\mathcal N}\left(\frac{1}{4}-Q(x), \mathfrak e_1-x\right)\\
&=-\sum_{\substack{x\in\mathcal P'\\\frac{\ell -4\ell Q(x)}{4}\in\Z_{>0}}}F\left(
\frac{\ell -4\ell Q(x)}{4}\right)
\end{align*}
as claimed.
The last identity follows from  Lemma \ref{lem:explicit-formula} and the claim that    for any $x\in\mathcal P'$ such that $\ell (\frac{1}4 - Q(x))\in\Z_{>0}$, we have  $\mathfrak e_1-x \in\mathcal N'/\mathcal N$ and $Q_{\mathcal N}(\mathfrak e_1-x)\equiv\frac{1}{4}-Q(x)\pmod{1}$. The latter claim  is actually a corollary of Proposition \ref{prop:fqm2}.
Indeed,  since $|\mathcal P'/\mathcal P|=2 \ell = |L'/L| \cdot |\mathcal N'/\mathcal N|$, Proposition \ref{prop:fqm2}  implies (in the notation there) that $g(x) =\frake_m -x \in \mathcal N'/\mathcal N$ for $m=0$ or $1$, and
$$
Q_{\mathcal N}(\mathfrak e_m-x)\equiv \frac{m^2 }4 -Q(x)\pmod{1}.
$$
Since $\calN$ has level $\ell$, we have  $\ell Q_{\mathcal N}(\mathfrak e_m-x)\in \Z$, and therefore  $\ell  (\frac{m^2 }4 -Q(x))\in \Z$.
Since $\ell (\frac{1}4 - Q(x))\in \Z$, we find that $m=1$ and $\mathfrak e_1-x\in\mathcal N'/\mathcal N$ as claimed.
\end{proof}

\section{Examples and Remarks} \label{sect:example}

We remark that the intersection  formula in Theorem \ref{theo: mainIntSect}  is effectively computable.  We  give some examples of our computational results in  Table \ref{Table}  in the special case where $d_1=d_2=d$ is an odd fundamental discriminant (equal to the discriminant of $\kay$).

In the table, the first three columns list
the discriminant $d$, the class number $h_d$ of $\kay$, and an ideal $\mathfrak a = [\omega_2, a]= \Z \omega_2 + \Z a$ such that $J(C)\cong \C/\calO_d\times \C/\mathfrak{a}$ where $\omega_2$ is given as a linear combination of  $\omega =\frac{1+\sqrt{d}}{2}$ and $1$, and $a=\norm(\fraka)$. The fourth column describes the polarization  matrix
$$
\alpha =[\alpha_1, \alpha_2,  \beta]= \kzxz {\alpha_1 } { \frac{\bar\beta}a} {\beta} {\alpha_2},   \quad  \alpha_1, \alpha_2 \in \Z_{>0}, \;  \beta \in \mathfrak a, \;   \det \alpha = 1,
$$
where we list all non-product principal polarizations up to equivalence.

The last three columns contain the output of our computation: The quantity $I>0$ such that   $\log I^2$ is the normalized arithmetic intersection number of  $\underline{\mathbf J(C)}$  and the decomposable locus  $\mathcal A_1 \times \mathcal A_1$, the automorphism group $\Aut(C)$, and  the equation of a   curve $C$ with  $\mathbf J(C)\cong( A(\mathfrak a,1), \lambda_0 \circ \alpha)$ (when the equation is relatively simple).  Finally,  $D_n$ denotes the dihedral group of order $n$ and $C_n$ the cyclic group of order $n$.

We have implemented a program\footnote{
Code available on \href{https://github.com/yupeng-ruc/Bad-Reduction-of-Genus-Two-Curves-with-CM}{https://github.com/yupeng-ruc/Bad-Reduction-of-Genus-Two-Curves-with-CM} and in the Zenodo archive under \href{https://doi.org/10.5281/zenodo.20475038}{DOI 10.5281/zenodo.20475038} .}
to compute these output data from the input data $d$, $\mathfrak{a}$, and $\alpha$. We emphasize that the program always gives an equation of the associated genus two curve $C$.


Here are some remarks and observations regarding our computational results.
\begin{enumerate}

\item  The quantity $I^{12}$ matches perfectly with  the column `Explicit computation' in \cite[Tables 1--3]{GRV}.  By the remark before Theorem \ref{theo: mainIntSect}, $\log I$ is the intersection number defined in \cite[Section 5]{GRV} when $A =E \times E$.

\item     Because of the isomorphism in  \eqref{eq:GalAction}, the computational results only depend on the class of $[\mathfrak a]$ in the genus group $\Cl(\kay)/\Cl(\kay)^2$  (with all the different principal polarizations).
In particular, when $-d$ is a prime number, it suffices to consider the case $A= E \times E$ as in \cite{GRV}. If $-d$ is not a prime, we need to consider non-principal genera as well as the principal genus.

\item    When the automorphism group of a curve $C$  is the Dihedral group $D_{12}$ of order $12$, the curve seems to be much simpler than the curves with smaller automorphism groups.

\item  Another interesting observation  from the computation is that the defining field of the curve $C$ is a quadratic extension of its field of moduli if $\Aut(C)=C_2$, and is  the same  as its field of moduli if $\Aut(C)=D_4$ or $D_{12}$. This was already observed in the class number one case in  \cite{GHR}.

\end{enumerate}

\newgeometry{top=2.55cm, left=1.5cm, right=1.5cm} 
{\tiny
\begin{table}
\caption{Computational results}
\label{Table}
\begin{longtable}{>{\centering\arraybackslash}m{.6cm}|>{\centering\arraybackslash}m{.4cm}|>{\centering\arraybackslash}m{1.1cm}|>{\centering\arraybackslash}m{1.8cm}|>{\centering\arraybackslash}m{1.6cm}|>{\centering\arraybackslash}m{1.3cm}|>{\centering\arraybackslash}m{8.5cm}}

\hline
\vspace*{.3em}\small$\boldsymbol{d}$ & \vspace*{.3em}\small$\boldsymbol{h_d}$ & \vspace*{.3em}\small$\boldsymbol{\fraka}$& \vspace*{.3em}\small$\boldsymbol{\alpha}$ & \vspace*{.3em}\small$\boldsymbol{I}$ & \vspace*{.3em}\small\textbf{Aut}($\boldsymbol{C}$) &\vspace*{.3em}\small$\boldsymbol{C}$ \\
\hline
\endfirsthead
\hline
\vspace*{.3em}\small$\boldsymbol{d}$ & \vspace*{.3em}\small$\boldsymbol{h_d}$ & \vspace*{.3em}\small$\boldsymbol{\fraka}$& \vspace*{.3em}\small$\boldsymbol{\alpha}$ & \vspace*{.3em}\small$\boldsymbol{I}$ & \vspace*{.3em}\small\textbf{Aut}($\boldsymbol{C}$) &\vspace*{.3em}\small$\boldsymbol{C}$ \\
\hline
\endhead
\hline
\endfoot
\hline
\endlastfoot

$-11$ &$1$ & $[\omega,1]$ & $[2,2,\omega]$ & $2$ & $D_{12}$ & $y^2 = x^6 + x^3 - 4/11  $\\
\hline
$-15$&$2$&$[\omega,2]$&$[2,2,\omega-2]$& $3$& $D_{12}$ &  $y^2 = x^6 + x^3 + 1/20$\\
\hline
$-19$&$1$ &$[\omega,1]$ & $[2,3,\omega]$ & $2\cdot3$ & $D_4$ & $y^2=-8x^6 + 2040x^5 + 2244x^4 + 5840x^3 +4230x^2 + 4014x + 837$ \\
\hline
$-23$&$3$
&$[\omega,1]$&$[3,3,\omega+1]$ & $5$ & $D_{12}$ & $y^2=x^6+x^3+1/278300(-63504v^2 - 32508v + 60395)$, where $v$ is root of $X^3 + X^2 - 1$\\
\hline
$-31$&$3$
&$[\omega,1]$&$[3,3,\omega]$ & $3^3$ &  $D_4$ &\\
\hline
\multirow{4}{*}{$-35$}&\multirow{4}{*}{$2$}
&$[\omega,1]$&$[2,5,\omega]$ & $2^2\cdot5$ & $D_4$  \\
\cline{3-7}
&&$[\omega,1]$& $[3,4,\omega+1]$ & $2^2\cdot5$ & $D_4$  \\
\cline{3-7}
&&$[\omega,3]$ & $[2,2,\omega]$ & $2^3$ & $D_{12}$ & $y^2=x^6+x^3+1/2645(722-54\sqrt{-7})$\\
\cline{3-7}
&&$[\omega,3]$ & $[2,3,\omega-3]$ & $2^2\cdot 7$ & $D_4$ \\
\hline
\multirow{2}{*}{$-39$}&\multirow{2}{*}{$4$}
&$[\omega,1]$&$[5,8,2\omega-1]$ & $3^3$ & $D_{12}$ & $y^2 = x^6 + x^3 +1/324(19-14\sqrt{13})$\\
\cline{3-7}
&&$[\omega,2]$& $[2,3,\omega]$ & $3^2\cdot7$ & $D_4$  \\
\hline
\multirow{2}{*}{$-43$}&\multirow{2}{*}{$1$}
&$[\omega,1]$& $[2,6,\omega]$ & $2^2\cdot3\cdot5$ & $D_4$ & $y^2 = -8x^6 + 97512x^5 + 121188x^4 +584432x^3 + 410022x^2 + 805482x + 184653$\\
\cline{3-7}
&&$[\omega,1]$& $[3,4,\omega]$ & $2\cdot3\cdot5\cdot 7$ & $C_2$  \\
\hline
\multirow{4}{*}{$-47$}&\multirow{4}{*}{$5$}
&$[\omega,1]$& $[3,5,\omega+1]$ & $5^3$ & $D_4$ \\
\cline{3-7}
&&$[\omega,1]$& $[7,17,3\omega+2]$ & $5\cdot11$ & $D_{12}$ & $y^2=x^6+x^3+1/568700(-259200v^4 + 589896v^3 - 739800v^2 + 658476v - 295873)$, where $v$ is a root of $X^5 - 2X^4 + 2X^3 - X^2 + 1$ \\
\hline
\multirow{5}{*}{$-51$}&\multirow{5}{*}{$2$}
&$[\omega,1]$& $[2,7,\omega]$ & $2^3\cdot3^2$ & $D_4$ \\
\cline{3-7}
&&$[\omega,1]$& $[4,4,\omega+1]$ & $2^3\cdot3^2$ & $D_4$\\
\cline{3-7}
&&$[\omega+1,3]$ & $[2,3,\omega+1]$ & $2^2\cdot 3\cdot 7$ & $D_4$ \\
\cline{3-7}
&&$[\omega+1,3]$ & $[2,9,2\omega-1]$ & $2^2\cdot 3$ & $D_{12}$ & $y^2 = x^6 + x^3 + 4/17$ \\
\cline{3-7}
&&$[\omega+1,3]$ & $[3,4,\omega+4]$ & $2^2\cdot 3\cdot 7^2$ & $C_2$ \\
\hline
\multirow{3}{*}{$-55$}&\multirow{3}{*}{$4$}
&$[\omega,1]$& $[3,5,\omega]$ & $3^4\cdot5^2$ & $C_2$ \\
\cline{3-7}
&&$[\omega,1]$& $[5,12,2\omega+1]$ & $3^4\cdot11$ & $D_4$ \\
\cline{3-7}
&&$[\omega,2]$ & $[2,4,\omega]$ & $3^4\cdot 5^2$ & $D_4$ \\
\hline
\multirow{5}{*}{$-59$}&\multirow{5}{*}{$3$}
&$[\omega,1]$& $[2,8,\omega]$ & $2^4\cdot11$ & $D_4$ \\
\cline{3-7}
&&$[\omega,1]$& $[3,6,\omega+1]$ & $2^3\cdot13$ & $D_4$ \\
\cline{3-7}
&&$[\omega,1]$& $[4,4,\omega]$ & $2^3\cdot11$ & $D_4$ \\
\cline{3-7}
&&$[\omega,1]$& $[5,12,2\omega-1]$ & $2^4$ & $D_{12}$ & $y^2=x^6+x^3+1/15770051(-778356v^2 - 1535004v + 2918504)$, where $v$ is a root of $X^3 + X^2 - X - 2$\\
\hline
\multirow{3}{*}{$-67$}&\multirow{3}{*}{$1$}
&$[\omega,1]$& $[2,9,\omega]$ & $2\cdot3\cdot5\cdot11$ &  $D_4$ & $y^2 = -8x^6 + 1570200x^5 + 1961652x^4 +
    14130800x^3 + 10112454x^2 + 30032550x + 7091361$\\
\cline{3-7}
&&$[\omega,1]$& $[3,6,\omega]$ & $2\cdot3\cdot5\cdot7\cdot13$ & $C_2$ \\
\cline{3-7}
&&$[\omega,1]$& $[4,5,\omega+1]$ & $2^2\cdot3\cdot5\cdot7\cdot11$ & $C_2$
 \\
\hline
\multirow{6}{*}{$-71$}&\multirow{6}{*}{$7$}
&$[\omega,1]$& $[3,7,\omega+1]$ & $7^2\cdot13$ & $D_4$ \\
\cline{3-7}
&&$[\omega,1]$& $[5,5,\omega+2]$ & $11\cdot17$ & $D_{12}$ & $y^2=x^6+x^3+1/18287790943004(-5244946376292v^6 - 13610475918144v^5 - 1676396435112v^4 +
    10528462150416v^3 + 353986525620v^2 + 5385722941620v + 7579618976207)$, where $v$ is a root of $X^7 + 3X^6 + 2X^5 - X^4 - 2X^3 - 2X^2 - X - 1$ \\
\cline{3-7}
&&$[\omega,1]$& $[7,23,3\omega-1]$ & $7^3\cdot11$ & $D_4$ \\
\hline
\multirow{3}{*}{$-79$}&\multirow{3}{*}{$5$}
&$[\omega,1]$& $[3,7,\omega]$ & $3^5\cdot7^3$ &  $C_2$\\
\cline{3-7}
&&$[\omega,1]$& $[5,16,2\omega-1]$ & $3^5\cdot17$ & $D_4$ \\
\cline{3-7}
&&$[\omega,1]$& $[8,12,2\omega+3]$ & $3^5\cdot7^2$ & $D_4$ \\
\hline
\multirow{14}{*}{$-163$}&\multirow{14}{*}{$1$} &$[\omega,1]$
& $[2,21,\omega]$ & $2^2\cdot3\cdot5\cdot23\cdot29$ & $D_4$ & $y^2 = -8x^6 + 3270840792x^5 + 4088548308x^4 +
    68687654192x^3 + 50493631302x^2 + 352024255062x + 85910688153$\\
\cline{3-7}
&&$[\omega,1]$& $[3,14,\omega]$ & $2\cdot3^2\cdot5\cdot7\cdot11\cdot17\cdot31$ & $C_2$ \\
\cline{3-7}
&&$[\omega,1]$& $[4,11,\omega+1]$ & $2\cdot3\cdot5\cdot7\cdot11\cdot13\cdot23\cdot29$ & $C_2$ \\
\cline{3-7}
&&$[\omega,1]$& $[5,33,2\omega]$ & $2^2\cdot3\cdot5\cdot7\cdot13\cdot17\cdot19\cdot37$ & $C_2$ \\
\cline{3-7}
&&$[\omega,1]$& $[6,7,\omega]$ & $2\cdot3^2\cdot5\cdot7\cdot11\cdot17\cdot19\cdot23$ & $C_2$ \\
\cline{3-7}
&&$[\omega,1]$& $[6,8,\omega+2]$ & $2^2\cdot3\cdot5\cdot7\cdot11\cdot13\cdot29\cdot31$ & $C_2$ \\
\cline{3-7}
&&$[\omega,1]$& $[7,24,2\omega+1]$ & $2^3\cdot3\cdot5\cdot11\cdot13\cdot17\cdot19\cdot23$ & $C_2$ \\
\hline
\end{longtable}
\end{table}}
\restoregeometry

\end{document}